\tikzset{
  curve/.style={
    settings={#1},
    to path={
      (\tikztostart)
      .. controls ($(\tikztostart)!\pv{pos}!(\tikztotarget)!\pv{height}!270:(\tikztotarget)$)
      and ($(\tikztostart)!1-\pv{pos}!(\tikztotarget)!\pv{height}!270:(\tikztotarget)$)
      .. (\tikztotarget)\tikztonodes
    },
  },
  settings/.code={%
    \tikzset{quiver/.cd,#1}%
    \def\pv##1{\pgfkeysvalueof{/tikz/quiver/##1}}%
  },
  quiver/.cd,
  pos/.initial=0.35,
  height/.initial=0,
}
\theoremstyle{plain}
\newtheorem{thm}{Theorem}
\newtheorem{mainthm}{Theorem}
\numberwithin{thm}{subsection}
\newtheorem{prop}[thm]{Proposition}
\newtheorem*{prop*}{Proposition}
\newtheorem{cor}[thm]{Corollary}
\newtheorem*{cor*}{Corollary}
\newtheorem{lem}[thm]{Lemma}
\newtheorem*{lem*}{Lemma}
\theoremstyle{definition}
\newtheorem{defn}[thm]{Definition}
\newtheorem*{defn*}{Definition}
\newtheorem{exmp}[thm]{Example}
\newtheorem*{exmp*}{Example}
\newtheorem{conj}[thm]{Conjecture}
\newtheorem*{conj*}{Conjecture}
\newtheorem{rem}[thm]{Remark}
\newtheorem*{rem*}{Remark}
\newtheorem*{qst*}{Question}
\numberwithin{equation}{section}
\newcommand{\longeq}{\scalebox{3}[2]{=}}
\title{Birational Weyl Group Action on the Symplectic Groupoid and Cluster Algebras}
\author{Woojin Choi\footnote{Michigan State University, East Lansing, USA. Email: choiwoo7@msu.edu}}
\date{}
\begin{document}

\maketitle
\begin{abstract}
A. Bondal introduced a symplectic groupoid of triangular bilinear forms. This groupoid induces a Poisson structure on $\mathcal{A}_n$, the space of $n \times n$ unipotent upper-triangular matrices, governed by the classical $\mathfrak{so}(n)$ reflection equation. L. Chekhov and M. Shapiro described log-canonical coordinates on this symplectic groupoid via the $\mathcal{A}_n$-quiver.

In this paper, we introduce a birational Weyl group action on the symplectic groupoid, generated by cluster transformations associated with certain cycles of the quiver. We show that the Poisson algebra of Weyl group invariants is a finite central extension of the algebra generated by the matrix entries on $\mathcal{A}_n$.

J. Song constructed an embedding of the $\imath$quantum group of type $\mathrm{AI}_n$ into the quantum cluster algebra associated with the $\Sigma_n$-quiver. This quiver is obtained by adding frozen vertices to the $\mathcal{A}_{n+1}$-quiver. Using the Weyl group action on the $\Sigma_n$-quiver, we determine the exact image of the embedding in the classical limit. Specifically, we prove that the image is Poisson isomorphic to a quotient algebra of the Weyl group invariants.

V. Fock and L. Chekhov defined a Poisson map $\phi_n$ from the Teichmüller space $\mathcal{T}_{g,s}$, with genus $g$ and $s \in \{1,2\}$ boundary components, to $\mathcal{A}_n$. To describe the cluster structure of $\operatorname{Im}(\phi_n)$, we aim to find a cluster Poisson reduction for $\mathcal{A}_n$. Since every element $A \in \operatorname{Im}(\phi_n)$ satisfies $\operatorname{rank}(A+A^T) \le 4$, this rank condition provides a natural criterion for our reduction. The solution set of this rank condition decomposes into distinct irreducible components. We show, however, that the Weyl group acts transitively on these components, and hence the corresponding reductions are conjugate. Consequently, it suffices to determine the cluster Poisson reduction on a single component.

We prove that, when $n$ is even, the longest element of the Weyl group corresponds to a cluster DT-transformation for the $\mathcal{A}_n$-quiver. This provides a canonical basis for $\mathcal{O}(\mathcal{X}_{|\mathcal{A}_n|})$. In contrast, when $n$ is odd, the $\mathcal{A}_n$-quiver admits no reddening sequence.
\end{abstract}

\tableofcontents

\section{Introduction}

\subsection{Background and Motivation}

Let $\mathcal A_n \subset GL_n$ be the space of unipotent upper-triangular matrices where unipotent means the diagonal entries are all units. The \textbf{symplectic groupoid of triangular bilinear forms} is the set of admissible pairs of matrices $(B, \mathbb A)$ such that
\begin{equation}
\mathcal M = \left\{(B,\mathbb A) : B \in GL_n, \text{ }\mathbb A \in \mathcal A_n,\text{ } B\mathbb A B^T \in \mathcal A_n \right\}.\end{equation}
The groupoid is equipped with the following standard maps:
\begin{equation}
\begin{aligned}
    \textbf{source map }&
    s: \mathcal{M} \to \mathcal{A}_n,\quad (B,\mathbb{A}) \longmapsto \mathbb{A},\\
    \textbf{target map }&
    t: \mathcal{M} \to \mathcal{A}_n, \quad (B,\mathbb{A}) \longmapsto B\mathbb{A}B^T,\\
    \textbf{injection }&
    e: \mathcal{A}_n \to \mathcal{M}, \quad \mathbb{A} \longmapsto (E,\mathbb{A}),\\
    \textbf{inversion }&
    i: \mathcal{M} \to \mathcal{M}, \quad (B,\mathbb{A}) \longmapsto (B^{-1},B\mathbb{A}B^T),\\
    \textbf{multiplication }&
    m: \mathcal{M}^{(2)} \to \mathcal{M}, \quad ((C,B\mathbb{A}B^T),(B,\mathbb{A})) \longmapsto (CB,\mathbb{A}).
\end{aligned}
\end{equation}

where $\mathcal M^{(2)}$ is the fibred square
\begin{equation}
\begin{tikzpicture}[x=0.75pt,y=0.75pt,yscale=-1,xscale=1]
%uncomment if require: \path (0,235); %set diagram left start at 0, and has height of 235

%Straight Lines [id:da9888091745010472] 
\draw    (271.82,93.58) -- (313.97,110.82) ;
\draw [shift={(315.82,111.58)}, rotate = 202.25] [color={rgb, 255:red, 0; green, 0; blue, 0 }  ][line width=0.75]    (10.93,-3.29) .. controls (6.95,-1.4) and (3.31,-0.3) .. (0,0) .. controls (3.31,0.3) and (6.95,1.4) .. (10.93,3.29)   ;

% Text Node
\draw (184,118) node    {$\mathcal M^{(2)}$};
% Text Node
\draw (257,90) node    {$\mathcal M$};
% Text Node
\draw (257,145) node    {$\mathcal M$};
% Text Node
\draw (328,119) node    {$\mathcal A_{n}$};
% Text Node
\draw (205,88) node [anchor=north west][inner sep=0.75pt]    {$p_{2}$};
% Text Node
\draw (211,137) node [anchor=north west][inner sep=0.75pt]    {$p_{1}$};
% Text Node
\draw (291,88) node [anchor=north west][inner sep=0.75pt]    {$s$};
% Text Node
\draw (290,137) node [anchor=north west][inner sep=0.75pt]    {$t$};
% Connection
\draw    (200,111.86) -- (241.13,96.09) ;
\draw [shift={(243,95.37)}, rotate = 159.02] [color={rgb, 255:red, 0; green, 0; blue, 0 }  ][line width=0.75]    (10.93,-3.29) .. controls (6.95,-1.4) and (3.31,-0.3) .. (0,0) .. controls (3.31,0.3) and (6.95,1.4) .. (10.93,3.29)   ;
% Connection
\draw    (200,123.92) -- (241.12,139.13) ;
\draw [shift={(243,139.82)}, rotate = 200.3] [color={rgb, 255:red, 0; green, 0; blue, 0 }  ][line width=0.75]    (10.93,-3.29) .. controls (6.95,-1.4) and (3.31,-0.3) .. (0,0) .. controls (3.31,0.3) and (6.95,1.4) .. (10.93,3.29)   ;
% Connection
\draw    (271,139.87) -- (312.12,124.81) ;
\draw [shift={(314,124.13)}, rotate = 159.89] [color={rgb, 255:red, 0; green, 0; blue, 0 }  ][line width=0.75]    (10.93,-3.29) .. controls (6.95,-1.4) and (3.31,-0.3) .. (0,0) .. controls (3.31,0.3) and (6.95,1.4) .. (10.93,3.29);
\end{tikzpicture}
\end{equation}

such that $p_1$ and $p_2$ are natural projections to the first and second components respectively. Then there is a symplectic form $\omega \in \Omega^2$ on $\mathcal M$ that satisfies the splitting condition: $m^*\omega = p_1^*\omega + p^*_2\omega$\cite{4}.

A symplectic form $\omega$ on $\mathcal{M}$ induces a Poisson bracket $\omega^{-1}$. Its pushforward $s_*(\omega^{-1})$ defines a Poisson structure on $\mathcal{A}_n$. This Poisson structure is called the \textbf{Bondal Poisson bracket} and was computed in \cite{4}. It coincides with the Nelson--Regge \cite{28}, Dubrovin \cite{39}, and Ugaglia brackets on Stokes matrices \cite{38,26}. The resulting Poisson algebra is also known as the classical $\mathfrak{so}(n)$ reflection equation algebra, which coincides with the one arising from the classical limit of the $\imath$quantum group of type $\mathrm{AI}_n$.

Moreover, a similar Poisson structure appears on the Teichmüller space $\mathcal{T}_{g,s}$. It is given by the Goldman bracket on a subset of geodesic functions associated with the Riemann surface $\Sigma_{g,s}$, where 
$g$ is the genus and $s \in \{1,2\}$ is the number of boundary components \cite{13}. Thus, we call the entries of the matrix $\mathbb{A} \in \mathcal{A}_n$ \textbf{formal geodesic functions}. In terms of matrix entries, the Bondal Poisson bracket is given by
\begin{equation} \label{1.4}
\begin{aligned}
    \left\{a_{ik},a_{jl}\right\} &= 0\text{ for }i < k < j < l\text{ or }i < j < l < k,\\
    \left\{a_{ik},a_{jl}\right\} &= a_{ij}a_{kl} - a_{il}a_{kj}\text{ for }i < j <k <l,\\
    \left\{a_{ik},a_{kl}\right\} &= {1 \over 2}a_{ik}a_{kl} - a_{il}\text{ for }i < k < l,\\
    \left\{a_{ik},a_{jk}\right\} &= -{1 \over 2}a_{ik}a_{jk} + a_{ij}\text{ for }i < j < k,\\
    \left\{a_{ik},a_{il}\right\} &= -{1 \over 2}a_{ik}a_{il} + a_{kl}\text{ for }i < k < l.
\end{aligned}
\end{equation}

In \cite{1}, L. Chekhov and M. Shapiro constructed log-canonical coordinates on the $\mathcal{A}_n$-groupoid via a parametrization of the moduli space of flat $SL_n$-connections on a disk with three boundary marked points. Note that coordinates $\{x_i\}_{i=1}^n$ on an $n$-dimensional Poisson variety are called 
log-canonical if the Poisson bracket is given by $\{x_i,x_j\} = \lambda_{ij}x_ix_j$, where $\lambda_{ij}$ are constants.

To construct these log-canonical coordinates, they used transport matrices of a flat $SL_n$-connection on a disk with three marked points on the boundary. The moduli space of such connections is equipped with a Poisson structure derived from the Fock--Goncharov $SL_n$-quiver. The element $\mathbb{A} \in \mathcal{A}_n$ is constructed by a particular composition of transport matrices. Since this composition is generally not in $\mathcal{A}_n$, the parameters of the $SL_n$-quiver are modified to satisfy the compatibility condition. This process transforms the original $SL_n$-quiver into the \textbf{$\mathcal{A}_n$-quiver}. This quiver describes the log-canonical coordinates on the $\mathcal{A}_n$-groupoid (Section~\ref{Ch3.1}).
\begin{figure}[H]
    \centering
    \includegraphics[width=0.85\linewidth]{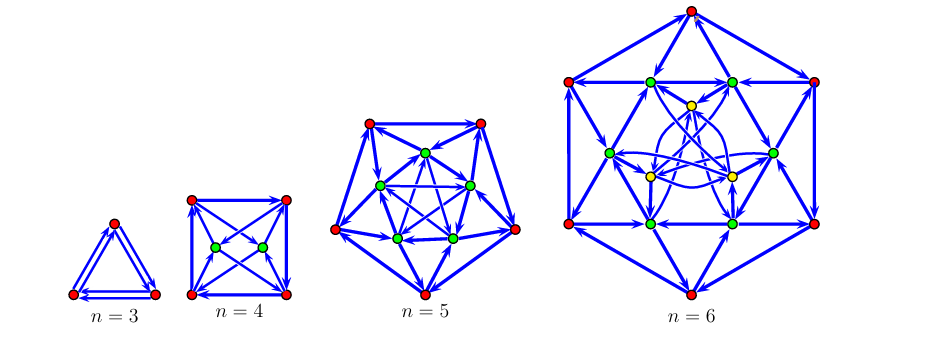}
    \caption{\textcolor{black}{\cite{1}} Examples of $\mathcal A_n$-quivers.}
    \label{Fig1}
\end{figure}

Cluster Poisson reductions have been widely studied in several contexts, including dimer models \cite{40} and the Painlevé equations \cite{41}. Such reductions enable us to derive refined, lower-dimensional cluster structures from their higher-dimensional counterparts.

As detailed in \cite{13}, there exists a Poisson map $\phi_n: \mathcal{T}_{\lfloor \frac{n-1}{2} \rfloor, \operatorname{par}(n)}
\to \mathcal{A}_n$, where $\operatorname{par}(n)=1$ for odd $n$ and $2$ otherwise. To recover the shear coordinates and the Goldman bracket of the underlying Teichmüller space, we seek to describe the cluster structure of $\operatorname{Im}(\phi_n) \subset \mathcal{A}_n$. To this end, we aim to find a cluster Poisson reduction, or equivalently a \textbf{coisotropic reduction}, for the symplectic groupoid. Since every element $\mathbb A \in \operatorname{Im}(\phi_n)$ satisfies the \textbf{rank condition} $\operatorname{rank}(\mathbb A+\mathbb A^T) \le 4$ \cite{28}, this constraint provides a natural criterion for our reduction.

In Fock--Goncharov parameters, the solution set $\mathcal{D}^n$ of the rank condition decomposes into distinct irreducible components \cite{17}. In particular, $\mathcal{D}^6$ splits into three distinct irreducible components \cite{16}. To describe the coisotropic reduction of $\mathcal{A}_6$, the authors of \cite{16} first constructed a reduction on one component. They then found sequences of cluster mutations inducing Poisson birational maps from the other components to this one. This allowed them to describe coisotropic reductions on all irreducible components.

Our goal is to generalize this approach to higher $n$. Namely, we aim to find cluster transformations that act transitively on the components of $\mathcal{D}^n$ as Poisson birational maps.

In this paper, we introduce a \textbf{birational Weyl group action} on the $\mathcal{A}_n$-quiver. This action is analogous to the $\mathbb{Z}_2$-action on the moduli space of framed local systems studied in \cite{23}. From a combinatorial perspective, it is similar to Weyl group actions in which each reflection is realized as a specific sequence of cluster mutations along a chordless cycle of the quiver \cite{19,2}. We are indebted to M. Bershtein for this observation.

For any chordless cycle such that the number of arrows from any external vertex $v$ to the cycle equals the number of arrows from the cycle to $v$, we define the \textbf{reflection} (or \textbf{cycle mutation}) as a specific sequence of cluster mutations along the chordless cycle (Section~\ref{Ch3.3}).

The $\mathcal{A}_n$-quiver contains $\lfloor n/2 \rfloor$ main cycles with this property, as illustrated in Figure~\ref{Fig1} (where each cycle consists of vertices of the same color). When $n$ is even, every such cycle is chordless, allowing us to define the reflections naturally for all main cycles. However, when $n$ is odd, the innermost main cycle is not chordless. Consequently, we cannot directly define a reflection associated with this cycle.

To resolve this issue, we introduce the \textbf{doubled $\mathcal{A}_n$-quiver}. It is a $2:1$ quiver covering of the $\mathcal{A}_n$-quiver, with variables $\widetilde{U_{i,j}}$ and $U_{i,j}$ (Figure~\ref{Fig2}).

\begin{figure}[H]
    \centering
    \begin{tikzcd}[scale cd=0.7,column sep = tiny, row sep = small]
	&&&& \textcolor{rgb,255:red,255;green,0;blue,0}{{{U_{1,3}}}} \\
	&&& \textcolor{rgb,255:red,0;green,255;blue,0}{{{U_{2,4}}}} && \textcolor{rgb,255:red,255;green,0;blue,0}{{{U_{1,4}}}} &&&&&&&& \textcolor{rgb,255:red,255;green,0;blue,0}{{{U_{1,3}}}} \\
	&& \textcolor{rgb,255:red,0;green,0;blue,255}{{{\widetilde{U_{2,2}}}}} && \textcolor{rgb,255:red,0;green,255;blue,0}{{{U_{2,5}}}} && \textcolor{rgb,255:red,255;green,0;blue,0}{{{U_{1,5}}}} &&&&&& \textcolor{rgb,255:red,0;green,0;blue,255}{{{U_{2,2}}}} && \textcolor{rgb,255:red,255;green,0;blue,0}{{{U_{1,4}}}} \\
	& \textcolor{rgb,255:red,255;green,128;blue,0}{{{\widetilde{U_{1,2}}}}} && \textcolor{rgb,255:red,0;green,0;blue,255}{{{\widetilde{U_{2,3}}}}} && \textcolor{rgb,255:red,0;green,255;blue,0}{{{U_{2,1}}}} && \textcolor{rgb,255:red,255;green,0;blue,0}{{{U_{1,1}}}} &&&& \textcolor{rgb,255:red,255;green,128;blue,0}{{{\widetilde{U_{1,2}}}}} && \textcolor{rgb,255:red,0;green,0;blue,255}{{{\widetilde{U_{2,1}}}}} && \textcolor{rgb,255:red,255;green,0;blue,0}{{{U_{1,1}}}} \\
	{} && \textcolor{rgb,255:red,255;green,128;blue,0}{{{\widetilde{U_{1,3}}}}} && \textcolor{rgb,255:red,0;green,0;blue,255}{{{\widetilde{U_{2,4}}}}} && \textcolor{rgb,255:red,0;green,255;blue,0}{{{U_{2,2}}}} && \textcolor{rgb,255:red,255;green,0;blue,0}{{{U_{1,2}}}} &&&& \textcolor{rgb,255:red,255;green,128;blue,0}{{{\widetilde{U_{1,3}}}}} && \textcolor{rgb,255:red,0;green,0;blue,255}{{{\widetilde{U_{2,2}}}}} && \textcolor{rgb,255:red,255;green,0;blue,0}{{{U_{1,2}}}} && {} \\
	&&& \textcolor{rgb,255:red,255;green,128;blue,0}{{{\widetilde{U_{1,4}}}}} && \textcolor{rgb,255:red,0;green,0;blue,255}{{{\widetilde{U_{2,5}}}}} && \textcolor{rgb,255:red,0;green,255;blue,0}{{{U_{2,3}}}} && \textcolor{rgb,255:red,255;green,0;blue,0}{{{U_{1,3}}}} &&&& \textcolor{rgb,255:red,255;green,128;blue,0}{{{\widetilde{U_{1,4}}}}} && \textcolor{rgb,255:red,0;green,0;blue,255}{{{U_{2,1}}}} && \textcolor{rgb,255:red,255;green,0;blue,0}{{{U_{1,3}}}} \\
	&&&& \textcolor{rgb,255:red,255;green,128;blue,0}{{{\widetilde{U_{1,5}}}}} && \textcolor{rgb,255:red,0;green,0;blue,255}{{{\widetilde{U_{2,1}}}}} && \textcolor{rgb,255:red,0;green,255;blue,0}{{{U_{2,4}}}} &&&&&& \textcolor{rgb,255:red,255;green,128;blue,0}{{{\widetilde{U_{1,1}}}}} && \textcolor{rgb,255:red,0;green,0;blue,255}{{{U_{2,2}}}} \\
	&&&&& \textcolor{rgb,255:red,255;green,128;blue,0}{{{\widetilde{U_{1,1}}}}} && \textcolor{rgb,255:red,0;green,0;blue,255}{{{\widetilde{U_{2,2}}}}} &&&&&&&& \textcolor{rgb,255:red,255;green,128;blue,0}{{{\widetilde{U_{1,2}}}}} \\
	&&&&&& \textcolor{rgb,255:red,255;green,128;blue,0}{{{\widetilde{U_{1,2}}}}}
	\arrow[from=1-5, to=2-6]
	\arrow[dashed, from=2-4, to=1-5]
	\arrow[from=2-4, to=3-5]
	\arrow[from=2-6, to=2-4]
	\arrow[from=2-6, to=3-7]
	\arrow[from=2-14, to=3-15]
	\arrow[dashed, from=3-3, to=2-4]
	\arrow[from=3-3, to=4-4]
	\arrow[from=3-5, to=2-6]
	\arrow[from=3-5, to=3-3]
	\arrow[from=3-5, to=4-6]
	\arrow[from=3-7, to=3-5]
	\arrow[from=3-7, to=4-8]
	\arrow[dashed, from=3-13, to=2-14]
	\arrow[from=3-13, to=4-14]
	\arrow[from=3-15, to=3-13]
	\arrow[from=3-15, to=4-16]
	\arrow[dashed, from=4-2, to=3-3]
	\arrow[from=4-2, to=5-3]
	\arrow[from=4-4, to=3-5]
	\arrow[from=4-4, to=4-2]
	\arrow[from=4-4, to=5-5]
	\arrow[from=4-6, to=3-7]
	\arrow[from=4-6, to=4-4]
	\arrow[from=4-6, to=5-7]
	\arrow[from=4-8, to=4-6]
	\arrow[from=4-8, to=5-9]
	\arrow[dashed, from=4-12, to=3-13]
	\arrow[from=4-12, to=5-13]
	\arrow[from=4-14, to=3-15]
	\arrow[from=4-14, to=4-12]
	\arrow[from=4-14, to=5-15]
	\arrow[from=4-16, to=4-14]
	\arrow[from=4-16, to=5-17]
	\arrow[from=5-3, to=4-4]
	\arrow[dotted, no head, from=5-3, to=5-1]
	\arrow[from=5-3, to=6-4]
	\arrow[from=5-5, to=4-6]
	\arrow[from=5-5, to=5-3]
	\arrow[from=5-5, to=6-6]
	\arrow[from=5-7, to=4-8]
	\arrow[from=5-7, to=5-5]
	\arrow[from=5-7, to=6-8]
	\arrow[from=5-9, to=5-7]
	\arrow[dotted, no head, from=5-9, to=5-13]
	\arrow[from=5-9, to=6-10]
	\arrow[from=5-13, to=4-14]
	\arrow[from=5-13, to=6-14]
	\arrow[from=5-15, to=4-16]
	\arrow[from=5-15, to=5-13]
	\arrow[from=5-15, to=6-16]
	\arrow[from=5-17, to=5-15]
	\arrow[dotted, no head, from=5-17, to=5-19]
	\arrow[from=5-17, to=6-18]
	\arrow[from=6-4, to=5-5]
	\arrow[from=6-4, to=7-5]
	\arrow[from=6-6, to=5-7]
	\arrow[from=6-6, to=6-4]
	\arrow[from=6-6, to=7-7]
	\arrow[from=6-8, to=5-9]
	\arrow[from=6-8, to=6-6]
	\arrow[from=6-8, to=7-9]
	\arrow[from=6-10, to=6-8]
	\arrow[from=6-14, to=5-15]
	\arrow[from=6-14, to=7-15]
	\arrow[from=6-16, to=5-17]
	\arrow[from=6-16, to=6-14]
	\arrow[from=6-16, to=7-17]
	\arrow[from=6-18, to=6-16]
	\arrow[from=7-5, to=6-6]
	\arrow[from=7-5, to=8-6]
	\arrow[from=7-7, to=6-8]
	\arrow[from=7-7, to=7-5]
	\arrow[from=7-7, to=8-8]
	\arrow[dashed, from=7-9, to=6-10]
	\arrow[from=7-9, to=7-7]
	\arrow[from=7-15, to=6-16]
	\arrow[from=7-15, to=8-16]
	\arrow[dashed, from=7-17, to=6-18]
	\arrow[from=7-17, to=7-15]
	\arrow[from=8-6, to=7-7]
	\arrow[from=8-6, to=9-7]
	\arrow[dashed, from=8-8, to=7-9]
	\arrow[from=8-8, to=8-6]
	\arrow[dashed, from=8-16, to=7-17]
	\arrow[dashed, from=9-7, to=8-8]
\end{tikzcd}
    \caption{Examples of doubled $\mathcal{A}_n$-quivers. Vertices of the same color represent a main cycle.}
    \label{Fig2}
\end{figure}

Since every main cycle in the doubled quiver is chordless, we can naturally associate a reflection with each cycle. We then show that certain reflections descend to the standard quiver via the natural projection from the doubled quiver. After verifying that this descent is well-defined, we demonstrate that this construction induces a birational Weyl group action on the standard $\mathcal{A}_n$-quiver, including a reflection associated with the innermost cycle in the odd case (Section~\ref{Ch3.4}).

\subsection{Main Results}

In higher Teichmüller theory, the Fock--Goncharov--Shen Weyl group acts on the moduli space of framed local systems on an oriented surface by permuting the eigenvalues of monodromy operators, thereby preserving their traces. Analogously, we show that the birational Weyl group action considered in this paper preserves the formal geodesic functions, namely the entries of $\mathbb{A} \in \mathcal{A}_n$ (Theorem~\ref{thm422}).

A natural extension of this result is to identify the generators of the entire algebra of Weyl group invariants. A useful precedent for this question comes from the quantum group setting. Consider the quantum cluster algebra of framed $PGL_{n+1}$ local systems on a disk with two marked points on the boundary and one puncture. In \cite{20}, an embedding of $U_q(\mathfrak{sl}_{n+1})$ into this quantum cluster algebra was constructed. The quantum cluster algebra admits the Fock--Goncharov--Shen Weyl group action, and the image of $U_q(\mathfrak{sl}_{n+1})$ is invariant under this action. A few years later, it was proved in \cite{3} that this embedding is an isomorphism onto a quotient algebra of the Weyl group invariants.

Motivated by this framework, we establish a classical analogue in our setting. Let $\mathcal{O}(\sqrt{\mathcal{X}_{|\mathcal{A}_n|}})$ be the ring of universal Laurent polynomials in the square roots of the cluster variables, and let $\mathcal{O}(\sqrt{\mathcal{X}_{|\mathcal{A}_n|}})^W$ be the Poisson subalgebra of Weyl group invariants in $\mathcal{O}(\sqrt{\mathcal{X}_{|\mathcal{A}_n|}})$. We prove that $\mathcal{O}(\sqrt{\mathcal{X}_{|\mathcal{A}_n|}})^W$ is generated by formal geodesic functions and certain Casimirs. This implies that $\mathcal{O}(\sqrt{\mathcal{X}_{|\mathcal{A}_n|}})^W$ is a finite central extension of $\mathcal{O}(\mathcal{A}_n)$, the Poisson algebra generated by formal geodesic functions:
\begin{mainthm}[Theorem~\ref{thm449}]\label{thm:mainA}
$\mathcal{O}(\sqrt{\mathcal{X}_{|\mathcal{A}_n|}})^W$ is a finite central extension of $\mathcal{O}(\mathcal{A}_n)$. Hence, their Poisson structures coincide, which induces a natural isomorphism at the level of symplectic leaves.
\end{mainthm}

Originating from G. Letzter's seminal work on quantum symmetric pairs \cite{29}, $\imath$quantum groups provide a broad generalization of standard quantum groups. This framework has introduced new foundational concepts, such as universal $K$-matrices, $\imath$Schur--Weyl duality, and $\imath$canonical bases \cite{43,44}.

Let $(\mathfrak{g}, \mathfrak{g}^\theta)$ be a symmetric pair, where $\mathfrak{g}$ is a semisimple Lie algebra and $\mathfrak{g}^\theta$ is its fixed-point subalgebra under an involutive automorphism $\theta$. The associated quantum symmetric pair is denoted by $(U, U^\imath)$, where $U$ is the Drinfeld--Jimbo quantum group corresponding to $\mathfrak{g}$, and $U^\imath \subset U$ is the corresponding $\imath$quantum group, a coideal subalgebra. In this framework, the Schrader--Shapiro construction for $U_q(\mathfrak{sl}_{n+1})$ can be reinterpreted as yielding cluster realizations of $\imath$quantum groups of type $(\mathfrak{sl}_{n+1} \otimes \mathfrak{sl}_{n+1}, \mathfrak{sl}_{n+1})$. This perspective naturally motivates the study of cluster realizations of $\imath$quantum groups of other types.

J. Song constructed an embedding of the $\imath$quantum group of type $\mathrm{AI}_n$ into the quantum cluster algebra associated with the $\Sigma_n$-quiver \cite{42}. Recall that the $\imath$quantum group of type $\mathrm{AI}_n$ is associated with the symmetric pair $(\mathfrak{sl}_{n+1}, \mathfrak{so}_{n+1})$. While determining the exact image of this embedding has remained an open problem, we resolve this problem in the classical limit by applying the techniques developed in Section~\ref{Ch4.3}.

The $\Sigma_n$-quiver is obtained by adding frozen vertices to the $\mathcal{A}_{n+1}$-quiver. Hence, we can naturally induce a birational Weyl group action on the corresponding cluster Poisson variety. Let $\mathcal{O}(\mathcal{X}_{|\Sigma_n|})$ denote the ring of regular functions on the cluster Poisson variety $\mathcal{X}_{|\Sigma_n|}$. We show that the image of this embedding is Poisson isomorphic to a quotient algebra of the Weyl group invariant subalgebra in $\mathcal{O}(\mathcal{X}_{|\Sigma_n|})$:

\begin{mainthm}[Theorem~\ref{thm535}]\label{thm:mainB}
Let $U^\imath_n$ and $\iota \colon U^\imath_n \to \mathcal{O}(\mathcal{X}_{|\Sigma_n|})$ denote, respectively, the classical limits of the $\imath$quantum group of type $\mathrm{AI}_n$ and of the embedding in \cite{42}. The image of $\iota$ is Poisson isomorphic to a quotient algebra of the Weyl group invariant subalgebra of $\mathcal{O}(\mathcal{X}_{|\Sigma_n|})$.
\end{mainthm}

Next, recall that the solution set $\mathcal{D}^n$ of the rank condition decomposes into distinct irreducible components. We aim to show that these components are conjugate under the birational Weyl group action. In fact, our main motivation for introducing the action is to prove its transitivity on the irreducible components of $\mathcal{D}^n$.

Using Theorem~\ref{thm422}, we show that $\operatorname{rank}(\mathbb{A}+\mathbb{A}^T)$ is invariant under this action (Proposition~\ref{prop616}). Since these irreducible components are indexed by the Casimirs \cite{17}, it suffices to prove transitivity on the set of Casimirs. This yields the following result:

\begin{mainthm}[Corollary~\ref{cor623}]\label{thm:mainC}
The Weyl group action is transitive on the irreducible components of the solution set of the rank condition. Consequently, the coisotropic reductions of distinct components are conjugate under the Weyl group action. In other words, it suffices to determine the coisotropic reduction on a single irreducible component.
\end{mainthm}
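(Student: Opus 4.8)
The plan is to deduce this from Theorem \ref{thm:mainA} together with the classification of irreducible components in \cite{17}, reducing everything to a finite combinatorial statement about how the reflections permute the Casimirs.

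First I would observe that the rank condition cuts out a $W$-stable subvariety. By Theorem \ref{thm:mainA} every matrix entry of $\mathbb A$ is fixed by each reflection $s_i$, hence so is the matrix $\mathbb A + \mathbb A^T$ and the ideal generated by its $5 \times 5$ minors; therefore its zero locus $\mathcal D^n$ is carried to itself by each $s_i$. A reflection is a composition of cluster mutations, so it is a birational Poisson automorphism of $\mathcal X_{|\mathcal A_n|}$ defined on a dense open set $U_i$; since the defining minors of $\mathcal D^n$ are not monomials, no irreducible component of $\mathcal D^n$ is contained in the complement of $U_i$, so $U_i \cap \mathcal D^n$ is dense in $\mathcal D^n$ and $s_i$ induces a well-defined permutation of the (finite) set of irreducible components of $\mathcal D^n$. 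This produces the $W$-action on that set.

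Next I would invoke \cite{17}: the irreducible components of $\mathcal D^n$ are indexed by the Casimirs of $\mathcal O(\mathcal X_{|\mathcal A_n|})$ attached to the $\lfloor n/2 \rfloor$ main cycles of the $\mathcal A_n$-quiver — equivalently, by a basis of the kernel of the exchange matrix, where for odd $n$ the innermost cycle is first made chordless inside the doubled quiver of Section \ref{Ch3.4}. Because a birational Poisson automorphism carries the Casimir cutting out one component to the Casimir cutting out its image, this bijection is $W$-equivariant, so transitivity of $W$ on components is equivalent to transitivity of $W$ on this set of cycle-Casimirs. The latter is exactly the transitivity statement proved in Section 5: one computes the action of each generating reflection $s_i$ on each cycle-Casimir $C_j$ — a cycle mutation inverts the Casimir of its own cycle and multiplies a neighbouring Casimir by an integral power of it, fixing the others — and checks directly that the group so generated has a single orbit on $\{C_1,\dots,C_{\lfloor n/2 \rfloor}\}$.

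Finally, for the "consequently" clause: if $s \in W$ sends the component $\mathcal D^n_\alpha$ onto $\mathcal D^n_\beta$, then $s$ restricts to a Poisson birational isomorphism between them intertwining the moment maps and the gauge-group actions that define the two Hamiltonian reductions, so these reductions are conjugate; by transitivity it then suffices to perform the reduction on a single component. I expect the main obstacle to lie in the Section 5 computation of the $s_i$-action on the Casimirs — the sign-and-exponent bookkeeping and, above all, verifying it uniformly in $n$, i.e. reconciling the odd case with the doubled-quiver construction — since the remainder of the argument is formal once that combinatorial input is available.
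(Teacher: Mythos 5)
Your proposal follows essentially the same route as the paper: Theorem~\ref{thm:mainA} makes the rank locus $W$-stable, the irreducible components are distinguished by Casimir conditions, the generating reflections act on the Casimirs (Proposition~\ref{prop353}, recast from your description on the cycle-Casimirs $\mathcal C_j$ to the nested products $\mathcal K_i$, where they become honest transpositions), and transitivity together with the fact that the $\tau_i$ are cluster modular transformations yields conjugacy of the Hamiltonian reductions, exactly as in Theorem~5.2.1 and Corollary~\ref{cor523}. Two small adjustments: the components are indexed by $e(n)$-element \emph{subsets} of $\{\mathcal K_1,\dots,\mathcal K_{\lfloor n/2\rfloor}\}$ rather than by single Casimirs, so the combinatorial input you need is transitivity on such subsets (immediate here because the generators $s_1,\dots,s_{m-1}$ realize the full symmetric group $S_{\lfloor n/2\rfloor}$ on the indices, but not a formal consequence of transitivity on points); and the odd-$n$/doubled-quiver reconciliation you flag as the main obstacle never arises, since only $s_1,\dots,s_{m-1}$ --- ordinary cluster transformations for every $n$ --- are needed for both the transitivity and the conjugacy statements.
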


In \cite{33}, M. Kontsevich and Y. Soibelman defined Donaldson--Thomas (DT) invariants for a $3$-dimensional Calabi--Yau category equipped with a stability condition. Every cluster variety has an associated $3$-dimensional Calabi--Yau category, and its DT-transformation encodes these invariants in a single formal automorphism of the cluster variety. In general, the DT-transformation is not a cluster transformation.

A cluster transformation $\omega$ is called a \textbf{cluster DT-transformation} if it preserves the underlying quiver and its associated $C$-matrix is the negative identity matrix, i.e., $C^\omega=-I$. Whenever such a transformation exists, it coincides with the Kontsevich--Soibelman DT-transformation \cite{34}.

We prove that, for even $n$, the longest element of the Weyl group corresponds to a cluster DT-transformation on the cluster Poisson variety $\mathcal{X}_{|\mathcal{A}_n|}$ associated with the $\mathcal{A}_n$-quiver. This provides a canonical basis for $\mathcal{O}(\mathcal{X}_{|\mathcal{A}_n|})$, the ring of regular functions on $\mathcal{X}_{|\mathcal{A}_n|}$ \cite{21}. In contrast, when $n$ is odd, we show that the $\mathcal{A}_n$-quiver admits no reddening sequence. Recall that a reddening sequence is a cluster transformation whose $C$-matrix has only nonpositive entries. Thus, the cluster Poisson variety $\mathcal{X}_{|\mathcal{A}_n|}$ for odd $n$ does not admit a cluster 
DT-transformation.

\begin{mainthm}[Theorems~\ref{thm731} and \ref{thm737}]\label{thm:mainD}
For even $n$, the longest element of the Weyl group corresponds to a cluster DT-transformation on $\mathcal{X}_{|\mathcal{A}_n|}$. In contrast, the $\mathcal{A}_n$-quiver admits no reddening sequence for odd $n$.
\end{mainthm}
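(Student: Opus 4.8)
The plan is to treat both statements through the $C$-matrix of a cluster transformation, using sign-coherence of $c$-vectors, the characterization (recalled above, see \cite{34}) that a cluster transformation $\omega$ is a cluster DT-transformation exactly when $C^\omega=-I$, and the companion fact from tropical duality that a reddening sequence carrying a quiver $Q$ to a relabelled copy $\sigma(Q)$ has $C$-matrix $-P_\sigma$. For even $n=2k$, fix a reduced word $w_0=s_{i_1}\cdots s_{i_N}$ for the longest element of the Weyl group $W$ of the $\mathcal A_{2k}$-quiver, where $s_i$ is the reflection (cycle mutation) along the $i$-th main cycle from Section~\ref{Ch3.3}; expanding each $s_{i_j}$ into its constituent mutations gives an explicit mutation sequence $\mathbf{m}_{w_0}$, and since $w_0$ acts on $\mathcal X_{|\mathcal A_{2k}|}$ by the construction of Section~\ref{Ch3.3}, $\mathbf{m}_{w_0}$ returns the quiver to a relabelled copy, so $w_0=\sigma\circ\mathbf{m}_{w_0}$ as a cluster transformation. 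It then suffices to prove that $\mathbf{m}_{w_0}$ is a reddening sequence: this forces $C^{\mathbf{m}_{w_0}}=-P_\sigma$, and absorbing $\sigma$ gives $C^{w_0}=-I$, so $w_0$ is a cluster DT-transformation coinciding with the Kontsevich--Soibelman one. To prove that $\mathbf{m}_{w_0}$ is reddening I would induct on $k$ using the nested structure of the main cycles: the $\mathcal A_{2k}$-quiver contains the $\mathcal A_{2k-2}$-quiver as the sub-configuration obtained by deleting the outermost cycle, a reduced word for the longest element of $W$ may be chosen to begin with one for the longest element of the Weyl group of the $\mathcal A_{2k-2}$-sub-quiver, and one then tracks how the additional mutations along the outermost cycle carry all remaining $c$-vectors from the green region into the red one; the base cases $\mathcal A_2$ and $\mathcal A_4$ are checked directly. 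A more geometric alternative would identify $w_0$ with the orientation-reversing half-rotation of the triangle underlying the $SL_{2k}$-construction --- compare the eigenvalue-permutation picture from higher Teichmüller theory noted in the Introduction --- and match it against the known Donaldson--Thomas transformation of polygon configuration spaces; there the delicate point is to control the modification of the $SL_n$-quiver into the $\mathcal A_n$-quiver.

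For odd $n=2k+1$ the statement is the stronger one that \emph{no} reddening sequence exists, so a fortiori no cluster transformation has $C$-matrix $-I$ and there is no cluster DT-transformation. The existence of a reddening sequence is mutation-invariant, and it fails precisely when the cluster scattering diagram of the quiver contains a dense ``Badlands''. I would produce such a region from the innermost main cycle, which for odd $n$ is not chordless --- exactly the defect that forced the doubled-quiver construction of Section~\ref{Ch3.4}: the chord produces, after a short sequence of mutations, a Markov-type sub-configuration (a $3$-cycle with arrows of multiplicity at least $2$), the classical Badlands example. I would first check this for $n=3$, where the single main cycle is the entire obstruction, and then treat the general odd case by showing that the walls contributed by the outer main cycles do not erase this region, so that the Badlands of the innermost part survives in the scattering diagram of the whole $\mathcal A_{2k+1}$-quiver.

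The main obstacle in both halves is the bookkeeping of the $c$-vectors attached to the \emph{external} vertices of each cycle. A single cycle mutation permutes and sign-changes the $c$-vectors supported on the cycle's own vertices transparently, but it also alters those of the adjacent vertices; the subtle point is to show --- for even $n$ --- that these contributions cancel over a full reduced word so that the net $C$-matrix is exactly $-I$, and --- for odd $n$ --- that they instead accumulate into the Markov-type obstruction above rather than cancelling. Peeling the main cycles off from the outside in, with the doubled-quiver model of Section~\ref{Ch3.4} keeping the combinatorics of the innermost cycle honest, is the device I expect to make this manageable.
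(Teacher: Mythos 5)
Your treatment of the odd case is essentially the paper's own argument: the obstruction is a Markov-type full subquiver reached by a short mutation sequence from the non-chordless innermost cycle, and the paper closes it exactly as you suggest via the two standard facts (reddening is mutation-invariant, and a quiver with a reddening sequence passes that property to every full subquiver). Your worry about whether the ``Badlands'' of the innermost part survives the walls contributed by the outer cycles is unnecessary --- Muller's full-subquiver theorem (Theorem \ref{thm615}) already disposes of it --- but the substance matches Theorem \ref{thm637}.

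The even case, however, has genuine gaps. First, the inductive skeleton rests on a false structural claim: deleting the outermost main cycle of the $\mathcal A_{2k}$-quiver does \emph{not} produce the $\mathcal A_{2k-2}$-quiver. The main cycles of $\mathcal A_{2k}$ have lengths $2k,2k,\dots,2k,k$, so after removing the outermost cycle the remaining cycles still have length $2k$ (and $k$), whereas the cycles of $\mathcal A_{2k-2}$ have lengths $2k-2$ and $k-1$; the nested reduced-word induction therefore has no base to stand on. Moreover, even with a correct subquiver relation, the full-subquiver theorem only propagates reddening \emph{downward} (from a quiver to its full subquivers), so knowing the inner configuration admits a reddening sequence gives you nothing about the ambient quiver; the step ``track how the additional mutations along the outermost cycle carry all remaining $c$-vectors into the red region'' is the entire difficulty and is left as a gesture. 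Second, even granting that $\mathbf m_{w_0}$ is reddening and quiver-preserving, you obtain $C^{\mathbf m_{w_0}}=-P_\sigma$ for \emph{some} quiver automorphism $\sigma$, but the theorem asserts that the specific composite $w_0$ (with the relabelings $\pi_{j_{N-1},j_N}$ already built into each cycle mutation) has $C$-matrix exactly $-I$; ``absorbing $\sigma$'' silently replaces $w_0$ by a different cluster transformation unless you prove the built-in permutation coincides with $\sigma$. The paper avoids both problems by a direct computation: Lemma \ref{lem632} shows $w_0^*(\mathcal C_i)=\mathcal C_i^{-1}$, which together with sign-coherence (Theorem \ref{thm616}) and cyclic symmetry forces the $C$-matrix into block form $\mathrm{diag}(-P_{\sigma_1},\dots,-P_{\sigma_m})$, and the permutations are then pinned to the identity by tropicalizing the Weyl-invariant elementary geodesic functions (Theorem \ref{thm422}). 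Some substitute for that last step --- an argument identifying the permutation --- is indispensable and is missing from your proposal.
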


We describe the canonical basis for $n=4$. The $\mathcal{A}_4$-quiver arises from a triangulation of a twice-punctured torus. Thus, $\mathcal{O}(\mathcal{X}_{|\mathcal{A}_4|})$ admits a canonical basis via bounded laminations, i.e., homotopy classes of finite sets of simple closed curves \cite{22,27,31}. This gives a geometric description of $\mathcal{O}(\mathcal{X}_{|\mathcal{A}_4|})$ and allows us to show that it is generated by formal geodesic functions and the Casimirs. We employ the Goldman trace algebra \cite{37} to 
characterize the canonical functions associated with laminations (Theorem~\ref{thm7410}).

\subsection{Organization of the Paper}
\begin{itemize}
\item \textbf{Section 2:} We review the necessary preliminaries: cluster algebras, their Poisson structures, and the Goldman bracket on Teichmüller spaces.

\item \textbf{Section 3:} We briefly describe the log-canonical coordinates on the symplectic groupoid and the $\mathcal{A}_n$-quiver established in \cite{1}. We introduce the doubled $\mathcal{A}_n$-quiver and the birational Weyl group action on the $\mathcal{A}_n$-quiver.

\item \textbf{Section 4:} We introduce the notion of formal geodesic functions and show their invariance under the Weyl group action (\textbf{Theorem~\ref{thm422}}). Furthermore, we prove the algebraic inclusion $\mathcal{O}(\mathcal{X}_{|\mathcal{A}_n|})^W \subset \mathcal{O}(\mathcal{A}_n)$ (\textbf{Theorem~\ref{thm4321}}) and demonstrate that the ring $\mathcal{O}(\sqrt{\mathcal{X}_{|\mathcal{A}_n|}})^W$ is a finite central extension of $\mathcal{O}(\mathcal{A}_n)$ (\textbf{Theorem~\ref{thm449}}).

\item \textbf{Section 5:} We introduce the $\imath$quantum group of type $\mathrm{AI}_n$ and examine its cluster realization in the classical limit. We show that the image of this realization is Poisson isomorphic to a quotient algebra of Weyl group invariants in $\mathcal{O}(\mathcal{X}_{|\Sigma_n|})$ (\textbf{Theorem~\ref{thm535}}).

\item \textbf{Section 6:} We introduce the rank condition for the coisotropic reduction on the $\mathcal{A}_n$-groupoid. We show that it suffices to perform the coisotropic reduction on a single irreducible component (\textbf{Corollary~\ref{cor623}}).

\item \textbf{Section 7:} We introduce the tropicalization of cluster algebras and the theta basis. We prove that, for even $n$, the longest element of the Weyl group corresponds to a cluster DT-transformation (\textbf{Theorem~\ref{thm731}}). In contrast, for odd $n$, we show that the $\mathcal{A}_n$-quiver admits no reddening sequence (\textbf{Theorem~\ref{thm737}}). We explicitly describe $\mathcal{O}(\mathcal{X}_{|\mathcal{A}_4|})$ using bounded laminations on a twice-punctured torus and prove that it is generated by formal geodesic functions and the Casimirs (\textbf{Theorem~\ref{thm7410}}).

\item \textbf{Section 8:} We provide a brief conclusion and discuss future directions for this research.
\end{itemize}

\section{Preliminaries}
\subsection{Cluster Algebras}

A seed $\Sigma$ is a triple $(I, F, \epsilon)$, where $I$ is a finite set, $F$ is a subset of $I$, and $\epsilon = (\epsilon_{ij})_{i,j \in I}$ is a skew-symmetric $\frac{1}{2}\mathbb{Z}$-valued matrix such that $\epsilon_{ij} \in \mathbb{Z}$ unless $i, j \in F$. We refer to the matrix $\epsilon$ as the exchange matrix associated with the seed $\Sigma$. 

We associate two algebraic tori $\mathcal{X}_{\Sigma} = (\mathbb{C}^*)^{|I|}$ and $\mathcal{A}_{\Sigma} = (\mathbb{C}^*)^{|I|}$ equipped with coordinates $(X_i)_{i\in I}$ and $(A_i)_{i\in I}$ respectively to the seed $\Sigma$. 

The combinatorial data of the seed $\Sigma$ can be expressed via a quiver $Q_{\Sigma}$ with vertices labeled by elements of $I$ and exchange matrix $\epsilon$. Note that $Q_{\Sigma}$ does not allow oriented 1-cycles (loops)
or oriented 2-cycles. A vertex is said to be frozen if its index is in $F$. The mutation $\mu_k$ in the direction $k \in I-F$ over the quiver is defined in the following steps:
\begin{enumerate}
    \item Reverse all arrows incident to the vertex $k$;
    \item For each pair of arrows $i \to k$ and $k \to j$, draw an arrow $i \to j$;
    \item Delete all 2-cycles and loops.
\end{enumerate}

According to the mutation rules above, a mutation in the direction $k$ changes the matrix $\epsilon$ into the matrix

\begin{equation}
\epsilon_{ij}' =
\begin{cases}
-\epsilon_{ij} & \mbox{if }k \in \left\{i,j\right\}  \\
\epsilon_{ij}+ {|\epsilon_{ik}|\epsilon_{kj} + \epsilon_{ik}|\epsilon_{kj}| \over 2} & \mbox{otherwise.}
\end{cases}
\end{equation}

The mutation $\mu_k$ is intertwined with birational isomorphisms on two cluster tori defined by the formulas

\begin{equation}
(\mu_{k}^\mathcal X)^*(X_i') =
\begin{cases}
X_i^{-1} & \mbox{if }i = k  \\
X_i\left(1+X_k^{-\text{sgn}(\epsilon_{ki})}\right)^{-\epsilon_{ki}} & \mbox{if }i \ne k
\end{cases}
\end{equation}

and

\begin{equation} \label{2.3}
(\mu_{k}^\mathcal A)^*(A_{i}') =
\begin{cases}
{\prod_{j|\epsilon_{kj}>0} A_j^{\epsilon_{kj}} + \prod_{j|\epsilon_{kj}<0} A_j^{-\epsilon_{kj}} \over A_i} & \mbox{if }i = k  \\
A_i & \mbox{if }i \ne k
\end{cases}
\end{equation}

where $(X'_i)_{i \in I}$ and $(A'_i)_{i \in I}$ are coordinates of $\mathcal X_{\Sigma'}$ and $\mathcal A_{\Sigma'}$ respectively. Next, we define a cluster transformation and the notion of mutation equivalent.

\begin{defn}
    Two seeds $\Sigma$ and $\Sigma'$ are \textit{mutation equivalent} if there is a sequence of mutations that transforms one seed to another seed.
    A transformation obtained by composing mutations and permutations of the index set $I$, equipped with the corresponding birational isomorphisms, is called a \textit{cluster transformation}.
\end{defn}

\begin{exmp}\textit{(An example of a seed, an associated quiver, and mutations)}

Let the seed $\Sigma$ be a $(I,\emptyset,\epsilon)$ where
$$\epsilon = \begin{pmatrix}
0 & 1 & -1 \\
-1 & 0 & 1 \\
1 & -1 & 0
\end{pmatrix} \text{ and } I = \left\{1,2,3\right\}.$$

Then its associated quiver is as follows:
$$
\begin{tikzcd}
	& {\text{1}} \\
	{\text{3}} && {\text{2}}
	\arrow[from=1-2, to=2-3]
	\arrow[from=2-1, to=1-2]
	\arrow[from=2-3, to=2-1]
\end{tikzcd}
$$

The following quivers are obtained by applying $\mu_1$, $\mu_2$ and $\mu_3$ respectively:
$$
\begin{tikzcd}
	& {\text{1}} &&&& {\text{1}} &&&& {\text{1}} \\
	{\text{3}} && {\text{2}} && {\text{3}} && {\text{2}} && {\text{3}} && {\text{2}}
	\arrow[from=1-2, to=2-1]
	\arrow[from=1-10, to=2-9]
	\arrow[from=2-3, to=1-2]
	\arrow[from=2-5, to=2-7]
	\arrow[from=2-7, to=1-6]
	\arrow[from=2-9, to=2-11]
\end{tikzcd}
$$
\end{exmp}

Under the mutation $\mu_1$, initial tori $(X_1,X_2,X_3) \in (\mathbb C^*)^3$ and $(A_1,A_2,A_3) \in (\mathbb C^*)^3$ are transformed into $(X'_1,X'_2,X'_3) \in (\mathbb C^*)^3$ and $(A'_1,A'_2,A'_3) \in (\mathbb C^*)^3$ where $$(\mu_{1}^\mathcal X)^*(X'_1) = X_1^{-1},\text{ }(\mu_{1}^\mathcal X)^*(X'_2) = X_2(1+X_1^{-1})^{-1},\text{ } (\mu_{1}^\mathcal X)^*(X'_3) = X_3(1+X_1)$$
and
$$(\mu_{1}^\mathcal A)^*(A'_1) = {(A_2+A_3) \over A_1},\text{ }(\mu_{1}^\mathcal A)^*(A'_2) = A_2,\text{ } (\mu_{1}^\mathcal A)^*(A'_3) = A_3.$$

\begin{defn}\textit{(Cluster variety and its ring of regular functions)} 

The \textit{cluster $\mathcal X$-variety} $\mathcal X_{|\Sigma|}$ is a scheme obtained by gluing $\mathcal X$-tori for all seeds which are mutation equivalent to the seed $\Sigma$. $\mathcal{O}(\mathcal X_{|\Sigma|})$ is the ring of regular functions on the scheme $\mathcal X_{|\Sigma|}$. An element $f \in \mathcal{O}(\mathcal X_{|\Sigma|})$ is called a \textit{universal Laurent polynomial}. Note that such an element remains a Laurent polynomial under any finite sequence of cluster mutations.

The \textit{cluster $\mathcal A$-variety} $\mathcal A_{|\Sigma|}$ is a scheme obtained by gluing $\mathcal A$-tori for all seeds which are mutation equivalent to the seed $\Sigma$. We refer to the ring of regular functions on $\mathcal A_{|\Sigma|}$ as the upper cluster algebra and denote it by $\mathcal{O}(\mathcal A_{|\Sigma|})$. The element $g \in \mathcal{O}(\mathcal A_{|\Sigma|})$ is also called a universal Laurent polynomial and remains a Laurent polynomial under any finite sequence of cluster mutations.
\end{defn}

\begin{defn}\textit{(Classical ensemble map)} \label{defn214}
    
    Assume the matrix $\epsilon$ is an integer matrix. Then there is a regular map $p_{\Sigma}: \mathcal A_{\Sigma} \to \mathcal X_{\Sigma}$ which is called the \textit{classical ensemble map}, defined by 
    $$p_{\Sigma}^*(X_i) = \prod_{k \in I}A_k^{\epsilon_{ki}}.$$
\end{defn}

It is an isomorphism of algebraic tori if and only if $\det \epsilon = \pm 1$. Cluster mutations on $\mathcal A$-tori and $\mathcal X$-tori are compatible with each other via the ensemble map:
    \begin{equation} \label{2.4}
    \mu_k^\mathcal X \circ p_{\Sigma} = p_{\mu_k(\Sigma)}\circ \mu_{k}^\mathcal A.
    \end{equation}

Let $Q^{j}$ be the quiver obtained from $Q$ by freezing all vertices except a vertex $j \in J$. Let $\Sigma^j$ be its associated seed. The \textbf{upper bound} $\mathcal U(\mathcal A_\Sigma)$ is defined by
\begin{equation}
    \mathcal U(\mathcal A_\Sigma) = \bigcap_{j \in J}\mathcal{O}(\mathcal A_{|\Sigma^j|}).
\end{equation}
which includes $\mathcal{O}(\mathcal A_{|\Sigma|})$ because an element in the upper cluster algebra should remain a Laurent polynomial under 1-step mutations.

\begin{thm} \label{thm215}
\textcolor{black}{\cite[Corollary 1.9]{5}} If $\epsilon$ has full rank, the upper bound $\mathcal U(\mathcal A_\Sigma)$ is independent of the choice of a seed mutation equivalent to $\Sigma$, and hence is equal to $\mathcal{O}(\mathcal A_{|\Sigma|}).$
\end{thm}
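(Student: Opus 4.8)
The plan is to follow Berenstein--Fomin--Zelevinsky and reduce the statement to one assertion: $\mathcal U(\mathcal A_\Sigma) = \mathcal U(\mathcal A_{\mu_l\Sigma})$ for every mutable vertex $l$. Granting this, $\mathcal U$ is constant along any mutation path, hence on the whole mutation class. Write $\mathcal L_t$ for the Laurent polynomial ring in the cluster variables of a seed $t$, all rings viewed inside the common field of fractions. Then $\mathcal U(\mathcal A_\Sigma) \subseteq O(\mathcal A_{|\Sigma^j|}) \subseteq \mathcal L_\Sigma$ for each mutable $j$, and by mutation invariance the same reasoning applied at every seed $\Sigma'$ in the class gives $\mathcal U(\mathcal A_\Sigma) = \mathcal U(\mathcal A_{\Sigma'}) \subseteq \mathcal L_{\Sigma'}$; so every element of $\mathcal U(\mathcal A_\Sigma)$ is universally Laurent, i.e. $\mathcal U(\mathcal A_\Sigma) \subseteq O(\mathcal A_{|\Sigma|})$. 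The reverse inclusion $O(\mathcal A_{|\Sigma|}) \subseteq \mathcal U(\mathcal A_\Sigma)$ is the observation recorded just before the statement. Hence the theorem reduces to single-mutation invariance.

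To set that up I first rewrite the upper bound. For $\Sigma^j$ --- the seed with the single mutable vertex $j$ --- the scheme $\mathcal A_{|\Sigma^j|}$ is glued from just the two tori of $\Sigma^j$ and $\mu_j\Sigma^j$, so $O(\mathcal A_{|\Sigma^j|})$ consists of the rational functions that are Laurent in the cluster of $\Sigma$ and in the cluster of $\mu_j\Sigma$; that is, $O(\mathcal A_{|\Sigma^j|}) = \mathcal L_\Sigma \cap \mathcal L_{\mu_j\Sigma}$. Intersecting over $j$ gives $\mathcal U(\mathcal A_\Sigma) = \mathcal L_\Sigma \cap \bigcap_k \mathcal L_{\mu_k\Sigma}$. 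Applying this at $\mu_l\Sigma$ and using $\mu_l\mu_l\Sigma = \Sigma$,
\[ \mathcal U(\mathcal A_{\mu_l\Sigma}) = \mathcal L_\Sigma \cap \mathcal L_{\mu_l\Sigma} \cap \bigcap_{k\ne l}\mathcal L_{\mu_k\mu_l\Sigma}, \qquad \mathcal U(\mathcal A_\Sigma) = \mathcal L_\Sigma \cap \mathcal L_{\mu_l\Sigma} \cap \bigcap_{k\ne l}\mathcal L_{\mu_k\Sigma}. \]
Since intersection distributes over the $\bigcap_{k\ne l}$, it is enough to prove, for each fixed $k \ne l$, the termwise identity $\mathcal L_\Sigma \cap \mathcal L_{\mu_l\Sigma} \cap \mathcal L_{\mu_k\Sigma} = \mathcal L_\Sigma \cap \mathcal L_{\mu_l\Sigma} \cap \mathcal L_{\mu_k\mu_l\Sigma}$.

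The heart of the argument is this last identity, and it is where the full-rank hypothesis on $\epsilon$ is used. One localizes away from all cluster variables of $\Sigma$ except $x_k$ and $x_l$; each side then becomes an intersection of rings that are polynomial in $x_k, x_l$ and their one-step mutated partners over a Laurent ring in the remaining variables, and the two sides are compared by examining the exchange binomials attached to $k$ in the seeds $\Sigma$ and $\mu_l\Sigma$. Full rank of $\epsilon$ guarantees that these binomials are not proportional --- equivalently, that $\Sigma$ and $\mu_l\Sigma$ are coprime in the sense of \cite{5} --- and this coprimality, propagated to all adjacent seeds precisely because $\epsilon$ has full rank, forces the two triple intersections to agree; in the treatment of \cite{5} the comparison is made by exhibiting a ``standard monomial'' basis for $\mathcal L_\Sigma \cap \mathcal L_{\mu_k\Sigma}$ compatible with a further intersection. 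I expect this coprimality/standard-monomial step to be the only real obstacle: the reduction above is formal bookkeeping of cluster tori and their gluings, whereas here one must do honest commutative algebra, and it is the single point at which the full-rank hypothesis enters.
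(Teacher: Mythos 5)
This theorem is quoted in the paper from \cite[Corollary 1.9]{5} and no proof is given there, so there is no internal proof to compare against; what you have written is a reconstruction of the Berenstein--Fomin--Zelevinsky argument itself. Your formal reduction is correct and is exactly their strategy: the identification $O(\mathcal A_{|\Sigma^j|}) = \mathcal L_\Sigma \cap \mathcal L_{\mu_j\Sigma}$ is right (the seed $\Sigma^j$ has a single mutable vertex, so its cluster variety is glued from exactly two tori), the rewriting $\mathcal U(\mathcal A_\Sigma) = \mathcal L_\Sigma \cap \bigcap_k \mathcal L_{\mu_k\Sigma}$ follows, and the reduction of mutation-invariance of $\mathcal U$ to the termwise identity $\mathcal L_\Sigma \cap \mathcal L_{\mu_l\Sigma} \cap \mathcal L_{\mu_k\Sigma} = \mathcal L_\Sigma \cap \mathcal L_{\mu_l\Sigma} \cap \mathcal L_{\mu_k\mu_l\Sigma}$ is valid, since intersecting the termwise identities over $k \ne l$ recovers both upper bounds. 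The final deduction that mutation-invariance forces $\mathcal U(\mathcal A_\Sigma) \subseteq \mathcal L_{\Sigma'}$ for every seed $\Sigma'$ in the class, hence $\mathcal U(\mathcal A_\Sigma) = O(\mathcal A_{|\Sigma|})$, is also fine.

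The gap is that the identity you correctly isolate as the heart of the argument is only described, not proved. Everything preceding it is bookkeeping; the entire mathematical content of the theorem lives in that triple-intersection identity, and your treatment of it amounts to a summary of what \cite{5} does (localization away from all variables except $x_k, x_l$, coprimality of exchange binomials, standard monomial bases) together with the expectation that it works. Two things would need to be supplied to make this a proof: first, the actual standard-monomial computation showing that non-proportionality of the exchange binomials at $k$ in $\Sigma$ and in $\mu_l\Sigma$ forces the two triple intersections to coincide (the content of BFZ's Section 4); and second, the fact that full rank of $\epsilon$ is itself preserved under mutation, which you use implicitly when you propagate coprimality to adjacent seeds --- without it, single-mutation invariance cannot be iterated along a mutation path, and the reduction in your first paragraph collapses. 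As a roadmap to the cited proof your proposal is accurate; as a standalone proof it is incomplete at exactly the step you flag.
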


From the theorem above, we derive the following corollary:

\begin{cor} \label{cor216} \cite[Lemma 3.8]{35}
    It is enough to check 1-step mutations to determine whether $f \in \mathcal{O}(\mathcal X_{|\Sigma|})$ or not. In other words,
    $$\mathcal{O}(\mathcal X_{|\Sigma|}) = \bigcap_{j \in J}\mathcal{O}(\mathcal X_{|\Sigma^j|})$$
\end{cor}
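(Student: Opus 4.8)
The plan is to deduce the nontrivial inclusion from the $\mathcal A$-side statement, Theorem~\ref{thm215}, by a doubling trick. The easy direction $O(\mathcal X_{|\Sigma|})\subseteq\bigcap_{j\in J}O(\mathcal X_{|\Sigma^j|})$ is immediate from the definitions: freezing every vertex of $Q$ but $j$ leaves only the mutation $\mu_j$, and since $\mu_j^2=\mathrm{id}$, the ring $O(\mathcal X_{|\Sigma^j|})$ consists exactly of the functions that are Laurent at $\Sigma$ and at $\mu_j(\Sigma)$ — a property every universally Laurent element enjoys. So all the work is in the reverse inclusion.

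First I would pass to a doubled seed. Given $\Sigma=(I,F,\epsilon)$, set $\widehat\Sigma=(\widehat I,\widehat F,\widehat\epsilon)$ with $\widehat I=I\sqcup I'$, where $I'$ is a disjoint frozen copy of $I$, $\widehat F=F\sqcup I'$, and
$$\widehat\epsilon=\begin{pmatrix}\epsilon & -\mathrm{Id}\\ \mathrm{Id} & 0\end{pmatrix}$$
in block form (blocks indexed by $I$ and $I'$). Then $\widehat\epsilon$ is skew-symmetric with $\det\widehat\epsilon=\pm 1$, so by Definition~\ref{defn214} the classical ensemble map $p_{\widehat\Sigma}\colon\mathcal A_{\widehat\Sigma}\to\mathcal X_{\widehat\Sigma}$ is an isomorphism, compatible with mutation via \eqref{2.4}; hence $O(\mathcal A_{|\widehat\Sigma|})\cong O(\mathcal X_{|\widehat\Sigma|})$. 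The mutable directions of $\widehat\Sigma$ are still $J=I\setminus F$, and I would check that under $\mu_k$ ($k\in J$) the $X$-coordinates indexed by $I$ transform by the very same formulas as in $\mathcal X_\Sigma$, untouched by and not feeding into the $I'$-coordinates; this makes the coordinate projection forgetting the $I'$-variables into a well-defined morphism $\pi\colon\mathcal X_{|\widehat\Sigma|}\to\mathcal X_{|\Sigma|}$ that is the coordinate projection in every seed.

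Next, take $f\in\bigcap_{j\in J}O(\mathcal X_{|\Sigma^j|})$ and transport it: put $g:=p_{\widehat\Sigma}^*(\pi^*f)$. Since $\pi^*f$ is a Laurent polynomial in the $X_i$ ($i\in I$), and each $X_i$ is a Laurent monomial in the $A$-variables, $g$ is Laurent at $\widehat\Sigma$; and mutation-compatibility of $p_{\widehat\Sigma}$ and $\pi$ turns the hypothesis ``$f$ is Laurent after the $1$-step mutation $\mu_j^{\mathcal X}$'' into ``$g$ is Laurent after $\mu_j^{\mathcal A}$'' for each $j\in J$. Hence $g\in\bigcap_{j\in J}O(\mathcal A_{|\widehat\Sigma^j|})=\mathcal U(\mathcal A_{\widehat\Sigma})$, where $\widehat\Sigma^j$ is $\widehat\Sigma$ with all vertices but $j$ frozen; because $\widehat\epsilon$ has full rank, Theorem~\ref{thm215} gives $g\in O(\mathcal A_{|\widehat\Sigma|})$. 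Transporting back, $\widehat f:=(p_{\widehat\Sigma}^{-1})^*g\in O(\mathcal X_{|\widehat\Sigma|})$; as $\widehat f$ and $\pi^*f$ agree on the initial torus they are equal, so $\pi^*f\in O(\mathcal X_{|\widehat\Sigma|})$. Finally, since $\pi$ is the coordinate projection in every seed, the restriction of $\pi^*f$ to the chart lying over any seed $\Sigma''$ of $\mathcal X_{|\Sigma|}$ is independent of the $I'$-variables and coincides with the expression of $f$ in the $\Sigma''$-coordinates; its regularity forces that expression to be a Laurent polynomial, i.e.\ $f$ is Laurent at $\Sigma''$. As $\Sigma''$ was arbitrary, $f\in O(\mathcal X_{|\Sigma|})$.

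The main obstacle I expect is the bookkeeping around the doubled seed: verifying that $\widehat\epsilon$ really is of full rank (and, if one worries about half-integer entries occurring between frozen vertices, that this causes no trouble for Theorem~\ref{thm215}), and checking that after freezing all vertices but one the square relating $\mu^{\mathcal A}$, $\mu^{\mathcal X}$, $p_{\widehat\Sigma}$ and $\pi$ still commutes, so that a single $\mathcal X$-mutation of $f$ corresponds to a single $\mathcal A$-mutation of $g$ with no spurious mutable directions appearing. Once those points are pinned down, the remaining steps are routine manipulations with monomial changes of variables.
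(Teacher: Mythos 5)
Your proof is correct and follows essentially the same route as the paper: both pass to the framed (frozen-doubled) quiver, use that its exchange matrix has determinant $\pm 1$ so the ensemble map identifies $O(\mathcal A_{|\hat\Sigma|})$ with $O(\mathcal X_{|\hat\Sigma|})$, and then invoke Theorem \ref{thm215} to collapse the intersection over one-step freezings. Your write-up simply makes explicit the projection forgetting the frozen copies and the back-and-forth transport of $f$, details the paper's terser proof leaves implicit.
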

\begin{proof}
    Consider the \textit{framing} $\hat{Q}$ of a quiver $Q$, obtained by adding a frozen vertex $\hat k$ for every vertex $k$ of $Q$ along with an arrow $\hat k \leftarrow k$. Then the determinant of the exchange matrix of $\hat Q$ is $1$. Hence, by Theorem~\ref{thm215}, we have the following chain of equations:
   $$
   \mathcal{O}(\mathcal X_{|\hat\Sigma|}) = \mathcal{O}(\mathcal A_{|\hat\Sigma|}) = \bigcap_{j \in J}\mathcal{O}(\mathcal A_{|(\hat\Sigma)^j|}) = \bigcap_{j \in J}\mathcal{O}(\mathcal X_{|(\hat\Sigma)^j|}).
   $$
   Note that $\mathcal{O}(\mathcal X_{|\Sigma|}) \simeq \mathcal{O}(\mathcal A_{|\Sigma|})$ when $\det \epsilon = \pm 1.$ As $\mathcal{O}(\mathcal X_{|\Sigma|})$ is a subring of $\mathcal{O}(\mathcal X_{|\hat \Sigma|})$ and $\mathcal{O}(X_{|\Sigma|}) \cap \mathcal{O}(X_{|(\hat\Sigma)^j|}) = \mathcal{O}(\mathcal X_{|\Sigma^j|})$, the proof is complete. \end{proof}

\begin{exmp}
    \textit{(Examples of universal Laurent polynomials)}

    Let seed $\Sigma$ be a $(I,\emptyset,\epsilon)$ where
$$\epsilon = \begin{pmatrix}
0 & 1 & 0 & -1 & 1 & -1 \\
-1 & 0 & 1 & 0 & -1 & 1\\
0 & -1 & 0 & 1 & 1 & -1 \\
1 & 0 & -1 & 0 & -1 & 1 \\
-1 & 1 & -1 & 1 & 0 & 0\\
1 & -1 & 1 & -1 & 0 & 0
\end{pmatrix} \text{ and } I = \left\{1,2,3,4,5,6\right\}.$$

Then the quiver is as follows:
$$
\begin{tikzcd}
	{\text{1}} &&&& {\text{2}} \\
	\\
	& {\text{6}} && {\text{5}} \\
	\\
	{\text{4}} &&&& {\text{3}}
	\arrow[from=1-1, to=1-5]
	\arrow[from=1-1, to=3-4]
	\arrow[from=1-5, to=3-2]
	\arrow[from=1-5, to=5-5]
	\arrow[from=3-2, to=1-1]
	\arrow[from=3-2, to=5-5]
	\arrow[from=3-4, to=1-5]
	\arrow[from=3-4, to=5-1]
	\arrow[from=5-1, to=1-1]
	\arrow[from=5-1, to=3-2]
	\arrow[from=5-5, to=3-4]
	\arrow[from=5-5, to=5-1]
\end{tikzcd}
$$
Consider elements $\mathcal C_1 = X_1X_2X_3X_4$, $\mathcal C_2 = X_5X_6$, and $\beta_1 = (X_1X_5X_2)\left(1+ {1 \over X_1}+{1 \over X_1X_5}+{1 \over X_1X_5X_2}\right)^2.$ Under a mutation $\mu_1:\Sigma' \to \Sigma$ on a vertex $1,$ we have 
$$(\mu_1^\mathcal X)^* (\beta_1) = {(X'_1X_2'X_5' + X_1'X_2' + X_1' + 1)^2 \over (X'_1X_2'X_5')},$$
$$(\mu_1^\mathcal X)^* (\mathcal C_1) = X'_2X'_3X_4',$$and
$$(\mu_1^\mathcal X)^* (\mathcal C_2) = X_1'X_5'X_6'$$
which are still Laurent polynomials in the cluster variables $X_i'$. By similar calculations, $\beta_1$, $\mathcal C_1$, and $\mathcal C_2$ remain Laurent polynomials after every one-step mutation. Thus, by Corollary~\ref{cor216}, they belong to $\mathcal{O}(\mathcal X_{|\Sigma|})$.
\end{exmp}

We now introduce the cluster modular group. Our birational Weyl group action corresponds to specific elements of this group.

\begin{defn}\label{defn218}
The \textit{cluster modular group} $\Gamma_{|\Sigma|}$ is the group generated by cluster transformations of the seed $\Sigma$ that preserve its quiver. An element of the cluster modular group is called a \textit{cluster modular transformation}.
\end{defn}

\subsection{Poisson Structure on Cluster Algebras}

\begin{defn}
    A \textit{Poisson algebra} $\mathcal F$ is a commutative algebra equipped with a Poisson bracket $\mathcal F \times \mathcal F \to \mathcal F$ that is a skew-symmetric bilinear map satisfying
    
\begin{enumerate}
    \item $\left\{fg,h\right\} = f\left\{g,h\right\} + \left\{f,h\right\}g$\textit{ (Leibniz identity),}
    \item $\left\{f,\left\{g,h\right\}\right\} + \left\{g,\left\{h,f\right\}\right\} + \left\{h,\left\{f,g\right\}\right\} =0$\textit{ (Jacobi identity).}
\end{enumerate}
\end{defn}

\begin{defn} \label{defn222}
A Casimir element in $\mathcal F$ is an element $c \in \mathcal F$ such that $\left\{c,p\right\} = 0$ for any $p \in \mathcal F.$ It is also called a \textit{Casimir}.   
\end{defn}

Assume that $\left\{\cdot,\cdot\right\}$ is a Poisson bracket on $\mathcal F$ which is a field of rational functions generated by $n$ functions $f_1, \cdots,f_n$. We call the coordinates $(f_1, \cdots ,f_n)$ \textit{log-canonical} if the Poisson bracket is given by $\left\{f_i,f_j\right\} = \lambda_{ij}f_if_j$ where $\lambda_{ij}$ are constants.

It is well-known from\textcolor{black}{\cite{6}} that for each torus $\mathcal X_{\Sigma}$, the algebra of rational functions $\mathcal K(\mathcal X_{\Sigma})$ is equipped with a Poisson bracket defined as \begin{equation} \label{2.6}
\left\{X_i,X_j\right\} = \epsilon_{ij}X_iX_j.
\end{equation}
Hence, the variables $(X_i)_{i \in I}$ form a set of log-canonical coordinates on an algebraic torus $\mathcal{X}_{\Sigma}$.

This Poisson bracket is compatible with cluster transformations. Specifically, for any cluster transformation $\tau: \Sigma' \to \Sigma$ and any functions $f,g \in \mathcal{K}(\mathcal{X}_{\Sigma})$, we have:$$
\{(\tau^\mathcal{X})^*f, (\tau^\mathcal{X})^*g\} = (\tau^\mathcal{X})^*(\{f,g\}).$$Here, the bracket on the left is computed in $\mathcal{K}(\mathcal{X}_{\Sigma'})$ via the exchange matrix of the seed $\Sigma'$ (i.e., $\{X'_i, X'_j\} = \epsilon'_{ij} X'_i X'_j$). Thus, a cluster $\mathcal{X}$-variety is also referred to as the \textit{cluster Poisson variety}. The following proposition shows that $\mathcal{O}(\mathcal{X}_{|\Sigma|})$ is closed under the Poisson bracket, and hence is a Poisson algebra:

\begin{prop} \label{prop223}
    $\left\{f,g\right\} \in \mathcal{O}(\mathcal X_{|\Sigma|})$ if $f,g \in \mathcal{O}(\mathcal X_{|\Sigma|})$. 
\end{prop}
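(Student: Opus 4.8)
The plan is to show that the Poisson bracket of two universally Laurent elements is again universally Laurent by reducing, via Corollary \ref{cor216}, to a one-step check, and then controlling the shape of the bracket after a single mutation. Concretely, suppose $f, g \in O(\mathcal X_{|\Sigma|})$. By Corollary \ref{cor216}, it suffices to prove that for every $k \in I - F$, the element $\{f, g\}$ — computed in $\mathcal K(\mathcal X_\Sigma)$ using \eqref{2.6} — remains a Laurent polynomial after the single mutation $\mu_k$. Since the bracket is compatible with cluster transformations, $(\mu_k^{\mathcal X})^*\{f, g\} = \{(\mu_k^{\mathcal X})^* f, (\mu_k^{\mathcal X})^* g\}$, where the right-hand bracket is now computed in $\mathcal K(\mathcal X_{\mu_k(\Sigma)})$ using $\epsilon'$. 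Because $f, g \in O(\mathcal X_{|\Sigma|})$, both $(\mu_k^{\mathcal X})^* f$ and $(\mu_k^{\mathcal X})^* g$ are Laurent polynomials in the mutated variables $(X_i')_{i \in I}$.

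So the problem reduces to a purely local statement: if $P$ and $Q$ are Laurent polynomials in the torus coordinates $(X_i')$ of a single seed, is $\{P, Q\}$ a Laurent polynomial? This is immediate from the Leibniz rule together with \eqref{2.6}: writing $P = \sum_\alpha c_\alpha X'^\alpha$ and $Q = \sum_\beta d_\beta X'^\beta$ as finite sums of Laurent monomials, bilinearity gives $\{P, Q\} = \sum_{\alpha, \beta} c_\alpha d_\beta \{X'^\alpha, X'^\beta\}$, and a direct computation from the Leibniz identity shows $\{X'^\alpha, X'^\beta\} = \left(\sum_{i,j} \alpha_i \beta_j \epsilon'_{ij}\right) X'^{\alpha + \beta}$, which is again a Laurent monomial. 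Hence $\{P, Q\}$ is a Laurent polynomial in $(X_i')$. Applying this with $P = (\mu_k^{\mathcal X})^* f$ and $Q = (\mu_k^{\mathcal X})^* g$ shows $(\mu_k^{\mathcal X})^*\{f, g\}$ is a Laurent polynomial in the $\mu_k(\Sigma)$-coordinates for each $k$; since $\{f,g\}$ is manifestly Laurent in the original seed as well, Corollary \ref{cor216} gives $\{f, g\} \in O(\mathcal X_{|\Sigma|})$.

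I do not expect a serious obstacle here; the only point requiring a little care is making sure the "one-step check" of Corollary \ref{cor216} is being invoked legitimately — that is, that $\{f,g\}$ lies in $\mathcal K(\mathcal X_\Sigma)$ to begin with (true, since $f, g$ are Laurent polynomials in the initial seed and the bracket of Laurent polynomials in one seed is Laurent, by the monomial computation above) and that its image under each $\mu_k$ is again rational and in fact Laurent. One should also note that the compatibility of the Poisson bracket with cluster transformations, quoted in the text just before the proposition, is exactly what licenses transporting the computation to the mutated seed; without it, "Laurent after $\mu_k$" would not obviously follow from "$f, g$ become Laurent after $\mu_k$." With these observations in place the proof is a two-line application of Corollary \ref{cor216} and the monomial bracket formula.
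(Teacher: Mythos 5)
Your proof is correct and rests on exactly the same two ingredients as the paper's: compatibility of the bracket with cluster transformations, and the fact that the log-canonical bracket of two Laurent polynomials in a single torus is again a Laurent polynomial (which you helpfully make explicit via the monomial formula $\{X'^\alpha, X'^\beta\} = \bigl(\sum_{i,j}\alpha_i\beta_j\epsilon'_{ij}\bigr)X'^{\alpha+\beta}$). The only difference is that your reduction to one-step mutations via Corollary \ref{cor216} is an unnecessary detour — the identical local argument applies verbatim to an arbitrary cluster transformation $\mu$, which is how the paper phrases it.
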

\begin{proof}
    Consider an $\mathcal{X}$-variable cluster transformation $\mu: \mathcal{X}_{\Sigma'} \to \mathcal{X}_{\Sigma}$. We have $\{\mu^*f,\mu^*g\} = \mu^*(\{f,g\})$ because the Poisson bracket $\{\cdot,\cdot\}$ is compatible with any cluster transformation. 
    
    Since both $\mu^*f$ and $\mu^*g$ are Laurent polynomials in the cluster $\mathcal{X}$-variables $X_i'$, the bracket $\{\mu^*f,\mu^*g\}$ is also a Laurent polynomial in the variables $X_i'$ by the definition of the log-canonical Poisson bracket. Hence, $\mu^*(\{f,g\})$ is also a Laurent polynomial in the variables $X_i'$. This argument works for any cluster transformation $\mu$, so the claim follows.\end{proof}

\subsection{Goldman Bracket on Teichmüller Space $\mathcal T_{g,s}$ with $s \ge 1$} \label{Ch2.3}

Let $\Sigma_{g,s}$ be a two-dimensional oriented and compact surface where $g$ is the genus and $s$ is the number of boundary components. We call the surface \textit{hyperbolic} if it possesses a global Riemannian metric of constant curvature $-1$. In particular, this implies $2g - 2 + s > 0$.

By \textit{Poincaré uniformization theorem}, the surface $\Sigma_{g,s}$ is isometric to a quotient of the upper half plane $\mathbb H = \left\{x + iy : x,y \in \mathbb R, y>0\right\}$ by Fuchsian subgroup $\Delta_{g,s}$ of $PSL(2,\mathbb R)$. Note that the upper half plane is equipped with the global metric $ds^2 = {dx^2 + dy^2 \over y^2}$ whose curvature is $-1$. The Fuchsian group is generated by a finite number of \textit{hyperbolic} elements that have two distinct fixed points on the real line.

The \textit{ideal triangle} of the hyperbolic surface is a triangle with geodesic sides in $\mathbb H.$ Note that geodesics in $\mathbb H$ are either semicircles with a center on $x$-axis or an infinite vertical half-line.

\begin{figure}[H]
    \centering
\tikzset{every picture/.style={line width=0.7pt}} %set default line width to 0.75pt      

\begin{tikzpicture}[x=0.75pt,y=0.75pt,yscale=-1,xscale=1]
%uncomment if require: \path (0,300); %set diagram left start at 0, and has height of 300

%Straight Lines [id:da3182048831354565] 
\draw    (161.82,219.82) -- (509.82,219.82) ;
\draw [shift={(511.82,219.82)}, rotate = 180] [color={rgb, 255:red, 0; green, 0; blue, 0 }  ][line width=0.75]    (10.93,-3.29) .. controls (6.95,-1.4) and (3.31,-0.3) .. (0,0) .. controls (3.31,0.3) and (6.95,1.4) .. (10.93,3.29)   ;
%Straight Lines [id:da4225042800404033] 
\draw    (225.82,76.82) -- (225.82,219.82) ;
%Straight Lines [id:da9584010603197256] 
\draw    (336.82,76.82) -- (336.82,219.82) ;
%Straight Lines [id:da667170500542643] 
\draw    (447.82,76.82) -- (447.82,140.82) -- (447.82,219.82) ;
%Shape: Arc [id:dp7292303014497719] 
\draw  [draw opacity=0] (197.02,218.83) .. controls (197.06,182.89) and (223.43,153.77) .. (255.94,153.79) .. controls (288.46,153.8) and (314.8,182.97) .. (314.78,218.93) .. controls (314.78,219.12) and (314.78,219.3) .. (314.78,219.49) -- (255.9,218.9) -- cycle ; \draw   (197.02,218.83) .. controls (197.06,182.89) and (223.43,153.77) .. (255.94,153.79) .. controls (288.46,153.8) and (314.8,182.97) .. (314.78,218.93) .. controls (314.78,219.12) and (314.78,219.3) .. (314.78,219.49) ;  
%Shape: Arc [id:dp8145877412437816] 
\draw  [draw opacity=0] (266.92,219.56) .. controls (266.98,160.8) and (313.3,113.21) .. (370.4,113.24) .. controls (427.51,113.27) and (473.79,160.95) .. (473.76,219.74) .. controls (473.76,220.06) and (473.76,220.39) .. (473.76,220.71) -- (370.34,219.68) -- cycle ; \draw   (266.92,219.56) .. controls (266.98,160.8) and (313.3,113.21) .. (370.4,113.24) .. controls (427.51,113.27) and (473.79,160.95) .. (473.76,219.74) .. controls (473.76,220.06) and (473.76,220.39) .. (473.76,220.71) ;  

\end{tikzpicture}
    \caption{Geodesics in the upper half plane}
    \label{Fig3}
\end{figure}
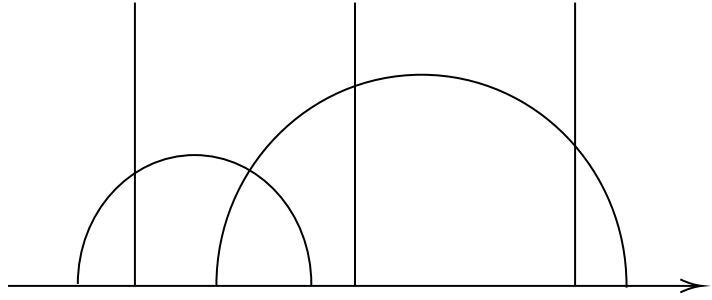

We cut infinite hyperboloids attached to the holes along a closed geodesic which is homeomorphic to a loop around each hole. Such a closed geodesic is called the \textit{bottleneck curve}. Then, we get a \textit{reduced surface}, which is an open Riemann surface with all infinite hyperboloids removed with bounding bottleneck curves. 

Let us define an \textit{ideal triangulation} of the reduced surface.

\begin{defn}
    An \textit{ideal triangulation} of a hyperbolic surface $\Sigma_{g,s}$ is a partition of its reduced surface into ideal triangles with vertices on the ideal line (real line $(x,0) \in \overline{\mathbb H}$). \end{defn}
\begin{figure}[H]
    \centering
\tikzset{every picture/.style={line width=0.75pt}} %set default line width to 0.75pt        

\tikzset{every picture/.style={line width=0.75pt}} %set default line width to 0.75pt        

\begin{tikzpicture}[x=0.75pt,y=0.75pt,yscale=-1,xscale=1]
%uncomment if require: \path (0,300); %set diagram left start at 0, and has height of 300

%Shape: Arc [id:dp027843911806103683] 
\draw  [draw opacity=0][line width=1.5]  (184.76,218.28) .. controls (184.83,152.85) and (233.11,99.84) .. (292.62,99.87) .. controls (352.16,99.91) and (400.4,153) .. (400.37,218.47) .. controls (400.37,218.81) and (400.37,219.15) .. (400.36,219.48) -- (292.56,218.41) -- cycle ; \draw  [color={rgb, 255:red, 0; green, 255; blue, 0 }  ,draw opacity=1 ][line width=1.5]  (184.76,218.28) .. controls (184.83,152.85) and (233.11,99.84) .. (292.62,99.87) .. controls (352.16,99.91) and (400.4,153) .. (400.37,218.47) .. controls (400.37,218.81) and (400.37,219.15) .. (400.36,219.48) ;  
%Shape: Arc [id:dp18348644282702797] 
\draw  [draw opacity=0][line width=1.5]  (184.76,218.28) .. controls (184.8,177.87) and (217.12,145.14) .. (256.95,145.16) .. controls (296.8,145.18) and (329.09,177.97) .. (329.07,218.41) .. controls (329.07,218.63) and (329.07,218.86) .. (329.07,219.09) -- (256.91,218.37) -- cycle ; \draw  [color={rgb, 255:red, 255; green, 0; blue, 0 }  ,draw opacity=1 ][line width=1.5]  (184.76,218.28) .. controls (184.8,177.87) and (217.12,145.14) .. (256.95,145.16) .. controls (296.8,145.18) and (329.09,177.97) .. (329.07,218.41) .. controls (329.07,218.63) and (329.07,218.86) .. (329.07,219.09) ;  
%Shape: Arc [id:dp756348387911932] 
\draw  [draw opacity=0][line width=1.5]  (184.76,218.28) .. controls (184.85,122.59) and (249.92,45.07) .. (330.13,45.11) .. controls (410.37,45.16) and (475.38,122.8) .. (475.33,218.53) .. controls (475.33,218.99) and (475.33,219.45) .. (475.32,219.9) -- (330.04,218.46) -- cycle ; \draw  [line width=1.5]  (184.76,218.28) .. controls (184.85,122.59) and (249.92,45.07) .. (330.13,45.11) .. controls (410.37,45.16) and (475.38,122.8) .. (475.33,218.53) .. controls (475.33,218.99) and (475.33,219.45) .. (475.32,219.9) ;  
%Shape: Arc [id:dp7292303014497719] 
\draw  [draw opacity=0][fill={rgb, 255:red, 155; green, 155; blue, 155 }  ,fill opacity=1 ][line width=1.5]  (184.64,219.17) .. controls (184.66,194.27) and (200.74,174.1) .. (220.55,174.11) .. controls (240.38,174.12) and (256.44,194.32) .. (256.42,219.23) .. controls (256.42,219.35) and (256.42,219.46) .. (256.42,219.57) -- (220.53,219.21) -- cycle ; \draw  [line width=1.5]  (184.64,219.17) .. controls (184.66,194.27) and (200.74,174.1) .. (220.55,174.11) .. controls (240.38,174.12) and (256.44,194.32) .. (256.42,219.23) .. controls (256.42,219.35) and (256.42,219.46) .. (256.42,219.57) ;  
%Shape: Arc [id:dp6360087689692222] 
\draw  [draw opacity=0][fill={rgb, 255:red, 155; green, 155; blue, 155 }  ,fill opacity=1 ][line width=1.5]  (256.79,219.26) .. controls (256.82,194.36) and (272.89,174.19) .. (292.71,174.2) .. controls (312.53,174.21) and (328.59,194.41) .. (328.58,219.32) .. controls (328.58,219.43) and (328.58,219.55) .. (328.58,219.66) -- (292.69,219.3) -- cycle ; \draw  [line width=1.5]  (256.79,219.26) .. controls (256.82,194.36) and (272.89,174.19) .. (292.71,174.2) .. controls (312.53,174.21) and (328.59,194.41) .. (328.58,219.32) .. controls (328.58,219.43) and (328.58,219.55) .. (328.58,219.66) ;  
%Shape: Arc [id:dp10048465722635258] 
\draw  [draw opacity=0][fill={rgb, 255:red, 155; green, 155; blue, 155 }  ,fill opacity=1 ][line width=1.5]  (328.58,219.66) .. controls (328.6,194.76) and (344.68,174.59) .. (364.5,174.6) .. controls (384.32,174.61) and (400.38,194.81) .. (400.37,219.72) .. controls (400.37,219.83) and (400.37,219.95) .. (400.36,220.06) -- (364.47,219.7) -- cycle ; \draw  [line width=1.5]  (328.58,219.66) .. controls (328.6,194.76) and (344.68,174.59) .. (364.5,174.6) .. controls (384.32,174.61) and (400.38,194.81) .. (400.37,219.72) .. controls (400.37,219.83) and (400.37,219.95) .. (400.36,220.06) ;  
%Shape: Arc [id:dp2568462115956003] 
\draw  [draw opacity=0][fill={rgb, 255:red, 155; green, 155; blue, 155 }  ,fill opacity=1 ][line width=1.5]  (400.36,218.96) .. controls (400.39,193.45) and (417.06,172.79) .. (437.6,172.8) .. controls (458.16,172.81) and (474.81,193.5) .. (474.79,219.02) .. controls (474.79,219.14) and (474.79,219.26) .. (474.79,219.37) -- (437.58,219) -- cycle ; \draw  [line width=1.5]  (400.36,218.96) .. controls (400.39,193.45) and (417.06,172.79) .. (437.6,172.8) .. controls (458.16,172.81) and (474.81,193.5) .. (474.79,219.02) .. controls (474.79,219.14) and (474.79,219.26) .. (474.79,219.37) ;  
%Straight Lines [id:da3182048831354565] 
\draw [line width=1.5]    (160.82,219.82) -- (507.82,219.82) ;
\draw [shift={(510.82,219.82)}, rotate = 180] [color={rgb, 255:red, 0; green, 0; blue, 0 }  ][line width=1.5]    (14.21,-4.28) .. controls (9.04,-1.82) and (4.3,-0.39) .. (0,0) .. controls (4.3,0.39) and (9.04,1.82) .. (14.21,4.28)   ;
%Shape: Ellipse [id:dp7504182592912756] 
\draw  [fill={rgb, 255:red, 0; green, 0; blue, 255 }  ,fill opacity=1 ] (180.73,219.17) .. controls (180.73,217.01) and (182.48,215.26) .. (184.64,215.26) .. controls (186.79,215.26) and (188.55,217.01) .. (188.55,219.17) .. controls (188.55,221.33) and (186.79,223.08) .. (184.64,223.08) .. controls (182.48,223.08) and (180.73,221.33) .. (180.73,219.17) -- cycle ;
%Shape: Ellipse [id:dp10510556409208582] 
\draw  [fill={rgb, 255:red, 0; green, 0; blue, 255 }  ,fill opacity=1 ] (252.88,219.26) .. controls (252.88,217.1) and (254.63,215.35) .. (256.79,215.35) .. controls (258.95,215.35) and (260.7,217.1) .. (260.7,219.26) .. controls (260.7,221.42) and (258.95,223.17) .. (256.79,223.17) .. controls (254.63,223.17) and (252.88,221.42) .. (252.88,219.26) -- cycle ;
%Shape: Ellipse [id:dp3408649567458262] 
\draw  [fill={rgb, 255:red, 0; green, 0; blue, 255 }  ,fill opacity=1 ] (324.67,219.66) .. controls (324.67,217.5) and (326.42,215.75) .. (328.58,215.75) .. controls (330.74,215.75) and (332.49,217.5) .. (332.49,219.66) .. controls (332.49,221.82) and (330.74,223.57) .. (328.58,223.57) .. controls (326.42,223.57) and (324.67,221.82) .. (324.67,219.66) -- cycle ;
%Shape: Ellipse [id:dp17328926546160928] 
\draw  [fill={rgb, 255:red, 0; green, 0; blue, 255 }  ,fill opacity=1 ] (396.46,218.96) .. controls (396.46,216.8) and (398.21,215.05) .. (400.36,215.05) .. controls (402.52,215.05) and (404.27,216.8) .. (404.27,218.96) .. controls (404.27,221.12) and (402.52,222.87) .. (400.36,222.87) .. controls (398.21,222.87) and (396.46,221.12) .. (396.46,218.96) -- cycle ;
%Shape: Ellipse [id:dp5856054950715446] 
\draw  [fill={rgb, 255:red, 0; green, 0; blue, 255 }  ,fill opacity=1 ] (470.88,219.37) .. controls (470.88,217.21) and (472.63,215.46) .. (474.79,215.46) .. controls (476.95,215.46) and (478.7,217.21) .. (478.7,219.37) .. controls (478.7,221.53) and (476.95,223.28) .. (474.79,223.28) .. controls (472.63,223.28) and (470.88,221.53) .. (470.88,219.37) -- cycle ;

\end{tikzpicture}
    \caption{An example of an ideal triangulation on the reduced surface. The blue vertices are ideal points, green and red arcs are internal sides, and gray regions indicate the truncated infinite hyperboloids.}
    \label{Fig4}
\end{figure}
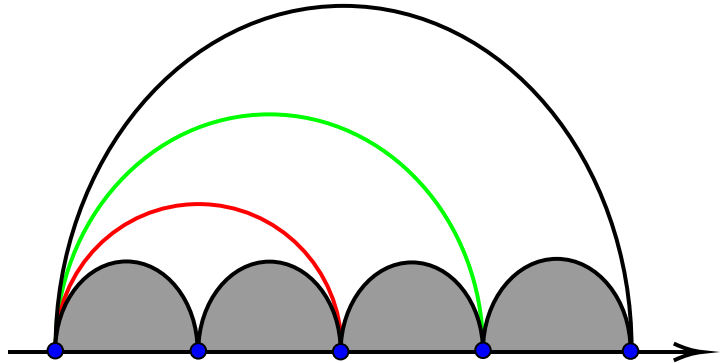
We introduce the \textit{Teichmüller space} and its fat graph description.

\begin{defn}
    Teichmüller space $\mathcal T_{g,s}$ is the space of complex structures on a genus $g$ surface with $s$ holes modulo diffeomorphisms homotopy equivalent to identity.
\end{defn}
Consider a fat graph $\Gamma$ dual to an ideal triangulation of $\Sigma_{g,s}$, so that its vertices correspond to ideal triangles and its edges intersect the edges of the triangulation transversely at exactly one point. We associate a real variable $z_\alpha$ to each edge $\alpha$ of $\Gamma$ in order to parametrize the Teichmüller space. Then,

\begin{thm}
\textcolor{black}{\cite{14}} For a fat graph $\Gamma$ derived from $\Sigma_{g,s}$, there is a bijection between the set of points of $\mathcal T_{g,s}$ and the set $\mathbb R^{\text{the number of edges}}$ of assignments of real numbers to edges of the graph. 
\end{thm}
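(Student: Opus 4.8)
The plan is to exhibit mutually inverse maps between $\mathcal T_{g,s}$ and $\mathbb R^{E}$, where $E$ is the number of edges of $\Gamma$, equivalently the number of internal sides of the ideal triangulation of the reduced surface. For the map $\mathcal T_{g,s}\to\mathbb R^E$: given a complete hyperbolic structure, lift everything to the universal cover via the Poincaré uniformization theorem, so that the ideal triangulation lifts to a $\Delta_{g,s}$-invariant ideal triangulation of $\mathbb H$ with all vertices on the ideal boundary $\mathbb R\cup\{\infty\}$. Each edge $\alpha$ of the triangulation is the diagonal of a unique ideal quadrilateral cut out by the two adjacent ideal triangles, with four ideal vertices; I set $z_\alpha$ to be the logarithm of the cross-ratio of these four points. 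Since $PSL(2,\mathbb R)$ preserves cross-ratios on $\mathbb R\cup\{\infty\}$, the value $z_\alpha$ depends only on the $\Delta_{g,s}$-orbit of the lifted quadrilateral, hence only on $\alpha$ and the point of $\mathcal T_{g,s}$, and is manifestly unchanged under diffeomorphisms isotopic to the identity.

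For the inverse I would use a developing map built from the shears. Given an arbitrary tuple $(z_\alpha)\in\mathbb R^E$, fix one triangle of the triangulation and place its lift in $\mathbb H$ with ideal vertices $0,1,\infty$; then, traversing the dual graph $\Gamma$, inductively glue in the lift of each adjacent triangle across the separating edge, the fourth ideal vertex being the explicit fractional-linear function of the three existing vertices and $e^{z_\alpha}$ forced by the cross-ratio relation. Around every vertex of the triangulation the successively placed triangles close up after finitely many steps, a purely combinatorial consistency of the edge matrices $\left(\begin{smallmatrix}0&-e^{z/2}\\ e^{-z/2}&0\end{smallmatrix}\right)$ and the turn matrix $\left(\begin{smallmatrix}1&1\\-1&0\end{smallmatrix}\right)$; thus the construction extends $\Delta_{g,s}$-equivariantly to $\widetilde{\Sigma}_{g,s}$ and yields a holonomy representation $\rho\colon\pi_1(\Sigma_{g,s})\to PSL(2,\mathbb R)$ with an associated developing map. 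The two constructions are mutually inverse: reconstructing from the shears of a given structure returns the same developing map (the reference triangle is placed identically), and the reconstructed structure has exactly the prescribed shears by construction; since a point of $\mathcal T_{g,s}$ is a developing map up to post-composition by $PSL(2,\mathbb R)$, these identifications match.

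The main obstacle is showing that the reconstructed $\rho$ always defines a point of $\mathcal T_{g,s}$ — discrete, faithful, with quotient homeomorphic to $\Sigma_{g,s}$ carrying geodesic boundary — for \emph{every} real tuple, with no balancing constraint imposed. I would argue that the developing images of the triangles tile a convex region of $\mathbb H$ with no accumulation in its interior, so the $\Delta_{g,s}$-action on that region is free and properly discontinuous (each turn matrix is elliptic of order $3$ and each edge contributes a genuine half-turn gluing), giving discreteness and faithfulness. For each boundary component I would compute the holonomy around its bottleneck curve as the corresponding product of edge and turn matrices and verify that the absolute value of its trace exceeds $2$ for all real shears, so this element is hyperbolic and its translation length — an arbitrary positive real — is the length of the boundary geodesic; this is precisely why, in the holed case $s\ge 1$ considered here, the sum of shears around a boundary component is unconstrained and the parameter space is the full $\mathbb R^E$ (in contrast to the cusped case, where a vanishing condition appears). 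Finally, comparing $E$ with $\dim\mathcal T_{g,s}=6g-6+3s$ confirms the map is between spaces of equal dimension, a consistency check on the claimed bijection.
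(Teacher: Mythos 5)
The paper does not prove this statement at all: it is quoted verbatim from the reference \cite{14} (the fat-graph/shear-coordinate parametrization of Teichm\"uller space), so there is no internal proof to compare against. Your overall strategy --- cross-ratio (shear) coordinates for the forward map and a developing-map reconstruction for the inverse --- is indeed the standard route taken in that literature, and the forward direction as you describe it is fine.

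However, two specific claims in your inverse construction are wrong as stated. First, you assert that ``around every vertex of the triangulation the successively placed triangles close up after finitely many steps,'' and you invoke the order-$3$ turn matrices to justify it. An ideal triangulation has no interior vertices: all vertices lie on the ideal boundary, and in the universal cover the triangles incident to a lift of a puncture form an infinite fan spiralling toward that ideal point --- nothing closes up finitely. The reason no consistency condition arises is different and simpler: every edge of an ideal triangulation of the simply connected cover separates it, so the dual graph of the lifted triangulation is a tree and the inductive gluing is unobstructed; the relation $R^3=-I$ only encodes the trivial consistency of the three turns inside a single triangle. Second, you claim the holonomy around each bottleneck curve has $|\mathrm{tr}|>2$ ``for all real shears.'' In fact that trace equals $\pm 2\cosh(P/2)$ where $P$ is the sum of the shear coordinates around the hole, so it equals $\pm 2$ exactly on the locus $P=0$, where the monodromy is parabolic and the hole degenerates to a cusp. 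The bijection with the \emph{full} space $\mathbb R^{E}$ only holds under the convention (used in \cite{14}) that points of $\mathcal T_{g,s}$ may have either geodesic boundary or cusps at the holes; your argument, which insists on hyperbolic boundary monodromy everywhere, would only parametrize the open subset $P\neq 0$ and so would not produce a bijection onto all of $\mathbb R^{E}$. Finally, the assertion that the developed triangles tile a convex region with no interior accumulation --- the key to discreteness and properness --- is stated without justification, and it is the genuinely hard analytic step of the proof; as written the proposal takes it on faith.
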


In this fat graph description, the canonical Poisson bracket $B_{WP}$ in the coordinates $\{z_\alpha\}$ is defined by

\begin{equation} \label{2.7}
    B_{WP} = \sum_v \sum_{i=1}^3 
    {\partial \over \partial z_{v_i}}\wedge{\partial \over \partial z_{v_{i+1}}}.
\end{equation}

Here, the sum is taken over all vertices $v$, and $v_1,v_2,v_3$ label the cyclically ordered edges adjacent to $v$, with indices taken modulo $3$. This Poisson bracket is degenerate; its Casimirs are the lengths of geodesics surrounding the holes.

We introduce the Poisson algebra generated by closed geodesics on the surface. To characterize these geodesics, we assign a matrix $X_{z_\alpha} \in \operatorname{PSL}(2,\mathbb R)$ to each edge $\alpha$, defined by
\begin{equation} \label{2.8}
    X_{z_\alpha}=\begin{pmatrix}
0 & -e^{z_\alpha/2} \\
e^{-z_\alpha/2} & 0
\end{pmatrix}.
\end{equation}
Furthermore, we define the following matrices corresponding to \textit{left} and \textit{right} turns in a closed path on the fat graph:
\begin{equation} \label{2.9}
    L =\begin{pmatrix}
0 & 1 \\
-1 & -1
\end{pmatrix},\text{ }
R=\begin{pmatrix}
1 & 1 \\
-1 & 0
\end{pmatrix}.\text{ }
\end{equation}

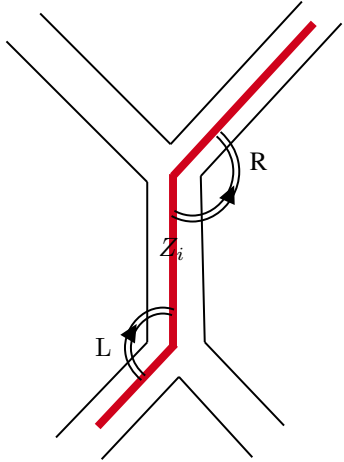
\begin{figure}[H]
    \centering
\tikzset{every picture/.style={line width=0.75pt}} %set default line width to 0.75pt        

\begin{tikzpicture}[x=0.75pt,y=0.75pt,yscale=-1,xscale=1]
%uncomment if require: \path (0,235); %set diagram left start at 0, and has height of 235

%Straight Lines [id:da7313205622171378] 
\draw    (247.82,18.67) -- (318.82,89.67) ;
%Straight Lines [id:da21272405370384717] 
\draw    (264.82,4.58) -- (330.58,70.58) ;
%Straight Lines [id:da532795477553455] 
\draw    (330.58,70.58) -- (396.82,-1.42) ;
%Straight Lines [id:da9053806753867502] 
\draw    (272.82,218.58) -- (318.82,170.58) ;
%Straight Lines [id:da677244691458253] 
\draw    (345.82,86.67) -- (416.82,9.58) ;
%Straight Lines [id:da9819879132407741] 
\draw    (347.82,170.49) -- (345.82,86.67) ;
%Straight Lines [id:da2008473729779886] 
\draw    (318.82,170.58) -- (318.82,89.67) ;
%Straight Lines [id:da616991751334333] 
\draw    (347.82,170.49) -- (387.82,212.58) ;
%Straight Lines [id:da011557859414001781] 
\draw    (334.82,188) -- (371.82,228.58) ;
%Straight Lines [id:da9938796833368464] 
\draw    (295.82,229.58) -- (334.82,188) ;
%Straight Lines [id:da6067701971645951] 
\draw [color={rgb, 255:red, 208; green, 2; blue, 27 }  ,draw opacity=1 ][line width=3]    (331.82,86.67) -- (402.82,9.58) ;
%Straight Lines [id:da8927086712592955] 
\draw [color={rgb, 255:red, 208; green, 2; blue, 27 }  ,draw opacity=1 ][line width=3]    (331.82,173.49) -- (331.82,85.67) ;
%Straight Lines [id:da928505724242965] 
\draw [color={rgb, 255:red, 208; green, 2; blue, 27 }  ,draw opacity=1 ][line width=3]    (293.82,213.07) -- (332.82,171.49) ;
%Curve Lines [id:da30623624313896536] 
\draw    (332.54,104.17) .. controls (332.98,104.42) and (333.43,104.64) .. (333.88,104.85) .. controls (336.51,106.06) and (339.14,106.61) .. (341.68,106.61) .. controls (348.94,106.61) and (355.42,102.13) .. (359.08,95.82) .. controls (361.07,92.4) and (362.23,88.43) .. (362.23,84.31) .. controls (362.23,82.8) and (362.07,81.28) .. (361.75,79.76) .. controls (360.77,75.2) and (358.26,70.64) .. (353.81,66.6)(331.1,106.81) .. controls (331.61,107.08) and (332.12,107.34) .. (332.63,107.57) .. controls (335.68,108.98) and (338.74,109.61) .. (341.68,109.61) .. controls (349.98,109.61) and (357.46,104.57) .. (361.67,97.33) .. controls (363.92,93.46) and (365.23,88.96) .. (365.23,84.31) .. controls (365.23,82.6) and (365.05,80.87) .. (364.68,79.13) .. controls (363.59,74.03) and (360.81,68.91) .. (355.83,64.38) ;
\draw [shift={(362.66,91.35)}, rotate = 114.12] [fill={rgb, 255:red, 0; green, 0; blue, 0 }  ][line width=0.08]  [draw opacity=0] (8.93,-4.29) -- (0,0) -- (8.93,4.29) -- cycle    ;
%Curve Lines [id:da7109753935870375] 
\draw    (315.86,189.64) .. controls (311.4,185.93) and (308.97,181.69) .. (308.04,177.47) .. controls (307.74,176.09) and (307.59,174.72) .. (307.59,173.36) .. controls (307.59,165.99) and (311.84,159.13) .. (317.77,155.59) .. controls (320.47,153.97) and (323.52,153.05) .. (326.67,153.05) .. controls (328.06,153.05) and (329.47,153.23) .. (330.88,153.61) .. controls (331.37,153.74) and (331.86,153.9) .. (332.34,154.08)(317.78,187.34) .. controls (313.93,184.13) and (311.78,180.48) .. (310.97,176.82) .. controls (310.72,175.66) and (310.59,174.5) .. (310.59,173.36) .. controls (310.59,167.06) and (314.24,161.19) .. (319.31,158.17) .. controls (321.54,156.83) and (324.06,156.05) .. (326.67,156.05) .. controls (327.8,156.05) and (328.94,156.19) .. (330.09,156.5) .. controls (330.49,156.61) and (330.89,156.74) .. (331.29,156.89) ;
\draw [shift={(313.49,161.29)}, rotate = 120.66] [fill={rgb, 255:red, 0; green, 0; blue, 0 }  ][line width=0.08]  [draw opacity=0] (8.93,-4.29) -- (0,0) -- (8.93,4.29) -- cycle    ;

% Text Node
\draw (369,73) node [anchor=north west][inner sep=0.75pt]   [align=left] {{\fontfamily{ptm}\selectfont R}};
% Text Node
\draw (291,167) node [anchor=north west][inner sep=0.75pt]   [align=left] {L};
% Text Node
\draw (323,116.4) node [anchor=north west][inner sep=0.75pt]    {$Z_{i}$};

\end{tikzpicture}
    \caption{The red line indicates a part of the path $P_{\gamma}$. The representation $[\gamma]$ is given by $[\cdots L X_{z_i} R \cdots]$ from the fat graph. As $L$, $R$, and $X_{z_i}$ lie in $PSL(2,\mathbb{R})$, the matrix product is in $PSL(2,\mathbb{R})$.}
\end{figure}

To define the Poisson algebra generated by closed geodesics, we state the following correspondence:
\begin{prop}
    \textcolor{black}{\cite[Theorem 1.6]{15}} \label{prop234}
    The following are in one-to-one correspondence:
    \begin{enumerate}
        \item Conjugacy classes $[\gamma]$ of hyperbolic elements of the Fuchsian group $\Delta_{g,s}$.
        \item Closed geodesics on the Riemann surface whose length $l_\gamma$ is given by an equation $\text{Tr}(\gamma) = 2\cosh({l_\gamma \over 2})$.
        \item Closed paths in a fat graph $\Gamma$.
        \item Conjugacy classes in $\pi_1(\Sigma_{g,s})$.
    \end{enumerate}
\end{prop}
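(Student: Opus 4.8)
The plan is to prove the four classes coincide by a short cycle of identifications, separating the purely topological equivalences (1)~$\Leftrightarrow$~(4)~$\Leftrightarrow$~(3) from the hyperbolic-geometry content, which is (1)~$\Leftrightarrow$~(2). For (1)~$\Leftrightarrow$~(4) I would start from the fact that $\mathbb H$ is contractible, so $\mathbb H \to \mathbb H/\Delta_{g,s} = \Sigma_{g,s}$ is the universal covering and $\Delta_{g,s}$ is its deck group; hence $\pi_1(\Sigma_{g,s}) \cong \Delta_{g,s}$, and this identification carries conjugacy classes to conjugacy classes. Because $s \ge 1$ and we work on the reduced surface with geodesic bottleneck boundary, $\Delta_{g,s}$ is free of rank $2g+s-1$, acting freely and properly discontinuously, hence torsion-free and without parabolics; so every nontrivial element is hyperbolic. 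Therefore conjugacy classes of hyperbolic elements of $\Delta_{g,s}$, nontrivial conjugacy classes in $\pi_1(\Sigma_{g,s})$, and free homotopy classes of essential loops on $\Sigma_{g,s}$ are one and the same, which is (1)~$\Leftrightarrow$~(4).

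For (3)~$\Leftrightarrow$~(4) I would use that the fat graph $\Gamma$ dual to the ideal triangulation is a spine of the reduced surface, i.e. a deformation retract, so $\pi_1(\Gamma) \cong \pi_1(\Sigma_{g,s})$ is free; a closed edge-path in $\Gamma$, taken up to cyclic rotation and reduction, is precisely a conjugacy class in this free group. Moreover the bijection can be made explicit: reading off a closed path through the fat graph produces an alternating word in the edge matrices $X_{z_\alpha}$ of \eqref{2.8} and the turn matrices $L,R$ of \eqref{2.9}, and the resulting element of $PSL(2,\mathbb R)$, well defined up to conjugacy, is exactly the monodromy $\gamma \in \Delta_{g,s}$ of that loop. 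This is Fock's lift, and it directly realizes the map (3)~$\to$~(1); composing with the identification above gives (3)~$\Leftrightarrow$~(4).

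It remains to prove (1)~$\Leftrightarrow$~(2). Here I would invoke that a hyperbolic element $\gamma \in PSL(2,\mathbb R)$ has two fixed points on $\partial\mathbb H$ and a unique invariant geodesic $A_\gamma$ (its axis) joining them, along which $\gamma$ acts by translation; diagonalizing, the eigenvalues are $e^{\pm l_\gamma/2}$, so the translation length satisfies $\text{Tr}(\gamma) = \pm 2\cosh(l_\gamma/2)$, matching statement (2). The image $\pi(A_\gamma) \subset \Sigma_{g,s}$ is then a closed geodesic of length $l_\gamma$, and conjugating $\gamma$ moves $A_\gamma$ by a deck transformation without changing its image, so $[\gamma]$ determines a well-defined closed geodesic. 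For the inverse I would use the standard consequence of negative curvature (convexity of the distance function on $\mathbb H$, equivalently curve shortening): every free homotopy class of essential loops contains a unique closed geodesic, whose lift to $\mathbb H$ is the axis of a hyperbolic deck transformation. Stitching this together with the earlier identifications closes the loop of bijections.

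The main obstacle I anticipate is bookkeeping around the boundary and primitivity rather than anything deep. One must check that on the reduced surface the peripheral (bottleneck) curves give hyperbolic conjugacy classes and not parabolic ones -- which is exactly why the ``reduced surface with geodesic boundary'' set-up of Section~\ref{Ch2.3} is used -- and one must fix conventions: whether ``closed geodesic'' includes multiple covers (so that imprimitive conjugacy classes are allowed) and whether all four families are taken with orientation. Maintaining a single consistent choice across all four descriptions is what the argument really comes down to; the geometric inputs (axis of a hyperbolic isometry, the trace--length formula, and existence and uniqueness of a geodesic representative) are classical and I would simply cite them, as in \cite{15}.
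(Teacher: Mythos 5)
The paper does not prove this proposition at all: it is quoted verbatim from \cite[Theorem 1.6]{15} as a preliminary, so there is no in-paper argument to compare against. Your outline is the standard and correct one — identifying $\Delta_{g,s}$ with the deck group of $\mathbb H \to \Sigma_{g,s}$ for (1)$\Leftrightarrow$(4), using that the fat graph is a spine for (3)$\Leftrightarrow$(4), and the axis/trace--length formula plus uniqueness of geodesic representatives for (1)$\Leftrightarrow$(2) — and the caveats you flag (hyperbolic rather than parabolic peripheral elements on the reduced surface, sign of the trace in $PSL(2,\mathbb R)$, primitivity and orientation conventions) are exactly the right ones; nothing further is needed beyond the citation.
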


Let $P_\gamma$ be a path in the fat graph $\Gamma$ corresponding to $[\gamma]$, and let $z_1,\ldots,z_n$ be the variables associated with the edges of $P_\gamma$. Recall that the conjugacy class of an element $\gamma \in \Delta_{g,s}$ is denoted by $[\gamma]$. As the path travels along the edges and turns left or right, it determines a product expression for $[\gamma]$ in terms of the matrices $X_{z_i}$, $L$, and $R$; see \cite{13}.

The geodesic functions $G_{\gamma}:=\operatorname{Tr}([\gamma])$ are in one-to-one correspondence with the closed geodesics on the surface, as established in Proposition~\ref{prop234}. These geodesic functions also generate a Poisson algebra induced by the Poisson bracket in \eqref{2.8}. This Poisson structure is called the \textit{Goldman bracket}.

To describe this Poisson structure, it suffices to consider a single intersection point of two geodesics by the Leibniz rule. Let $P,Q \in \Delta_{g,s}$, and suppose that the loops associated with $P$ and $Q$ intersect at exactly one point. The Poisson bracket at this intersection point is given by

\begin{equation} \label{2.10}
    \left\{G_P,G_Q\right\} = {1\over 2}G_{PQ} - {1 \over 2}G_{PQ^{-1}}
\end{equation}
\begin{figure}[H]
    \centering
    \includegraphics[width=0.5\linewidth]{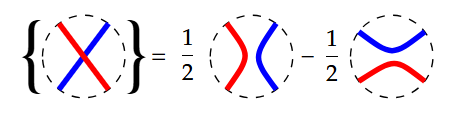}
    \caption{Graphical description of the Poisson bracket.}
\end{figure}

We also recall the classical skein relation:
\begin{equation} \label{2.11}
G_PG_Q = G_{PQ} + G_{PQ^{-1}}.
\end{equation}
It follows directly from the trace identity $\operatorname{Tr}(A)\operatorname{Tr}(B)
=
\operatorname{Tr}(AB)+\operatorname{Tr}(AB^{-1})$ for $A,B \in SL_2(\mathbb{C})$.
\begin{figure}[H]
    \centering
    \includegraphics[width=0.45\linewidth]{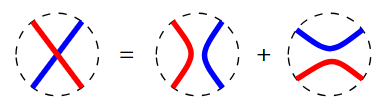}
    \caption{Graphical description of the skein relation.}
\end{figure}

\section{$\mathcal A_n$-quiver and its Birational Weyl Group Action}

\subsection{Log-canonical Coordinates for the Symplectic Groupoid and $\mathcal A_n$-quiver} \label{Ch3.1}

To describe a log-coordinate representation for the matrix entries of $\mathcal{A}_n$, we utilize Fock--Goncharov higher Teichmüller variables on an $SL_n$-quiver and a plabic graph $P_n$ dual to the $n$-triangular quiver. A detailed description is provided in \cite{1}.

\begin{figure}[H] 
    \centering
	\begin{tikzcd}[scale cd=0.7,column sep = tiny]
	&&&& \textcolor{rgb,255:red,0;green,0;blue,255}{{{Z_{0,0,4}}}} \\
	&&& \textcolor{rgb,255:red,0;green,0;blue,255}{{{Z_{1,0,3}}}} && \textcolor{rgb,255:red,0;green,0;blue,255}{{Z_{0,1,3}}} \\
	&& \textcolor{rgb,255:red,0;green,0;blue,255}{{{Z_{2,0,2}}}} && {Z_{1,1,2}} && \textcolor{rgb,255:red,0;green,0;blue,255}{{Z_{0,2,2}}} \\
	& \textcolor{rgb,255:red,0;green,0;blue,255}{{{Z_{3,0,1}}}} && {Z_{2,1,1}} && {Z_{1,2,1}} && \textcolor{rgb,255:red,0;green,0;blue,255}{{Z_{0,3,1}}} \\
	\textcolor{rgb,255:red,0;green,0;blue,255}{{{Z_{4,0,0}}}} && \textcolor{rgb,255:red,0;green,0;blue,255}{{{Z_{3,1,0}}}} && \textcolor{rgb,255:red,0;green,0;blue,255}{{{Z_{2,2,0}}}} && \textcolor{rgb,255:red,0;green,0;blue,255}{{{Z_{1,3,0}}}} && \textcolor{rgb,255:red,0;green,0;blue,255}{{{Z_{0,4,0}}}}
	\arrow[dashed, from=1-5, to=2-6]
	\arrow[dashed, from=2-4, to=1-5]
	\arrow[from=2-4, to=3-5]
	\arrow[from=2-6, to=2-4]
	\arrow[dashed, from=2-6, to=3-7]
	\arrow[dashed, from=3-3, to=2-4]
	\arrow[from=3-3, to=4-4]
	\arrow[from=3-5, to=2-6]
	\arrow[from=3-5, to=3-3]
	\arrow[from=3-5, to=4-6]
	\arrow[from=3-7, to=3-5]
	\arrow[dashed, from=3-7, to=4-8]
	\arrow[dashed, from=4-2, to=3-3]
	\arrow[from=4-2, to=5-3]
	\arrow[from=4-4, to=3-5]
	\arrow[from=4-4, to=4-2]
	\arrow[from=4-4, to=5-5]
	\arrow[from=4-6, to=3-7]
	\arrow[from=4-6, to=4-4]
	\arrow[from=4-6, to=5-7]
	\arrow[from=4-8, to=4-6]
	\arrow[dashed, from=4-8, to=5-9]
	\arrow[dashed, from=5-1, to=4-2]
	\arrow[from=5-3, to=4-4]
	\arrow[dashed, from=5-3, to=5-1]
	\arrow[from=5-5, to=4-6]
	\arrow[dashed, from=5-5, to=5-3]
	\arrow[from=5-7, to=4-8]
	\arrow[dashed, from=5-7, to=5-5]
	\arrow[dashed, from=5-9, to=5-7]
\end{tikzcd}
    \caption{4-triangular quiver with variables. Note that the dashed arrow has weight $1 \over 2$ and blue vertices are frozen.}
    \label{Fig8}
\end{figure}
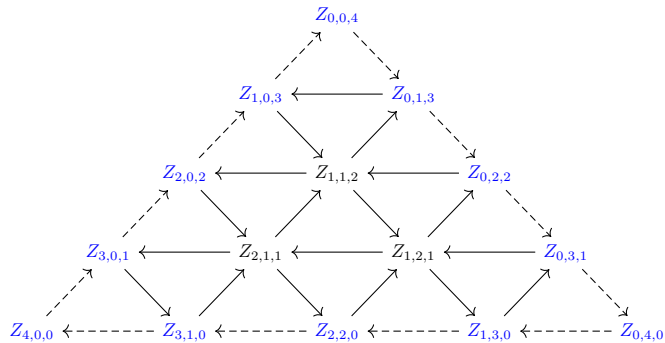
Consider an $n$-triangular quiver; see Figure~\ref{Fig8}. Vertices in the $k$th row from the top are labeled by $(i,k-i-1,n-k+1)$ for $i \in \{0,\ldots,k-1\}$, ordered from right to left. The vertex labeled by $(i,k-i-1,n-k+1)$ is assigned the variable $Z_{i,k-i-1,n-k+1}$. Vertices on the boundary of the triangle are frozen, and dashed arrows between frozen vertices have weight $1/2$.

By removing the three corner vertices corresponding to the variables $Z_{n,0,0}$, $Z_{0,n,0}$, and $Z_{0,0,n}$, we obtain a \textit{Fock--Goncharov $SL_n$-quiver}, or simply an $SL_n$-quiver. This quiver is naturally endowed with the Poisson structure defined in \eqref{2.6}.

\begin{figure}[H]
    \centering
    \begin{tikzcd}[scale cd=0.8,sep = tiny]
	&&&& \textcolor{rgb,255:red,255;green,51;blue,51}{{{1'}}} && \textcolor{rgb,255:red,255;green,0;blue,0}{\bullet} && \textcolor{rgb,255:red,255;green,51;blue,51}{1} \\
	&&&& \textcolor{rgb,255:red,51;green,173;blue,255}{{Z_{1,0,2}}} &&&& \textcolor{rgb,255:red,51;green,173;blue,255}{{Z_{0,1,2}}} \\
	&& \textcolor{rgb,255:red,255;green,51;blue,51}{{{2'}}} && \textcolor{rgb,255:red,255;green,0;blue,0}{\bullet} && \textcolor{rgb,255:red,255;green,0;blue,0}{\bullet} && \textcolor{rgb,255:red,255;green,0;blue,0}{\bullet} && \textcolor{rgb,255:red,255;green,51;blue,51}{2} \\
	&& \textcolor{rgb,255:red,51;green,173;blue,255}{{Z_{2,0,1}}} &&&& {Z_{1,1,1}} &&&& \textcolor{rgb,255:red,51;green,173;blue,255}{{Z_{0,2,1}}} \\
	\textcolor{rgb,255:red,255;green,51;blue,51}{{{3'}}} && \textcolor{rgb,255:red,255;green,0;blue,0}{\bullet} && \textcolor{rgb,255:red,255;green,0;blue,0}{\bullet} && \textcolor{rgb,255:red,255;green,0;blue,0}{\bullet} && \textcolor{rgb,255:red,255;green,0;blue,0}{\bullet} && \textcolor{rgb,255:red,255;green,0;blue,0}{\bullet} && \textcolor{rgb,255:red,255;green,51;blue,51}{3} \\
	&&&& \textcolor{rgb,255:red,51;green,173;blue,255}{{Z_{2,1,0}}} &&&& \textcolor{rgb,255:red,51;green,173;blue,255}{{Z_{1,2,0}}} \\
	&& \textcolor{rgb,255:red,255;green,51;blue,51}{{{1''}}} &&&& \textcolor{rgb,255:red,255;green,51;blue,51}{{{2''}}} &&&& \textcolor{rgb,255:red,255;green,51;blue,51}{{{3''}}}
	\arrow[color={rgb,255:red,255;green,0;blue,0}, from=1-7, to=1-5]
	\arrow[color={rgb,255:red,255;green,0;blue,0}, from=1-7, to=3-7]
	\arrow[color={rgb,255:red,255;green,0;blue,0}, from=1-9, to=1-7]
	\arrow[from=2-5, to=4-7]
	\arrow[from=2-9, to=2-5]
	\arrow[dashed, from=2-9, to=4-11]
	\arrow[color={rgb,255:red,255;green,0;blue,0}, from=3-5, to=3-3]
	\arrow[color={rgb,255:red,255;green,0;blue,0}, from=3-5, to=5-5]
	\arrow[color={rgb,255:red,255;green,0;blue,0}, from=3-7, to=3-5]
	\arrow[color={rgb,255:red,255;green,0;blue,0}, from=3-9, to=3-7]
	\arrow[color={rgb,255:red,255;green,0;blue,0}, from=3-9, to=5-9]
	\arrow[color={rgb,255:red,255;green,0;blue,0}, from=3-11, to=3-9]
	\arrow[dashed, from=4-3, to=2-5]
	\arrow[from=4-3, to=6-5]
	\arrow[from=4-7, to=2-9]
	\arrow[from=4-7, to=4-3]
	\arrow[from=4-7, to=6-9]
	\arrow[from=4-11, to=4-7]
	\arrow[color={rgb,255:red,255;green,0;blue,0}, from=5-3, to=5-1]
	\arrow[color={rgb,255:red,255;green,0;blue,0}, from=5-3, to=7-3]
	\arrow[color={rgb,255:red,255;green,0;blue,0}, from=5-5, to=5-3]
	\arrow[color={rgb,255:red,255;green,0;blue,0}, from=5-7, to=5-5]
	\arrow[color={rgb,255:red,255;green,0;blue,0}, from=5-7, to=7-7]
	\arrow[color={rgb,255:red,255;green,0;blue,0}, from=5-9, to=5-7]
	\arrow[color={rgb,255:red,255;green,0;blue,0}, from=5-11, to=5-9]
	\arrow[color={rgb,255:red,255;green,0;blue,0}, from=5-11, to=7-11]
	\arrow[color={rgb,255:red,255;green,0;blue,0}, from=5-13, to=5-11]
	\arrow[from=6-5, to=4-7]
	\arrow[from=6-9, to=4-11]
	\arrow[dashed, from=6-9, to=6-5]
\end{tikzcd}
    \caption{Fock--Goncharov $SL_3$-quiver and a graph $P_3$ which is dual to the $3$-triangular quiver.}
    \label{Fig9}
\end{figure}
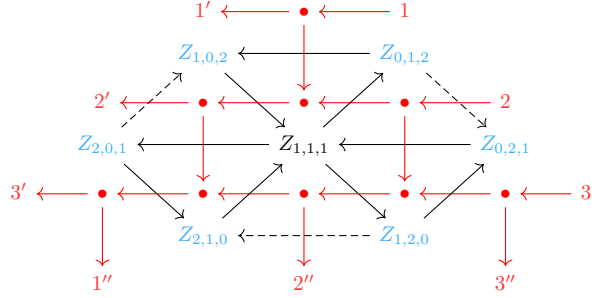

On the planar graph $P_n$ that is dual to the $n$-triangular quiver, we label the vertices on the right, left, and bottom sides from $1$ to $n$, $1'$ to $n'$, and $1''$ to $n''$ as shown in the orange graph in Figure~\ref{Fig9}.

Within these frameworks, transport matrices $T_{(1)}$, $T_{(2)}$ are defined as follows:

\begin{defn}
We define transport matrices $T_{(1)}$ and $T_{(2)}$ by

$$
(T_{(1)})_{ji} = \sum_{\text{oriented paths } p: i \to j'} w(p) \text{ and } (T_{(2)})_{ji} = \sum_{\text{oriented paths } p: i \to j''} w(p)
$$

where $w(p) = \prod_{v}Z_v$ such that the product is taken over all vertices $v$ in the $SL_n$-quiver that is to the right of the path $p$.
\end{defn}

\begin{exmp}
    Consider paths that start from $2$ and end at $3'$ of the orange quiver in Figure \ref{Fig9}. There are two such paths $p_1$ and $p_2$ and we compute $w(p_1) = Z_{1,0,2}Z_{0,1,2}Z_{2,0,1}Z_{1,1,1}$ and $w(p_2) = Z_{1,0,2}Z_{0,1,2}Z_{2,0,1}$. Thus, we get
    $$(T_{(1)})_{32} = Z_{1,0,2}Z_{0,1,2}Z_{2,0,1}Z_{1,1,1} + Z_{1,0,2}Z_{0,1,2}Z_{2,0,1}.$$

    Similarly, consider paths that start from $2$ and end at $1''$ of the orange quiver in Figure \ref{Fig9}. Again, there are two such paths $p'_1$ and $p'_2$, and we compute $w(p'_1) = Z_{1,0,2}Z_{0,1,2}Z_{2,0,1}Z_{1,1,1}$ and $w(p'_2) = Z_{1,0,2}Z_{0,1,2}Z_{2,0,1}$ Then, we have
    $$(T_{(2)})_{12} = Z_{1,0,2}Z_{0,1,2}Z_{2,0,1}Z_{1,1,1} + Z_{1,0,2}Z_{0,1,2}Z_{2,0,1}.$$
    
\end{exmp}

\begin{defn}
    The $n \times n$ matrix $S$ is defined by  $S_{ij} = (-1)^{n-i}\delta_{i,n+1-j}$, and the scalars $D_1$ and $D_2$ are defined as follows:

$$D_1 := \prod_{k=1}^n \prod_{i+j = n-k}Z_{i,j,k}^{k \over n} \text { and } D_2 := \prod_{k=1}^n \prod_{i+j = n-k}Z_{i,k,j}^{k \over n}.$$
\end{defn}

Then, we have the following theorem:

\begin{thm} \label{thm314} \textcolor{black}{\cite{1}}
Let $M_1 = ST_{(1)}D_1^{-1}$ and $M_2 = ST_{(2)}D_2^{-1}$. The matrix $\mathbb A := M_1^T M_2$ is an upper triangular matrix and its entries satisfy the Bondal Poisson bracket (\ref{1.4}) under the natural Poisson structure of the quiver defined in (\ref{2.6}).\end{thm}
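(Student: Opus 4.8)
The plan is to verify the two assertions of Theorem \ref{thm314} separately: that $\mathbb A = M_1^T M_2$ is unipotent upper-triangular, and that its entries reproduce the Bondal bracket (\ref{1.4}). For the first part, I would work directly with the path-model definition of the transport matrices. The key structural fact is that $T_{(1)}$ (resp.\ $T_{(2)}$) records oriented paths in the plabic graph $P_n$ from the right boundary into the left (resp.\ bottom) boundary, with weights given by products of $Z_v$ over vertices to the right of the path. Ordering the boundary vertices as in Figure \ref{Fig9}, one checks that the path from $i$ to $j'$ exists only when $j \ge i$ (no oriented path can "overshoot"), so $T_{(1)}$ is triangular with respect to the boundary ordering, and similarly for $T_{(2)}$; multiplying by the anti-diagonal $S$ and the diagonal scalars $D_1, D_2$ then arranges $M_1, M_2$ into the correct (anti-)triangular shape so that $M_1^T M_2$ lands in $\mathcal A_n$. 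The unipotence (units on the diagonal) comes from tracking the unique extremal path contributing to each diagonal entry, whose weight is a monomial in the $Z_v$; the normalization by $D_1, D_2$ is precisely chosen so these monomials become $1$ on the diagonal. This is essentially a careful but routine bookkeeping argument on the combinatorics of $P_n$, following \cite{1}.

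For the Poisson part, the cleanest route is to reduce everything to a statement about $2\times 2$ minors and then to a local computation. Since the natural bracket on the quiver is log-canonical in the $Z_v$ (equation (\ref{2.6})), and each matrix entry $a_{ik}$ of $\mathbb A$ is a sum of weight-monomials, the bracket $\{a_{ik}, a_{jl}\}$ is a bilinear expression in the $Z_v$ that one wants to identify with the right-hand sides of (\ref{1.4}). I would exploit the fact that the path weights $w(p)$ are (up to the $D$-normalization) Laurent monomials, so brackets of entries can be computed via the matrix $\epsilon$ restricted to the vertices appearing in the relevant paths; the intersection pattern of two paths in $P_n$ controls which $\epsilon_{uv}$ survive. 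The five cases of (\ref{1.4}) correspond to the five possible relative positions of the index pairs $(i,k)$ and $(j,l)$, and each reduces to understanding how the supports of the contributing paths overlap. The terms $a_{ij}a_{kl} - a_{il}a_{kj}$ that appear (the Plücker-type quadratics) should emerge because the bracket of two "crossing" path-sums produces exactly the weight of a resolved pair of paths, i.e.\ a product of two other entries; the half-integer coefficients $\pm\tfrac12$ trace back to the weight-$\tfrac12$ dashed arrows among the frozen vertices, which contribute half-integer entries to $\epsilon$.

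The main obstacle I anticipate is the "nested" and "linked" cases $i<j<k<l$ and the three cases involving a shared index ($i<k<l$ with $a_{ik}, a_{kl}$, etc.), where the bracket must produce a genuine \emph{polynomial} correction term like $-a_{il}$ rather than something merely proportional to the product $a_{ik}a_{kl}$. Getting these right requires not just the leading log-canonical behavior but a precise accounting of how the sum-over-paths structure interacts with the bracket: one has to show that the "cross terms" between distinct monomials in $a_{ik}$ and in $a_{kl}$ assemble — after using the explicit form of $S$ and the cancellations forced by $M_1^T M_2$ being upper-triangular — into a single lower-complexity entry $a_{il}$. I would organize this by first proving the identity for small $n$ (say $n=3,4$) by direct expansion to fix the combinatorial pattern, then argue the general case by an induction that peels off the outermost row of the triangular quiver, using that the transport matrices for $P_n$ are built from those of $P_{n-1}$ by a prescribed one-row extension. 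Throughout, I would lean on Theorem \ref{thm314}'s provenance in \cite{1} for the hardest cancellations, presenting the argument as a verification rather than rederiving the plabic-graph formalism from scratch.
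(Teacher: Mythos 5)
The paper does not actually prove Theorem \ref{thm314}: it is imported verbatim from \cite{1}, with the citation placed in the theorem statement, and the text immediately moves on to the compatibility conditions. So there is no in-paper argument to compare yours against; what follows is an assessment of your sketch on its own terms.

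Your outline of the triangularity and Poisson-bracket verifications is a reasonable reconstruction of the strategy in \cite{1} (path-model bookkeeping for the shape of $T_{(1)},T_{(2)}$, then log-canonical bracket computations organized by the intersection pattern of contributing paths). However, there is one concrete error: you assert that the normalization by $D_1, D_2$ makes the diagonal entries of $\mathbb A$ equal to $1$, i.e.\ that $\mathbb A$ is unipotent. The theorem does not claim unipotence, and the paper explicitly states right after it that ``diagonals of $\mathbb A = M_1^T M_2$ are generally not units,'' with $\mathbb A_{i,i} = \prod_{j=1}^{i} K_j$ up to the scalars $D_1, D_2$. Unipotence is achieved only by restricting to the locus $K_i = 1$ for all $i$ --- this is precisely the compatibility condition whose imposition (together with the amalgamation of $Z_{i,0,n-i}$ with $Z_{n-i,i,0}$ and the removal of $Z_{0,i,n-i}$) produces the $\mathcal A_n$-quiver. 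If your ``unique extremal path'' argument really forced the diagonal to be $1$ after the $D$-normalization, the entire construction of Section \ref{Ch3.1} would be unnecessary, so that step of your proposal would fail. The remainder of the Poisson computation is only a plan (you defer the hard cancellations in the shared-index cases back to \cite{1}), which is acceptable for a cited result but should not be presented as a proof.
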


In general, diagonal entries of $\mathbb A = M_1^TM_2$ are not necessarily units, so we impose compatibility conditions to make $\mathbb A$ unipotent. 

We have $\mathbb A_{i,j} = \sum_k (M_1^T)_{i,k}(M_2)_{k,j} = \sum_k (T_{(1)})_{k,i}(T_{(2)})_{k,j}$ up to the scalars $D_1$ and $D_2$. Moreover, all monomials of $(T_{(1)})_{k,i}$ contain $\prod_{i=1}^kZ_{i,0,n-i}$ and all monomials of $(T_{(2)})_{k,j}$ contain $\prod_{i=1}^kZ_{n-i,i,0}.$ These observations imply the matrix $\mathbb A$ depends on $Z_{i,0,n-i}Z_{n-i,i,0}$ but not each variable independently. 

Hence, we replace $Z_{i,0,n-i}Z_{n-i,i,0}$ by a new variable $\overline Z_i.$ In other words, we amalgamate two vertices $Z_{i,0,n-i}$ and $Z_{n-i,i,0}$. 

\begin{figure}[H]
    \centering
    \includegraphics[width=0.7\linewidth]{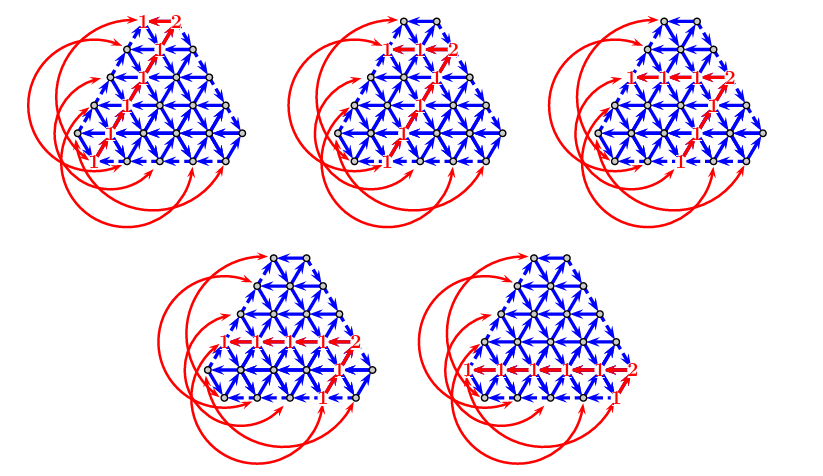}
    \caption{\textcolor{black}{\cite{1}} 
    Graphical description of $K_i$ for $SL_6$-quiver.}
    \label{Fig10}
\end{figure}

Let
\begin{equation}
    K_i := Z_{0,i,n-i}^2
    \prod_{j=1}^{i-1} Z_{j,i-j,n-i}\,\overline{Z}_i
    \prod_{j=1}^{n-i-1} Z_{j,i,n-i-j},
    \qquad i \in \{1,\ldots,n-1\}.
\end{equation}
Consider the locus defined by $K_i=1$ for all $i$ to make $\mathbb{A}$ unipotent. Each $K_i$ can be expressed as shown in Figure~\ref{Fig10}. Note that $\mathbb{A}_{i,i} = \prod_{j=1}^i K_j$ up to the scalars $D_1$ and $D_2$.

The remaining elements still form a log-canonical coordinate system on the locus; their Poisson bracket is determined by a quiver constructed by forgetting the variables $Z_{0,i,n-i}$ and amalgamating $Z_{i,0,n-i}$ with $Z_{n-i,i,0}$ for each $i$.

After unfreezing all vertices, we obtain a refined quiver called an \textbf{$\mathcal{A}_n$-quiver}. Examples of $\mathcal{A}_n$-quivers for $n=3,4,5,6$ are as follows:
\begin{figure}[H]
    \centering
    \includegraphics[width=0.8\linewidth]{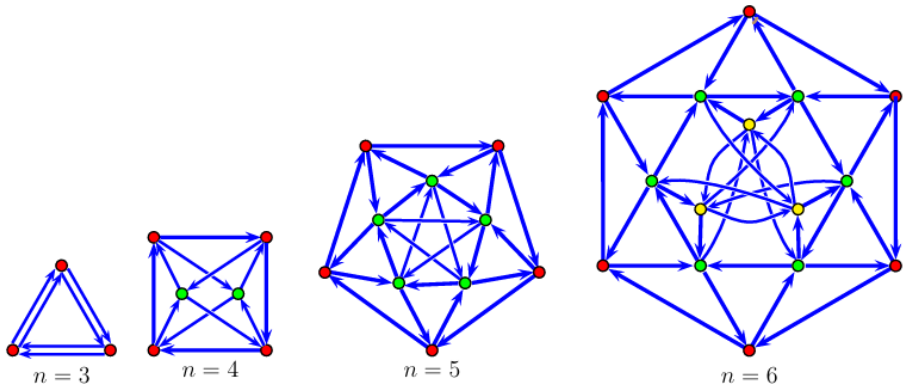}
    \caption{\textcolor{black}{\cite{1}} $\mathcal A_n$-quivers.}
    \label{Fig11}
\end{figure}

\begin{rem}(\textit{$\mathcal{A}_n$-quivers})
    \begin{enumerate}
        \item There are no dashed arrows due to the amalgamation. 
        \item The cluster Poisson coordinates associated with the $\mathcal{A}_n$-quiver are the log-canonical coordinates on the $\mathcal{A}_n$-groupoid.
        \item The $\mathcal{A}_3$-quiver coincides with the Markov quiver. Hence, it arises from a triangulation of a once-punctured torus. The $\mathcal{A}_4$-quiver arises from a triangulation of a twice-punctured torus.
    \end{enumerate}
\end{rem}

Next, we introduce the $\Sigma_{n-1}$-quiver, which will be used in Section~\ref{Ch5}.

\begin{defn}\label{defn316}
The $\Sigma_{n-1}$-quiver is obtained from the $SL_n$-quiver by unfreezing and amalgamating the vertices $Z_{i,0,n-i}$ and $Z_{n-i,i,0}$ for each $1 \le i \le n-1$. The vertices $Z_{0,i,n-i}$ are retained as frozen. Note that, in this case, the matrix $\mathbb{A}$ is not unipotent but upper triangular.
\end{defn}

\subsection{Indices of $\mathcal A_n$-quiver} \label{Ch3.2}
In this section, we define a new coordinate system $X_{l,j}$. This step is crucial because the Fock--Goncharov coordinates $Z_{i,j,k}$ are not convenient to express the Weyl group action we intend to construct. 

The $\mathcal{A}_n$-quiver contains $\lfloor n/2 \rfloor$ main cycles, which are distinguished by the red, green, and yellow vertices in Figure \ref{Fig11}. We relabel the cluster variables according to these cycles as follows: Using the Fock--Goncharov coordinates, we can notice that the $l$th main cycle is (see Figure \ref{Fig12}),
$$Z_{(l,n-l-1,1)} \rightarrow Z_{(l,n-l,0)} = Z_{(n-l,0,l)} \rightarrow Z_{(n-l,1,l-1)} \rightarrow \cdots \rightarrow Z_{(n-l,l-1,1)} \rightarrow Z_{(n-l,l,0)} = Z_{(l,0,n-l)} \rightarrow $$$$Z_{(l,1,n-l-1)}\rightarrow \cdots \rightarrow Z_{(l,n-l-2,2)}\rightarrow Z_{(l,n-l-1,1)}$$
when $l \ne {n \over 2},$ or
$$Z_{(l,n-l-1,1)} \rightarrow Z_{(l,n-l,0)} = Z_{(n-l,0,l)} \rightarrow Z_{(n-l,1,l-1)} \rightarrow \cdots \rightarrow Z_{(n-l,l-1,1)} = Z_{(l,n-l-1,1)}$$
when $l = {n \over 2}.$ Note that the equalities above follow from the amalgamation discussed in the previous section.

\begin{figure}[H]
    \centering
    \begin{tikzcd}[scale cd=0.6,column sep = tiny]
	&&&& \textcolor{rgb,255:red,255;green,0;blue,0}{{Z_{1,0,5}}(X_{1,3})} \\
	&&& \textcolor{rgb,255:red,0;green,0;blue,255}{{Z_{2,0,4}}(X_{2,4})} && \textcolor{rgb,255:red,255;green,0;blue,0}{{Z_{1,1,4}}(X_{1,4})} \\
	&& \textcolor{rgb,255:red,0;green,255;blue,0}{{Z_{3,0,3}}(X_{3,2})} && \textcolor{rgb,255:red,0;green,0;blue,255}{{Z_{2,1,3}}(X_{2,5})} && \textcolor{rgb,255:red,255;green,0;blue,0}{{Z_{1,2,3}}(X_{1,5})} \\
	& \textcolor{rgb,255:red,0;green,0;blue,255}{{Z_{4,0,2}}(X_{2,2})} && \textcolor{rgb,255:red,0;green,255;blue,0}{{Z_{3,1,2}}(X_{3,3})} && \textcolor{rgb,255:red,0;green,0;blue,255}{{Z_{2,2,2}}(X_{2,6})} && \textcolor{rgb,255:red,255;green,0;blue,0}{{Z_{1,3,2}}(X_{1,6})} \\
	\textcolor{rgb,255:red,255;green,0;blue,0}{{Z_{5,0,1}}(X_{1,2})} && \textcolor{rgb,255:red,0;green,0;blue,255}{{Z_{4,1,1}}(X_{2,3})} && \textcolor{rgb,255:red,0;green,255;blue,0}{{Z_{3,2,1}}(X_{3,1})} && \textcolor{rgb,255:red,0;green,0;blue,255}{{Z_{2,3,1}}(X_{2,1})} && \textcolor{rgb,255:red,255;green,0;blue,0}{{Z_{1,4,1}}(X_{1,1})} \\
	& \textcolor{rgb,255:red,255;green,0;blue,0}{{Z_{5,1,0}}(X_{1,3})} && \textcolor{rgb,255:red,0;green,0;blue,255}{{Z_{4,2,0}}(X_{2,4})} && \textcolor{rgb,255:red,0;green,255;blue,0}{{Z_{3,3,0}}(X_{3,2})} && \textcolor{rgb,255:red,0;green,0;blue,255}{{Z_{2,4,0}}(X_{2,2})} && \textcolor{rgb,255:red,255;green,0;blue,0}{{Z_{1,5,0}}(X_{1,2})}
	\arrow[from=1-5, to=2-6]
	\arrow[dashed, from=2-4, to=1-5]
	\arrow[from=2-4, to=3-5]
	\arrow[from=2-6, to=2-4]
	\arrow[from=2-6, to=3-7]
	\arrow[dashed, from=3-3, to=2-4]
	\arrow[from=3-3, to=4-4]
	\arrow[from=3-5, to=2-6]
	\arrow[from=3-5, to=3-3]
	\arrow[from=3-5, to=4-6]
	\arrow[from=3-7, to=3-5]
	\arrow[from=3-7, to=4-8]
	\arrow[dashed, from=4-2, to=3-3]
	\arrow[from=4-2, to=5-3]
	\arrow[from=4-4, to=3-5]
	\arrow[from=4-4, to=4-2]
	\arrow[from=4-4, to=5-5]
	\arrow[from=4-6, to=3-7]
	\arrow[from=4-6, to=4-4]
	\arrow[from=4-6, to=5-7]
	\arrow[from=4-8, to=4-6]
	\arrow[from=4-8, to=5-9]
	\arrow[dashed, from=5-1, to=4-2]
	\arrow[from=5-1, to=6-2]
	\arrow[from=5-3, to=4-4]
	\arrow[from=5-3, to=5-1]
	\arrow[from=5-3, to=6-4]
	\arrow[from=5-5, to=4-6]
	\arrow[from=5-5, to=5-3]
	\arrow[from=5-5, to=6-6]
	\arrow[from=5-7, to=4-8]
	\arrow[from=5-7, to=5-5]
	\arrow[from=5-7, to=6-8]
	\arrow[from=5-9, to=5-7]
	\arrow[from=5-9, to=6-10]
	\arrow[from=6-2, to=5-3]
	\arrow[from=6-4, to=5-5]
	\arrow[dashed, from=6-4, to=6-2]
	\arrow[from=6-6, to=5-7]
	\arrow[dashed, from=6-6, to=6-4]
	\arrow[from=6-8, to=5-9]
	\arrow[dashed, from=6-8, to=6-6]
	\arrow[dashed, from=6-10, to=6-8]
\end{tikzcd}
    \caption{We have $2$nd cycle (blue vertices) as $Z_{2,3,1} \to Z_{2,4,0} = Z_{4,0,2} \to Z_{4,1,1} \to Z_{4,2,0} = Z_{2,0,4} \to Z_{2,1,3} \to Z_{2,2,2} \to Z_{2,3,1}$ and $3$rd cycle (green vertices) as $Z_{3,2,1} \to Z_{3,3,0} = Z_{3,0,3} \to Z_{3,1,2} \to Z_{3,2,1}$ in the $\mathcal A_6$-quiver.}
    \label{Fig12}
\end{figure}
We label the cluster variables of the $l$th cycle as $X_{l,j}$ according to their cyclic order, with $l \in \{1,\dots,\lfloor n/2 \rfloor\}$ and $j \in \{1,\dots,N_l\}$. Here, $N_l$ denotes the length of the $l$th main cycle; $N_l=n/2$ if $l=n/2$, and $N_l=n$ otherwise. Specifically, for $l \ne n/2$, we 
label the variables as follows:
\[
\begin{gathered}
X_{l,1}:=Z_{(l,n-l-1,1)},\quad
X_{l,2}:=Z_{(l,n-l,0)}=Z_{(n-l,0,l)},\quad
X_{l,3}:=Z_{(n-l,1,l-1)},\quad \ldots,\quad
X_{l,l}:=Z_{(n-l,l-1,1)},\\
X_{l,l+1}:=Z_{(n-l,l,0)}=Z_{(l,0,n-l)},\quad
X_{l,l+2}:=Z_{(l,1,n-l-1)},\quad \ldots,\quad
X_{l,n}:=Z_{(l,n-l-2,2)}.
\end{gathered}
\]
The collection $I_l$ of cluster variable indices for the $l$th cycle is given by
\begin{equation} \label{3.2}
I_l = \{n(l-1)+1, n(l-1)+2,\dots,n(l-1)+N_l\}.
\end{equation}
We also extend the definition of the elements $X_{l,j}$ to all 
$j \in \mathbb{Z}$ by periodicity:
\[
X_{l,j+N_l}=X_{l,j}.
\]

\begin{prop}\label{prop321}
\textit{(Cycle symmetry of the $\mathcal{A}_n$-quiver)}

For any integer $k$, let $T_k$ be the shift operator defined by shifting the 
indices of each variable by $k$:
\[
T_k(X_{l,j}) = X_{l,j+k}
\quad \text{for all } 
l \in \left\{1,\ldots,\left\lfloor \frac{n}{2} \right\rfloor\right\}
\text{ and } j \in \{1,\ldots,N_l\}.
\]
The resulting quiver is isomorphic to the original quiver via this relabeling.
\end{prop}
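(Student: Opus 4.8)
The plan is to reduce to the single shift $T_1$ and then verify that it preserves the exchange matrix $\epsilon$ of the $\mathcal A_n$-quiver. Every $T_m$ is a composition of copies of $T_1$ and of $T_1^{-1}$ (the relabeling $X_{l,j}\mapsto X_{l,j-1}$), and a composition of quiver isomorphisms is again a quiver isomorphism, so it is enough to prove that $T:=T_1$ satisfies
\begin{equation*}
\epsilon_{(l,j),(l',j')}=\epsilon_{(l,j+1),(l',j'+1)}\qquad\text{for all admissible }l,l',j,j',
\end{equation*}
where indices are read modulo $N_l$ and $N_{l'}$. I will split the check according to whether $l=l'$ or not, using the explicit description of the $\mathcal A_n$-quiver coming from the Fock-Goncharov $SL_n$-quiver in Section \ref{Ch3.1}: that quiver is supported on the triangular lattice $\{(i,j,k):i+j+k=n\}$ with an oriented $3$-cycle of arrows around each elementary up- and down-triangle (with alternating orientations, and with half-weight arrows along the boundary), so the full arrow incidence at a vertex is determined by its position in the lattice; the $\mathcal A_n$-quiver is obtained from it by the purely local operation of deleting the vertices $Z_{0,i,n-i}$, amalgamating $Z_{i,0,n-i}$ with $Z_{n-i,i,0}$, and unfreezing.

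For $l=l'$: by construction the arrows supported on the $l$-th main cycle are the consecutive arrows $X_{l,j}\to X_{l,j+1}$, which come from the lattice edges along the two lines where the first coordinate equals $l$ or $n-l$, together with --- in the single exceptional case $n$ odd and $l=\lfloor n/2\rfloor$, where these two lines are adjacent and the cycle fails to be chordless --- the ``chord'' arrows joining $X_{l,j}$ to $X_{l,j+c}$ for a fixed offset $c$ depending only on $n$. Both families are manifestly invariant under $j\mapsto j+1$. For $l\ne l'$: tracing the adjacency of lattice lines shows that $\epsilon_{(l,j),(l',j')}\ne 0$ forces $|l-l'|=1$, so only neighboring cycles interact. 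Away from the two distinguished vertices of each cycle a vertex $X_{l,j}$ lies in the interior of one of those two lines, its lattice neighborhood --- hence its arrows to the $(l\pm1)$-st cycles --- is translated rigidly by $j\mapsto j+1$, and the equality of exchange-matrix entries follows at once.

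The main obstacle is the behaviour at the two amalgamated vertices $X_{l,2}=Z_{(l,n-l,0)}=Z_{(n-l,0,l)}$ and $X_{l,l+1}=Z_{(n-l,l,0)}=Z_{(l,0,n-l)}$ of each cycle, where the cycle turns a corner of the triangle and two $SL_n$-quiver vertices have been identified, so the ``translate the lattice neighborhood'' argument does not apply directly. There one must write out the arrows at these vertices and at their immediate neighbours $X_{l,1}, X_{l,3}, X_{l,l}, X_{l,l+2}$, in both the $i=l$ and $i=n-l$ portions of the cycle, and check by hand that the combined incidence --- arrows inherited from each of the two amalgamated $SL_n$-vertices, together with the effect of having deleted the vertices $Z_{0,i,n-i}$ on the remaining side --- agrees with the $j\mapsto j+1$ translate of the generic pattern; the point is that the amalgamation and deletion are precisely what make the two corner patterns line up with the generic one. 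Finally the central cycle $l=n/2$ for even $n$, which has length $N_{n/2}=n/2$ and lies at the very centre of the triangle, has a degenerate corner structure and needs its own short direct check of the same kind. Once these finitely many local comparisons are carried out, $\epsilon$-invariance holds in every case, so $T_1$, and hence every $T_m$, is an automorphism of the $\mathcal A_n$-quiver.
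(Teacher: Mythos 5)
Your route is genuinely different from the paper's. The paper does not verify $\epsilon$-invariance vertex by vertex on the original triangle; it glues a flipped second copy of the $\mathcal A_n$-quiver along the bottom side so that the amalgamated variables match, cites \cite{1} for the fact that the glued quiver is isomorphic to the $\mathcal A_n$-quiver up to arrow multiplicity, and then reads off the cyclic symmetry because in the glued picture each main cycle runs along a straight lattice line and $T_1$ is a rigid translation of the whole diagram. The entire purpose of that gluing is to \emph{unfold} the amalgamation: the corner vertices $Z_{(l,n-l,0)}=Z_{(n-l,0,l)}$ that you single out as the problematic cases become ordinary interior vertices of the glued strip, so no special-case analysis is needed.

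This is exactly where your proposal has a gap. You correctly observe that the generic ``translate the lattice neighborhood'' argument breaks down at the two amalgamated vertices of each cycle (and at the degenerate central cycle for even $n$, and at the chords of the innermost cycle for odd $n$), but you then only \emph{announce} that ``one must write out the arrows \dots and check by hand'' that the corner incidence patterns agree with the translate of the generic pattern --- you never carry out those checks. Since the generic interior case is essentially trivial (a rigid lattice translation), the whole content of the proposition is concentrated in precisely the finitely many local comparisons you defer; as written, the proof is a plan rather than a proof. The plan is workable --- the checks would succeed, and your reduction to $T_1$ and the observation that only adjacent cycles interact are both correct --- but to complete it you must either perform the corner computations explicitly or adopt the paper's gluing construction, which is engineered so that those computations become unnecessary.
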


\begin{proof}
Take a second copy of the $\mathcal{A}_n$-quiver and flip it vertically. Then glue this copy to the bottom side of the original quiver so that the amalgamated variables on the corresponding sides of the two quivers match. This \textit{glued $\mathcal{A}_n$-quiver} is isomorphic to the standard $\mathcal{A}_n$-quiver up to arrow multiplicity \cite{1}.
    $$\begin{tikzcd}[scale cd=0.65,column sep = tiny, row sep = small]
	&&&& \textcolor{rgb,255:red,255;green,0;blue,0}{{{X_{1,3}}}} \\
	&&& \textcolor{rgb,255:red,0;green,255;blue,0}{{{X_{2,4}}}} && \textcolor{rgb,255:red,255;green,0;blue,0}{{{X_{1,4}}}} &&&&&&&& \textcolor{rgb,255:red,255;green,0;blue,0}{{{X_{1,3}}}} \\
	&& \textcolor{rgb,255:red,0;green,255;blue,0}{{{{X_{2,2}}}}} && \textcolor{rgb,255:red,0;green,255;blue,0}{{{X_{2,5}}}} && \textcolor{rgb,255:red,255;green,0;blue,0}{{{X_{1,5}}}} &&&&&& \textcolor{rgb,255:red,0;green,255;blue,0}{{{X_{2,2}}}} && \textcolor{rgb,255:red,255;green,0;blue,0}{{{X_{1,4}}}} \\
	& \textcolor{rgb,255:red,255;green,0;blue,0}{{{{X_{1,2}}}}} && \textcolor{rgb,255:red,0;green,255;blue,0}{{{{X_{2,3}}}}} && \textcolor{rgb,255:red,0;green,255;blue,0}{{{X_{2,1}}}} && \textcolor{rgb,255:red,255;green,0;blue,0}{{{X_{1,1}}}} &&&& \textcolor{rgb,255:red,255;green,0;blue,0}{{{{X_{1,2}}}}} && \textcolor{rgb,255:red,0;green,255;blue,0}{{{{X_{2,1}}}}} && \textcolor{rgb,255:red,255;green,0;blue,0}{{{X_{1,1}}}} \\
	{} && \textcolor{rgb,255:red,255;green,0;blue,0}{{{{X_{1,3}}}}} && \textcolor{rgb,255:red,0;green,255;blue,0}{{{{X_{2,4}}}}} && \textcolor{rgb,255:red,0;green,255;blue,0}{{{X_{2,2}}}} && \textcolor{rgb,255:red,255;green,0;blue,0}{{{X_{1,2}}}} &&&& \textcolor{rgb,255:red,255;green,0;blue,0}{{{{X_{1,3}}}}} && \textcolor{rgb,255:red,0;green,255;blue,0}{{{{X_{2,2}}}}} && \textcolor{rgb,255:red,255;green,0;blue,0}{{{X_{1,2}}}} && {} \\
	&&& \textcolor{rgb,255:red,255;green,0;blue,0}{{{{X_{1,4}}}}} && \textcolor{rgb,255:red,0;green,255;blue,0}{{{{X_{2,5}}}}} && \textcolor{rgb,255:red,0;green,255;blue,0}{{{X_{2,3}}}} && \textcolor{rgb,255:red,255;green,0;blue,0}{{{X_{1,3}}}} &&&& \textcolor{rgb,255:red,255;green,0;blue,0}{{{{X_{1,4}}}}} && \textcolor{rgb,255:red,0;green,255;blue,0}{{{X_{2,1}}}} && \textcolor{rgb,255:red,255;green,0;blue,0}{{{X_{1,3}}}} \\
	&&&& \textcolor{rgb,255:red,255;green,0;blue,0}{{{{X_{1,5}}}}} && \textcolor{rgb,255:red,0;green,255;blue,0}{{{{X_{2,1}}}}} && \textcolor{rgb,255:red,0;green,255;blue,0}{{{X_{2,4}}}} &&&&&& \textcolor{rgb,255:red,255;green,0;blue,0}{{{{X_{1,1}}}}} && \textcolor{rgb,255:red,0;green,255;blue,0}{{{X_{2,2}}}} \\
	&&&&& \textcolor{rgb,255:red,255;green,0;blue,0}{{{{X_{1,1}}}}} && \textcolor{rgb,255:red,0;green,255;blue,0}{{{{X_{2,2}}}}} &&&&&&&& \textcolor{rgb,255:red,255;green,0;blue,0}{{{{X_{1,2}}}}} \\
	&&&&&& \textcolor{rgb,255:red,255;green,0;blue,0}{{{{X_{1,2}}}}}
	\arrow[from=1-5, to=2-6]
	\arrow[dashed, from=2-4, to=1-5]
	\arrow[from=2-4, to=3-5]
	\arrow[from=2-6, to=2-4]
	\arrow[from=2-6, to=3-7]
	\arrow[from=2-14, to=3-15]
	\arrow[dashed, from=3-3, to=2-4]
	\arrow[from=3-3, to=4-4]
	\arrow[from=3-5, to=2-6]
	\arrow[from=3-5, to=3-3]
	\arrow[from=3-5, to=4-6]
	\arrow[from=3-7, to=3-5]
	\arrow[from=3-7, to=4-8]
	\arrow[dashed, from=3-13, to=2-14]
	\arrow[from=3-13, to=4-14]
	\arrow[from=3-15, to=3-13]
	\arrow[from=3-15, to=4-16]
	\arrow[dashed, from=4-2, to=3-3]
	\arrow[from=4-2, to=5-3]
	\arrow[from=4-4, to=3-5]
	\arrow[from=4-4, to=4-2]
	\arrow[from=4-4, to=5-5]
	\arrow[from=4-6, to=3-7]
	\arrow[from=4-6, to=4-4]
	\arrow[from=4-6, to=5-7]
	\arrow[from=4-8, to=4-6]
	\arrow[from=4-8, to=5-9]
	\arrow[dashed, from=4-12, to=3-13]
	\arrow[from=4-12, to=5-13]
	\arrow[from=4-14, to=3-15]
	\arrow[from=4-14, to=4-12]
	\arrow[from=4-14, to=5-15]
	\arrow[from=4-16, to=4-14]
	\arrow[from=4-16, to=5-17]
	\arrow[from=5-3, to=4-4]
	\arrow[dotted, no head, from=5-3, to=5-1]
	\arrow[from=5-3, to=6-4]
	\arrow[from=5-5, to=4-6]
	\arrow[from=5-5, to=5-3]
	\arrow[from=5-5, to=6-6]
	\arrow[from=5-7, to=4-8]
	\arrow[from=5-7, to=5-5]
	\arrow[from=5-7, to=6-8]
	\arrow[from=5-9, to=5-7]
	\arrow[dotted, no head, from=5-9, to=5-13]
	\arrow[from=5-9, to=6-10]
	\arrow[from=5-13, to=4-14]
	\arrow[from=5-13, to=6-14]
	\arrow[from=5-15, to=4-16]
	\arrow[from=5-15, to=5-13]
	\arrow[from=5-15, to=6-16]
	\arrow[from=5-17, to=5-15]
	\arrow[dotted, no head, from=5-17, to=5-19]
	\arrow[from=5-17, to=6-18]
	\arrow[from=6-4, to=5-5]
	\arrow[from=6-4, to=7-5]
	\arrow[from=6-6, to=5-7]
	\arrow[from=6-6, to=6-4]
	\arrow[from=6-6, to=7-7]
	\arrow[from=6-8, to=5-9]
	\arrow[from=6-8, to=6-6]
	\arrow[from=6-8, to=7-9]
	\arrow[from=6-10, to=6-8]
	\arrow[from=6-14, to=5-15]
	\arrow[from=6-14, to=7-15]
	\arrow[from=6-16, to=5-17]
	\arrow[from=6-16, to=6-14]
	\arrow[from=6-16, to=7-17]
	\arrow[from=6-18, to=6-16]
	\arrow[from=7-5, to=6-6]
	\arrow[from=7-5, to=8-6]
	\arrow[from=7-7, to=6-8]
	\arrow[from=7-7, to=7-5]
	\arrow[from=7-7, to=8-8]
	\arrow[dashed, from=7-9, to=6-10]
	\arrow[from=7-9, to=7-7]
	\arrow[from=7-15, to=6-16]
	\arrow[from=7-15, to=8-16]
	\arrow[dashed, from=7-17, to=6-18]
	\arrow[from=7-17, to=7-15]
	\arrow[from=8-6, to=7-7]
	\arrow[from=8-6, to=9-7]
	\arrow[dashed, from=8-8, to=7-9]
	\arrow[from=8-8, to=8-6]
	\arrow[dashed, from=8-16, to=7-17]
	\arrow[dashed, from=9-7, to=8-8]
\end{tikzcd}$$
    In this setting, the cyclic symmetry is immediate.\end{proof}

\subsection{Cycle mutations on $\mathcal A_n$-quiver} \label{Ch3.3}

Let $\mathbf{q}$ be a quiver labeled by $I$ and containing a subquiver $\mathbf{q}_N$ with $N$ vertices, which is a chordless cycle labeled clockwise by the set $J$.

Assume that, for each vertex $v \notin \mathbf{q}_N$, the number of arrows from $v$ to $\mathbf{q}_N$ equals the number of arrows from $\mathbf{q}_N$ to $v$. In other words, we assume
\begin{equation} \label{3.3}
    \sum_{j \in J}\epsilon_{vj}=0
    \quad \text{for all } v \in I- J.
\end{equation}

\begin{figure}[H]
    \centering
    \begin{tikzcd}[scale cd=0.8, column sep = small]
	&&&&&&& \bullet && \bullet \\
	\bullet &&&& \bullet \\
	\bullet &&& \bullet && \bullet && \bullet && \bullet \\
	{\mathbf q_2} &&&& {\mathbf q_3} &&&& {\mathbf q_4}
	\arrow[from=1-8, to=1-10]
	\arrow[from=1-10, to=3-10]
	\arrow[from=2-5, to=3-6]
	\arrow[from=3-4, to=2-5]
	\arrow[from=3-6, to=3-4]
	\arrow[from=3-8, to=1-8]
	\arrow[from=3-10, to=3-8]
\end{tikzcd}
    \caption{Examples of $\mathbf q_2,\mathbf q_3$, and $\mathbf q_4.$}
\end{figure}
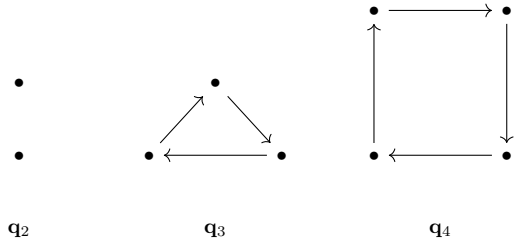

Consider a set $\left\{j_1, j_2, \cdots, j_N\right\}$ which is a permutation of the set $J$. We define the \textbf{cycle mutation} over the $\mathbf q_N$ by
\begin{equation} \label{3.4}
    \tau_{J}^{} := \mu_{j_1}\circ \cdots \circ\mu_{j_{N-1}}\circ \pi_{j_{N-1},j_N}\circ \mu_{j_{N-1}} \circ \cdots \circ \mu_{j_1}
\end{equation}

where $\mu_k$ is the cluster mutation in direction $k$ and $\pi_{j_{N-1},j_N}$ is the transformation switching the labels $j_{N-1}$ and $j_{N}$. Then, we have the following theorem:

\begin{thm} \label{thm331}
    \textcolor{black}{\cite[Theorem 7.1 and Theorem 7.7]{2}} The cluster transformation $\tau_{J}^{}$ preserves $\mathbf q$ and does not depend on the order of $\left\{j_1, j_2, \cdots, j_N\right\}.$
\end{thm}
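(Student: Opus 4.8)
The plan is to prove the two assertions in turn: first that $\tau_J$ sends the quiver $\mathbf q$ to itself, and then that the resulting cluster transformation is insensitive to the chosen order of $J$. For the first assertion I would track the quiver explicitly through the $2(N-1)$ mutations and the label-swap appearing in (\ref{3.4}). The bookkeeping data at each stage is the subquiver supported on the cycle $J$ together with, for every external vertex $v \in I-J$, the vector $(\epsilon_{vj})_{j\in J}$ of arrow multiplicities between $v$ and the cycle; hypothesis (\ref{3.3}) is precisely the statement that each such vector is balanced, $\sum_{j\in J}\epsilon_{vj}=0$. The point is that mutating at a cycle vertex only transports these multiplicities to the two cycle-neighbours, so that along the \emph{up leg} $\mu_{j_{N-1}}\circ\cdots\circ\mu_{j_1}$ the cycle is progressively unwound into a linear \emph{broken-cycle} configuration while the external data remains under control; the label-swap $\pi_{j_{N-1},j_N}$ then realigns the free end, and the \emph{down leg} rewinds the cycle, now traversed in the opposite cyclic direction. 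The conclusion is that $\tau_J$ leaves $\mathbf q$ invariant and acts on the labels by a permutation that reverses the cyclic order of $J$ and fixes $I-J$ pointwise, so that $\tau_J$ is a cluster modular transformation in the sense of Definition \ref{defn218}.

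I would establish the needed statements about the intermediate quivers by induction on the number $k$ of up-leg mutations performed, giving a closed-form description of the quiver after $\mu_{j_k}\circ\cdots\circ\mu_{j_1}$ and showing the persistence of a suitably modified balance condition for the external vertices. A more conceptual route, when available, is to realize the cycle together with its external attachments inside a combinatorial model --- a triangulated surface or a plabic graph in which the cycle bounds a puncture or a face --- so that $\tau_J$ becomes a manifestly order-independent geometric operation; but since not every chordless cycle with property (\ref{3.3}) need arise this way, the direct mutation bookkeeping is the fallback that works in general.

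For independence of the order I would invoke the rigidity of quiver-preserving cluster transformations: such a transformation is determined by its $C$-matrix (equivalently, by the $c$-vectors and $g$-vectors of the initial seed), so by the Fomin--Zelevinsky separation formulas two of them with the same $C$-matrix act identically on every cluster variable and hence coincide. It therefore suffices to compute the $C$-matrix of $\tau_J$ and to verify that it does not involve the chosen order $(j_1,\ldots,j_N)$. This computation is more tractable than the full quiver computation, since $c$-vectors evolve by the sign-coherent tropical mutation rule; the expected answer is the signed permutation matrix associated to the permutation that reverses the cyclic order of $J$ and is the identity on $I-J$, which is visibly the same regardless of the starting vertex. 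As a complementary check one notes that mutations at cyclically non-adjacent cycle vertices commute, so that any two orders are related by such commutations and a bounded number of local rearrangements near the flip; but because distinct orders use distinct transpositions $\pi_{j_{N-1},j_N}$, commutation alone does not close the argument, and the $C$-matrix route is the robust one.

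I expect the principal obstacle to be the control of the external arrows in the intermediate-quiver lemma: along the up leg the multiplicities $\epsilon_{vj}$ may temporarily grow before contracting again, and one must verify that on the down leg precisely the right $2$-cycles are created and then deleted, so that the second half genuinely reflects the cycle rather than producing spurious arrows or failing to return to $\mathbf q$. Condition (\ref{3.3}) is exactly the structural input that forces these cancellations, and establishing the persistence of the modified balance through the whole staircase of mutations is the technical heart of the proof. A secondary obstacle is making the $C$-matrix computation uniform in the order, which again reduces to checking that the tropical sign pattern propagating around the cycle does not depend on the vertex one starts from.
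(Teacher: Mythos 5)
First, a point of reference: the paper does not prove this statement at all — it is imported verbatim from \cite{2} (Theorems 7.1 and 7.7 there) — so your proposal has to stand on its own rather than be matched against an internal argument. Your plan for quiver preservation (unwinding and rewinding the cycle by induction along the mutation staircase) is the standard route and is presumably what the cited source does, but the bookkeeping data you declare — the cycle subquiver plus the vectors $(\epsilon_{vj})_{j\in J}$ — is not closed under mutation: mutating at a cycle vertex $j$ also creates or cancels arrows between pairs of external vertices that are both adjacent to $j$, so your induction hypothesis must additionally control the external–external part of the quiver. That is fixable, but it is not the ``technical heart'' you identify; it is a separate omission.

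The genuine gap is in the order-independence step. The principle you invoke (a quiver-preserving mutation-plus-relabeling sequence is determined by its $C$-matrix, via the separation formulas and sign coherence) is sound, but your predicted $C$-matrix is wrong, and the error is load-bearing. From the paper's own Theorem \ref{thm333} one has $\tau_J^*(X_{l,j}) = F_{l,j}/(X_{l,j-1}F_{l,j-2})$, whose tropicalization is $X_{l,j-1}^{-1}$, so the cycle block of $C^{\tau_J}$ is $-1$ times a cyclic \emph{shift}, not a reversal of the cyclic order. Moreover, for an external vertex $q$ adjacent to the cycle, $\tau_J^*(X_q) = X_q\prod_k Y_{l,k}^{c_{qk}}$ with the nonnegative, not-identically-zero vectors $c_q$ of Definition \ref{defn332} (for instance $\tau_l^*(X_{l-1,j}) = X_{l-1,j}Y_{l,j}$ in Proposition \ref{prop334}, which tropicalizes to $X_{l-1,j}X_{l,j}$); those columns of $C^{\tau_J}$ therefore have several nonzero entries, and $C^{\tau_J}$ is not a signed permutation matrix. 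Once the $C$-matrix has this shape its independence of the chosen order is no longer ``visible'': verifying it amounts to essentially the same computation around the cycle as proving the full formula, so the reduction you propose does not actually discharge the claim. The route can be repaired — compute $C^{\tau_J}$ for an arbitrary starting vertex and show it equals the matrix determined by the shift on $J$ and the vectors $c_v$ — but as written the decisive step rests on an incorrect expected answer.
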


To compute the image under $\tau_{J}^*$, we define a vector $c_v$ for each $v \in I-J$.

\begin{defn}\label{defn332}
Let $J=\{j_1,j_2,\ldots,j_N\}$ with $j_1<j_2<\cdots<j_N$. For each $v\in I - J$, there is a unique vector $c_v=(c_{v,1},\ldots,c_{v,N})$ such that
\begin{enumerate}
    \item $c_{v,i}-c_{v,i-1}=\epsilon_{v,j_i}$ for all $i=1,\ldots,N$, with 
    the convention $c_{v,0}=c_{v,N}$.
    \item $\min(c_{v,1},\ldots,c_{v,N})=0$.
\end{enumerate}
\end{defn}

Now consider the case where $\mathbf{q}$ is an $\mathcal{A}_n$-quiver with $I=\{1,\ldots,n(n-1)/2\}$ and $J=I_l$, as defined in \eqref{3.2}. We exclude the case where $n$ is odd and $l=\lfloor n/2\rfloor$, since the corresponding main cycle is not chordless. Then we have the following result (note that the main cycles satisfy \eqref{3.3}):

\begin{thm} 
\textcolor{black}{\cite[Theorem 7.7]{2}} \label{thm333}$$\tau_{I_l}^*(X_{i,j}) =
\begin{cases}
\dfrac{X_{i,j}}{Y_{i,j}Y_{i,j-1}} & \text{if } i = l, \\[10pt]
X_{i,j} \prod_{k=1}^{N_l}(Y_{l,k})^{c_{q,k}} & \text{if } i \ne l,
\end{cases}$$where $N_l = |I_l|$ is the length of the $l$th cycle and $q = n(i-1) + j$. The variables $Y_{i,j} $ are defined as$$Y_{i,j} := X_{i,j} \frac{F_{i,j-1}}{F_{i,j}}$$
where the polynomial $F_{i,j}$ is given by
$$F_{i,j} := 1 + X_{i,j} + X_{i,j}X_{i,j-1} + \dots + X_{i,j}X_{i,j-1}\dots X_{i,j-N_i+2}.$$
\end{thm}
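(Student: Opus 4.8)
The plan is to deduce Theorem \ref{thm333} from the general cycle-mutation formula \cite[Theorem 7.7]{2} by checking that its hypotheses hold for the pair $(\mathbf q, J) = (\mathcal A_n\text{-quiver},\, I_l)$ and then translating its conclusion into the index notation fixed in Section \ref{Ch3.2}. Two things must be verified: (i) the main cycle on the vertex set $I_l$ is a \emph{chordless} cycle of the $\mathcal A_n$-quiver, and (ii) the balancing condition (\ref{3.3}) holds, i.e. $\sum_{j \in I_l}\epsilon_{vj} = 0$ for every external vertex $v \in I - I_l$. Both are asserted in the running text preceding the statement; the substance of the proof is making them precise, after which the formula is a direct substitution.

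For (i), I would argue from the combinatorial description of the $\mathcal A_n$-quiver. Using the Fock--Goncharov coordinates, the $l$th main cycle consists exactly of the vertices $Z_{(l,*,*)}$ and $Z_{(n-l,*,*)}$ listed cyclically before (\ref{3.2}); inspecting the triangular adjacency of the $SL_n$-quiver, together with the amalgamation $Z_{i,0,n-i}\leftrightarrow Z_{n-i,i,0}$, one sees that the only arrows among these vertices are the consecutive ones of the cycle. Hence the cycle is chordless as soon as it has length $\geq 3$, which excludes precisely the odd-$n$, $l=\lfloor n/2\rfloor$ case (handled separately via the doubled quiver in Section \ref{Ch3.4}). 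For (ii), the balancing is the ``cycle property'' emphasized in the introduction: every external vertex has as many arrows into the cycle as out of it. I would prove this by a short local case analysis — an external vertex adjacent to $I_l$ occupies one of a few positions relative to the triangular arrangement, and in each the in-degree and out-degree toward $I_l$ agree (one in and one out, or zero and zero) — with the amalgamated vertices $\overline Z_i$ treated separately, since each inherits arrows from both of its pre-images on two sides of the triangle.

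Granting (i) and (ii): \cite[Theorem 7.1]{2} (recorded here as Theorem \ref{thm331}) gives that $\tau_{I_l}$ is a well-defined, quiver-preserving cluster transformation independent of the ordering of $I_l$, and \cite[Theorem 7.7]{2} gives the explicit action. It then remains to match notation. Under the relabeling $X_{l,k}\leftrightarrow$ the $k$th cycle vertex, the auxiliary $F$-polynomials and $Y$-variables of loc. cit. become the $F_{i,j}$ and $Y_{i,j}$ of the statement; the periodicity $X_{l,j+N_l}=X_{l,j}$ is exactly what turns the linear $F$-polynomial of loc. cit. into the cyclic one written here, and the vectors $c_v$ of Definition \ref{defn332} coincide with those of \cite{2} once we set $q = n(i-1)+j$. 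Plugging these into the two branches of loc. cit. yields the displayed formula.

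The obstacle I anticipate is not algebraic but combinatorial: the careful handling of the amalgamated vertices (which are adjacent to two ``sides'' of the original triangular quiver at once) and of arrow multiplicities that appear in the glued/doubled pictures of Figures \ref{Fig2} and \ref{Fig12}. Should that case analysis become unwieldy, a self-contained alternative is to bypass \cite{2} entirely: by Theorem \ref{thm331} choose the ordering $(j_1,\dots,j_N)$ to be the consecutive vertices of the directed cycle, and then track the exchange matrix and the $\mathcal X$-variables through the $2(N-1)$ mutations in (\ref{3.4}). Iterating the $\mathcal X$-mutation rule along the cycle makes the polynomials $F_{i,j}$ emerge as the running numerators; the return trip together with the transposition $\pi_{j_{N-1},j_N}$ recombines them into the stated ratios, while (\ref{3.3}) forces the off-cycle contributions to collapse to $\prod_k Y_{l,k}^{c_{qk}}$. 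In that approach the delicate point is the sign bookkeeping in the factor $(1 + X_k^{-\text{sgn}(\epsilon_{ki})})^{-\epsilon_{ki}}$, since $\text{sgn}(\epsilon_{ki})$ flips as the cycle arrows are reversed during the first half of the sequence.
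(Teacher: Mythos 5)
Your proposal matches the paper exactly: the paper offers no independent proof of this statement, but simply invokes \cite[Theorem 7.7]{2} after asserting in the surrounding text that the main cycle $I_l$ is chordless and satisfies the balancing condition (\ref{3.3}) (excluding the odd-$n$, $l=\lfloor n/2\rfloor$ case). Your added verification of those two hypotheses and the notational translation is a reasonable fleshing-out of what the paper leaves implicit, and your fallback of tracking the mutations directly is not needed.
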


Using Theorem~\ref{thm333}, we can describe the image of the birational action more explicitly. To simplify notation, we write $\tau_{l}^{}$ for $\tau_{I_l}$.

\begin{prop} \label{prop334}Let $n \ge 5$. Unless $n = 2m$ and $l = m-1$, the action of $\tau_l^*$ is given by
$$\begin{aligned} \tau_l^*(X_{l-1,j}) &= X_{l-1,j}Y_{l,j}, \\ \tau_l^*(X_{l,j}) &= \frac{X_{l,j}}{Y_{l,j}Y_{l,j-1}}, \\ \tau_l^*(X_{l+1,j}) &= X_{l+1,j}Y_{l,j-1}. \end{aligned}$$
If $n = 2m$ and $l = m-1$, the formulas for $\tau_l^*(X_{l-1,j})$ and $\tau_l^*(X_{l,j})$ remain unchanged, while the action on $X_{m,j}$ is instead given by
$$\tau_{m-1}^*(X_{m,j}) = X_{m,j}Y_{m-1,j-1}Y_{m-1,m+j-1}.$$
In all cases, $\tau_l^*(X_{i,j}) = X_{i,j}$ for $i \notin \{l-1, l, l+1\}$.\end{prop}

\begin{proof} 
Assume that $n=2m$. We divide the proof into two cases depending on $l$:
\begin{enumerate}
    \item $l \in \{1, \dots, m-2\} \cup \{m\}$: Let the indices for $X_{l-1,j}$ and $X_{l+1,j}$ be $q_1$ and $q_2$ respectively. Consider the following subquiver:
    $$\begin{tikzcd}[scale cd=0.8,column sep = tiny, row sep = small]
	& {X_{l,j-1}} \\
	{X_{l+1,j}} && {X_{l,j}} && {X_{l-1,j}} \\
	&&& {X_{l,j+1}}
	\arrow[from=1-2, to=2-3]
	\arrow[from=2-1, to=1-2]
	\arrow[from=2-3, to=2-1]
	\arrow[from=2-3, to=3-4]
	\arrow[from=2-5, to=2-3]
	\arrow[from=3-4, to=2-5]
\end{tikzcd}$$
    
    This describes the adjacency relations between $X_{l-1,j}$, $X_{l+1,j}$, and the $l$th main cycle. The other variables in the $l$th main cycle are not adjacent to $X_{l-1,j}$ or $X_{l+1,j}$ (see Figure \ref{Fig14}). Then, we have
\[
c_{q_1} = (0, \dots, 0, \underset{j\text{th}}{1}, 0, \dots, 0)
\]
\[
c_{q_2} = (0, \dots, 0, \underset{(j-1)\text{th}}{1}, 0, \dots, 0)
\]
since $(\epsilon_{q_1,k})_{k \in I_l}= (0, \dots, 0, \underset{j\text{th}}{1}, -1,0, \dots, 0)$ and $(\epsilon_{q_2,k})_{k \in I_l} = (0, \dots, 0, \underset{(j-1)\text{th}}{1}, -1, 0,\dots, 0)$. This implies, by Definition~\ref{defn332} and Theorem~\ref{thm333}, 
    \[
\tau_l^*(X_{l-1,j}) = X_{l-1,j}Y_{l,j} \text{ and } \tau_l^*(X_{l+1,j}) = X_{l+1,j}Y_{l,j-1}.
\]
It is clear that $\tau_l^*(X_{l,j}) = \frac{X_{l,j}}{Y_{l,j}Y_{l,j-1}}$ and $\tau_l^*(X_{i,j}) = X_{i,j}$ for $i \notin \{l-1, l, l+1\}$.
    \item $l = m-1$: Let the indices for $X_{m-2,j}$ and $X_{m,j}$ be $q_1$ and $q_2$ respectively. Consider the following subquiver:
    
    $$\begin{tikzcd}[scale cd=0.8,column sep = tiny, row sep = small]
	&&& {X_{m-1,j-1}} \\
	{X_{m-1,m+j-1}} && {X_{m,j}} && {X_{m-1,j}} && {X_{m-2,j}} \\
	& {X_{m-1,m+j}} &&&& {X_{m-1,j+1}}
	\arrow[from=1-4, to=2-5]
	\arrow[from=2-1, to=3-2]
	\arrow[from=2-3, to=1-4]
	\arrow[from=2-3, to=2-1]
	\arrow[from=2-5, to=2-3]
	\arrow[from=2-5, to=3-6]
	\arrow[from=2-7, to=2-5]
	\arrow[from=3-2, to=2-3]
	\arrow[from=3-6, to=2-7]
\end{tikzcd}$$
    This describes the adjacency relations between $X_{m-2,j}$, $X_{m,j}$, and the $m-1$th main cycle. The other variables in the $m-1$th main cycle are not adjacent to $X_{m-2,j}$ or $X_{m,j}$ (see Figure \ref{Fig12} and \ref{Fig14}). Then, we have
\[
c_{q_1} = (0, \dots, 0, \underset{j\text{th}}{1}, 0, \dots, 0)
\]
\[
c_{q_2} = (0, \dots, 0, \underset{(j-1)\text{th}}{1}, 0\dots, 0,\underset{(m+j-1)\text{th}}{1},0,\dots,0)
\]
as $(\epsilon_{q_1,k})_{k \in I_{m-1}}= (0, \dots, 0, \underset{j\text{th}}{1}, -1,0, \dots, 0)$ and 

$(\epsilon_{q_2,k})_{k \in I_{m-1}} = (0, \dots, 0, \underset{(j-1)\text{th}}{1}, -1, 0,\dots,0,\underset{(m+j-1)\text{th}}{1}, -1,0,\dots ,0)$. This implies, by Definition~\ref{defn332} and Theorem~\ref{thm333}, 
    \[
\begin{aligned}
\tau_{m-1}^*(X_{m-2,j}) &= X_{m-2,j}Y_{m-1,j}\text{ and }\tau_{m-1}^*(X_{m,j}) = X_{m,j}Y_{m-1,j-1}Y_{m-1,m+j-1},
\end{aligned}
\]
It is clear that $\tau_{m-1}^*(X_{m-1,j}) = \frac{X_{m-1,j}}{Y_{m-1,j}Y_{m-1,j-1}}$ and $\tau_{m-1}^*(X_{i,j}) = X_{i,j}$ for $i \notin \{m-2, m-1, m\}$.
\end{enumerate}
The odd case is identical. This completes the proof.\end{proof}

\subsection{Birational Weyl Group Action on $\mathcal A_n$-quiver} \label{Ch3.4}
\begin{defn} 
    Take another copy of an $\mathcal A_n$-quiver and flip it down. Then, glue this copy to the bottom side of the original quiver in a way that amalgamated variables on each side of the two quivers match. This glued quiver is isomorphic to the $\mathcal{A}_n$-quiver up to arrow multiplicity. It is illustrated as follows:
    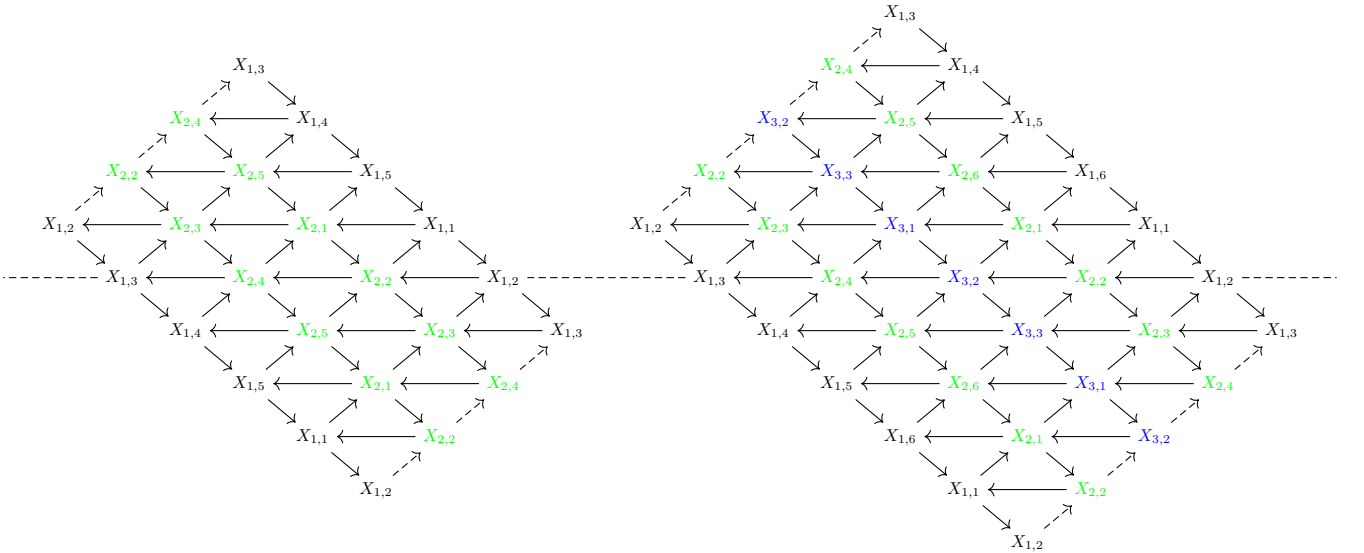
\begin{figure}[H]
    \centering
    \begin{tikzcd}[scale cd=0.6,column sep = tiny, row sep = small]
	&&&&&&&&&&&&&&&& {X_{1,3}} \\
	&&&&& {X_{1,3}} &&&&&&&&&& \textcolor{rgb,255:red,0;green,255;blue,0}{{X_{2,4}}} && {X_{1,4}} \\
	&&&& \textcolor{rgb,255:red,0;green,255;blue,0}{{X_{2,4}}} && {X_{1,4}} &&&&&&&& \textcolor{rgb,255:red,0;green,0;blue,255}{{{{X_{3,2}}}}} && \textcolor{rgb,255:red,0;green,255;blue,0}{{X_{2,5}}} && {X_{1,5}} \\
	&&& \textcolor{rgb,255:red,0;green,255;blue,0}{{X_{2,2}}} && \textcolor{rgb,255:red,0;green,255;blue,0}{{X_{2,5}}} && {X_{1,5}} &&&&&& \textcolor{rgb,255:red,0;green,255;blue,0}{{{{X_{2,2}}}}} && \textcolor{rgb,255:red,0;green,0;blue,255}{{X_{3,3}}} && \textcolor{rgb,255:red,0;green,255;blue,0}{{X_{2,6}}} && {X_{1,6}} \\
	&& {X_{1,2}} && \textcolor{rgb,255:red,0;green,255;blue,0}{{X_{2,3}}} && \textcolor{rgb,255:red,0;green,255;blue,0}{{X_{2,1}}} && {X_{1,1}} &&&& {X_{1,2}} && \textcolor{rgb,255:red,0;green,255;blue,0}{{X_{2,3}}} && \textcolor{rgb,255:red,0;green,0;blue,255}{{X_{3,1}}} && \textcolor{rgb,255:red,0;green,255;blue,0}{{{{X_{2,1}}}}} && {{{X_{1,1}}}} \\
	{} &&& {X_{1,3}} && \textcolor{rgb,255:red,0;green,255;blue,0}{{X_{2,4}}} && \textcolor{rgb,255:red,0;green,255;blue,0}{{X_{2,2}}} && {X_{1,2}} &&&& {X_{1,3}} && \textcolor{rgb,255:red,0;green,255;blue,0}{{X_{2,4}}} && \textcolor{rgb,255:red,0;green,0;blue,255}{{{{X_{3,2}}}}} && \textcolor{rgb,255:red,0;green,255;blue,0}{{{{X_{2,2}}}}} && {{{X_{1,2}}}} &&& {} \\
	&&&& {X_{1,4}} && \textcolor{rgb,255:red,0;green,255;blue,0}{{X_{2,5}}} && \textcolor{rgb,255:red,0;green,255;blue,0}{{X_{2,3}}} && {X_{1,3}} &&&& {X_{1,4}} && \textcolor{rgb,255:red,0;green,255;blue,0}{{X_{2,5}}} && \textcolor{rgb,255:red,0;green,0;blue,255}{{{{X_{3,3}}}}} && \textcolor{rgb,255:red,0;green,255;blue,0}{{{{X_{2,3}}}}} && {X_{1,3}} \\
	&&&&& {X_{1,5}} && \textcolor{rgb,255:red,0;green,255;blue,0}{{X_{2,1}}} && \textcolor{rgb,255:red,0;green,255;blue,0}{{X_{2,4}}} &&&&&& {X_{1,5}} && \textcolor{rgb,255:red,0;green,255;blue,0}{{{{X_{2,6}}}}} && \textcolor{rgb,255:red,0;green,0;blue,255}{{{{X_{3,1}}}}} && \textcolor{rgb,255:red,0;green,255;blue,0}{{X_{2,4}}} \\
	&&&&&& {X_{1,1}} && \textcolor{rgb,255:red,0;green,255;blue,0}{{X_{2,2}}} &&&&&&&& {{{X_{1,6}}}} && \textcolor{rgb,255:red,0;green,255;blue,0}{{{{X_{2,1}}}}} && \textcolor{rgb,255:red,0;green,0;blue,255}{{{{X_{3,2}}}}} \\
	&&&&&&& {X_{1,2}} &&&&&&&&&& {{{X_{1,1}}}} && \textcolor{rgb,255:red,0;green,255;blue,0}{{{{X_{2,2}}}}} \\
	&&&&&&&&&&&&&&&&&& {X_{1,2}}
	\arrow[from=1-17, to=2-18]
	\arrow[from=2-6, to=3-7]
	\arrow[dashed, from=2-16, to=1-17]
	\arrow[from=2-16, to=3-17]
	\arrow[from=2-18, to=2-16]
	\arrow[from=2-18, to=3-19]
	\arrow[dashed, from=3-5, to=2-6]
	\arrow[from=3-5, to=4-6]
	\arrow[from=3-7, to=3-5]
	\arrow[from=3-7, to=4-8]
	\arrow[dashed, from=3-15, to=2-16]
	\arrow[from=3-15, to=4-16]
	\arrow[from=3-17, to=2-18]
	\arrow[from=3-17, to=3-15]
	\arrow[from=3-17, to=4-18]
	\arrow[from=3-19, to=3-17]
	\arrow[from=3-19, to=4-20]
	\arrow[dashed, from=4-4, to=3-5]
	\arrow[from=4-4, to=5-5]
	\arrow[from=4-6, to=3-7]
	\arrow[from=4-6, to=4-4]
	\arrow[from=4-6, to=5-7]
	\arrow[from=4-8, to=4-6]
	\arrow[from=4-8, to=5-9]
	\arrow[dashed, from=4-14, to=3-15]
	\arrow[from=4-14, to=5-15]
	\arrow[from=4-16, to=3-17]
	\arrow[from=4-16, to=4-14]
	\arrow[from=4-16, to=5-17]
	\arrow[from=4-18, to=3-19]
	\arrow[from=4-18, to=4-16]
	\arrow[from=4-18, to=5-19]
	\arrow[from=4-20, to=4-18]
	\arrow[from=4-20, to=5-21]
	\arrow[dashed, from=5-3, to=4-4]
	\arrow[from=5-3, to=6-4]
	\arrow[from=5-5, to=4-6]
	\arrow[from=5-5, to=5-3]
	\arrow[from=5-5, to=6-6]
	\arrow[from=5-7, to=4-8]
	\arrow[from=5-7, to=5-5]
	\arrow[from=5-7, to=6-8]
	\arrow[from=5-9, to=5-7]
	\arrow[from=5-9, to=6-10]
	\arrow[dashed, from=5-13, to=4-14]
	\arrow[from=5-13, to=6-14]
	\arrow[from=5-15, to=4-16]
	\arrow[from=5-15, to=5-13]
	\arrow[from=5-15, to=6-16]
	\arrow[from=5-17, to=4-18]
	\arrow[from=5-17, to=5-15]
	\arrow[from=5-17, to=6-18]
	\arrow[from=5-19, to=4-20]
	\arrow[from=5-19, to=5-17]
	\arrow[from=5-19, to=6-20]
	\arrow[from=5-21, to=5-19]
	\arrow[from=5-21, to=6-22]
	\arrow[from=6-4, to=5-5]
	\arrow[dashed, no head, from=6-4, to=6-1]
	\arrow[from=6-4, to=7-5]
	\arrow[from=6-6, to=5-7]
	\arrow[from=6-6, to=6-4]
	\arrow[from=6-6, to=7-7]
	\arrow[from=6-8, to=5-9]
	\arrow[from=6-8, to=6-6]
	\arrow[from=6-8, to=7-9]
	\arrow[from=6-10, to=6-8]
	\arrow[from=6-10, to=7-11]
	\arrow[from=6-14, to=5-15]
	\arrow[dashed, no head, from=6-14, to=6-10]
	\arrow[from=6-14, to=7-15]
	\arrow[from=6-16, to=5-17]
	\arrow[from=6-16, to=6-14]
	\arrow[from=6-16, to=7-17]
	\arrow[from=6-18, to=5-19]
	\arrow[from=6-18, to=6-16]
	\arrow[from=6-18, to=7-19]
	\arrow[from=6-20, to=5-21]
	\arrow[from=6-20, to=6-18]
	\arrow[from=6-20, to=7-21]
	\arrow[from=6-22, to=6-20]
	\arrow[dashed, no head, from=6-22, to=6-25]
	\arrow[from=6-22, to=7-23]
	\arrow[from=7-5, to=6-6]
	\arrow[from=7-5, to=8-6]
	\arrow[from=7-7, to=6-8]
	\arrow[from=7-7, to=7-5]
	\arrow[from=7-7, to=8-8]
	\arrow[from=7-9, to=6-10]
	\arrow[from=7-9, to=7-7]
	\arrow[from=7-9, to=8-10]
	\arrow[from=7-11, to=7-9]
	\arrow[from=7-15, to=6-16]
	\arrow[from=7-15, to=8-16]
	\arrow[from=7-17, to=6-18]
	\arrow[from=7-17, to=7-15]
	\arrow[from=7-17, to=8-18]
	\arrow[from=7-19, to=6-20]
	\arrow[from=7-19, to=7-17]
	\arrow[from=7-19, to=8-20]
	\arrow[from=7-21, to=6-22]
	\arrow[from=7-21, to=7-19]
	\arrow[from=7-21, to=8-22]
	\arrow[from=7-23, to=7-21]
	\arrow[from=8-6, to=7-7]
	\arrow[from=8-6, to=9-7]
	\arrow[from=8-8, to=7-9]
	\arrow[from=8-8, to=8-6]
	\arrow[from=8-8, to=9-9]
	\arrow[dashed, from=8-10, to=7-11]
	\arrow[from=8-10, to=8-8]
	\arrow[from=8-16, to=7-17]
	\arrow[from=8-16, to=9-17]
	\arrow[from=8-18, to=7-19]
	\arrow[from=8-18, to=8-16]
	\arrow[from=8-18, to=9-19]
	\arrow[from=8-20, to=7-21]
	\arrow[from=8-20, to=8-18]
	\arrow[from=8-20, to=9-21]
	\arrow[dashed, from=8-22, to=7-23]
	\arrow[from=8-22, to=8-20]
	\arrow[from=9-7, to=8-8]
	\arrow[from=9-7, to=10-8]
	\arrow[dashed, from=9-9, to=8-10]
	\arrow[from=9-9, to=9-7]
	\arrow[from=9-17, to=8-18]
	\arrow[from=9-17, to=10-18]
	\arrow[from=9-19, to=8-20]
	\arrow[from=9-19, to=9-17]
	\arrow[from=9-19, to=10-20]
	\arrow[dashed, from=9-21, to=8-22]
	\arrow[from=9-21, to=9-19]
	\arrow[dashed, from=10-8, to=9-9]
	\arrow[from=10-18, to=9-19]
	\arrow[from=10-18, to=11-19]
	\arrow[dashed, from=10-20, to=9-21]
	\arrow[from=10-20, to=10-18]
	\arrow[dashed, from=11-19, to=10-20]
\end{tikzcd}

\caption{Glued $\mathcal{A}_n$-quivers for $n=5,6$.}
\label{Fig14}
\end{figure}

    Note that the dotted line in the center indicates the line along which the triangles are glued. On the quiver above, there are multiple vertices corresponding to each vertex $X_{i,j}$. We unfold each vertex $X_{i,j}$ to $\widetilde{U_{i,j}}$ and $U_{i,j}$ to obtain the following quiver (You can also see Figure \ref{Fig2}):

    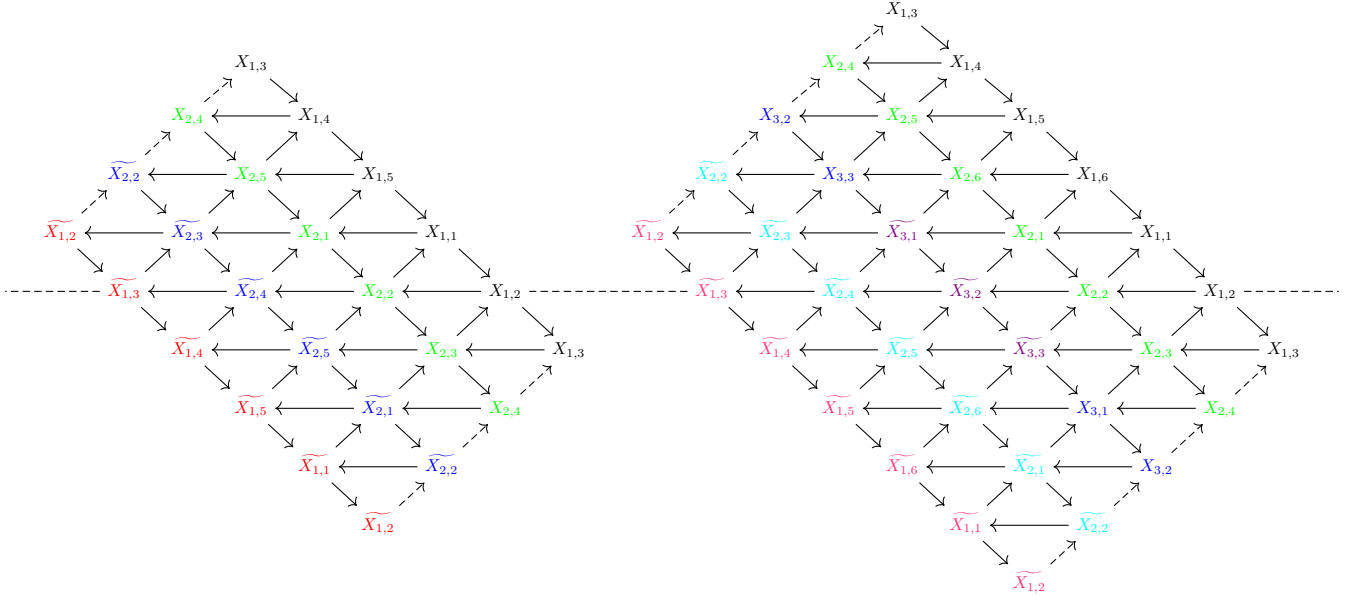
\begin{figure}[H]
    \centering
	\begin{tikzcd}[scale cd=0.6,column sep = tiny, row sep = small]
	&&&&&&&&&&&&&&&& {U_{1,3}} \\
	&&&&& {U_{1,3}} &&&&&&&&&& \textcolor{rgb,255:red,0;green,255;blue,0}{{U_{2,4}}} && {U_{1,4}} \\
	&&&& \textcolor{rgb,255:red,0;green,255;blue,0}{{U_{2,4}}} && {U_{1,4}} &&&&&&&& \textcolor{rgb,255:red,0;green,0;blue,255}{{{{U_{3,2}}}}} && \textcolor{rgb,255:red,0;green,255;blue,0}{{U_{2,5}}} && {U_{1,5}} \\
	&&& \textcolor{rgb,255:red,0;green,0;blue,255}{{\widetilde{U_{2,2}}}} && \textcolor{rgb,255:red,0;green,255;blue,0}{{U_{2,5}}} && {U_{1,5}} &&&&&& \textcolor{rgb,255:red,0;green,255;blue,255}{{\widetilde{{U_{2,2}}}}} && \textcolor{rgb,255:red,0;green,0;blue,255}{{U_{3,3}}} && \textcolor{rgb,255:red,0;green,255;blue,0}{{U_{2,6}}} && {U_{1,6}} \\
	&& \textcolor{rgb,255:red,255;green,0;blue,0}{{\widetilde{U_{1,2}}}} && \textcolor{rgb,255:red,0;green,0;blue,255}{{\widetilde{U_{2,3}}}} && \textcolor{rgb,255:red,0;green,255;blue,0}{{U_{2,1}}} && {U_{1,1}} &&&& \textcolor{rgb,255:red,255;green,51;blue,116}{{\widetilde{U_{1,2}}}} && \textcolor{rgb,255:red,0;green,255;blue,255}{{\widetilde{U_{2,3}}}} && \textcolor{rgb,255:red,128;green,0;blue,128}{{\widetilde{U_{3,1}}}} && \textcolor{rgb,255:red,0;green,255;blue,0}{{{{U_{2,1}}}}} && {{{U_{1,1}}}} \\
	{} &&& \textcolor{rgb,255:red,255;green,0;blue,0}{{\widetilde{U_{1,3}}}} && \textcolor{rgb,255:red,0;green,0;blue,255}{{\widetilde{U_{2,4}}}} && \textcolor{rgb,255:red,0;green,255;blue,0}{{U_{2,2}}} && {U_{1,2}} &&&& \textcolor{rgb,255:red,255;green,51;blue,116}{{\widetilde{U_{1,3}}}} && \textcolor{rgb,255:red,0;green,255;blue,255}{{\widetilde{U_{2,4}}}} && \textcolor{rgb,255:red,128;green,0;blue,128}{{\widetilde{{U_{3,2}}}}} && \textcolor{rgb,255:red,0;green,255;blue,0}{{{{U_{2,2}}}}} && {{{U_{1,2}}}} &&& {} \\
	&&&& \textcolor{rgb,255:red,255;green,0;blue,0}{{\widetilde{U_{1,4}}}} && \textcolor{rgb,255:red,0;green,0;blue,255}{{\widetilde{U_{2,5}}}} && \textcolor{rgb,255:red,0;green,255;blue,0}{{U_{2,3}}} && {U_{1,3}} &&&& \textcolor{rgb,255:red,255;green,51;blue,116}{{\widetilde{U_{1,4}}}} && \textcolor{rgb,255:red,0;green,255;blue,255}{{\widetilde{U_{2,5}}}} && \textcolor{rgb,255:red,128;green,0;blue,128}{{\widetilde{{U_{3,3}}}}} && \textcolor{rgb,255:red,0;green,255;blue,0}{{{{U_{2,3}}}}} && {U_{1,3}} \\
	&&&&& \textcolor{rgb,255:red,255;green,0;blue,0}{{\widetilde{U_{1,5}}}} && \textcolor{rgb,255:red,0;green,0;blue,255}{{\widetilde{U_{2,1}}}} && \textcolor{rgb,255:red,0;green,255;blue,0}{{U_{2,4}}} &&&&&& \textcolor{rgb,255:red,255;green,51;blue,116}{{\widetilde{U_{1,5}}}} && \textcolor{rgb,255:red,0;green,255;blue,255}{{\widetilde{{U_{2,6}}}}} && \textcolor{rgb,255:red,0;green,0;blue,255}{{{{U_{3,1}}}}} && \textcolor{rgb,255:red,0;green,255;blue,0}{{U_{2,4}}} \\
	&&&&&& \textcolor{rgb,255:red,255;green,0;blue,0}{{\widetilde{U_{1,1}}}} && \textcolor{rgb,255:red,0;green,0;blue,255}{{\widetilde{U_{2,2}}}} &&&&&&&& \textcolor{rgb,255:red,255;green,51;blue,116}{{\widetilde{{U_{1,6}}}}} && \textcolor{rgb,255:red,0;green,255;blue,255}{{\widetilde{{U_{2,1}}}}} && \textcolor{rgb,255:red,0;green,0;blue,255}{{{{U_{3,2}}}}} \\
	&&&&&&& \textcolor{rgb,255:red,255;green,0;blue,0}{{\widetilde{U_{1,2}}}} &&&&&&&&&& \textcolor{rgb,255:red,255;green,51;blue,116}{{\widetilde{{U_{1,1}}}}} && \textcolor{rgb,255:red,0;green,255;blue,255}{{\widetilde{{U_{2,2}}}}} \\
	&&&&&&&&&&&&&&&&&& \textcolor{rgb,255:red,255;green,51;blue,116}{{\widetilde{U_{1,2}}}}
	\arrow[from=1-17, to=2-18]
	\arrow[from=2-6, to=3-7]
	\arrow[dashed, from=2-16, to=1-17]
	\arrow[from=2-16, to=3-17]
	\arrow[from=2-18, to=2-16]
	\arrow[from=2-18, to=3-19]
	\arrow[dashed, from=3-5, to=2-6]
	\arrow[from=3-5, to=4-6]
	\arrow[from=3-7, to=3-5]
	\arrow[from=3-7, to=4-8]
	\arrow[dashed, from=3-15, to=2-16]
	\arrow[from=3-15, to=4-16]
	\arrow[from=3-17, to=2-18]
	\arrow[from=3-17, to=3-15]
	\arrow[from=3-17, to=4-18]
	\arrow[from=3-19, to=3-17]
	\arrow[from=3-19, to=4-20]
	\arrow[dashed, from=4-4, to=3-5]
	\arrow[from=4-4, to=5-5]
	\arrow[from=4-6, to=3-7]
	\arrow[from=4-6, to=4-4]
	\arrow[from=4-6, to=5-7]
	\arrow[from=4-8, to=4-6]
	\arrow[from=4-8, to=5-9]
	\arrow[dashed, from=4-14, to=3-15]
	\arrow[from=4-14, to=5-15]
	\arrow[from=4-16, to=3-17]
	\arrow[from=4-16, to=4-14]
	\arrow[from=4-16, to=5-17]
	\arrow[from=4-18, to=3-19]
	\arrow[from=4-18, to=4-16]
	\arrow[from=4-18, to=5-19]
	\arrow[from=4-20, to=4-18]
	\arrow[from=4-20, to=5-21]
	\arrow[dashed, from=5-3, to=4-4]
	\arrow[from=5-3, to=6-4]
	\arrow[from=5-5, to=4-6]
	\arrow[from=5-5, to=5-3]
	\arrow[from=5-5, to=6-6]
	\arrow[from=5-7, to=4-8]
	\arrow[from=5-7, to=5-5]
	\arrow[from=5-7, to=6-8]
	\arrow[from=5-9, to=5-7]
	\arrow[from=5-9, to=6-10]
	\arrow[dashed, from=5-13, to=4-14]
	\arrow[from=5-13, to=6-14]
	\arrow[from=5-15, to=4-16]
	\arrow[from=5-15, to=5-13]
	\arrow[from=5-15, to=6-16]
	\arrow[from=5-17, to=4-18]
	\arrow[from=5-17, to=5-15]
	\arrow[from=5-17, to=6-18]
	\arrow[from=5-19, to=4-20]
	\arrow[from=5-19, to=5-17]
	\arrow[from=5-19, to=6-20]
	\arrow[from=5-21, to=5-19]
	\arrow[from=5-21, to=6-22]
	\arrow[from=6-4, to=5-5]
	\arrow[dashed, no head, from=6-4, to=6-1]
	\arrow[from=6-4, to=7-5]
	\arrow[from=6-6, to=5-7]
	\arrow[from=6-6, to=6-4]
	\arrow[from=6-6, to=7-7]
	\arrow[from=6-8, to=5-9]
	\arrow[from=6-8, to=6-6]
	\arrow[from=6-8, to=7-9]
	\arrow[from=6-10, to=6-8]
	\arrow[from=6-10, to=7-11]
	\arrow[from=6-14, to=5-15]
	\arrow[dashed, no head, from=6-14, to=6-10]
	\arrow[from=6-14, to=7-15]
	\arrow[from=6-16, to=5-17]
	\arrow[from=6-16, to=6-14]
	\arrow[from=6-16, to=7-17]
	\arrow[from=6-18, to=5-19]
	\arrow[from=6-18, to=6-16]
	\arrow[from=6-18, to=7-19]
	\arrow[from=6-20, to=5-21]
	\arrow[from=6-20, to=6-18]
	\arrow[from=6-20, to=7-21]
	\arrow[from=6-22, to=6-20]
	\arrow[dashed, no head, from=6-22, to=6-25]
	\arrow[from=6-22, to=7-23]
	\arrow[from=7-5, to=6-6]
	\arrow[from=7-5, to=8-6]
	\arrow[from=7-7, to=6-8]
	\arrow[from=7-7, to=7-5]
	\arrow[from=7-7, to=8-8]
	\arrow[from=7-9, to=6-10]
	\arrow[from=7-9, to=7-7]
	\arrow[from=7-9, to=8-10]
	\arrow[from=7-11, to=7-9]
	\arrow[from=7-15, to=6-16]
	\arrow[from=7-15, to=8-16]
	\arrow[from=7-17, to=6-18]
	\arrow[from=7-17, to=7-15]
	\arrow[from=7-17, to=8-18]
	\arrow[from=7-19, to=6-20]
	\arrow[from=7-19, to=7-17]
	\arrow[from=7-19, to=8-20]
	\arrow[from=7-21, to=6-22]
	\arrow[from=7-21, to=7-19]
	\arrow[from=7-21, to=8-22]
	\arrow[from=7-23, to=7-21]
	\arrow[from=8-6, to=7-7]
	\arrow[from=8-6, to=9-7]
	\arrow[from=8-8, to=7-9]
	\arrow[from=8-8, to=8-6]
	\arrow[from=8-8, to=9-9]
	\arrow[dashed, from=8-10, to=7-11]
	\arrow[from=8-10, to=8-8]
	\arrow[from=8-16, to=7-17]
	\arrow[from=8-16, to=9-17]
	\arrow[from=8-18, to=7-19]
	\arrow[from=8-18, to=8-16]
	\arrow[from=8-18, to=9-19]
	\arrow[from=8-20, to=7-21]
	\arrow[from=8-20, to=8-18]
	\arrow[from=8-20, to=9-21]
	\arrow[dashed, from=8-22, to=7-23]
	\arrow[from=8-22, to=8-20]
	\arrow[from=9-7, to=8-8]
	\arrow[from=9-7, to=10-8]
	\arrow[dashed, from=9-9, to=8-10]
	\arrow[from=9-9, to=9-7]
	\arrow[from=9-17, to=8-18]
	\arrow[from=9-17, to=10-18]
	\arrow[from=9-19, to=8-20]
	\arrow[from=9-19, to=9-17]
	\arrow[from=9-19, to=10-20]
	\arrow[dashed, from=9-21, to=8-22]
	\arrow[from=9-21, to=9-19]
	\arrow[dashed, from=10-8, to=9-9]
	\arrow[from=10-18, to=9-19]
	\arrow[from=10-18, to=11-19]
	\arrow[dashed, from=10-20, to=9-21]
	\arrow[from=10-20, to=10-18]
	\arrow[dashed, from=11-19, to=10-20]
\end{tikzcd}
    \caption{Doubled $\mathcal A_5$-quiver and $\mathcal A_6$-quiver.}
    \label{Fig15}
\end{figure}
    We refer to this quiver as the \textit{doubled $\mathcal{A}_n$-quiver}, or simply the $\mathcal{A}_n^{\mathrm{dbl}}$-quiver. Note that the doubled $\mathcal A_n$-quiver is a $2:1$ quiver covering of the standard $\mathcal{A}_n$-quiver. 
\end{defn}

\begin{exmp}
    An example of the doubled $\mathcal A_3$-quiver is as follows:

    $$
    \begin{tikzcd}[scale cd=0.7,column sep = tiny, row sep = small]
	{\widetilde{U_{1,3}}} &&&& {\widetilde{U_{1,1}}} \\
	&& {U_{1,2}} \\
	& {U_{1,1}} && {U_{1,3}} \\
	\\
	&& {\widetilde{U_{1,2}}}
	\arrow[curve={height=-12pt}, from=1-1, to=1-5]
	\arrow[from=1-1, to=3-2]
	\arrow[from=1-5, to=2-3]
	\arrow[curve={height=-18pt}, from=1-5, to=5-3]
	\arrow[from=2-3, to=1-1]
	\arrow[from=2-3, to=3-4]
	\arrow[from=3-2, to=2-3]
	\arrow[from=3-2, to=5-3]
	\arrow[from=3-4, to=1-5]
	\arrow[from=3-4, to=3-2]
	\arrow[curve={height=-18pt}, from=5-3, to=1-1]
	\arrow[from=5-3, to=3-4]
\end{tikzcd}
$$
    The doubled $\mathcal A_3$-quiver is identical to the quiver from a triangulation of the 4-punctured sphere; see\textcolor{black}{\cite{25}}.
\end{exmp}

Next, we consider natural cycle mutations on each main cycle of the doubled quiver.

\begin{prop} \label{prop341}
Consider the doubled $\mathcal A_n$-quiver. On each cycle $U_{i,1} \to U_{i,2} \to \cdots \to U_{i,n} \to U_{i,1}$ and $\widetilde{U_{i,1}} \to \widetilde{U_{i,2}} \to \cdots \to \widetilde{U_{i,n}} \to \widetilde{U_{i,1}}$ for $i \ne n/2$, there are natural cycle mutations $f_i$ and $\widetilde{f_i}$ associated to the cycles (\ref{3.4}). Specifically,
$$f_i :=  \mu_{j^i_1}\circ \cdots \circ\mu_{j^i_{n-1}}\circ \pi_{j^i_{n-1},j^i_{n}}\circ \mu_{j^i_{n-1}} \circ \cdots \circ \mu_{j^i_1}$$
and
$$\widetilde{f_i} := \mu_{\widetilde{j^i_1}}\circ \cdots \circ\mu_{\widetilde{j^i_{n-1}}}\circ \pi_{\widetilde{j^i_{n-1}},\widetilde{j^i_{n}}}\circ \mu_{\widetilde{j^i_{n-1}}} \circ \cdots \circ \mu_{\widetilde{j^i_1}}$$
where $\left\{j^i_1, j^i_2, \cdots, j^i_n\right\}$ is a permutation of the set of indices of the variables $\left\{U_{i,j}\right\}_{j=1}^n$ and $\left\{\widetilde{j^i_1}, \widetilde{j^i_2}, \cdots, \widetilde{j^i_n}\right\}$ is a permutation of the set of indices of the variables $\left\{\widetilde{U_{i,j}}\right\}_{j=1}^n$.

When $n$ is even, the cycle $\widetilde{U_{{n \over 2},1}} \to \widetilde{U_{{n \over 2},2}} \to \cdots \to \widetilde{U_{{n \over 2},{n \over 2}}} \to U_{{n \over 2},1} \to  U_{{n \over 2},2} \to \cdots \to U_{{n \over 2},{n \over 2}} \to \widetilde{U_{{n \over 2},1}}$ induces a cycle mutation $f_{{n \over 2}} = \widetilde{f_{{n \over 2}}}$. Specifically,
$$f_{{n \over 2}} = \widetilde{f_{{n \over 2}}} :=  \mu_{j^{n/2}_1}\circ \cdots \circ\mu_{j^{n/2}_{n-1}}\circ \pi_{j^{n/2}_{n-1},j^{n/2}_{n}}\circ \mu_{j^{n/2}_{n-1}} \circ \cdots \circ \mu_{j^{n/2}_1}$$
where $\left\{j^{n/2}_1, j^{n/2}_2, \cdots, j^{n/2}_n\right\}$ is a permutation of the set of indices of the variables $\left\{U_{{n \over 2},j}\right\}_{j=1}^{n/2} \cup \left\{\widetilde{U_{{n \over 2},j}}\right\}_{j=1}^{n/2}$.
\end{prop}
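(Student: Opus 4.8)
The plan is to reduce everything to Theorem \ref{thm331}. For each cycle $C$ listed in the proposition, with set of vertex-indices $J$, it suffices to verify two facts about $C$ as a subquiver of $\mathcal{A}_n^{\mathrm{dbl}}$: that $C$ is \emph{chordless} (consecutive vertices joined by a single arrow, non-consecutive vertices of $C$ non-adjacent), and that the balance condition $\sum_{j\in J}\epsilon_{vj}=0$ of (\ref{3.3}) holds for every external vertex $v\in I\setminus J$. Granting these, formula (\ref{3.4}) defines the corresponding transformation, and Theorem \ref{thm331} guarantees that it preserves $\mathcal{A}_n^{\mathrm{dbl}}$ and is independent of the chosen ordering of $J$; this is the content of the proposition for $f_i$, $\widetilde{f_i}$, and $f_{n/2}$. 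For even $n$ and $i=n/2$ the cycle uses the whole index set $\{X_{n/2,j}\}_{j=1}^{n/2}\cup\{\widetilde{X_{n/2,j}}\}_{j=1}^{n/2}$, so there is a single such cycle and $f_{n/2}=\widetilde{f_{n/2}}$ is forced by the definition.

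First I would describe the neighbourhood of each main cycle in $\mathcal{A}_n^{\mathrm{dbl}}$. Away from the amalgamated vertices, this neighbourhood is inherited unchanged from the plain $\mathcal{A}_n$-quiver and is exactly the one drawn in the proof of Proposition \ref{prop334}: a cycle vertex receives one arrow from its cyclic predecessor, sends one arrow to its cyclic successor, is joined by a single arrow to each of $X_{i-1,j}$ and $X_{i+1,j}$, and is adjacent to no other vertex of $C$. Hence away from the amalgamation line chordlessness is immediate, and (\ref{3.3}) follows because for an external neighbour $q$ of the cycle the vector $(\epsilon_{q,k})_{k\in I_i}$ has a single $+1$ and a single $-1$ and thus sums to zero (all remaining external vertices being non-adjacent to $C$); this is precisely the computation already carried out for Theorem \ref{thm333} and Proposition \ref{prop334}. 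By the $\widetilde{X_{i,j}}\leftrightarrow X_{i,j}$ symmetry of the construction, the tilde cycles behave identically.

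The substantive point, which is the reason for introducing $\mathcal{A}_n^{\mathrm{dbl}}$ at all, is the behaviour near the amalgamated vertices. In the plain $\mathcal{A}_n$-quiver an amalgamated vertex carries the arrows of both $Z_{i,0,n-i}$ and $Z_{n-i,i,0}$; this is exactly what makes the innermost main cycle fail to be chordless when $n$ is odd (for instance the doubled arrows of the Markov quiver when $n=3$), and for even $n$ it is what ties the two half-copies of the innermost cycle together. Unfolding each amalgamated vertex into its $X$- and $\widetilde{X}$-versions distributes these arrows between the two copies. I would then check, by direct inspection of the doubled quivers in Figures \ref{Fig14}, \ref{Fig15} and \ref{Fig2} together with the doubled $\mathcal{A}_3$-quiver, that after this unfolding: for $i\ne n/2$ both the $X$-cycle $X_{i,1}\to\cdots\to X_{i,n}\to X_{i,1}$ and its tilde counterpart are chordless $n$-cycles (so that, in particular, $f_{\lfloor n/2\rfloor}$ and $\widetilde{f_{\lfloor n/2\rfloor}}$ exist in the odd case even though $\tau_{\lfloor n/2\rfloor}$ does not exist in the plain quiver); and for even $n$ and $i=n/2$ the two half-cycles fuse, along the amalgamated data they share, into the single chordless $n$-cycle $\widetilde{X_{n/2,1}}\to\cdots\to\widetilde{X_{n/2,n/2}}\to X_{n/2,1}\to\cdots\to X_{n/2,n/2}\to\widetilde{X_{n/2,1}}$. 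In each case the external vertices near the amalgamation line again have equal in- and out-degree toward the cycle, so (\ref{3.3}) persists there as well, and Theorem \ref{thm331} applies.

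I expect the main obstacle to be exactly this bookkeeping of arrow multiplicities through the gluing-and-unfolding step: one must fix, consistently, how each arrow of the glued $\mathcal{A}_n$-quiver distributes between the $X$- and $\widetilde{X}$-copies after unfolding, and check that the chord (or doubled edge) which obstructs chordlessness of the innermost cycle of the plain quiver becomes an arrow joining an $X$-vertex to a $\widetilde{X}$-vertex --- hence not a chord of either copy, and, for the merged $i=n/2$ cycle, one of the edges of the cycle rather than a chord --- while the locus $\widetilde{X_{i,j}}=X_{i,j}$ still recovers the plain $\mathcal{A}_n$-quiver up to arrow multiplicity. Once this combinatorial description is pinned down, the chordlessness and balance checks are routine local verifications and Theorem \ref{thm331} completes the proof.
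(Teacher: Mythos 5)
Your proposal is correct and takes essentially the same route as the paper: the paper's entire proof is the one-line remark that the cycles are chordless and satisfy condition (\ref{3.3}), after which Theorem \ref{thm331} applies, which is precisely the reduction you carry out (you simply make the local verifications near the amalgamated vertices explicit where the paper leaves them to the reader).
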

\begin{proof} A direct calculation shows that the cycles satisfy condition \eqref{3.3} and are chordless. \end{proof}

\begin{defn}
    Let $m = \lfloor n/2 \rfloor$. We define $s_i := f_i \widetilde{f_i}$ for all $1 \le i < m$. For the final index $m$, we set:$$s_m := \begin{cases}
f_m & \text{if } n \text{ is even}, \\
f_m \widetilde{f_m} f_m & \text{if } n \text{ is odd}.
\end{cases}
$$
\end{defn}

\begin{prop}
    Let $W_n$ be the group generated by the birational transformations $s_i^*$ on the rational function field $\mathcal K(\mathcal X_{|\mathcal A_n^{dbl}|})$. Then $W_n$ is isomorphic to the Weyl group of type $B_{\lfloor n/2 \rfloor}$, where the generators $s_i^*$ correspond to the simple reflections.
\end{prop}
\begin{proof}
    Let $m = \lfloor n/2 \rfloor$. By\textcolor{black}{\cite[Theorem 4.4]{19}}, we have the following relations:
    $$(f_if_{i+1})^3 = (\widetilde{f_i}\widetilde{f_{i+1}})^3 = \mathrm{id} \quad \text{for } 1 \le i < m,$$
    and $(f_m\widetilde{f_m})^3 = \mathrm{id}$; These cases are the same as the two adjacent cycles in a ladder shape in the cited paper. All $f_i$ and $\widetilde{f_i}$ are clearly involutions ($f_i^2 = (\widetilde{f_i})^2 = \mathrm{id}$). Furthermore, we have $(f_if_j)^2 = (\widetilde{f_i}\widetilde{f_j})^2 = \mathrm{id}$ for $|i-j| \ge 2$ and $(f_i\widetilde{f_j})^2 = \mathrm{id}$ for all pairs $(i,j) \neq (m,m)$.

    Hence, we can directly verify the following relations: $(s_m s_{m-1})^4 = \mathrm{id}$; $(s_i s_{i-1})^3 = \mathrm{id}$ for $2 \le i \le m-1$; $s_i^2 = \mathrm{id}$; and $(s_i s_j)^2 = \mathrm{id}$ for $|i-j| \ge 2$ (Since these are direct calculations, we omit the details). These relations establish that $W_n$ is the Weyl group of type ${B}_{\lfloor n/2 \rfloor}$.\end{proof}

We are now ready to define the \textbf{birational Weyl group action} on the $\mathcal A_n$-quiver.

\begin{defn}
    We define $g$ as the involution on $\mathcal{K}(\mathcal{X}_{|\mathcal{A}_n^{dbl}|})$ that exchanges the variables $\widetilde{U_{i,j}}$ and $U_{i,j}$. We also define the projection $\operatorname{pr}: \mathcal{K}(\mathcal{X}_{|\mathcal{A}_n^{dbl}|}) \to \mathcal{K}(\mathcal{X}_{|\mathcal{A}_n|})$ by 
    $$\operatorname{pr}(U_{i,j}) = X_{i,j} \quad \text{and} \quad \operatorname{pr}(\widetilde{U_{i,j}}) = X_{i,j}.$$
\end{defn}

\begin{lem}
    For each $i$, the action $s_i^*$ commutes with $g$ on the rational function field $\mathcal{K}(\mathcal{X}_{|\mathcal{A}_n^{dbl}|})$; that is, $g \circ s_i^* = s_i^* \circ g$.
\end{lem}
\begin{proof}
    For $i \leq n/2$ (when $n$ is even) or $i \leq \lfloor n/2 \rfloor - 1$ (when $n$ is odd), the commutation is immediate from the symmetry of the doubled $\mathcal{A}_n$-quiver with respect to the variables $\widetilde{U_{i,j}}$ and $U_{i,j}$. For $i = \lfloor n/2 \rfloor$ when $n$ is odd, the relation
    $$f_{\lfloor n/2 \rfloor} \widetilde{f_{\lfloor n/2 \rfloor}} f_{\lfloor n/2 \rfloor} = \widetilde{f_{\lfloor n/2 \rfloor}} f_{\lfloor n/2 \rfloor} \widetilde{f_{\lfloor n/2 \rfloor}}$$
    combined with the symmetry of the doubled quiver implies $g \circ s^*_{\lfloor n/2 \rfloor} = s^*_{\lfloor n/2 \rfloor} \circ g$.
\end{proof}

\begin{defn}
    Since $g$ and $s_i^*$ commute, each $s_i^*$ naturally descends to an action on $\mathcal{K}(\mathcal{X}_{|\mathcal{A}_n|})$. Specifically, the induced actions satisfy the relation $s_i^* \circ \operatorname{pr} = \operatorname{pr} \circ s_i^*$, where $s_i^*$ on the left-hand side acts on $\mathcal{K}(\mathcal{X}_{|\mathcal{A}_n|})$. We refer to this induced action as the \textit{birational Weyl group action} on the $\mathcal{A}_n$-quiver.
\end{defn}

Let $L_n$ be the group generated by the birational actions $\tau_i^*$ defined in \eqref{3.4} on the rational function field $\mathcal{K}(\mathcal{X}_{|\mathcal{A}_n|})$. We have a natural inclusion $L_n \subset W_n$.
        
For even $n$, we have $s_i^*=\tau_i^*$ on $\mathcal{K}(\mathcal{X}_{|\mathcal{A}_n|})$ for $i \in \{1,\ldots,\lfloor n/2 \rfloor\}$. This follows immediately from the symmetry of the doubled $\mathcal{A}_n$-quiver between the variables $\widetilde{U_{i,j}}$ and $U_{i,j}$. Hence, $L_n=W_n$ for even $n$.

When $n$ is odd, we still have $s_i^*=\tau_i^*$ on $\mathcal{K}(\mathcal{X}_{|\mathcal{A}_n|})$ for $i \in \{1,\ldots,\lfloor n/2 \rfloor-1\}$ by the same symmetry. However, the action $s^*_{\lfloor n/2 \rfloor}$ is not contained in $L_n$, although it is the reflection associated with the innermost cycle of the $\mathcal{A}_n$-quiver. Hence, $L_n \subsetneq W_n$.

\begin{prop}\label{prop349}
The birational Weyl group action is compatible with the Poisson structure on the standard $\mathcal{A}_n$-quiver.
\end{prop}

\begin{proof}
Recall that $\operatorname{pr}$ is a $2:1$ covering map satisfying $\operatorname{pr}(\widetilde{U_{i,j}})=X_{i,j}$ and $\operatorname{pr}(U_{i,j})=X_{i,j}$, and that $g$ is the involution exchanging $\widetilde{U_{i,j}}$ and $U_{i,j}$ in $\mathcal{K}(\mathcal{X}_{|\mathcal{A}_n^{dbl}|})$. For the initial variables, the Poisson bracket on the standard quiver satisfies
\[
\{X_{i,j},X_{k,l}\}
=
\{\operatorname{pr}(\widetilde{U_{i,j}}),
  \operatorname{pr}(\widetilde{U_{k,l}})\}
=
\operatorname{pr}\left(
\{\widetilde{U_{i,j}},\widetilde{U_{k,l}}\}
+
\{\widetilde{U_{i,j}},g(\widetilde{U_{k,l}})\}
\right).
\]
The second equality, which we call the descent formula, extends to rational functions in the variables $U_{i,j}$ and $\widetilde{U_{i,j}}$, since both sides agree on the initial generators and satisfy the Leibniz rule.

We need to show that $s_r^*\{X_{i,j},X_{k,l}\}=\{s_r^*X_{i,j},s_r^*X_{k,l}\}.$ Applying the transformation $s_r^*$ to the bracket, we obtain
\begin{align*}
s_r^*\{X_{i,j},X_{k,l}\}
&=s_r^*\left(
\operatorname{pr}\left(
\{\widetilde{U_{i,j}},\widetilde{U_{k,l}}\}
+
\{\widetilde{U_{i,j}},g(\widetilde{U_{k,l}})\}
\right)\right)\\
&=\operatorname{pr}\left(
s_r^*\left(
\{\widetilde{U_{i,j}},\widetilde{U_{k,l}}\}
+
\{\widetilde{U_{i,j}},g(\widetilde{U_{k,l}})\}
\right)\right)
&&(\text{since } s_r^*\circ\operatorname{pr}=\operatorname{pr}\circ s_r^*)\\
&=\operatorname{pr}\left(
\{s_r^*\widetilde{U_{i,j}},s_r^*\widetilde{U_{k,l}}\}
+
\{s_r^*\widetilde{U_{i,j}},s_r^*g(\widetilde{U_{k,l}})\}
\right)
&&(\text{since } s_r^* \text{ is Poisson})\\
&=\operatorname{pr}\left(
\{s_r^*\widetilde{U_{i,j}},s_r^*\widetilde{U_{k,l}}\}
+
\{s_r^*\widetilde{U_{i,j}},g(s_r^*\widetilde{U_{k,l}})\}
\right)
&&(\text{since } s_r^*\circ g=g\circ s_r^*)\\
&=
\{\operatorname{pr}(s_r^*\widetilde{U_{i,j}}),
  \operatorname{pr}(s_r^*\widetilde{U_{k,l}})\}
&&(\text{by the descent formula})\\
&=
\{s_r^*(\operatorname{pr}\widetilde{U_{i,j}}),
  s_r^*(\operatorname{pr}\widetilde{U_{k,l}})\}
&&(\text{since } \operatorname{pr}\circ s_r^*=s_r^*\circ\operatorname{pr})\\
&=
\{s_r^*X_{i,j},s_r^*X_{k,l}\}.
\end{align*}
Thus the relation holds for all initial cluster variables. Since the Poisson bracket satisfies the Leibniz rule and $s_r^*$ is an algebra homomorphism, $s_r^*$ preserves the bracket for all rational functions in the field.
\end{proof}

\subsection{Casimirs of the $\mathcal A_n$-quiver} \label{Ch3.5}
In this subsection, we introduce the Casimirs of the $\mathcal{A}_n$-quiver and characterize the action of the Weyl group on these elements.

\begin{defn} \label{defn351}
    A Casimir of the quiver is defined as a Casimir element (Definition~\ref{defn222}) with respect to the Poisson bracket given in \eqref{2.6}.
\end{defn}

\begin{prop} \label{prop352}
    For each $i$, define $\mathcal{C}_i := \prod_{j=1}^{N_i} X_{i,j}$ and $\mathcal{K}_i := \prod_{j=i}^{\lfloor n/2 \rfloor} \mathcal{C}_{j}$, where $N_i$ is the length of the $i$th cycle. The elements $\mathcal{K}_i$ are Casimirs of the $\mathcal{A}_n$-quiver and are globally defined monomials (i.e., they remain monomials in any cluster).
\end{prop}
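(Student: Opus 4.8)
The plan is to identify these monomials with null vectors of the exchange matrix. Let $I$ denote the vertex set of the $\mathcal{A}_n$-quiver and $\epsilon=(\epsilon_{vw})$ its exchange matrix. For a monomial $M=\prod_{v\in I}X_v^{a_v}$, the Leibniz identity together with \eqref{2.6} gives $\{M,X_w\}=\bigl(\sum_{v}a_v\epsilon_{vw}\bigr)MX_w$ for every $w$; since $\epsilon$ is skew-symmetric, $M$ is thus a Casimir of the $\mathcal{A}_n$-quiver if and only if its exponent vector lies in $\ker\epsilon$. The exponent vector of $\mathcal{C}_i$ is the indicator $\mathbf 1_{I_i}$ of the index set \eqref{3.2}, and that of $\mathcal{K}_i$ is $\sum_{j=i}^{\lfloor n/2\rfloor}\mathbf 1_{I_j}$; as $\ker\epsilon$ is a linear subspace, it suffices to prove $\epsilon\,\mathbf 1_{I_i}=0$, i.e. $\sum_{v\in I_i}\epsilon_{vw}=0$ for every vertex $w$, for each main cycle $I_i$.

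I would split this sum according to whether $w\in I_i$. If $w\notin I_i$, then $\sum_{v\in I_i}\epsilon_{vw}=-\sum_{v\in I_i}\epsilon_{wv}=0$ by the balancing condition \eqref{3.3}, which is exactly the hypothesis satisfied by every main cycle of the $\mathcal{A}_n$-quiver when setting up cycle mutations. If $w\in I_i$ and $I_i$ is chordless — all main cycles for even $n$, and all but the innermost for odd $n$ — then the only arrows of $I_i$ incident to $w$ are the one from its cyclic predecessor and the one to its cyclic successor, of equal multiplicity along the cycle, contributing $+m$ and $-m$; so the sum vanishes. This proves $\mathcal{C}_i$, and hence $\mathcal{K}_i$, is a Casimir of the $\mathcal{A}_n$-quiver in all cases except the innermost cycle when $n$ is odd.

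For that case, set $m=\lfloor n/2\rfloor$ with $n$ odd and pass to the doubled quiver: by Proposition \ref{prop341} the cycle $X_{m,1}\to\cdots\to X_{m,n}\to X_{m,1}$ is chordless and balanced in $\mathcal{A}_n^{\mathrm{dbl}}$, so the argument of the previous paragraph makes $\prod_{j=1}^n X_{m,j}$ a Casimir of $\mathcal{X}_{|\mathcal{A}_n^{\mathrm{dbl}}|}$. Restricting to the locus $\widetilde{X_{i,j}}=X_{i,j}$ yields a Poisson isomorphism of rational function fields sending $X_{i,j}$ to $X_{i,j}$, which therefore carries this element to $\mathcal{C}_m$; hence $\mathcal{C}_m$ is a Casimir of the $\mathcal{A}_n$-quiver, and $\mathcal{K}_m=\mathcal{C}_m$. (Alternatively one checks $\sum_{v\in I_m}\epsilon_{vw}=0$ directly from the explicit quiver, the chords contributing in cancelling pairs.)

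Finally, to see $\mathcal{C}_i$ and $\mathcal{K}_i$ remain monomials in every cluster, I would induct on the distance from the initial seed. Compatibility of the Poisson bracket with cluster transformations (stated before Proposition \ref{prop223}) shows that, being a Casimir in the initial cluster, $\mathcal{C}_i$ is a Casimir with respect to every cluster's Poisson structure; so whenever $\mathcal{C}_i$ is a monomial $\prod_v X_v^{b_v}$ in a cluster with exchange matrix $\epsilon'$, the first paragraph gives $\epsilon' b=0$. Mutating at a direction $k$, the $\mathcal{X}$-mutation formula multiplies this monomial by $\prod_{v\neq k}\bigl(1+X_k^{-\operatorname{sgn}\epsilon'_{kv}}\bigr)^{-\epsilon'_{kv}b_v}$ and inverts the power of $X_k$; using $1+X_k^{-1}=X_k^{-1}(1+X_k)$, this factor collapses to a power of $X_k$ times $(1+X_k)^{-\sum_v\epsilon'_{kv}b_v}$, whose exponent vanishes since $\epsilon' b=0$. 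Thus the result is again a monomial, and by induction so are $\mathcal{C}_i$ and $\mathcal{K}_i$ in every cluster. I expect the only genuine obstacle to be the non-chordless innermost cycle in the odd case, handled above through the doubled quiver; the rest follows formally from \eqref{2.6}, the $\mathcal{X}$-mutation formula, condition \eqref{3.3}, and linearity of $\ker\epsilon$.
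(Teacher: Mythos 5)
Your proof is correct and follows essentially the same route as the paper's: the whole argument rests on the balancing identity $\sum_{j\in I_i}\epsilon_{kj}=0$ for \emph{every} vertex $k$ (the paper's equation \eqref{3.5}), from which both the Casimir property and global monomiality follow. The paper merely asserts this identity and its consequences, whereas you actually verify it case by case — correctly separating the $k\notin I_i$ case (condition \eqref{3.3}) from the $k\in I_i$ case, and handling the non-chordless innermost cycle for odd $n$, where your direct check of cancelling chord pairs is the more reliable of your two arguments since the ``Poisson isomorphism on the locus'' underlying the doubled-quiver detour is only informally justified in the paper — and you carry out the mutation computation showing that exponent vectors in $\ker\epsilon$ yield global monomials.
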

\begin{proof}
    Let $I$ denote the set of all variable indices, and let $I_i$ denote the subset of indices belonging to the $i$th main cycle. We then have
    \begin{equation} \label{3.5}
        \sum_{j \in I_i}\epsilon_{kj} = 0 \quad \text{for all } k \in I.
    \end{equation}
    This condition implies that the elements $\mathcal{C}_i$ are Casimirs and globally defined monomials. Since each $\mathcal{K}_i$ is a product of the $\mathcal{C}_j$, we conclude that the $\mathcal{K}_i$ are also Casimirs and globally defined monomials.
\end{proof}

Our primary interest lies in the action of the Weyl group on the set $\{\mathcal{K}_i\}_{i=1}^{\lfloor n/2 \rfloor}$. The following proposition describes this action.

\begin{prop}\label{prop353}
Let $m=\lfloor n/2\rfloor$. The action $s_i^*$ interchanges $\mathcal{K}_i$ and $\mathcal{K}_{i+1}$ for $i<m$, while $s_m^*$ inverts the last element, namely $s_m^*(\mathcal{K}_m)=\mathcal{K}_m^{-1}$. Consequently, the Weyl group acts faithfully on the set of Casimirs as the Weyl group of type $B_m$.
\end{prop}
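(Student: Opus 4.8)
The plan is to reduce everything to two inputs already available: the explicit birational formulas of Proposition~\ref{prop334}, together with the identifications $s_i^\ast=\tau_i^\ast$ on $\mathcal K(\mathcal X_{|\mathcal A_n|})$ recorded in Remark~\ref{rem348}, and the unitriangular change of variables $\mathcal K_i=\prod_{j\ge i}\mathcal C_j$. So I would first compute how each generator acts on the primitive Casimirs $\mathcal C_k$, and only at the very end pass to the $\mathcal K_i$, where the type-$B_m$ pattern becomes visible.

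The computational engine is the telescoping identity $\prod_{j=1}^{N_l}Y_{l,j}=\mathcal C_l$: since $Y_{l,j}=X_{l,j}F_{l,j-1}/F_{l,j}$ and both $(X_{l,j})_j$ and $(F_{l,j})_j$ are $N_l$-periodic, the $F$-factors cancel in a cyclic product, and likewise $\prod_{j=1}^{2N_l}Y_{l,j}=\mathcal C_l^2$. For $i<m$ one has $s_i^\ast=\tau_i^\ast$, and feeding $\tau_i^\ast(X_{i-1,j})=X_{i-1,j}Y_{i,j}$, $\tau_i^\ast(X_{i,j})=X_{i,j}/(Y_{i,j}Y_{i,j-1})$, $\tau_i^\ast(X_{i+1,j})=X_{i+1,j}Y_{i,j-1}$ (and the boundary variant $\tau_{m-1}^\ast(X_{m,j})=X_{m,j}Y_{m-1,j-1}Y_{m-1,m+j-1}$ when $n$ is even) into the cyclic-product identity gives
\[
s_i^\ast(\mathcal C_{i-1})=\mathcal C_{i-1}\mathcal C_i,\quad s_i^\ast(\mathcal C_i)=\mathcal C_i^{-1},\quad s_i^\ast(\mathcal C_{i+1})=\mathcal C_{i+1}\mathcal C_i,\quad s_i^\ast(\mathcal C_k)=\mathcal C_k\ \ (k\notin\{i-1,i,i+1\}).
\]
For $i=m$ with $n$ even, $s_m^\ast=\tau_m^\ast$ and the same computation applies, except that here $N_{m-1}=n=2N_m$, so the cyclic product of $Y_{m,\cdot}$ taken over the length-$N_{m-1}$ neighbour cycle equals $\mathcal C_m^2$; hence $s_m^\ast(\mathcal C_{m-1})=\mathcal C_{m-1}\mathcal C_m^2$, $s_m^\ast(\mathcal C_m)=\mathcal C_m^{-1}$, and $s_m^\ast$ fixes every other $\mathcal C_k$.

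For $i=m$ with $n$ odd the only new work is to push the same bookkeeping through the triple composition $s_m=f_m\widetilde f_mf_m$ on the doubled $\mathcal A_n$-quiver and then restrict to the locus $\widetilde X_{i,j}=X_{i,j}$. In the doubled quiver the innermost cycle $\{X_{m,j}\}$ is chordless and has exactly two neighbouring cycles, $\{X_{m-1,j}\}$ and the mirror copy $\{\widetilde X_{m,j}\}$, and symmetrically for $\{\widetilde X_{m,j}\}$. Writing $P=\prod_jX_{m-1,j}$, $\Pi=\prod_jX_{m,j}$, $\widetilde\Pi=\prod_j\widetilde X_{m,j}$ and applying Theorem~\ref{thm333} together with the telescoping identity, $f_m^\ast$ sends $(P,\Pi,\widetilde\Pi)\mapsto(P\Pi,\Pi^{-1},\widetilde\Pi\Pi)$ while fixing the outer Casimirs, $\widetilde f_m^\ast$ sends $(P,\Pi,\widetilde\Pi)\mapsto(P,\Pi\widetilde\Pi,\widetilde\Pi^{-1})$, and composing the three maps yields $s_m^\ast(\Pi)=\widetilde\Pi^{-1}$ and $s_m^\ast(P)=P\Pi\widetilde\Pi$. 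Restricting to $\widetilde X=X$ (so $\widetilde\Pi=\Pi=\mathcal C_m$) recovers precisely $s_m^\ast(\mathcal C_{m-1})=\mathcal C_{m-1}\mathcal C_m^2$, $s_m^\ast(\mathcal C_m)=\mathcal C_m^{-1}$, with all other $\mathcal C_k$ fixed — the same formula as in the even case.

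Finally I would translate these into the $\mathcal K_i$. For $i<m$, any $\mathcal K_k$ with $k\le i-1$ contains the block $\mathcal C_{i-1}\mathcal C_i\mathcal C_{i+1}$, whose image is $(\mathcal C_{i-1}\mathcal C_i)(\mathcal C_i^{-1})(\mathcal C_{i+1}\mathcal C_i)=\mathcal C_{i-1}\mathcal C_i\mathcal C_{i+1}$, so $\mathcal K_k$ is fixed; $\mathcal K_k$ with $k\ge i+2$ is fixed trivially; and direct cancellation gives $s_i^\ast(\mathcal K_i)=\mathcal K_{i+1}$, $s_i^\ast(\mathcal K_{i+1})=\mathcal K_i$. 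For $i=m$ the exponent $\mathcal C_m^2$ is exactly what makes $s_m^\ast(\mathcal C_{m-1}\mathcal C_m)=\mathcal C_{m-1}\mathcal C_m^2\cdot\mathcal C_m^{-1}=\mathcal C_{m-1}\mathcal C_m$, so $\mathcal K_k$ is fixed for every $k<m$, while $s_m^\ast(\mathcal K_m)=\mathcal C_m^{-1}=\mathcal K_m^{-1}$. Identifying $\mathcal K_i\leftrightarrow e_i$ and $\mathcal K_i^{-1}\leftrightarrow-e_i$, the generators $s_1,\dots,s_{m-1}$ act as the adjacent transpositions and $s_m$ as the sign change $e_m\mapsto-e_m$, which is exactly the standard action of the Weyl group of type $B_m$ on the signed permutation lattice, proving the claim. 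I expect the genuine obstacle to be the odd-$n$ step on the doubled quiver: one must pin down the arrow pattern between the two mirrored innermost cycles precisely enough to justify the $c$-vectors used above, and keep the cyclic-product bookkeeping (the origin of the type-$B$ "factor of $2$") consistent across all three mutations; the cases $i<m$ and $i=m$ with $n$ even are routine once the telescoping identity is in place.
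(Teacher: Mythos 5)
Your proposal is correct and follows exactly the route the paper takes: its entire proof consists of the telescoping identity $\prod_{j=1}^{N_i}Y_{i,j}=\prod_{j=1}^{N_i}X_{i,j}$ followed by the remark that one can "directly calculate the images," and your computation of $s_i^*(\mathcal C_k)$ via Proposition~\ref{prop334} (including the doubled-quiver bookkeeping for $s_m$ when $n$ is odd, and the factor $\mathcal C_m^2$ responsible for the type-$B$ node) is precisely that direct calculation, consistent with the formulas the paper itself records later in Lemma~\ref{lem632}.
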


\begin{proof}
We have the identity
\[
\prod_{j=1}^{N_i} Y_{i,j} = \prod_{j=1}^{N_i} X_{i,j},
\]
where $N_i$ is the length of the $i$th cycle. This identity follows immediately from the relation $Y_{i,j}=X_{i,j}\frac{F_{i,j-1}}{F_{i,j}}$. From this, the images under the Weyl group action can be verified by direct calculation.
\end{proof}

\begin{cor}\label{cor354}
The Weyl group acts by permutations and inversions on the set $\{\mathcal{K}_i\}_{i=1}^{m}$. Therefore, any Weyl group invariant element generated by these Casimirs can be expressed as a polynomial in the elementary symmetric functions of $\{\mathcal{K}_i+\mathcal{K}_i^{-1}\}_{i=1}^m$.
\end{cor}

\section{Weyl Group Invariants}
\subsection{Formal Geodesic Functions}
The main goal of this section is to characterize the Weyl group invariants of 
the birational Weyl group action. To this end, we first define formal geodesic 
functions as the entries of the matrix $\mathbb{A}\in\mathcal{A}_n$.
\begin{defn} \label{defn411}
    Each entry $\mathbb A_{ij}$ with $i < j$ is called a \textit{formal geodesic function}. In particular, each element $\mathbb A_{i,i+1}$ for $i \in \left\{1,\cdots,n-1\right\}$ is called an \textit{elementary formal geodesic function}.
\end{defn}
\begin{figure}[H]
\centering
\includegraphics[width=0.7\linewidth]{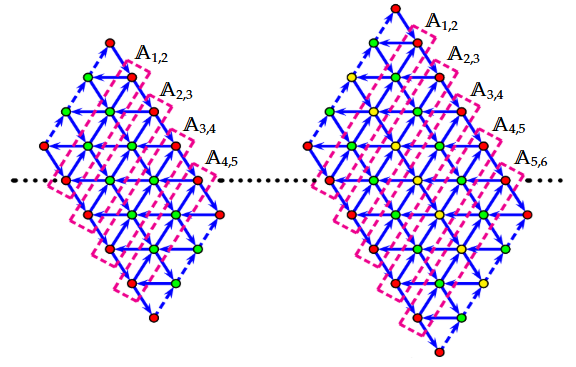}
\caption{Main cycles and elementary geodesic functions are represented on the union of two triangles, each of which is the copy of the $\mathcal A_n$-quiver for $n=5$ and $6$. The dotted line in the center indicates the line of the triangle gluing. The cluster variables in the red dashed block correspond to the formal geodesic functions and each collection of colored vertices represents the main cycle. The indices of cluster variables are denoted in Figure~\ref{Fig14}.} 
\label{Fig16}
\end{figure}

A convenient way to express elementary geodesic functions is the process depicted in \textbf{Figure~\ref{Fig16}}. 

We take another copy of an $\mathcal A_n$-quiver, flip the quiver down, and glue the quiver to the original one along the bottom side of the latter in a way that amalgamated variables on the sides of the two quivers match. The resulting quiver is equal to the glued $\mathcal{A}_n$-quiver discussed in Sections \ref{Ch3.2} and \ref{Ch3.4}.

In the quiver, the red dashed block corresponds to $\mathbb A_{i,i+1}$ and each collection of colored vertices represents the main cycle; see\textcolor{black}{\cite{1}} for more details.
\begin{rem}\textit{(Formal geodesic functions and classical geodesic functions)}
    \begin{enumerate}
        \item As stated in the introduction, there exists a Poisson map $\phi_n: \mathcal T_{\lfloor {n-1\over 2} \rfloor, \text{par}(n)} \to \mathcal A_n$. When $n=3$ and $n=4$, these two spaces are locally isomorphic because their dimensions coincide.
        
        Recall that the dimension of the Teichmüller space corresponds to the number of edges in its fat graph, given by $6g + 3s - 6$. Using the relation $n = 2g + s$, this simplifies to $3n - 6$. On the other hand, the dimension of $\mathcal A_n$ is $n(n-1)/2$. Since these dimensions are equal for $n=3$ and $n=4$, the formal geodesic functions can be identified with the geodesic functions in these cases.

        \item Formal geodesic functions satisfy the \textbf{Bondal Poisson bracket (\ref{1.4})} by \textit{Theorem~\ref{thm314}}, which is a generalization of the Goldman bracket. However, formal geodesic functions are not equal to geodesic functions in Teichmüller space since the matrix $\mathbb A$ does not enjoy the rank condition in general.
    \end{enumerate}
\end{rem}

We will sometimes omit the word \textit{formal} when the meaning is clear from the context.

Let us define the bracket notation $\langle\,\cdots\,\rangle$ as follows:
$$ \langle a_1, a_2, \dots, a_n \rangle := (a_1 a_2 \cdots a_n)^{\frac{1}{2}} \left( 1 + \frac{1}{a_1} + \frac{1}{a_1 a_2} + \cdots + \frac{1}{a_1 a_2 \cdots a_n} \right). $$

Let $m = \lfloor \frac{n}{2} \rfloor$. The elementary geodesic function $\mathbb{A}_{n-1,n}$ is defined along a certain path of variables (enclosed by the red dashed block in Figure \ref{Fig16}).

When $n$ is even, the path is given by:
$$ X_{1,1} \longrightarrow X_{2,1} \longrightarrow \cdots \longrightarrow X_{m,1} \longrightarrow X_{m-1,m} \longrightarrow \cdots \longrightarrow X_{2,3} \longrightarrow X_{1,2}. $$
The corresponding elementary geodesic function is:
$$ \mathbb{A}_{n-1,n} = \langle X_{1,1}, X_{2,1}, \dots, X_{m,1}, X_{m-1,m}, \dots, X_{2,3}, X_{1,2} \rangle, $$
which we denote by $\langle X_{j,1}|_{j=1}^{m}, X_{m-j,m-j+1}|_{j=1}^{m-1} \rangle$.

When $n$ is odd, the path is given by:
$$ X_{1,1} \longrightarrow X_{2,1} \longrightarrow \cdots \longrightarrow X_{m,1} \longrightarrow X_{m,m+1} \longrightarrow X_{m-1,m} \longrightarrow \cdots \longrightarrow X_{2,3} \longrightarrow X_{1,2}. $$
In this case, the corresponding elementary geodesic function is:
$$ \mathbb{A}_{n-1,n} = \langle X_{1,1}, X_{2,1}, \dots, X_{m,1}, X_{m,m+1}, X_{m-1,m}, \dots, X_{2,3}, X_{1,2} \rangle, $$
which we denote by $\langle X_{j,1}|_{j=1}^{m}, X_{m-j,m-j+1}|_{j=0}^{m-1} \rangle$.
\begin{figure}[H]
\centering
\includegraphics[width=0.9\linewidth]{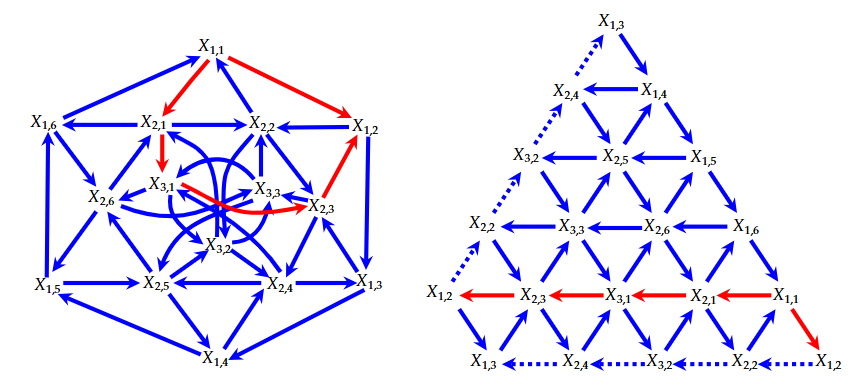}
\caption{Two expressions of the $\mathcal A_6$-quiver. The red arrows represent the element $\mathbb A_{5,6}.$} 
\label{Fig17}
\end{figure}

\begin{exmp}\textit{(Examples of elementary geodesic functions)}

    From Figure~\ref{Fig16}, we have 
    $$\mathbb A_{1,2} = \langle X_{1,3},X_{2,3},X_{3,3},X_{2,5},X_{1,4} \rangle,\text{ }\mathbb A_{2,3} = \langle X_{1,4},X_{2,4},X_{3,1},X_{2,6},X_{1,5} \rangle,$$
    $$\mathbb A_{3,4} = \langle X_{1,5},X_{2,5},X_{3,2},X_{2,1},X_{1,6} \rangle ,\text{ }\mathbb A_{4,5} = \langle X_{1,6},X_{2,6},X_{3,3},X_{2,2},X_{1,1} \rangle,\text{ and }\mathbb A_{5,6} = \langle X_{1,1},X_{2,1},X_{3,1},X_{2,3},X_{1,2} \rangle.$$
\end{exmp}

\begin{defn}\label{defn414}
Let $\mathcal{O}(\mathcal{A}_n)$ denote the Poisson algebra generated by the 
elementary geodesic functions. It contains all formal geodesic functions by 
the Bondal Poisson bracket relations in \eqref{1.4}.
\end{defn}

We prove that each matrix entry of $\mathbb A$ has the form $M^{1/2} \cdot L$ where $M$ is a monomial and $L$ is a Laurent polynomial in any cluster. 

\begin{lem} \label{lem416}
   Each $\mathbb A_{i,i+1}$ has the form $M^{1/2} \cdot L$ where $M$ is a monomial and $L$ is a Laurent polynomial in any cluster.
\end{lem}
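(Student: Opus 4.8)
The plan is to first pin down the form of $\mathbb A_{i,i+1}$ in the distinguished cluster attached to the $\mathcal A_n$-quiver, and then to propagate it along mutations. Starting from the explicit expression recalled above, in the initial cluster one has $\mathbb A_{i,i+1} = \langle a_1, \dots, a_{n-1}\rangle$, where $(a_1,\dots,a_{n-1})$ is the ordered tuple of cluster variables along the path representing $\mathbb A_{i,i+1}$ in Figure~\ref{Fig16}. By the definition of the symbol $\langle\ \cdots\ \rangle$ this equals $M^{1/2}L$ with $M = a_1a_2\cdots a_{n-1}$ a Laurent monomial and $L = 1 + a_1^{-1} + (a_1a_2)^{-1} + \dots + (a_1\cdots a_{n-1})^{-1}$ an honest Laurent polynomial, so the statement holds for the initial seed.

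Next I would show that the property is stable under a single mutation; by induction on the length of a mutation sequence this yields the statement in every cluster, and by Corollary~\ref{cor216} it in fact suffices to treat one-step mutations out of an arbitrary seed. So suppose $\mathbb A_{i,i+1} = M^{1/2}L$ in a seed $\Sigma$ with exchange matrix $\epsilon$, monomial $M = \prod_t X_t^{m_t}$, and $L \in \mathbb Z[X_t^{\pm 1}]$, and fix a mutation direction $k$. Substituting $X_k = (X_k')^{-1}$ and $X_t = X_t'(1 + (X_k')^{\text{sgn}(\epsilon_{kt})})^{\epsilon_{kt}}$ for $t\ne k$, the monomial $M$ becomes a Laurent monomial in the $X'_t$ times $(1 + X_k')^{\,e_k}$, where $e_k := \sum_t m_t\epsilon_{kt}$, while $L$ becomes $\tilde L\,(1+X_k')^{-d_k}$ for a Laurent polynomial $\tilde L$ and an integer $d_k\ge 0$. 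Thus $\mu_k^*(\mathbb A_{i,i+1}) = (\text{Laurent monomial})^{1/2}\,\tilde L\,(1+X_k')^{\,e_k/2 - d_k}$.

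The proof then reduces to two points: (a) $e_k$ is even, so $(1+X_k')^{e_k/2}$ is an honest polynomial; and (b) $\mathbb A_{i,i+1}^2$ has no pole along $1+X_k'=0$, i.e. $e_k - 2d_k\ge 0$ — this is exactly the assertion that $\mathbb A_{i,i+1}^2$ is universally Laurent, and it follows from the same one-step induction combined with Corollary~\ref{cor216}. Point (a) is where the main difficulty lies. In the initial seed it can be verified directly: for $k$ on the path the two path-edges at $k$, together with the extra cycle-edge joining the two path-endpoints (which are consecutive on a main cycle), contribute a signed sum in $\{0,\pm2\}$, while for $k$ outside the path the balance condition~(\ref{3.3}) for the chordless main cycles forces the signed number of path-neighbours of $k$ to be even. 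To carry this from $\Sigma$ to $\mu_k(\Sigma)$ one must check that the residue class $e_v \bmod 2$ is mutation-covariant: under $\mu_k$ the exponent vector transforms by $m_k\mapsto -m_k$ with the remaining coordinates changed only by even amounts dictated by the matrix-mutation rule for $\epsilon$, so that the class $[M]\in(\mathbb Z/2)^{I}$ transforms like the mod-$2$ reduction of a $c$-vector; this parity bookkeeping is the technical heart of the argument. Granting (a) and (b), $\mu_k^*(\mathbb A_{i,i+1}) = \bigl(\text{monomial}\cdot(1+X_k')^{e_k}\bigr)^{1/2}\cdot\bigl(\tilde L\,(1+X_k')^{\,e_k/2-d_k}\bigr)$ is again of the required form, closing the induction. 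An alternative, more hands-on but combinatorially heavier route is to work in the Fock--Goncharov $Z$-variables before amalgamation, where $\mathbb A_{ij}$ is visibly $D_1^{-1}D_2^{-1}$ times a polynomial and only the scalars $D_1,D_2$ carry fractional exponents, and to track how the $\tfrac1n$-powers in $D_1D_2$ collapse to $\tfrac12$-powers after amalgamating $Z_{i,0,n-i}$ with $Z_{n-i,i,0}$.
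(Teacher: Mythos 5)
Your base case and the overall shape of the induction are reasonable, but there is a genuine gap at the step you yourself flag as ``the technical heart'': the evenness of $e_k=\sum_t m_t\epsilon_{kt}$ at an \emph{arbitrary} seed. You verify this only in the initial cluster (where it follows from the balance condition (\ref{3.3}) and the structure of the path), and for every other seed you appeal to an unproved claim that the mod-$2$ class of the exponent vector of $M$ ``transforms like the mod-$2$ reduction of a $c$-vector.'' This is not established, and it is not a formality: after one mutation the new monomial $M'$ is obtained by stripping the $(1+X_k')$-factors out of the transformed $M$ \emph{and} $L$, so its exponent vector depends on the interaction between the two factors rather than on $M$ alone, and there is no evident reason it should obey a clean mod-$2$ mutation rule. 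Without (a) at every seed the factorization $\mu_k^*(\mathbb A_{i,i+1})=\bigl(\mathrm{mon}\cdot(1+X_k')^{e_k}\bigr)^{1/2}\,\tilde L\,(1+X_k')^{e_k/2-d_k}$ is not even well formed and the induction does not close. A secondary gap: your point (b) is reduced to the universal Laurentness of $\mathbb A_{i,i+1}^2$ via Corollary \ref{cor216}, but that corollary still requires you to actually verify Laurentness after each one-step mutation out of the initial seed, i.e.\ the explicit case analysis (mutation at an interior path vertex, at a vertex adjacent to the path, and at the two endpoints $X_{1,1}$ and $X_{1,2}$, where a nontrivial cancellation of $(1+X'_{1,1})$ occurs) that you never carry out.

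The paper sidesteps the parity problem entirely by a device worth comparing with your plan: it passes to the framed quiver $\hat{\mathcal A_n}$ and pulls $\mathbb A_{n-1,n}$ back along the ensemble map, which (because $\sum_{i\in I_G}\epsilon_{ij}\in\{0,\pm2\}$) turns the square-root part into a product of \emph{frozen} $\hat A$-variables times the square of an honest Laurent monomial. Frozen variables are inert under every mutation, so the fractional-exponent part never changes and only the genuine Laurent factor mutates; since the framed exchange matrix has determinant $1$, the ensemble map is an isomorphism and one-step checks suffice by Theorem \ref{thm215}. This is precisely the mechanism that makes your claim (a) true at every seed, and if you want to salvage your argument you should route it through the framing rather than attempt the mod-$2$ bookkeeping on the $\mathcal X$-side.
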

\begin{proof}
    We first verify the lemma for $\mathbb{A}_{n-1,n} = \langle X_{j,1}\big|_{j=1}^{m}, X_{m-j,m-j+1}\big|_{j=1}^{m-1} \rangle$ when $n = 2m$. 

Consider the \textit{framing} of the $\mathcal{A}_n$-quiver, obtained by adding a frozen vertex $\hat{v}$ and an arrow $\hat{v} \leftarrow v$ for each mutable vertex $v$. We denote this framed quiver by $\hat{\mathcal{A}}_n$. Let $(X_{i,j})$ and $(A_{i,j})$ be the mutable $\mathcal{X}$- and $\mathcal{A}$-variables, respectively, and let $(\hat{X}_{i,j})$ and $(\hat{A}_{i,j})$ be the corresponding frozen variables on the $\hat{\mathcal{A}}_n$-quiver. The pullback under the map $p_{\hat{\mathcal{A}}_n}^*$ satisfies the following relation:
\begin{equation}
 p_{\hat{\mathcal{A}}_n}^* \left( \prod_{j=1}^{m} X_{j,1} \prod_{j=1}^{m-1} X_{m-j,m-j+1} \right) = \left( \prod_{j=1}^{m} \hat{A}_{j,1} \prod_{j=1}^{m-1} \hat{A}_{m-j,m-j+1} \right) \left( \frac{A_{1,2}}{A_{1,1}} \right)^2.
\end{equation}

To establish this, let $I$ be the index set of all variables in the $\mathcal{A}_n$-quiver, and let $I_G \subset I$ be the subset of indices corresponding to the variables that define $\mathbb{A}_{n-1,n}$ (see Figure \ref{Fig17}). This relation follows from the property of the exchange matrix elements $\epsilon_{ij}$:
\begin{equation}
\sum_{i \in I_G} \epsilon_{ij} = 0 \quad \text{for all } j \in I \setminus \{1,2\},
\end{equation}
while $\sum_{i \in I_G} \epsilon_{i1} = -2$ and $\sum_{i \in I_G} \epsilon_{i2} = 2$. Note that the indices $1$ and $2$ correspond to the variables $X_{1,1}$ and $X_{1,2}$, respectively \eqref{3.2}. 

Consequently, by the definition of the symbol $\langle \cdots \rangle$, we obtain:
\begin{equation}
p_{\hat{\mathcal{A}}_n}^*(\mathbb{A}_{n-1,n}) = \left(\prod_{j=1}^{m}\hat{A}_{j,1}\prod_{j=1}^{m-1}\hat{A}_{m-j,m-j+1}\right)^{1/2} \cdot f,
\end{equation}
where $f$ is a Laurent polynomial in the $\mathcal{A}$-variables of the $\hat{\mathcal{A}}_n$-quiver.

For the framed quiver, the determinant of the exchange matrix is $1$. Therefore, $f \in \mathcal{O}(\mathcal{A}_{|\hat{\mathcal{A}}_n|})$ if and only if $f$ remains a Laurent polynomial in every cluster adjacent to the initial one. Since $\mathcal{A}$-variable mutations applied to $p_{\hat{\mathcal{A}}_n}^*(\mathbb{A}_{n-1,n})$ only affect the factor $f$, checking the adjacent clusters is sufficient to conclude that $p_{\hat{\mathcal{A}}_n}^*(\mathbb{A}_{n-1,n})$ takes the form $M^{1/2} \cdot L$ in any cluster, where $M$ is a monomial and $L$ is a Laurent polynomial.

Furthermore, as the determinant is $1$, the map $p_{\hat{\mathcal{A}}_n}^*$ is an isomorphism of tori. This allows us to pull this result back to $\mathbb{A}_{n-1,n}$ using the compatibility relation \eqref{2.4}:
\begin{equation}
p_{\hat{\mathcal{A}}_n}^* \circ(\tau^\mathcal{X})^* = (\tau^\mathcal{A})^*\circ p_{\hat{\mathcal{A}}_n}^*,
\end{equation}
where $\tau$ denotes a cluster transformation. Thus, it suffices to verify the adjacent clusters to ensure that $\mathbb{A}_{n-1,n}$ maintains the form $M^{1/2} \cdot L$ in every cluster.

To check adjacent clusters, we consider the following four cases (see Figure \ref{Fig17}):

\begin{enumerate}
    \item \textbf{A mutation of a vertex adjacent to the path defining the geodesic function}:

    Without loss of generality, we can assume the mutation is of $X_{1,n}$. Then, $(\mu^\mathcal X_{X_{1,n}})^*\left(X_{1,1}\right) = X'_{1,1}\left({X'_{1,n} \over 1+X'_{1,n}}\right)$, and $(\mu^\mathcal X_{X_{1,n}})^*\left(X_{2,1}\right) = X'_{2,1}\left(1 + X'_{1,n}\right)$. This induces
    $$(\mu^\mathcal X_{X_{1,n}})^*\left(1+{1 \over X_{1,1}} + {1 \over X_{1,1}X_{2,1}}\right) = 1 + {1+X'_{1,n} \over X'_{1,1}X'_{1,n}} + {1 \over X'_{1,1}X'_{1,n}X'_{2,1}}$$
    and 
    $$(\mu^\mathcal X_{X_{1,n}})^*\left(X_{1,1}X_{2,1}\right)= X'_{1,1}X'_{1,n}X'_{2,1}.$$
    Hence, the geodesic function still has the form $M^{1/2} \cdot L$ under the mutation.

    \item \textbf{A mutation of a vertex in the path for the geodesic function except $X_{1,1}$ and $X_{1,2}$}:

   Without loss of generality, we can assume the mutation is of $X_{2,1}$. Then, $(\mu^\mathcal X_{X_{2,1}})^*\left(X_{1,1}\right) = X'_{1,1}\left(1+X'_{2,1}\right)$, $(\mu^\mathcal X_{X_{2,1}})^*\left(X_{2,1}\right) = (X'_{2,1})^{-1}$, and $(\mu^\mathcal X_{X_{2,1}})^*\left(X_{3,1}\right) = X'_{3,1}\left({X'_{2,1} \over 1+X'_{2,1}}\right)$. Note that other variables are invariant. Thus, we have
   $$(\mu^\mathcal X_{X_{2,1}})^*\left(1 + {1\over X_{1,1}} + {1\over X_{1,1}X_{2,1}} + {1\over X_{1,1}X_{2,1}X_{3,1}}\right) = 1 + {1 \over (X'_{2,1}+1)X'_{1,1}} + {X'_{2,1} \over (X'_{2,1}+1)X'_{1,1}} +{1 \over X'_{1,1}X'_{3,1}}$$
   $$= 1+ {1 \over X'_{1,1}} + {1 \over X'_{1,1}X'_{3,1}}$$
   and 
   $$(\mu^\mathcal X_{X_{2,1}})^*\left(X_{1,1}X_{2,1}X_{3,1}\right) = X'_{1,1}X'_{3,1}.$$
    Hence, the geodesic function still has the form $M^{1/2} \cdot L$ under the mutation.
    \item \textbf{A mutation of $X_{1,1}$}:
    
    $(\mu^\mathcal X_{X_{1,1}})^*\left(X_{1,1}\right)=(X'_{1,1})^{-1},$ $(\mu^\mathcal X_{X_{1,1}})^*\left(X_{1,2}\right)=(X'_{1,2})\left({X'_{1,1} \over 1+X'_{1,1}}\right),$ and $(\mu^\mathcal X_{X_{1,1}})^*\left(X_{2,1}\right) = (X'_{2,1})\left({X'_{1,1} \over 1+X'_{1,1}}\right)$. 
    
    Then, we get
   $$(\mu^\mathcal X_{X_{1,1}})^*\left(1 + {1\over X_{1,1}} + {1\over X_{1,1}X_{2,1}} + \cdots + {1 \over \prod_{j=1}^{m}X_{j,1}\prod_{j=1}^{m-1}X_{m-j,m-j+1}}\right)$$
   $$= 1 + X'_{1,1} + {1 + X'_{1,1} \over X'_{2,1}} + \cdots + {(1+X'_{1,1})^2 \over \prod_{j=1}^{m}X'_{j,1}\prod_{j=1}^{m-1}X'_{m-j,m-j+1}}$$
   and 
   $$(\mu^\mathcal X_{X_{1,1}})^*\left(\prod_{j=1}^{m}X_{j,1}\prod_{j=1}^{m-1}X_{m-j,m-j+1}\right) = {\prod_{j=1}^{m}X'_{j,1}\prod_{j=1}^{m-1}X'_{m-j,m-j+1} \over (1 + X'_{1,1})^2}.$$
    Hence, $(\mu^\mathcal X_{X_{1,1}})^*\left(\mathbb A_{n-1,n}\right)$ is equal to the
    $$ \sqrt{\prod_{j=1}^{m}X'_{j,1}\prod_{j=1}^{m-1}X'_{m-j,m-j+1} \over (1 + X'_{1,1})^2}\cdot{\left(1 + X'_{1,1} + {1 + X'_{1,1} \over X'_{2,1}} + \cdots + {(1+X'_{1,1})^2 \over \prod_{j=1}^{m}X'_{j,1}\prod_{j=1}^{m-1}X'_{m-j,m-j+1}}\right)}.$$
    Thus,  the geodesic function still has the form $M^{1/2} \cdot L$ under the mutation. Note that the factor $1 + X'_{1,1}$ cancels out.
    
    \item \textbf{A mutation of $X_{1,2}$}: This is similar to the previous case.
    
\end{enumerate}
Thus, $\mathbb A_{n-1,n}$ has the form $M^{1/2} \cdot L$ in any cluster of $\mathcal X_{|\hat{\mathcal A_n}|}$. This implies $\mathbb A_{n-1,n}$ has the form $M^{1/2} \cdot L$ in any cluster of $\mathcal X_{|{\mathcal A_n}|}$ because $\mathcal X$-variable mutations do not affect frozen variables $\hat{ X}_{i,j}$. For the other elementary geodesic functions, the proofs are the same, which completes the proof.\end{proof}

\begin{thm} \label{thm416}Any formal geodesic function $\mathbb A_{i,j}$ has the form $M^{1/2}\cdot L$, where $M$ is a monomial and $L$ is a Laurent polynomial in any cluster.
\end{thm}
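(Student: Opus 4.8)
The plan is to induct on the gap $j-i$, using Lemma \ref{lem416} for the base case $j-i=1$ and the Bondal Poisson bracket (\ref{1.4}) to move from smaller gaps to larger ones. The third line of (\ref{1.4}) says $\{a_{ik},a_{kl}\}=\tfrac12 a_{ik}a_{kl}-a_{il}$ for $i<k<l$, i.e.
\[
\mathbb A_{i,j}=\tfrac12\,\mathbb A_{i,k}\mathbb A_{k,j}-\{\mathbb A_{i,k},\mathbb A_{k,j}\}
\]
for any $k$ with $i<k<j$ (one may take $k=i+1$, so that $\mathbb A_{i,k}$ is elementary and $\mathbb A_{k,j}$ has strictly smaller gap). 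Hence it suffices to prove that the class $\mathcal S$ of elements of $\mathcal K(\mathcal X_{|\mathcal A_n|})$ which, in every cluster, can be written as $M^{1/2}\cdot L$ with $M$ a Laurent monomial and $L$ a Laurent polynomial in that cluster, is closed under multiplication and under the Poisson bracket; granting this, the displayed formula shows $\mathbb A_{i,j}\in\mathcal S$ whenever $\mathbb A_{i,k},\mathbb A_{k,j}\in\mathcal S$, and Lemma \ref{lem416} provides the base of the induction.

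Closure under products is immediate: $M_1^{1/2}L_1\cdot M_2^{1/2}L_2=(M_1M_2)^{1/2}(L_1L_2)$. The substantive step is closure under the bracket, and this is where I would do the real work. Fix any cluster; by (\ref{2.6}) the Poisson structure is log-canonical in that cluster's $\mathcal X$-coordinates, $\{X_a,X_b\}=\epsilon_{ab}X_aX_b$, with the appropriately mutated exchange matrix. For a Laurent monomial $M=\prod_a X_a^{m_a}$ one computes directly, via the Leibniz rule, that $\{M^{1/2},X_b\}$ is a \emph{scalar} multiple of $M^{1/2}X_b$; consequently $\{M^{1/2},X^{b}\}$ is a scalar multiple of $M^{1/2}X^{b}$ for every monomial $X^{b}$, so $\{M^{1/2},L\}=M^{1/2}\cdot\widetilde L$ for some Laurent polynomial $\widetilde L$, and $\{M_1^{1/2},M_2^{1/2}\}$ is a scalar multiple of $(M_1M_2)^{1/2}$. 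Expanding $\{M_1^{1/2}L_1,M_2^{1/2}L_2\}$ by Leibniz into the four pieces $M_1^{1/2}M_2^{1/2}\{L_1,L_2\}$, $M_1^{1/2}L_2\{L_1,M_2^{1/2}\}$, $M_2^{1/2}L_1\{M_1^{1/2},L_2\}$, $L_1L_2\{M_1^{1/2},M_2^{1/2}\}$ and applying these observations shows each piece, hence the whole bracket, equals $(M_1M_2)^{1/2}$ times a Laurent polynomial. Since the product term $\tfrac12\mathbb A_{i,k}\mathbb A_{k,j}$ carries exactly the same half-monomial factor $(M_1M_2)^{1/2}$, the difference $\mathbb A_{i,j}$ is again of the form $(M_1M_2)^{1/2}\cdot L$; the cluster was arbitrary, so $\mathbb A_{i,j}\in\mathcal S$, completing the induction.

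The main obstacle is precisely this bracket-closure computation: one must check that bracketing with a half-monomial does not introduce genuinely new square roots, which is exactly the content of the log-canonical identity $\{M^{1/2},X_b\}\in\mathbb C\cdot M^{1/2}X_b$, and one must verify that the product term and the bracket term in the Bondal relation share a common half-monomial prefactor, so that their difference stays inside $\mathcal S$ (in general a \emph{sum} $M_1^{1/2}L_1+M_2^{1/2}L_2$ with $M_1\neq M_2$ need not lie in $\mathcal S$). Everything else is a routine induction. A minor preliminary point worth recording is that $\mathbb A_{i,j}$ is a well-defined element of $\mathcal K(\mathcal X_{|\mathcal A_n|})$ — so that the phrase ``in any cluster'' is meaningful — since cluster transformations act by birational isomorphisms on this field and Theorem \ref{thm314}, combined with the amalgamation and the restriction to the locus $K_i=1$, exhibits $\mathbb A_{i,j}$ as such an element.
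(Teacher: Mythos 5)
Your proposal is correct and follows essentially the same route as the paper: induct on $j-i$ starting from Lemma \ref{lem416}, use the Bondal relation $\mathbb A_{i,l}=\tfrac12\mathbb A_{i,k}\mathbb A_{k,l}-\{\mathbb A_{i,k},\mathbb A_{k,l}\}$, and control the bracket via the Leibniz rule, the identity $\{\sqrt u,v\}=\tfrac{1}{2\sqrt u}\{u,v\}$, and the log-canonical form of the bracket in each cluster. Your write-up is in fact somewhat more careful than the paper's, since you make explicit the point that the product term and the bracket term share a common half-monomial prefactor, which is needed for their difference to remain of the form $M^{1/2}\cdot L$.
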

\begin{proof}This result was proven for elementary geodesic functions in the previous lemma. For other geodesic functions, we utilize the Bondal Poisson bracket (\ref{1.4}). Consider the following relation for $i < k < l$:$$\left\{\mathbb A_{i,k}, \mathbb A_{k,l}\right\} = \frac{1}{2}\mathbb A_{i,k}\mathbb A_{k,l} - \mathbb A_{i,l}.$$
Let us denote $\mathbb A_{i,i+1} = M_i^{1/2} L_i$, where $M_i$ is a monomial and $L_i$ is a Laurent polynomial. By the Leibniz rule and using the identity $\{\sqrt u,v\} = \frac{1}{2\sqrt{u}}\{u,v\}$, it follows that $\mathbb A_{i,i+2}$ also has the form $M^{1/2}\cdot L$. Note that the Poisson bracket of any two Laurent polynomials is a Laurent polynomial; the argument is the same as that in Proposition~\ref{prop223}. By repeatedly applying this logic, we conclude that any geodesic function has this form.

Finally, since cluster transformations are compatible with both multiplication and the Poisson bracket, and because the elementary geodesic functions possess this property in any cluster, we conclude that any geodesic function retains this form in any cluster.\end{proof}

\subsection{Invariance of Formal Geodesic Functions under the Weyl Group Action}

In Fock--Goncharov higher Teichmüller theory, the Fock--Goncharov--Shen Weyl group acts on the moduli space of framed local systems on an oriented surface by permuting the eigenvalues of the monodromy operators around the punctures. As a result, the traces of these monodromy operators are invariant under the 
Weyl group action.

Analogously, the birational Weyl group action considered in this paper preserves the formal geodesic functions. We first prove the following lemma.

\begin{lem} \label{lem421}
Let $Y_{i,j} = X_{i,j}{F_{i,j-1} \over F_{i,j}}$ and $F_{i,j} := 1 + X_{i,j} + X_{i,j}X_{i,j-1} + \cdots + X_{i,j}X_{i,j-1} \cdots X_{i,j-N_i+2}$. Then, 
    $${1 \over Y_{i,j}} + {Y_{i,j-1} \over X_{i,j}} = 1 + {1 \over X_{i,j}}.$$
    where $i \in \left\{1,2, \cdots , \left\lfloor {n \over 2} \right\rfloor\right\}$. 
\end{lem}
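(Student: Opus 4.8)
The plan is to reduce the claimed identity to a telescoping cancellation in the numerators $F_{i,j}$. First I would substitute the definition $Y_{i,j} = X_{i,j}\,F_{i,j-1}/F_{i,j}$ directly into the left-hand side, so that $1/Y_{i,j} = F_{i,j}/(X_{i,j}F_{i,j-1})$ and $Y_{i,j-1}/X_{i,j} = (X_{i,j-1}F_{i,j-2})/(F_{i,j-1}X_{i,j})$ — here the indices are read modulo $N_i$, which is consistent since the $X_{i,j}$ were extended periodically and the polynomials $F_{i,j}$ inherit that periodicity. After clearing the common denominator $X_{i,j}F_{i,j-1}$, the identity becomes the polynomial statement
\[
F_{i,j} + X_{i,j-1}F_{i,j-2} = (X_{i,j}+1)F_{i,j-1},
\]
and the whole problem is to verify this.

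The key step is then a one-line recursion for the $F$'s. From the definition
\[
F_{i,j} = 1 + X_{i,j}\bigl(1 + X_{i,j-1} + X_{i,j-1}X_{i,j-2} + \cdots\bigr),
\]
one reads off $F_{i,j} = 1 + X_{i,j}F_{i,j-1}$, provided the length $N_i$ of the cycle makes the tail in the parenthesis exactly $F_{i,j-1}$ (this is where one must be slightly careful: $F_{i,j-1}$ has $N_i-1$ summands, and the parenthesized factor in $F_{i,j}$ also has $N_i-1$ summands, ending in $X_{i,j-1}X_{i,j-2}\cdots X_{i,j-N_i+1}$, so they agree term-by-term once periodicity is invoked). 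Likewise $F_{i,j-1} = 1 + X_{i,j-1}F_{i,j-2}$, hence $X_{i,j-1}F_{i,j-2} = F_{i,j-1} - 1$. Substituting both relations into the displayed polynomial identity gives
\[
F_{i,j} + X_{i,j-1}F_{i,j-2} = (1 + X_{i,j}F_{i,j-1}) + (F_{i,j-1} - 1) = X_{i,j}F_{i,j-1} + F_{i,j-1} = (X_{i,j}+1)F_{i,j-1},
\]
which is exactly what was needed. Tracing back through the cleared denominator recovers the claimed fractional identity.

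I expect the only genuine subtlety — the "hard part," such as it is — to be bookkeeping with the cyclic indices: one must confirm that the recursion $F_{i,j} = 1 + X_{i,j}F_{i,j-1}$ really does hold as stated given the precise number of terms $N_i$ defining $F_{i,j}$, and that $F_{i,j-2}$ is well-defined via the periodic extension $X_{i,j+N_i} = X_{i,j}$. Once that is pinned down the computation is purely formal. An alternative, essentially equivalent route would be to multiply the target identity by $Y_{i,j}$ to get $1 + Y_{i,j}Y_{i,j-1}/X_{i,j} = Y_{i,j}(1 + 1/X_{i,j})$ and expand, but the denominator-clearing approach above is cleaner and makes the telescoping transparent, so that is the one I would write up.
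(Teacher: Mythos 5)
Your reduction to the polynomial identity $F_{i,j} + X_{i,j-1}F_{i,j-2} = (X_{i,j}+1)F_{i,j-1}$ is exactly the paper's first step, but your ``key step'' --- the recursion $F_{i,j} = 1 + X_{i,j}F_{i,j-1}$ --- is false, and the parenthetical where you try to justify it is where the error sits. Each $F_{i,j}$ has exactly $N_i$ summands (the empty product together with the products of lengths $1$ through $N_i-1$), so $1 + X_{i,j}F_{i,j-1}$ would have $N_i+1$ summands and cannot equal $F_{i,j}$. Concretely, factoring $X_{i,j}$ out of $F_{i,j}-1$ leaves a sum whose last term is $X_{i,j-1}\cdots X_{i,j-N_i+2}$, whereas the last term of $F_{i,j-1}$ is $X_{i,j-1}\cdots X_{i,j-N_i+1}$; these do not agree, periodicity or not. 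Already for $N_i=3$ one has $F_{i,j} = 1+X_{i,j}+X_{i,j}X_{i,j-1}$ while $1+X_{i,j}F_{i,j-1} = 1+X_{i,j}+X_{i,j}X_{i,j-1}+X_{i,j}X_{i,j-1}X_{i,j-2}$. Consequently your second identity $X_{i,j-1}F_{i,j-2} = F_{i,j-1}-1$ is false as well.

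The argument can be repaired: the correct relations are $F_{i,j} = 1 + X_{i,j}\left(F_{i,j-1} - X_{i,j-1}X_{i,j-2}\cdots X_{i,j-N_i+1}\right)$ and $X_{i,j-1}F_{i,j-2} = F_{i,j-1} - 1 + X_{i,j-1}X_{i,j-2}\cdots X_{i,j-N_i}$. By periodicity $X_{i,j-N_i}=X_{i,j}$, both correction terms equal $X_{i,j}X_{i,j-1}\cdots X_{i,j-N_i+1}$ and cancel when you add the two expressions, so the sum is still $(1+X_{i,j})F_{i,j-1}$ and the lemma follows. As written, though, your proof rests on two false intermediate identities whose errors happen to compensate --- precisely at the boundary-term bookkeeping you flagged as the only subtlety. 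The paper sidesteps this by expanding both $F$'s in full and regrouping around the factor $(1+X_{i,j})$, invoking $X_{i,j-N_i}=X_{i,j}$ explicitly for the wrap-around term; your recursive route, once corrected as above, is a legitimate and somewhat tidier alternative.
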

\begin{proof}
Substituting the definition of $Y_{i,j}$ into the claim, the equation is equivalent to verifying the following identity:
\[
F_{i,j} + X_{i,j-1}F_{i,j-2} = F_{i,j-1}(1+X_{i,j}).
\]
We begin by expanding the left-hand side using the definition of $F_{i,j}$:
\begin{align*}
F_{i,j} + X_{i,j-1}F_{i,j-2} &= \left( 1 + X_{i,j} + X_{i,j}X_{i,j-1} + \cdots + X_{i,j}X_{i,j-1} \cdots X_{i,j-N_i+2} \right) \\
&\quad + X_{i,j-1}\left( 1 + X_{i,j-2} + X_{i,j-2}X_{i,j-3} + \cdots + X_{i,j-2}X_{i,j-3} \cdots X_{i,j-2-N_i+2} \right).
\end{align*}
We now regroup the terms to factor $(1+X_{i,j})$. By utilizing the cyclic property $X_{i,j-N_i} = X_{i,j}$, the expression can be rewritten as:
\begin{align*}
&= (1+X_{i,j}) + X_{i,j-1} \bigg[ X_{i,j}(1+X_{i,j-2} + \cdots + X_{i,j-2}X_{i,j-3}\cdots X_{i,j-N_i+2}) \\
&\quad \quad + (1 + X_{i,j-2} + \cdots + X_{i,j-2}X_{i,j-3}\cdots X_{i,j-N_i+2}) \\
&\quad \quad + X_{i,j-2}X_{i,j-3} \cdots X_{i,j-N_i+1}(1+X_{i,j-N_i}) \bigg] \\[1em]
&= (1+X_{i,j}) + X_{i,j-1} \bigg[ (1+X_{i,j})(1+X_{i,j-2} + \cdots + X_{i,j-2}X_{i,j-3}\cdots X_{i,j-N_i+2}) \\
&\quad \quad + X_{i,j-2}X_{i,j-3} \cdots X_{i,j-N_i+1}(1+X_{i,j}) \bigg].
\end{align*}
Finally, factoring out $(1+X_{i,j})$ from the entire expression induces:
\begin{align*}
F_{i,j} + X_{i,j-1}F_{i,j-2} &= (1+X_{i,j}) \left( 1 + X_{i,j-1} + X_{i,j-1}X_{i,j-2} + \cdots + X_{i,j-1}\cdots X_{i,j-1-N_i+2} \right) \\[0.5em]
&= (1+X_{i,j})F_{i,j-1}.
\end{align*}
This proves the identity.
\end{proof}

\begin{thm} \label{thm422}
    Consider the Weyl group $W_n$ generated by $\left\{s_1^*, \dots, s^*_{\lfloor n/2 \rfloor}\right\}$. The Weyl group action preserves formal geodesic functions. Note that this preservation depends on a choice of square root; otherwise, the formal geodesic functions are preserved only up to sign.
\end{thm}
\begin{proof}

    For the cases $n=3$ and $n=4$, we have verified the result via a computer software. Hence, we may assume $n \ge 5$.
    
    It suffices to consider only one elementary geodesic function to prove the theorem for all elementary geodesic functions. This follows from the cyclic symmetry of the $\mathcal A_n$-quiver (\textbf{Proposition~\ref{prop321}}) and from the fact that the birational actions do not depend on the choice of the order of cluster mutations (\textbf{Theorem~\ref{thm331}}). 
    
    Let us assume $n = 2m$ and consider an elementary geodesic function $\left\langle X_{1,1}, X_{2,1}, \cdots, X_{m,1}, X_{m-1,m}, \cdots, X_{2,3}, X_{1,2} \right\rangle$. The proof is divided into the following four cases according to the choice of the main cycle:
\begin{enumerate}
    \item $1$st cycle: In this case, we consider the action $s_1^* = \tau_{1}^*$.
    
    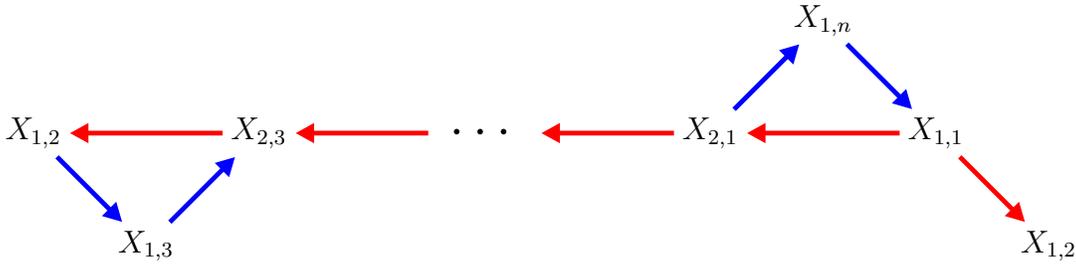
\begin{figure}[H]
    \centering
\begin{tikzpicture}[
    % 1. Global Scale
    scale=1.5,
    % 2. Styles
    vertex/.style={anchor=center, inner sep=3pt, font=\large},
    redarrow/.style={->, color=red, line width=1.8pt, -{Triangle[length=2.5mm, width=2.5mm]}},
    bluearrow/.style={->, color=blue, line width=1.8pt, -{Triangle[length=2.5mm, width=2.5mm]}}
]

    % --- 3. Place Vertices on a Grid ---
    
    % === LEFT BLOCK ===
    \node[vertex] (X_1_2L) at (0, 0)  {$X_{1,2}$};
    \node[vertex] (X_2_3)  at (2, 0)  {$X_{2,3}$};
    \node[vertex] (X_1_3)  at (1, -1) {$X_{1,3}$};

    % === CENTER GAP ===
    % Instead of a line, we place a Node containing dots at x=4
    \node[scale=2] (Dots) at (4, 0) {$\cdots$};

    % We define coordinates for where the arrows stop/start near the dots
    % Arrow entering from Right stops at x=4.5
    \coordinate (Arrow_Out_R) at (4.5, 0);
    
    % Arrow emerging to Left starts at x=3.5
    \coordinate (Arrow_In_L) at (3.5, 0);

    % === RIGHT BLOCK ===
    \node[vertex] (X_2_1)  at (6, 0) {$X_{2,1}$};
    \node[vertex] (X_1_1)  at (8, 0) {$X_{1,1}$};
    \node[vertex] (X_1_n)  at (7, 1) {$X_{1,n}$};
    \node[vertex] (X_1_2R) at (9, -1) {$X_{1,2}$};

    % --- 4. Draw Edges ---

    % === RED SPINE (Right-to-Left Flow) ===
    
    % 1. Far Right
    \draw[redarrow] (X_1_1)   -- (X_1_2R);
    \draw[redarrow] (X_1_1)   -- (X_2_1);
    
    % 2. Arrow pointing to the Dots
    \draw[redarrow] (X_2_1)   -- (Arrow_Out_R);
    
    % 3. Arrow emerging from the Dots
    \draw[redarrow] (Arrow_In_L) -- (X_2_3);

    % 4. Far Left
    \draw[redarrow] (X_2_3)   -- (X_1_2L);

    % === BLUE CYCLES ===
    
    % Left Triangle
    \draw[bluearrow] (X_1_2L) -- (X_1_3);
    \draw[bluearrow] (X_1_3)  -- (X_2_3);

    % Right Triangle
    \draw[bluearrow] (X_2_1)  -- (X_1_n);
    \draw[bluearrow] (X_1_n)  -- (X_1_1);

\end{tikzpicture}
        \caption{This expresses adjacency relations between the elementary geodesic function and the 1st cycle. Here, the red arrows indicate the path that defines an elementary geodesic function. Note that the other variables of the first cycle which are not shown in this figure, are not connected to the red path. }
    \label{Fig18}
    \end{figure}

    See \textbf{Figure~\ref{Fig18}}: among variables $X_{j,1}$ and $X_{m-j,m-j+1}$ that define the elementary geodesic function, only $X_{1,1},X_{2,1},X_{2,3}$ and $X_{1,2}$ are adjacent to the $1$st cycle, so other variables are invariant under the action $\tau_{1}^*.$

    For $X_{1,1}$ and $X_{1,2}$, direct calculation gives
    $$\tau_{1}^*(X_{1,1}) = {X_{1,1}\over Y_{1,1}Y_{1,n}} \text{ and } \tau_{1}^*(X_{1,2}) = {X_{1,2}\over Y_{1,2}Y_{1,1}}$$
    since the variables lie in the first cycle (\textbf{Theorem~\ref{thm333}}).

    For $X_{2,1},$ see vertices on the right side of the \textbf{Figure \ref{Fig18}}. $X_{2,1}$ is only adjacent to $X_{1,1}$ and $X_{1,N}$ among the variables of the cycle. Therefore, we have $c_{n+1} = (0,0,\cdots,0,1)$ because $(\epsilon_{n+1j})_{j \in I_1} = (-1,0,\cdots,0,1).$ Recall that the index of $X_{2,1}$ is $n+1$. Thus, $\tau_{1}^*(X_{2,1}) = X_{2,1}Y_{1,N}$ by \textbf{Theorem~\ref{thm333}}.

    For $X_{2,3},$ see the vertices on the left side of \textbf{Figure \ref{Fig18}}. $X_{2,3}$ is only adjacent to $X_{1,2}$ and $X_{1,3}$ among the variables of the cycle. Therefore, $c_{n+3} = (0,1,0,\cdots,0,0)$ because $(\epsilon_{n+3j})_{j \in I_1} = (0,1,-1,0\cdots,0,0).$ Note that the index of $X_{2,3}$ is $n+3$. Thus, $\tau_{1}^*(X_{2,3}) = X_{2,3}Y_{1,2}$ by \textbf{Theorem~\ref{thm333}}.
    
    As a result, $\tau_{1}^*(\left\langle X_{1,1}, X_{2,1}, \cdots, X_{m,1}, X_{m-1,m}, \cdots, X_{2,3}, X_{1,2} \right\rangle)$ is, 
    $$\sqrt{{X_{1,1}\over Y_{1,1}Y_{1,n}}X_{2,1}Y_{1,n} X_{3,1} \cdots X_{m,1}X_{m-1,m} \cdots X_{3,4}X_{2,3}Y_{1,2}{X_{1,2}\over Y_{1,2}Y_{1,1}}}\left(1 + {Y_{1,1}Y_{1,n} \over X_{1,1}} + {Y_{1,1} \over X_{1,1}X_{2,1}} + {Y_{1,1} \over X_{1,1}X_{2,1}X_{3,1}} +\right.$$
    $$\left.\cdots + {Y_{1,1} \over \prod_{j=1}^m X_{j,1} \prod_{j=1}^{m-3}X_{m-j,m-j+1}}\left(1+{1 \over X_{2,3}Y_{1,2}} + {Y_{1,1} \over X_{2,3}X_{1,2}}\right)\right)$$
    $$= \sqrt{\prod_{j=1}^m X_{j,1} \prod_{j=1}^{m-1}X_{m-j,m-j+1}} {1 \over \sqrt{(Y_{1,1})^2}}Y_{1,1}\left({1 \over Y_{1,1}} + {Y_{1,n} \over X_{1,1}} + {1 \over X_{1,1}X_{2,1}} + \cdots + {1 \over \prod_{j=1}^m X_{j,1} \prod_{j=1}^{m-2}X_{m-j,m-j+1}}\right.$$
    $$\left.\left({1 \over Y_{1,2}} + {Y_{1,1} \over X_{1,2}}\right)\right) = \sqrt{\prod_{j=1}^m X_{j,1} \prod_{j=1}^{m-1}X_{m-j,m-j+1}} \left(1+ {1 \over X_{1,1}} + \cdots +  {1 \over \prod_{j=1}^m X_{j,1} \prod_{j=1}^{m-2}X_{m-j,m-j+1}}\left(1 + {1 \over X_{1,2}}\right)\right)$$
    $$= \left\langle X_{1,1}, X_{2,1}, \cdots, X_{m,1}, X_{m-1,m}, \cdots, X_{2,3}, X_{1,2} \right\rangle,$$
    where the second equality is by \textbf{Lemma~\ref{lem421}.} Note that we can choose the square root which satisfies $\sqrt {(Y_{1,k})^2} = Y_{1,k}$ for any $k = 1,2,\dots,n$.
    \item $i$th cycle where $2 \le i \le m-2$: In this case, we consider the action $s_i^*=\tau_{i}^*.$
    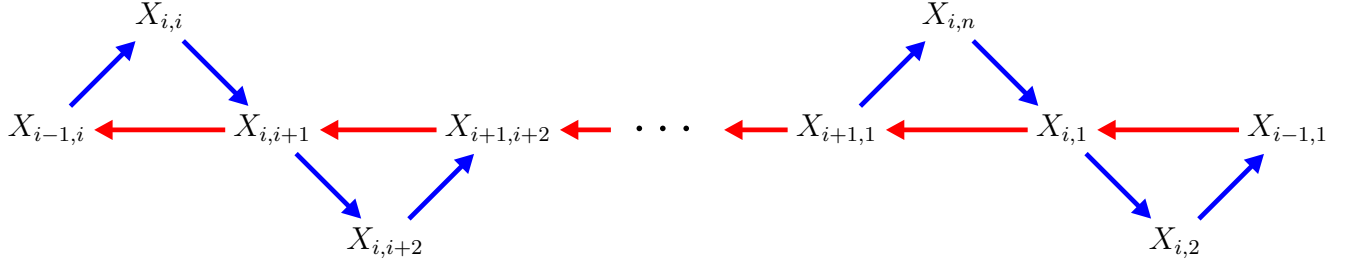
\begin{figure}[H]
        \begin{tikzpicture}[
    % 1. Global Scale
    scale=1.5,
    % 2. Styles
    vertex/.style={anchor=center, inner sep=3pt, font=\large},
    redarrow/.style={->, color=red, line width=1.8pt, -{Triangle[length=2.5mm, width=2.5mm]}},
    bluearrow/.style={->, color=blue, line width=1.8pt, -{Triangle[length=2.5mm, width=2.5mm]}}
]

    % --- 3. Place Vertices on a Grid ---
    
    % === LEFT CLUSTER ===
    \node[vertex] (X_i-1_i)   at (0, 0) {$X_{i-1,i}$};
    \node[vertex] (X_i_i+1)   at (2, 0) {$X_{i,i+1}$};
    \node[vertex] (X_i+1_i+2) at (4, 0) {$X_{i+1,i+2}$};
    
    \node[vertex] (X_i_i)     at (1, 1) {$X_{i,i}$};      % Peak
    \node[vertex] (X_i_i+2)   at (3, -1) {$X_{i,i+2}$};   % Valley

    % === CENTER GAP (The "Dots" area) ===
    % The space is between x=4 and x=7.
    % We place the ellipsis in the exact center: x=5.5
    \node[scale=2] (Dots) at (5.5, 0) {$\cdots$};

    % Define stops/starts to create blank space
    % Arrow from Right stops at x=6.0
    \coordinate (Arrow_In_Stop) at (6.0, 0);

    % Arrow to Left starts at x=5.0
    \coordinate (Arrow_Out_Start) at (5.0, 0);

    % === RIGHT CLUSTER ===
    \node[vertex] (X_i+1_1)   at (7, 0) {$X_{i+1,1}$};
    \node[vertex] (X_i_1)     at (9, 0) {$X_{i,1}$};
    \node[vertex] (X_i-1_1)   at (11, 0) {$X_{i-1,1}$};
    
    \node[vertex] (X_i_n)     at (8, 1) {$X_{i,n}$};      % Peak
    \node[vertex] (X_i_2)     at (10, -1) {$X_{i,2}$};    % Valley

    % --- 4. Draw Edges ---
    
    % -- RED SPINE (Right to Left Flow) --
    
    % 1. Inside Right Cluster
    \draw[redarrow] (X_i-1_1) -- (X_i_1);
    \draw[redarrow] (X_i_1)   -- (X_i+1_1);
    
    % 2. THE CENTER CONNECTION
    % Pointing TOWARDS the dots (stops at 6.0)
    \draw[redarrow] (X_i+1_1) -- (Arrow_In_Stop);
    
    % Emerging FROM the dots (starts at 5.0)
    \draw[redarrow] (Arrow_Out_Start) -- (X_i+1_i+2);
    
    % 3. Inside Left Cluster
    \draw[redarrow] (X_i+1_i+2) -- (X_i_i+1);
    \draw[redarrow] (X_i_i+1)   -- (X_i-1_i);

    % -- BLUE TRIANGLE EDGES --

    % Left Cluster Triangles
    \draw[bluearrow] (X_i-1_i) -- (X_i_i);
    \draw[bluearrow] (X_i_i)   -- (X_i_i+1);
    \draw[bluearrow] (X_i_i+1) -- (X_i_i+2);
    \draw[bluearrow] (X_i_i+2) -- (X_i+1_i+2);

    % Right Cluster Triangles
    \draw[bluearrow] (X_i+1_1) -- (X_i_n);
    \draw[bluearrow] (X_i_n)   -- (X_i_1);
    \draw[bluearrow] (X_i_1)   -- (X_i_2);
    \draw[bluearrow] (X_i_2)   -- (X_i-1_1);

\end{tikzpicture}

        \caption{The quiver expresses adjacency relations between the elementary geodesic function and $i$th cycle.}
    \label{Fig19}
    \end{figure}

    By \textbf{Theorem~\ref{thm333}}, we have
    $$\tau_{i}^*(X_{i,1}) = {X_{i,1} \over Y_{i,1}Y_{i,n}} \text{ and } \tau_{i}^*(X_{i,i+1}) = {X_{i,i+1} \over Y_{i,i+1}Y_{i,i}}.$$
    since these variables are in the $i$th cycle. Similar to \textit{Case 1}, we also get
    $$\tau_{i}^*(X_{i-1,1}) = X_{i-1,1}Y_{i,1}, \text{ }\tau_{i}^*(X_{i+1,1}) = X_{i+1,1}Y_{i,n}, \text{ }\tau_{i}^*(X_{i-1,i}) = X_{i-1,i}Y_{i,i},$$
    $$\text{ and } \tau_{i}^*(X_{i+1,i+2}) = X_{i+1,i+2}Y_{i,i+1}.$$

    The other variables are invariant under the $\tau_{i}^*$ because they are not adjacent to the cycle of $I_i:$ See \textbf{Figure \ref{Fig19}}. Hence, $\tau_{i}^*(\left\langle X_{1,1}, X_{2,1}, \cdots, X_{m,1}, X_{m-1,m}, \cdots, X_{2,3}, X_{1,2} \right\rangle)$ is,
    $$\sqrt{X_{1,1}X_{2,1}\cdots X_{i-1,1}Y_{i,1}{X_{i,1}\over Y_{i,1}Y_{i,n}}X_{i+1,1}Y_{i,n} \cdots X_{i+1,i+2}Y_{i,i+1}{X_{i,i+1}\over Y_{i,i+1}Y_{i,i}}X_{i-1,i}Y_{i,i}\cdots X_{2,3}X_{1,2}}\left(1 + {1 \over X_{1,1}} \right.$$
    $$+ \cdots + {1 \over \prod_{j=1}^{i-2}X_{j,1}}\left(1 + {1 \over X_{i-1,1}Y_{i,1}} + {Y_{i,n} \over X_{i-1,1}X_{i,1}} + {1 \over X_{i-1,1}X_{i,1}X_{i+1,1}}\right) + \cdots +$$
    $${1 \over \prod_{j=1}^m X_{j,1} \prod_{j=1}^{m-i-2}X_{m-j,m-j+1}}\left(1 + {1 \over X_{i+1,i+2}Y_{i,i+1}} + {Y_{i,i} \over X_{i+1,i+2}X_{i,i+1}} + {1 \over X_{i+1,i+2}X_{i,i+1}X_{i-1,i}}\right) + \cdots + $$
    $$\left.{1 \over \prod_{j=1}^m X_{j,1} \prod_{j=1}^{m-1}X_{m-j,m-j+1}}\right) = \left\langle X_{1,1}, X_{2,1}, \cdots, X_{m,1}, X_{m-1,m}, \cdots, X_{2,3}, X_{1,2} \right\rangle,$$
    since an identity
    $${1 \over X_{i-1,1}Y_{i,1}} + {Y_{i,n} \over X_{i-1,1}X_{i,1}} = {1 \over X_{i-1,1}}\left(1+ {1 \over X_{i,1}}\right), $$
    $${1 \over X_{i+1,i+2}Y_{i,i+1}} + {Y_{i,i} \over X_{i+1,i+2}X_{i,i+1}} = {1 \over X_{i+1,i+2}}\left(1+ {1 \over X_{i,i+1}}\right)$$
    from \textbf{Lemma~\ref{lem421}}, and the other identity
    $$\sqrt{X_{1,1}X_{2,1}\cdots X_{i-1,1}Y_{i,1}{X_{i,1}\over Y_{i,1}Y_{i,n}}X_{i+1,1}Y_{i,n} \cdots X_{i+1,i+2}Y_{i,i+1}{X_{i,i+1}\over Y_{i,i+1}Y_{i,i}}X_{i-1,i}Y_{i,i}\cdots X_{2,3}X_{1,2}}$$
    $$= \sqrt{\prod_{j=1}^{m}X_{j,1}\prod_{j=1}^{m-1}X_{m-j,m-j+1}}.$$
    
    \item $m-1$th cycle: In this case, we consider the action $s_{m-1}^* = \tau_{m-1}^*.$
    
    \begin{figure}[H]

\begin{tikzpicture}[
    % 1. Global Scale
    scale=1.6,
    % 2. Styles
    vertex/.style={anchor=center, inner sep=3pt, font=\large},
    redarrow/.style={->, color=red, line width=1.8pt, -{Triangle[length=2.5mm, width=2.5mm]}},
    bluearrow/.style={->, color=blue, line width=1.8pt, -{Triangle[length=2.5mm, width=2.5mm]}}
]

    % --- 3. Place Vertices on a Grid ---
    
    % The Horizontal Spine (Left to Right at y=0)
    \node[vertex] (X_m-2_m-1) at (0, 0) {$X_{m-2,m-1}$};
    \node[vertex] (X_m-1_m)   at (2, 0) {$X_{m-1,m}$};
    \node[vertex] (X_m_1)     at (4, 0) {$X_{m,1}$};
    \node[vertex] (X_m-1_1)   at (6, 0) {$X_{m-1,1}$};
    \node[vertex] (X_m-2_1)   at (8, 0) {$X_{m-2,1}$};

    % The "Peaks" (Top Row, y=1)
    % Placed between the first two and the third/fourth spine nodes
    \node[vertex] (X_m-1_m-1) at (1, 1) {$X_{m-1,m-1}$};
    \node[vertex] (X_m-1_n)   at (5, 1) {$X_{m-1,n}$};

    % The "Valleys" (Bottom Row, y=-1)
    % Placed between the second/third and fourth/fifth spine nodes
    \node[vertex] (X_m-1_m+1) at (3, -1) {$X_{m-1,m+1}$};
    \node[vertex] (X_m-1_2)   at (7, -1) {$X_{m-1,2}$};

    % --- 4. Draw Edges ---
    
    % -- Red Spine Arrows (Flow: Right to Left) --
    \draw[redarrow] (X_m-2_1)   -- (X_m-1_1);
    \draw[redarrow] (X_m-1_1)   -- (X_m_1);
    \draw[redarrow] (X_m_1)     -- (X_m-1_m);
    \draw[redarrow] (X_m-1_m)   -- (X_m-2_m-1);

    % -- Blue Triangle Edges --
    
    % 1. First Peak (Left)
    \draw[bluearrow] (X_m-2_m-1) -- (X_m-1_m-1); % Up-Right
    \draw[bluearrow] (X_m-1_m-1) -- (X_m-1_m);   % Down-Right

    % 2. First Valley (Middle-Left)
    \draw[bluearrow] (X_m-1_m)   -- (X_m-1_m+1); % Down-Right
    \draw[bluearrow] (X_m-1_m+1) -- (X_m_1);     % Up-Right

    % 3. Second Peak (Middle-Right)
    \draw[bluearrow] (X_m_1)     -- (X_m-1_n);   % Up-Right
    \draw[bluearrow] (X_m-1_n)   -- (X_m-1_1);   % Down-Right

    % 4. Second Valley (Right)
    \draw[bluearrow] (X_m-1_1)   -- (X_m-1_2);   % Down-Right
    \draw[bluearrow] (X_m-1_2)   -- (X_m-2_1);   % Up-Right

\end{tikzpicture}
        \centering
        \caption{The quiver expresses adjacency relations between the elementary geodesic function and $m-1$th cycle.}
        \label{Fig20}
    \end{figure}
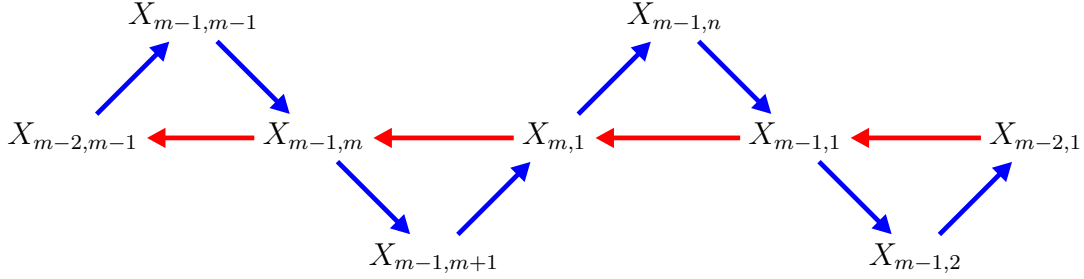

    By \textbf{Theorem~\ref{thm333}}, we have
    $$\tau_{m-1}^*(X_{m-1,1}) = {X_{m-1,1} \over Y_{m-1,1}Y_{m-1,n}}\text{ and }\tau_{m-1}^*(X_{m-1,m}) = {X_{m-1,m} \over Y_{m-1,m}Y_{m-1,m-1}}.$$
    Similar to the \textit{Case 1}, we get
    $$\tau_{m-1}^*(X_{m-2,1}) = X_{m-2,1}Y_{m-1,1}, \text{ and }\tau_{m-1}^*(X_{m-2,m-1}) = X_{m-2,m-1}Y_{m-1,m-1}.$$

    $X_{m,1}$ is slightly different. From Figure~\ref{Fig20}, $(\epsilon_{n(m-1)+1,j})_{j \in I_{m-1}} = (-1,0,\cdots,0,1,-1,0,\cdots,0,1),$ so $c_{n(m-1)+1} = (0,\cdots,0,1,0,\cdots,0,1)$. This implies $\tau_{m-1}^*(X_{m,1}) = X_{m,1}Y_{m-1,m}Y_{m-1,n}$ by  \textbf{Theorem~\ref{thm333}.} Note that the index of $X_{m,1}$ is $n(m-1)+1$.

    From the equation above and \textbf{Lemma~\ref{lem421}}, we have
    $$\tau_{m-1}^*\left(\left\langle X_{1,1}, X_{2,1}, \cdots, X_{m,1}, X_{m-1,m}, \cdots, X_{2,3}, X_{1,2} \right\rangle\right) =\left\langle X_{1,1}, X_{2,1}, \cdots, X_{m,1}, X_{m-1,m}, \cdots, X_{2,3}, X_{1,2} \right\rangle.$$
    We omit the calculation since it is similar to previous cases.

    \item $m$th cycle: In this case, we consider the action $s_m^* = \tau_{m}^*$.
    
    \begin{figure}[H]
\begin{tikzpicture}[
    % 1. Global Scale
    scale=1.5,
    % 2. Styles
    vertex/.style={anchor=center, inner sep=3pt, font=\large},
    redarrow/.style={->, color=red, line width=2pt, -{Triangle[length=2.5mm, width=2.5mm]}},
    bluearrow/.style={->, color=blue, line width=2pt, -{Triangle[length=2.5mm, width=2.5mm]}}
]

    % --- 3. Place Vertices on a Grid ---
    
    % The Spine (Left to Right at y=0)
    \node[vertex] (X_m-1_m) at (0, 0) {$X_{m-1,m}$};
    \node[vertex] (X_m_1)   at (2, 0) {$X_{m,1}$};
    \node[vertex] (X_m-1_1) at (4, 0) {$X_{m-1,1}$};

    % The "Peak" (Top Left)
    % Placed horizontally between the first two nodes
    \node[vertex] (X_mm)    at (1, 1) {$X_{m,m}$};

    % The "Valley" (Bottom Right)
    % Placed horizontally between the last two nodes
    \node[vertex] (X_m_2)   at (3, -1) {$X_{m,2}$};

    % --- 4. Draw Edges ---
    
    % Red Horizontal Arrows (Right to Left flow)
    \draw[redarrow] (X_m-1_1) -- (X_m_1);
    \draw[redarrow] (X_m_1)   -- (X_m-1_m);

    % Blue Top Triangle
    \draw[bluearrow] (X_m-1_m) -- (X_mm);  % Up-Right
    \draw[bluearrow] (X_mm)    -- (X_m_1); % Down-Right

    % Blue Bottom Triangle
    \draw[bluearrow] (X_m_1)   -- (X_m_2);   % Down-Right
    \draw[bluearrow] (X_m_2)   -- (X_m-1_1); % Up-Right

\end{tikzpicture}
        \centering
        \caption{The quiver expresses adjacency relations between the elementary geodesic function and $m$th cycle.}
    \label{Fig21}
    \end{figure}
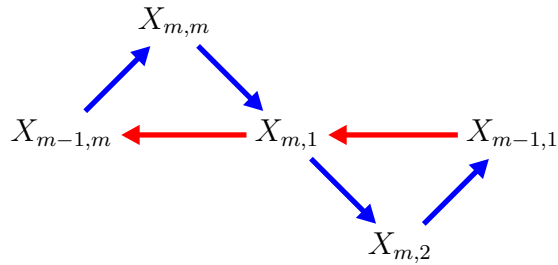

    Similar to the \textit{Case 1},
    $$\tau_{m}^*(X_{m,1}) = {X_{m,1} \over Y_{m,1}Y_{m,m}}, \text{ }\tau_{m}^*(X_{m-1,1}) = X_{m-1,1}Y_{m,1}, \text{ and }\tau_{m}^*(X_{m-1,m}) = X_{m-1,m}Y_{m,m}.$$

    From the equation above and \textbf{Lemma~\ref{lem421}}, we have 
    $$\tau_{m}^*\left(\left\langle X_{1,1}, X_{2,1}, \cdots, X_{m,1}, X_{m-1,m}, \cdots, X_{2,3}, X_{1,2} \right\rangle\right) =\left\langle X_{1,1}, X_{2,1}, \cdots, X_{m,1}, X_{m-1,m}, \cdots, X_{2,3}, X_{1,2} \right\rangle.$$
    We omit the calculation since it is very similar to previous cases.
\end{enumerate}
    Let us now assume $n = 2m+1$. For $i \le m-1,$ $s_i^* = \tau_i^*$ on $\mathcal K(\mathcal X_{|\mathcal A_n|})$, so the proof is identical to the \textit{Case 1} and \textit{Case 2} when $n$ is even. To deal with the case of $s_m^*$, we consider a doubled $\mathcal A_n$-quiver and the elementary geodesic function $\left\langle \widetilde{U_{1,1}}, \cdots, \widetilde{U_{m,1}}, U_{m,m+1},U_{m-1,m}, \cdots, U_{1,2} \right\rangle$.
    \begin{figure}[H]
\centering
\tikzset{every picture/.style={line width=0.75pt}} %set default line width to 0.75pt        
\begin{tikzpicture}[
    % 1. Global Scale
    scale=1.5, 
    % 2. Define Styles for consistency
    vertex/.style={anchor=center, inner sep=3pt, font=\normalsize},
    redarrow/.style={->, color=red, line width=2pt, -{Triangle[length=3mm, width=3mm]}},
    bluearrow/.style={->, color=blue, line width=2pt, -{Triangle[length=3mm, width=3mm]}}
]
    % --- 3. Place Vertices on a Grid ---
    % I used a coordinate system where:
    % y=2 is top row, y=1 is middle, y=0 is bottom
    % x spacing is 2 units horizontally
    
    % Middle Row (y=1)
    \node[vertex] (X_m-1_1) at (0, 1)   {$\widetilde{U_{m-1,1}}$};
    \node[vertex] (X_m_1)   at (2, 1)   {$\widetilde{U_{m,1}}$};
    \node[vertex] (X_mm1)   at (4, 1)   {$U_{m,m+1}$};
    \node[vertex] (X_m-1_m) at (6, 1)   {$U_{m-1,m}$};

    % Top Row (y=2) - Staggered (x values are odd numbers)
    \node[vertex] (X_m-1_n) at (3, 2.2) {$\widetilde{U_{m,n}}$};
    \node[vertex] (X_mm)    at (5, 2.2) {$U_{m,m}$};

    % Bottom Row (y=0) - Staggered
    \node[vertex] (X_m-1_2) at (1, -0.2) {$\widetilde{U_{m,2}}$};
    \node[vertex] (X_mm2)   at (3, -0.2) {$U_{m,m+2}$};

    % --- 4. Draw Edges ---
    
    % Red Horizontal Arrows (Middle Row)
    \draw[redarrow] (X_m-1_1) -- (X_m_1);
    \draw[redarrow] (X_m_1)   -- (X_mm1);
    \draw[redarrow] (X_mm1)   -- (X_m-1_m);

    % Blue Diagonal Arrows (Top Half)
    \draw[bluearrow] (X_m-1_m) -- (X_mm);    % Up-Left
    \draw[bluearrow] (X_mm)    -- (X_mm1);   % Down-Left
    \draw[bluearrow] (X_mm1)   -- (X_m-1_n); % Up-Left
    \draw[bluearrow] (X_m-1_n) -- (X_m_1);   % Down-Left

    % Blue Diagonal Arrows (Bottom Half)
    \draw[bluearrow] (X_mm1)   -- (X_mm2);   % Down-Left
    \draw[bluearrow] (X_mm2)   -- (X_m_1);   % Up-Left
    \draw[bluearrow] (X_m_1)   -- (X_m-1_2); % Down-Left
    \draw[bluearrow] (X_m-1_2) -- (X_m-1_1); % Up-Left
\end{tikzpicture}
        \caption{The quiver expresses adjacency relations between the elementary geodesic function and cycles associated with $s_m = f_m\widetilde{f_m}f_m$ on the doubled $\mathcal A_n$-quiver.}
        \label{Fig22}
    \end{figure}
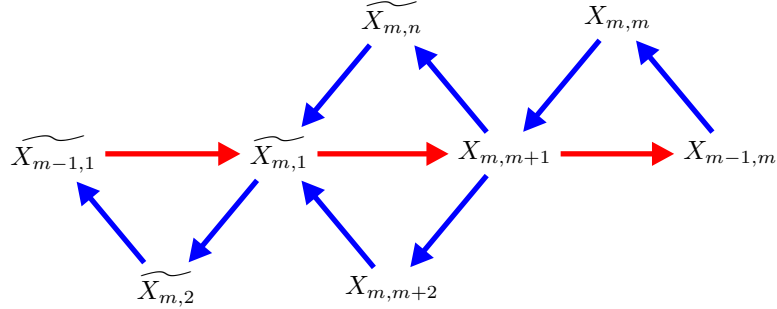

    We are going to prove that $f_m^*$ and $(\widetilde{f_m})^*$ preserve $\left\langle \widetilde{U_{1,1}}, \cdots, \widetilde{U_{m,1}}, U_{m,m+1},U_{m-1,m}, \cdots, U_{1,2} \right\rangle$. In this case, we temporarily assume the variables $Y_{i,j}$ and $\widetilde{Y_{i,j}}$ in the doubled quiver by$$Y_{i,j} = U_{i,j} \frac{F_{i,j-1}}{F_{i,j}} \quad \text{and} \quad \widetilde{Y_{i,j}} = \widetilde{U_{i,j}} \frac{\widetilde{F_{i,j-1}}}{\widetilde{F_{i,j}}},$$where the functions $F_{i,j}$ and $\widetilde{F_{i,j}}$ are defined as the following partial sums:\begin{align*}F_{i,j} &:= 1 + U_{i,j} + U_{i,j}U_{i,j-1} + \dots + U_{i,j}U_{i,j-1} \cdots U_{i,j-N_i+2}, \\
    \widetilde{F_{i,j}} &:= 1 + \widetilde{U_{i,j}} + \widetilde{U_{i,j}}\widetilde{U_{i,j-1}} + \dots + \widetilde{U_{i,j}}\widetilde{U_{i,j-1}} \cdots \widetilde{U_{i,j-N_i+2}}.\end{align*}
    For the case of the $(\widetilde{f_m})^*$, we have (see Figure \ref{Fig22})
    $$(\widetilde{f_m})^*(\widetilde{U_{m-1,1}}) = \widetilde{U_{m-1,1}}\widetilde{Y_{m,1}}, \text{ }(\widetilde{f_m})^*(\widetilde{U_{m,1}}) = {\widetilde{U_{m,1}} \over \widetilde{Y_{m,1}}\widetilde{Y_{m,n}}}, \text{ and }(\widetilde{f_m})^*(U_{m,m+1}) = U_{m,m+1}\widetilde{Y_{m,n}}.$$
    This can be obtained by the same argument as in \textbf{Proposition~\ref{prop334}}. Hence, similar to the previous cases, we have $(\widetilde{f_m})^*\left(\left\langle \widetilde{U_{1,1}}, \cdots, \widetilde{U_{m,1}}, U_{m,m+1},U_{m-1,m}, \cdots, U_{1,2} \right\rangle\right) = \left\langle \widetilde{U_{1,1}}, \cdots, \widetilde{U_{m,1}}, U_{m,m+1},U_{m-1,m}, \cdots, U_{1,2} \right\rangle.$ Note that we also have
    $${1 \over \widetilde{Y_{i,j}}} + {\widetilde{Y_{i,j-1}} \over \widetilde{U_{i,j}}} = 1 + {1 \over \widetilde{U_{i,j}}}.$$

    For the case of the $f_m^*$, we have (see Figure \ref{Fig22})
    $$f_m^*(U_{m-1,m}) = U_{m-1,m}Y_{m,m}, \text{ }f_m^*(U_{m,m+1}) = {U_{m,m+1} \over Y_{m,m+1}Y_{m,m}}, \text{ and }f_m^*(\widetilde{U_{m,1}}) = \widetilde{U_{m,1}}Y_{m,m+1}.$$
    This can be obtained by the same technique as in \textbf{Proposition~\ref{prop334}.} Hence, similar to the previous cases, we obtain $f_m^*\left(\left\langle \widetilde{U_{1,1}}, \cdots, \widetilde{U_{m,1}}, U_{m,m+1},U_{m-1,m}, \cdots, U_{1,2} \right\rangle\right) = \left\langle \widetilde{U_{1,1}}, \cdots, \widetilde{U_{m,1}}, U_{m,m+1},U_{m-1,m}, \cdots, U_{1,2} \right\rangle.$
    
    Consequently, we have 
    $$s_m^*\left(\left\langle \widetilde{U_{1,1}}, \cdots, \widetilde{U_{m,1}}, U_{m,m+1},U_{m-1,m}, \cdots, U_{1,2} \right\rangle\right) = \left\langle \widetilde{U_{1,1}}, \cdots, \widetilde{U_{m,1}}, U_{m,m+1},U_{m-1,m}, \cdots, U_{1,2} \right\rangle$$ 
    on $\mathcal K(\mathcal X_{|\mathcal A_n^{dbl}|})$ since $(s_m)^*=(f_m\widetilde{f_m}f_m)^* =(f_m)^*(\widetilde{f_m})^*(f_m)^*$. Since $\operatorname{{pr}} \circ s_m^* = s_m^*\circ \operatorname{{pr}}$, we conclude 
    $$s_m^*\left(\left\langle X_{1,1}, \cdots, X_{m,1}, X_{m,m+1},X_{m-1,m}, \cdots, X_{1,2} \right\rangle\right) = \left\langle X_{1,1}, \cdots, X_{m,1}, X_{m,m+1},X_{m-1,m}, \cdots, X_{1,2} \right\rangle.$$
    
    Thus,
    $$s_{i}^*(\mathbb A_{q,q+1}) = \mathbb A_{q,q+1}$$ 
     for any $i \in \left\{1, 2, \cdots,\left\lfloor {n \over 2} \right\rfloor\right\}$ and $q \in \left\{1,2,\cdots,n-1\right\}.$ 
     
     Consider the third equation in the Bondal Poisson bracket (\ref{1.4}):
$$\begin{aligned}
    \left\{\mathbb A_{i,k},\mathbb A_{k,l}\right\} &= {1 \over 2}\mathbb A_{i,k}\mathbb A_{k,l} - \mathbb A_{i,l}\text{ for }i < k < l.\\
\end{aligned}$$

    Let $k = q+1$ and $l = q+2$. We get

    $$\mathbb A_{q,q+2} = {1 \over 2}\mathbb A_{q,q+1}\mathbb A_{q+1,q+2}-\left\{\mathbb A_{q,q+1},\mathbb A_{q+1,q+2}\right\}.$$

    Since $s^*_i$ preserves the quiver and is compatible with the Poisson bracket (\textbf{Proposition~\ref{prop349}}), we get
    $$s^*_i(\mathbb A_{q,q+2}) = {1 \over 2}(s^*_i\mathbb A_{q,q+1})(s^*_i\mathbb A_{q+1,q+2})-\left\{s^*_i\mathbb A_{q,q+1},s^*_i\mathbb A_{q+1,q+2}\right\} = {1 \over 2}(\mathbb A_{q,q+1})(\mathbb A_{q+1,q+2})-\left\{\mathbb A_{q,q+1},\mathbb A_{q+1,q+2}\right\}$$
    $$= \mathbb A_{q,q+2}.$$
    
    By performing this procedure repeatedly, for any $r$, we have 
    $$s^*_i(\mathbb A_{q,q+r}) = \mathbb A_{q,q+r}.$$
    
    This implies that all $\mathbb A_{i,j}$ for $i< j$ are preserved under the Weyl group action. \end{proof}
\begin{rem} \label{rem423}
We want to find new coordinates of the once-punctured torus where geodesic length functions of the surface are invariant under the new coordinates. 
    
    Let $x, y, z$ be fixed. Consider the equation system
    $$
    \begin{aligned}
        \langle a,b \rangle &= \langle x,y \rangle \\
        \langle b,c \rangle &= \langle y,z \rangle \\
        \langle c,a \rangle &= \langle z,x \rangle.
    \end{aligned}
    $$
   With the assistance of computer algebra software (Maple), we verify that this system has exactly two solutions. The trivial solution is clearly $(a,b,c) = (x,y,z)$. The second, non-trivial solution can be recovered via the Weyl group action. Specifically, we consider the action of $s_1^*$ on $\mathcal K(\mathcal X_{|\mathcal A_3|})$ where the initial cluster variables are $x, y, z$.
    
    By the previous theorem, this action preserves the bracket $\langle \cdots \rangle$. Consequently, the triple $(s_1^*x, s_1^*y, s_1^*z)$ is the other non-trivial solution. Explicitly, the action is given by:
    $$
    \begin{aligned}
        s_1^*(x) &= {(x^2y^2z^2 + x^2yz^2 + 2x^2yz + x^2y + 2xy + y + 1)^2 \over x(x^2y^2z^2 + x^2y^2z + 2xy^2z + y^2z + 2yz + z + 1)^2},\\
        s_1^*(y) &= {(x^2y^2z^2 + x^2y^2z + 2xy^2z + y^2z + 2yz + z + 1)^2 \over y(x^2y^2z^2 + xy^2z^2 + 2xyz^2 + xz^2 + 2xz + x + 1)^2}, \\
        s_1^*(z) &= {(x^2y^2z^2 + xy^2z^2 + 2xyz^2 + xz^2 + 2xz + x + 1)^2 \over z(x^2y^2z^2 + x^2yz^2 + 2x^2yz + x^2y + 2xy + y + 1)^2}.
    \end{aligned}
    $$
    Since the cluster Poisson variety $\mathcal X_{|\mathcal{A}_3|}$ corresponds to a once-punctured torus, and the bracket $\langle \cdots \rangle$ represents the geodesic length functions, we conclude that $(s_1^*x, s_1^*y, s_1^*z)$ provides the desired new coordinate system. Note that the action of $s_1^*$ corresponds to the $\mathbb{Z}_2$-action on the once-punctured torus that reverses the orientation of the surface.
    
    Generalizing to higher $n$, we consider the system of equations
    $$\mathbb{A}_{p,q}(X_{i,j}) = b_{p,q}$$
    where $b_{p,q} \in \mathbb C$. The action of the Weyl group $W_n$ generates non-trivial solutions to this system. Specifically, for any $w \in W_n$, the tuple $(w^*X_{i,j})$ serves as a solution if $(X_{i,j})$ is a solution of the system.

Assuming that \(\mathcal O(\mathcal X_{|\mathcal A_n|})\) is closed under
the birational Weyl group action (which holds for even \(n\)), we show that the system gives exactly \(|W_n|\cdot d\) distinct solutions, where
\[
d =
\bigl[\mathbb C(\mathcal X) :
\operatorname{Frac}(\mathcal O(\mathcal X_{|\mathcal A_n|}))\bigr].
\]
Here, \(\mathcal O(\mathcal X_{|\mathcal A_n|})\) denotes the ring of regular functions on \(\mathcal X_{|\mathcal A_n|}\), \(\operatorname{Frac}(\mathcal O(\mathcal X_{|\mathcal A_n|}))\) denotes its field of fractions, and \(\mathbb C(\mathcal X)\) denotes the field of rational functions generated by an initial cluster \((X_{i,j})\).

We conjecture that \(d=1\), i.e.,
\[
\mathbb C(\mathcal X)
=
\operatorname{Frac}(\mathcal O(\mathcal X_{|\mathcal A_n|})).
\]
This would yield exactly \(|W_n|\) distinct solutions; see Remark~\ref{rem4323}.
\end{rem}

\begin{rem}\label{rem424}
If we allow an arbitrary choice of square roots, the action of an element $s \in W_n$ preserves the generators $\mathbb{A}_{i,i+1}$ up to a sign. Specifically, we have
$$s(\mathbb{A}_{i,i+1}) = \sigma_i \mathbb{A}_{i,i+1} \quad \text{for } i = 1, \dots, n-1,$$
where $\sigma_i \in \{\pm 1\}$. Recall that the generators satisfy the Bondal Poisson relation \eqref{1.4}:
$$\mathbb{A}_{ij} = \frac{1}{2}\mathbb{A}_{ik}\mathbb{A}_{kj} - \{\mathbb{A}_{ik}, \mathbb{A}_{kj}\}.$$
Since $s$ is a Poisson map, applying this relation inductively yields
$$s(\mathbb{A}_{ij}) = \left( \prod_{k=i}^{j-1} \sigma_k \right) \mathbb{A}_{ij}.$$
Now, define a diagonal matrix $E = \operatorname{diag}(\epsilon_1, \dots, \epsilon_n)$ by
$$\epsilon_1 = 1 \quad \text{and} \quad \epsilon_m = \prod_{k=1}^{m-1} \sigma_k \quad \text{for } m = 2, \dots, n.$$
The action on the matrix $\mathbb{A} = (\mathbb{A}_{ij})$ can be expressed in matrix form as $s(\mathbb{A}) = E \mathbb{A} E^T$. Indeed, using $\sigma_k^2 = 1$, a direct computation shows that
$$(E \mathbb{A} E^T)_{ij} = \epsilon_i \epsilon_j \mathbb{A}_{ij} = \left( \prod_{k=1}^{i-1} \sigma_k \right) \left( \prod_{k=1}^{j-1} \sigma_k \right) \mathbb{A}_{ij} = \left( \prod_{k=i}^{j-1} \sigma_k \right) \mathbb{A}_{ij}.$$
\end{rem}

\subsection{Weyl Group Invariants in $\mathcal{O}(\mathcal X_{|\mathcal A_n|})$} \label{Ch4.3}

For any lattice point $l \in \mathbb{Z}^{n(n-1)/2}$, we construct an element $h_l$ with a unique minimal multidegree $l$ using formal geodesic functions and Casimir elements. To this end, we first recall the general description of formal geodesic functions established in\cite{1}.

We glue two copies of the $\mathcal A_n$-quiver and consider its dual quiver. To compute $\mathbb{A}_{i,j}$, we sum over all paths starting at the northeastern vertex $j$ and terminating at the southwestern vertex $i'$.

For each path from $j$ to $i'$, consider the parallelogram $\blacklozenge_{i,j}$ defined by the vertices $j$ and $i'$. The cluster variables within this parallelogram contribute as follows: Those lying above the path have exponent $1/2$, while those lying below the path have exponent $-1/2$. Variables outside the parallelogram do not contribute to $\mathbb A_{i,j}$; see Figure \ref{Fig23} below.

\begin{figure}[H]
    \centering 
	\begin{tikzcd}[scale cd=0.6,column sep = tiny, row sep = tiny]
	&&&&& {X_{1,3}} \\
	&&&&& \bullet && \textcolor{rgb,255:red,255;green,0;blue,0}{2} \\
	&&&& {X_{2,4}} && {X_{1,4}} &&&&&&&& {X_{1,3}} \\
	&&&& \bullet & \bullet & \bullet && \textcolor{rgb,255:red,255;green,0;blue,0}{3} &&&&&& \bullet && \textcolor{rgb,255:red,255;green,0;blue,0}{2} \\
	&&& {X_{2,2}} && {X_{2,5}} && {X_{1,5}} &&&&&& {X_{2,2}} && {X_{1,4}} \\
	&&& \bullet & \bullet & \bullet & \bullet & \bullet && \textcolor{rgb,255:red,255;green,0;blue,0}{4} &&&& \bullet & \bullet & \bullet && \textcolor{rgb,255:red,255;green,0;blue,0}{3} \\
	&& {X_{1,2}} && {X_{2,3}} && {X_{2,1}} && {X_{1,1}} &&&& {X_{1,2}} && {X_{2,1}} && {X_{1,1}} \\
	&&& \bullet & \bullet & \bullet & \bullet & \bullet & \bullet && \textcolor{rgb,255:red,255;green,0;blue,0}{5} &&& \bullet & \bullet & \bullet & \bullet && \textcolor{rgb,255:red,255;green,0;blue,0}{4} \\
	{} & {} && {X_{1,3}} && {X_{2,4}} && {X_{2,2}} && {X_{1,2}} &&&& {X_{1,3}} && {X_{2,2}} && {X_{1,2}} && {} \\
	&& \textcolor{rgb,255:red,255;green,0;blue,0}{{{1'}}} && \bullet & \bullet & \bullet & \bullet & \bullet & \bullet &&& \textcolor{rgb,255:red,255;green,0;blue,0}{{{1'}}} && \bullet & \bullet & \bullet & \bullet \\
	&&&& {X_{1,4}} && {X_{2,5}} && {X_{2,3}} && {X_{1,3}} &&&& {X_{1,4}} && {X_{2,1}} && {X_{1,3}} \\
	&&& \textcolor{rgb,255:red,255;green,0;blue,0}{{{2'}}} && \bullet & \bullet & \bullet & \bullet & \bullet &&&& \textcolor{rgb,255:red,255;green,0;blue,0}{{{2'}}} && \bullet & \bullet & \bullet \\
	&&&&& {X_{1,5}} && {X_{2,1}} && {X_{2,4}} &&&&&& {X_{1,1}} && {X_{2,2}} \\
	&&&& \textcolor{rgb,255:red,255;green,0;blue,0}{{{3'}}} && \bullet & \bullet & \bullet &&&&&& \textcolor{rgb,255:red,255;green,0;blue,0}{{{3'}}} && \bullet \\
	&&&&&& {X_{1,1}} && {X_{2,2}} &&&&&&&& {X_{1,2}} \\
	&&&&& \textcolor{rgb,255:red,255;green,0;blue,0}{{{4'}}} && \bullet \\
	&&&&&&& {X_{1,2}}
	\arrow[from=1-6, to=3-7]
	\arrow[color={rgb,255:red,255;green,0;blue,0}, from=2-6, to=4-6]
	\arrow[color={rgb,255:red,255;green,0;blue,0}, from=2-8, to=2-6]
	\arrow[color={rgb,255:red,255;green,0;blue,0}, from=2-8, to=4-7]
	\arrow[dashed, from=3-5, to=1-6]
	\arrow[from=3-5, to=5-6]
	\arrow[from=3-7, to=3-5]
	\arrow[from=3-7, to=5-8]
	\arrow[from=3-15, to=5-16]
	\arrow[color={rgb,255:red,255;green,0;blue,0}, from=4-5, to=6-5]
	\arrow[color={rgb,255:red,255;green,0;blue,0}, from=4-6, to=4-5]
	\arrow[color={rgb,255:red,255;green,0;blue,0}, from=4-7, to=4-6]
	\arrow[color={rgb,255:red,255;green,0;blue,0}, from=4-7, to=6-7]
	\arrow[color={rgb,255:red,255;green,0;blue,0}, from=4-9, to=4-7]
	\arrow[color={rgb,255:red,255;green,0;blue,0}, from=4-9, to=6-8]
	\arrow[color={rgb,255:red,255;green,0;blue,0}, from=4-15, to=6-15]
	\arrow[color={rgb,255:red,255;green,0;blue,0}, from=4-17, to=4-15]
	\arrow[color={rgb,255:red,255;green,0;blue,0}, from=4-17, to=6-16]
	\arrow[dashed, from=5-4, to=3-5]
	\arrow[from=5-4, to=7-5]
	\arrow[from=5-6, to=3-7]
	\arrow[from=5-6, to=5-4]
	\arrow[from=5-6, to=7-7]
	\arrow[from=5-8, to=5-6]
	\arrow[from=5-8, to=7-9]
	\arrow[dashed, from=5-14, to=3-15]
	\arrow[from=5-14, to=7-15]
	\arrow[from=5-16, to=5-14]
	\arrow[from=5-16, to=7-17]
	\arrow[color={rgb,255:red,255;green,0;blue,0}, from=6-4, to=8-4]
	\arrow[color={rgb,255:red,255;green,0;blue,0}, from=6-5, to=6-4]
	\arrow[color={rgb,255:red,255;green,0;blue,0}, from=6-6, to=6-5]
	\arrow[color={rgb,255:red,255;green,0;blue,0}, from=6-6, to=8-6]
	\arrow[color={rgb,255:red,255;green,0;blue,0}, from=6-7, to=6-6]
	\arrow[color={rgb,255:red,255;green,0;blue,0}, from=6-8, to=6-7]
	\arrow[color={rgb,255:red,255;green,0;blue,0}, from=6-8, to=8-8]
	\arrow[color={rgb,255:red,255;green,0;blue,0}, from=6-10, to=6-8]
	\arrow[color={rgb,255:red,255;green,0;blue,0}, from=6-10, to=8-9]
	\arrow[color={rgb,255:red,255;green,0;blue,0}, from=6-14, to=8-14]
	\arrow[color={rgb,255:red,255;green,0;blue,0}, from=6-15, to=6-14]
	\arrow[color={rgb,255:red,255;green,0;blue,0}, from=6-16, to=6-15]
	\arrow[color={rgb,255:red,255;green,0;blue,0}, from=6-16, to=8-16]
	\arrow[color={rgb,255:red,255;green,0;blue,0}, from=6-18, to=6-16]
	\arrow[color={rgb,255:red,255;green,0;blue,0}, from=6-18, to=8-17]
	\arrow[dashed, from=7-3, to=5-4]
	\arrow[from=7-3, to=9-4]
	\arrow[from=7-5, to=5-6]
	\arrow[from=7-5, to=7-3]
	\arrow[from=7-5, to=9-6]
	\arrow[from=7-7, to=5-8]
	\arrow[from=7-7, to=7-5]
	\arrow[from=7-7, to=9-8]
	\arrow[from=7-9, to=7-7]
	\arrow[from=7-9, to=9-10]
	\arrow[dashed, from=7-13, to=5-14]
	\arrow[from=7-13, to=9-14]
	\arrow[from=7-15, to=5-16]
	\arrow[from=7-15, to=7-13]
	\arrow[from=7-15, to=9-16]
	\arrow[from=7-17, to=7-15]
	\arrow[from=7-17, to=9-18]
	\arrow[color={rgb,255:red,255;green,0;blue,0}, from=8-4, to=10-3]
	\arrow[color={rgb,255:red,255;green,0;blue,0}, from=8-5, to=8-4]
	\arrow[color={rgb,255:red,255;green,0;blue,0}, from=8-5, to=10-5]
	\arrow[color={rgb,255:red,255;green,0;blue,0}, from=8-6, to=8-5]
	\arrow[color={rgb,255:red,255;green,0;blue,0}, from=8-7, to=8-6]
	\arrow[color={rgb,255:red,255;green,0;blue,0}, from=8-7, to=10-7]
	\arrow[color={rgb,255:red,255;green,0;blue,0}, from=8-8, to=8-7]
	\arrow[color={rgb,255:red,255;green,0;blue,0}, from=8-9, to=8-8]
	\arrow[color={rgb,255:red,255;green,0;blue,0}, from=8-9, to=10-9]
	\arrow[color={rgb,255:red,255;green,0;blue,0}, from=8-11, to=8-9]
	\arrow[color={rgb,255:red,255;green,0;blue,0}, from=8-11, to=10-10]
	\arrow[color={rgb,255:red,255;green,0;blue,0}, from=8-14, to=10-13]
	\arrow[color={rgb,255:red,255;green,0;blue,0}, from=8-15, to=8-14]
	\arrow[color={rgb,255:red,255;green,0;blue,0}, from=8-15, to=10-15]
	\arrow[color={rgb,255:red,255;green,0;blue,0}, from=8-16, to=8-15]
	\arrow[color={rgb,255:red,255;green,0;blue,0}, from=8-17, to=8-16]
	\arrow[color={rgb,255:red,255;green,0;blue,0}, from=8-17, to=10-17]
	\arrow[color={rgb,255:red,255;green,0;blue,0}, from=8-19, to=8-17]
	\arrow[color={rgb,255:red,255;green,0;blue,0}, from=8-19, to=10-18]
	\arrow[from=9-4, to=7-5]
	\arrow[dashed, no head, from=9-4, to=9-1]
	\arrow[from=9-4, to=11-5]
	\arrow[from=9-6, to=7-7]
	\arrow[from=9-6, to=9-4]
	\arrow[from=9-6, to=11-7]
	\arrow[from=9-8, to=7-9]
	\arrow[from=9-8, to=9-6]
	\arrow[from=9-8, to=11-9]
	\arrow[from=9-10, to=9-8]
	\arrow[from=9-10, to=11-11]
	\arrow[from=9-14, to=7-15]
	\arrow[dashed, from=9-14, to=9-10]
	\arrow[from=9-14, to=11-15]
	\arrow[from=9-16, to=7-17]
	\arrow[from=9-16, to=9-14]
	\arrow[from=9-16, to=11-17]
	\arrow[from=9-18, to=9-16]
	\arrow[from=9-18, to=11-19]
	\arrow[dashed, no head, from=9-20, to=9-18]
	\arrow[color={rgb,255:red,255;green,0;blue,0}, from=10-5, to=10-3]
	\arrow[color={rgb,255:red,255;green,0;blue,0}, from=10-5, to=12-4]
	\arrow[color={rgb,255:red,255;green,0;blue,0}, from=10-6, to=10-5]
	\arrow[color={rgb,255:red,255;green,0;blue,0}, from=10-6, to=12-6]
	\arrow[color={rgb,255:red,255;green,0;blue,0}, from=10-7, to=10-6]
	\arrow[color={rgb,255:red,255;green,0;blue,0}, from=10-8, to=10-7]
	\arrow[color={rgb,255:red,255;green,0;blue,0}, from=10-8, to=12-8]
	\arrow[color={rgb,255:red,255;green,0;blue,0}, from=10-9, to=10-8]
	\arrow[color={rgb,255:red,255;green,0;blue,0}, from=10-10, to=10-9]
	\arrow[color={rgb,255:red,255;green,0;blue,0}, from=10-10, to=12-10]
	\arrow[color={rgb,255:red,255;green,0;blue,0}, from=10-15, to=10-13]
	\arrow[color={rgb,255:red,255;green,0;blue,0}, from=10-15, to=12-14]
	\arrow[color={rgb,255:red,255;green,0;blue,0}, from=10-16, to=10-15]
	\arrow[color={rgb,255:red,255;green,0;blue,0}, from=10-16, to=12-16]
	\arrow[color={rgb,255:red,255;green,0;blue,0}, from=10-17, to=10-16]
	\arrow[color={rgb,255:red,255;green,0;blue,0}, from=10-18, to=10-17]
	\arrow[color={rgb,255:red,255;green,0;blue,0}, from=10-18, to=12-18]
	\arrow[from=11-5, to=9-6]
	\arrow[from=11-5, to=13-6]
	\arrow[from=11-7, to=9-8]
	\arrow[from=11-7, to=11-5]
	\arrow[from=11-7, to=13-8]
	\arrow[from=11-9, to=9-10]
	\arrow[from=11-9, to=11-7]
	\arrow[from=11-9, to=13-10]
	\arrow[from=11-11, to=11-9]
	\arrow[from=11-15, to=9-16]
	\arrow[from=11-15, to=13-16]
	\arrow[from=11-17, to=9-18]
	\arrow[from=11-17, to=11-15]
	\arrow[from=11-17, to=13-18]
	\arrow[from=11-19, to=11-17]
	\arrow[color={rgb,255:red,255;green,0;blue,0}, from=12-6, to=12-4]
	\arrow[color={rgb,255:red,255;green,0;blue,0}, from=12-6, to=14-5]
	\arrow[color={rgb,255:red,255;green,0;blue,0}, from=12-7, to=12-6]
	\arrow[color={rgb,255:red,255;green,0;blue,0}, from=12-7, to=14-7]
	\arrow[color={rgb,255:red,255;green,0;blue,0}, from=12-8, to=12-7]
	\arrow[color={rgb,255:red,255;green,0;blue,0}, from=12-9, to=12-8]
	\arrow[color={rgb,255:red,255;green,0;blue,0}, from=12-9, to=14-9]
	\arrow[color={rgb,255:red,255;green,0;blue,0}, from=12-10, to=12-9]
	\arrow[color={rgb,255:red,255;green,0;blue,0}, from=12-16, to=12-14]
	\arrow[color={rgb,255:red,255;green,0;blue,0}, from=12-16, to=14-15]
	\arrow[color={rgb,255:red,255;green,0;blue,0}, from=12-17, to=12-16]
	\arrow[color={rgb,255:red,255;green,0;blue,0}, from=12-17, to=14-17]
	\arrow[color={rgb,255:red,255;green,0;blue,0}, from=12-18, to=12-17]
	\arrow[from=13-6, to=11-7]
	\arrow[from=13-6, to=15-7]
	\arrow[from=13-8, to=11-9]
	\arrow[from=13-8, to=13-6]
	\arrow[from=13-8, to=15-9]
	\arrow[dashed, from=13-10, to=11-11]
	\arrow[from=13-10, to=13-8]
	\arrow[from=13-16, to=11-17]
	\arrow[from=13-16, to=15-17]
	\arrow[dashed, from=13-18, to=11-19]
	\arrow[from=13-18, to=13-16]
	\arrow[color={rgb,255:red,255;green,0;blue,0}, from=14-7, to=14-5]
	\arrow[color={rgb,255:red,255;green,0;blue,0}, from=14-7, to=16-6]
	\arrow[color={rgb,255:red,255;green,0;blue,0}, from=14-8, to=14-7]
	\arrow[color={rgb,255:red,255;green,0;blue,0}, from=14-8, to=16-8]
	\arrow[color={rgb,255:red,255;green,0;blue,0}, from=14-9, to=14-8]
	\arrow[color={rgb,255:red,255;green,0;blue,0}, from=14-17, to=14-15]
	\arrow[from=15-7, to=13-8]
	\arrow[from=15-7, to=17-8]
	\arrow[dashed, from=15-9, to=13-10]
	\arrow[from=15-9, to=15-7]
	\arrow[dashed, from=15-17, to=13-18]
	\arrow[color={rgb,255:red,255;green,0;blue,0}, from=16-8, to=16-6]
	\arrow[dashed, from=17-8, to=15-9]
\end{tikzcd}
    \caption{$\mathcal A_n$-quivers and their dual quiver for $n=4,5$. The dual quiver is expressed via red arrows.}
    \label{Fig23}
\end{figure}
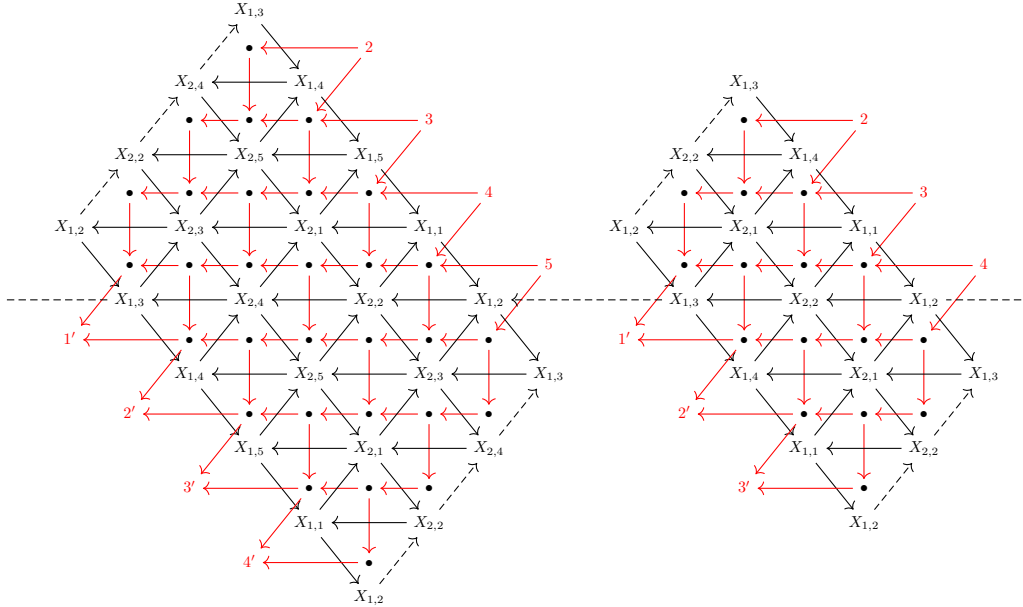

\begin{exmp} \label{exmp431}
    \textit{(Formal geodesic function examples for n = 4)}
    
    Let us calculate $\mathbb{A}_{1,3}$ and $\mathbb{A}_{2,4}$ (see Figure \ref{Fig23}). The parallelogram $\blacklozenge_{1,3}$ defined by vertices $3$ and $1'$ contains the variables $X_{1,1}, X_{2,1}, X_{2,2},$ and $X_{1,3}$. Summing the contributions from all paths from $3$ to $1'$, we obtain:
$$
\begin{aligned}
\mathbb{A}_{1,3} &= (X_{1,1})^{-1/2}(X_{2,1})^{-1/2}(X_{2,2})^{-1/2}(X_{1,3})^{-1/2} 
    + (X_{1,1})^{-1/2}(X_{2,1})^{1/2}(X_{2,2})^{-1/2}(X_{1,3})^{-1/2} \\
    &\quad + (X_{1,1})^{-1/2}(X_{2,1})^{1/2}(X_{2,2})^{-1/2}(X_{1,3})^{1/2} 
    + (X_{1,1})^{1/2}(X_{2,1})^{1/2}(X_{2,2})^{-1/2}(X_{1,3})^{-1/2} \\
    &\quad + (X_{1,1})^{1/2}(X_{2,1})^{1/2}(X_{2,2})^{-1/2}(X_{1,3})^{1/2} 
    + (X_{1,1})^{1/2}(X_{2,1})^{1/2}(X_{2,2})^{1/2}(X_{1,3})^{1/2}.
\end{aligned}
$$

    Similarly, the parallelogram $\blacklozenge_{2,4}$ defined by $4$ and $2'$ contains variables $X_{1,2},X_{2,2},X_{2,1},$ and $X_{1,4}$. Summing the contributions from all paths from $4$ to $2'$, we obtain:
    
$$
\begin{aligned}
\mathbb{A}_{2,4} &= (X_{1,2})^{-1/2}(X_{2,2})^{-1/2}(X_{2,1})^{-1/2}(X_{1,4})^{-1/2} 
    + (X_{1,2})^{-1/2}(X_{2,2})^{1/2}(X_{2,1})^{-1/2}(X_{1,4})^{-1/2} \\
    &\quad + (X_{1,2})^{-1/2}(X_{2,2})^{1/2}(X_{2,1})^{-1/2}(X_{1,4})^{1/2} 
    + (X_{1,2})^{1/2}(X_{2,2})^{1/2}(X_{2,1})^{-1/2}(X_{1,4})^{-1/2} \\
    &\quad + (X_{1,2})^{1/2}(X_{2,2})^{1/2}(X_{2,1})^{-1/2}(X_{1,4})^{1/2} 
    + (X_{1,2})^{1/2}(X_{2,2})^{1/2}(X_{2,1})^{1/2}(X_{1,4})^{1/2}.
\end{aligned}
$$
\end{exmp}

\begin{defn} \label{defn432}
    Let $f$ be a Laurent polynomial in the cluster variables $X_{i,j}$ with a unique minimal multidegree. We denote this multidegree by $\operatorname{val}(f)$. Note that each formal geodesic function $\mathbb A_{i,j}$ (Definition~\ref{defn411}) and each Casimir element $\mathcal K_i$ (Proposition~\ref{prop352}) has a unique minimal multidegree.
\end{defn}

\begin{exmp}
Recall that $\blacklozenge_{i,j}$ denotes the parallelogram defined by vertices $j$ and $i'$. The unique minimal multidegree of $\mathbb{A}_{i,j}$ is given by
\begin{equation} \label{4.4}
    \operatorname{val}(\mathbb{A}_{i,j}) = \operatorname{val}\left( \prod_{X_{p,q} \in \blacklozenge_{i,j}} X_{p,q}^{-1/2} \right).
\end{equation}
When $n=4$, $\operatorname{val}(\mathbb{A}_{1,3}) = [-1/2, 0, -1/2, 0, -1/2, -1/2]^T$ and $\operatorname{val}(\mathbb{A}_{2,4}) = [0, -1/2, 0, -1/2, -1/2, -1/2]^T$ (see Example \ref{exmp431}). Furthermore, for the Casimirs $\mathcal{K}_1 = X_{1,1}X_{1,2}X_{1,3}X_{1,4}X_{2,1}X_{2,2}$ and $\mathcal{K}_2 = X_{2,1}X_{2,2}$, we have $\operatorname{val}(\mathcal{K}_1) = [1, 1, 1, 1, 1, 1]^T$ and $\operatorname{val}(\mathcal{K}_2) = [0, 0, 0, 0, 1, 1]^T$.
\end{exmp}

\begin{defn}
    The \textit{(formal) geodesic degree matrix}, denoted by $\mathbb{A}_{\text{deg}}$, is the $\frac{n(n-1)}{2} \times \frac{n(n-1)}{2}$ matrix formed by the column vectors $\operatorname{val}(\mathbb{A}_{i,j})$. Specifically, it has the block structure
    $$
    \mathbb{A}_{\text{deg}} = \begin{bmatrix}
    \mathbb{A}_1 & \mathbb{A}_2 & \cdots & \mathbb{A}_{\lfloor n/2 \rfloor}
    \end{bmatrix},
    $$
    where for $d \neq n/2$, the block $\mathbb{A}_d$ is given by
    $$
    \mathbb{A}_d = \begin{bmatrix}
    \operatorname{val}(\mathbb{A}_{n-d,n}) & \operatorname{val}(\mathbb{A}_{1,n-d+1}) & \cdots & \operatorname{val}(\mathbb{A}_{d,n}) & \operatorname{val}(\mathbb{A}_{1,1+d}) & \cdots & \operatorname{val}(\mathbb{A}_{n-d-1,n-1})
    \end{bmatrix},
    $$
    and for $d = n/2$ (if $n$ is even),
    $$
    \mathbb{A}_d = \begin{bmatrix}
    \operatorname{val}(\mathbb{A}_{n-d,n}) & \operatorname{val}(\mathbb{A}_{1,n-d+1}) & \cdots & \operatorname{val}(\mathbb{A}_{d-1,n-1})
    \end{bmatrix}.
    $$
\end{defn}
\begin{exmp}
    \textit{(Geodesic degree matrix for n = 4,5)} For $n=4$, $\mathbb A_{\text{deg}}$ is
    $$\begin{bmatrix}
    \operatorname{val}(\mathbb A_{3,4}) & \operatorname{val}(\mathbb A_{1,4}) & \operatorname{val}(\mathbb A_{1,2}) & \operatorname{val}(\mathbb A_{2,3}) & \operatorname{val}(\mathbb A_{2,4}) & \operatorname{val}(\mathbb A_{1,3})
    \end{bmatrix}.$$
    Explicitly, we have 
$$\mathbb A_{\text{deg}} = \begin{bmatrix}
-1/2 &0  &0  &-1/2  &0  &-1/2  \\
-1/2 &-1/2  &0  &0  &-1/2  &0 \\
0 &-1/2  &-1/2  &0  &0  &-1/2  \\
0 &0  &-1/2  &-1/2  &-1/2  &0  \\
-1/2 &0  &-1/2  &0  &-1/2  &-1/2  \\
0 &-1/2  &0  &-1/2  &-1/2  &-1/2 
\end{bmatrix}$$

For $n=5$, $\mathbb A_{\text{deg}}$ is
    $$\begin{bmatrix}
    \operatorname{val}(\mathbb A_{4,5}) & \operatorname{val}(\mathbb A_{1,5}) & \operatorname{val}(\mathbb A_{1,2}) & \operatorname{val}(\mathbb A_{2,3}) & \operatorname{val}(\mathbb A_{3,4}) & \operatorname{val}(\mathbb A_{3,5}) & \operatorname{val}(\mathbb A_{1,4}) & \operatorname{val}(\mathbb A_{2,5}) & \operatorname{val}(\mathbb A_{1,3}) & \operatorname{val}(\mathbb A_{2,4})
    \end{bmatrix}.$$
\end{exmp}

Consider elements of the form
\begin{equation}\label{4.5}
    h_l = \prod_{i=1}^{\lfloor n/2\rfloor}\mathcal{K}_i^{q_i} \prod_{i,j} \mathbb{A}_{i,j}^{p_{i,j}},
\end{equation}
where $\mathcal{C}_i = \prod_{j=1}^{N_i}X_{i,j}$ and $\mathcal{K}_i = \prod_{j=i}^{\lfloor n/2 \rfloor} \mathcal{C}_{j}$, with $N_i$ denoting the length of the $i$th cycle.
For any $l \in \mathbb{Z}^{n(n-1)/2}$, we seek exponents $q_i \in \mathbb{Z}$ and $p_{i,j} \in \mathbb{Z}_{\ge 0}$ such that the unique minimal multidegree of the function $h_l$ equals $l$.
The condition that $p_{i,j}$ be nonnegative is necessary to guarantee that $h_l$ is a universal Laurent polynomial. It is equivalent to solving the following linear equation:
\begin{equation}\label{4.6}
    \sum_{i=1}^{\lfloor n/2\rfloor} q_i \operatorname{val}(\mathcal{K}_i) + \mathbb{A}_{\mathrm{deg}}\mathbf{p} = l.
\end{equation}
Here, $\mathbf{p} = [p_{i,j}]^T$ denotes a column vector of dimension $n(n-1)/2$. Solving this equation requires the inverse of the geodesic degree matrix $\mathbb{A}_{\mathrm{deg}}$. To compute this inverse, we introduce a new matrix $\mathbb{B}_{\text{deg}}$.

\begin{defn} \label{defn436}
    The matrix $\mathbb{B}_{\mathrm{deg}}$ is the block matrix
    \begin{equation*}
        \mathbb{B}_{\mathrm{deg}} = \begin{bmatrix}
        \mathbb{B}_1 & \mathbb{B}_2 & \cdots & \mathbb{B}_{\lfloor n/2 \rfloor}
        \end{bmatrix}.
    \end{equation*}
    Here, each submatrix $\mathbb{B}_k$ is of size $\frac{n(n-1)}{2} \times n$, except for the last block $\mathbb{B}_{\lfloor n/2 \rfloor}$ which is $\frac{n(n-1)}{2} \times \frac{n}{2}$ if $n$ is even. The $r$th column of each block, denoted $\mathbb{B}_{k,r}$, is defined as follows:
    \begin{itemize}
        \item For $k=1$:
        \[
            \mathbb{B}_{1,r} = \operatorname{val}\left(\frac{X_{2,r+1}}{X_{1,r}X_{1,r+1}}\right).
        \]
        \item For $2 \le k < \lfloor n/2 \rfloor$:
        \[
            \mathbb{B}_{k,r} = \operatorname{val}\left(\frac{X_{k+1,r+1}X_{k-1,r}}{X_{k,r}X_{k,r+1}}\right).
        \]
        \item For the last block $k = \lfloor n/2 \rfloor$:
        \[
            \mathbb{B}_{\lfloor n/2 \rfloor,r} = 
            \begin{cases} 
                 \operatorname{val}\left(\displaystyle \frac{X_{k,r+k+1}X_{k-1,r}}{X_{k, r}X_{k,r+1}}\right) & \text{if } n \text{ is odd}, \\[15pt]
                 \operatorname{val}\left(\displaystyle \frac{X_{k-1,r+k}X_{k-1,r}}{X_{k,r}X_{k,r+1}}\right) & \text{if } n \text{ is even}.
            \end{cases}
        \]
    \end{itemize}
\end{defn}

\begin{defn}
    \textit{(Geometric description of $\mathbb{B}_{\mathrm{deg}}$ via tiny polygons)}
    
    Each column $\mathbb{B}_{k,r}$ corresponds to a \textit{tiny polygon} $\Box_{k,r}$ on the $\mathcal{A}_n$-quiver. This polygon is defined by the vertex set $V_{k,r}$, which is partitioned into two disjoint subsets $V_{k,r} = V^+_{k,r} \sqcup V^-_{k,r}$. The subset $V^-_{k,r}$ is defined as
\[
    V^-_{k,r} = \{X_{k,r}, X_{k,r+1}\}.
\]
The complementary set $V^+_{k,r}$ is determined by the block index $k$ as follows:
    \begin{itemize}
        \item \textbf{Case $k=1$:} 
        \[ V^+_{1,r} = \{X_{2,r+1}\} \]
        
        \item \textbf{Case $1 < k < \lfloor n/2 \rfloor$:} 
        \[ V^+_{k,r} = \{X_{k-1,r}, X_{k+1,r+1}\} \]
    
        \item \textbf{Case $k = \lfloor n/2 \rfloor$:} 
        \[
            V^+_{k,r} = 
            \begin{cases} 
                \{X_{k-1,r}, X_{k,r+k+1}\} & \text{if } n \text{ is odd}, \\ 
                \{X_{k-1,r+k}, X_{k-1,r}\} & \text{if } n \text{ is even}.
            \end{cases}
        \]
    \end{itemize}
    Note that the variables in $V^+_{k,r}$ correspond to the entries with value $1$ in the column $\mathbb{B}_{k,r}$, whereas the variables in $V^-_{k,r}$ correspond to the entries with value $-1$.
\end{defn}

\begin{rem}\textit{(Cyclic symmetry of $\mathbb A_{\text{deg}}$ and $\mathbb B_{\text{deg}}$)} \label{rem438}

Recall the shift operator $T_1$ from Proposition~\ref{prop321}. We observe that $T_1$ cyclically permutes the columns of $\mathbb{A}_k$. Specifically, for the case $k \neq n/2$, the action of $T_1$ yields the following cycle:$$\mathbb{A}_{n-k,n} \xrightarrow{T_1} \mathbb{A}_{1, n-k+1} \xrightarrow{T_1} \cdots \xrightarrow{T_1} \mathbb{A}_{k,n} \xrightarrow{T_1} \mathbb{A}_{1, 1+k} \xrightarrow{T_1} \cdots \xrightarrow{T_1} \mathbb{A}_{n-k-1, n-1} \xrightarrow{T_1} \mathbb{A}_{n-k,n}.$$

Analogously, the columns of $\mathbb{B}_k$ are also cyclically permuted by $T_1$, satisfying the relation $T_1(V_{k,r}) = V_{k,r+1}$.\end{rem}
    
\begin{exmp}
\textit{(Example of tiny polygons for $n=6$)}

    Consider the following $\mathcal A_6$-quiver:
    $$\begin{tikzcd}[scale cd=0.6,column sep = tiny, row sep = tiny]
	&&&& {X_{1,3}} \\
	&&& {X_{2,4}} && \textcolor{rgb,255:red,0;green,255;blue,0}{{X_{1,4}}} \\
	&& \textcolor{rgb,255:red,0;green,255;blue,255}{{{{X_{3,2}}}}} && \textcolor{rgb,255:red,0;green,255;blue,0}{{X_{2,5}}} && \textcolor{rgb,255:red,0;green,255;blue,0}{{X_{1,5}}} \\
	& \textcolor{rgb,255:red,0;green,255;blue,255}{{{{X_{2,2}}}}} && \textcolor{rgb,255:red,255;green,0;blue,0}{{X_{3,3}}} && \textcolor{rgb,255:red,255;green,0;blue,0}{{X_{2,6}}} && {X_{1,6}} \\
	{X_{1,2}} && \textcolor{rgb,255:red,255;green,0;blue,0}{{X_{2,3}}} && \textcolor{rgb,255:red,255;green,0;blue,0}{{X_{3,1}}} && \textcolor{rgb,255:red,0;green,255;blue,255}{{{{X_{2,1}}}}} && \textcolor{rgb,255:red,0;green,255;blue,255}{{{{X_{1,1}}}}} \\
	& {X_{1,3}} && {X_{2,4}} && \textcolor{rgb,255:red,0;green,255;blue,255}{{{{X_{3,2}}}}} && \textcolor{rgb,255:red,0;green,255;blue,255}{{{{X_{2,2}}}}} && {{{X_{1,2}}}} \\
	&& \textcolor{rgb,255:red,0;green,255;blue,0}{{X_{1,4}}} && \textcolor{rgb,255:red,0;green,255;blue,0}{{X_{2,5}}} && \textcolor{rgb,255:red,255;green,0;blue,0}{{{{X_{3,3}}}}} && \textcolor{rgb,255:red,255;green,0;blue,0}{{{{X_{2,3}}}}} && {X_{1,3}} \\
	&&& \textcolor{rgb,255:red,0;green,255;blue,0}{{X_{1,5}}} && \textcolor{rgb,255:red,255;green,0;blue,0}{{{{X_{2,6}}}}} && \textcolor{rgb,255:red,255;green,0;blue,0}{{{{X_{3,1}}}}} && {X_{2,4}} \\
	&&&& {{{X_{1,6}}}} && \textcolor{rgb,255:red,0;green,255;blue,255}{{{X_{2,1}}}} && \textcolor{rgb,255:red,0;green,255;blue,255}{{{{X_{3,2}}}}} \\
	&&&&& \textcolor{rgb,255:red,0;green,255;blue,255}{{{{X_{1,1}}}}} && \textcolor{rgb,255:red,0;green,255;blue,255}{{{{X_{2,2}}}}} \\
	&&&&&& {X_{1,2}}
	\arrow[from=1-5, to=2-6]
	\arrow[dashed, from=2-4, to=1-5]
	\arrow[from=2-4, to=3-5]
	\arrow[from=2-6, to=2-4]
	\arrow[color={rgb,255:red,0;green,255;blue,0}, from=2-6, to=3-7]
	\arrow[dashed, from=3-3, to=2-4]
	\arrow[from=3-3, to=4-4]
	\arrow[color={rgb,255:red,0;green,255;blue,0}, from=3-5, to=2-6]
	\arrow[from=3-5, to=3-3]
	\arrow[from=3-5, to=4-6]
	\arrow[color={rgb,255:red,0;green,255;blue,0}, from=3-7, to=3-5]
	\arrow[from=3-7, to=4-8]
	\arrow[color={rgb,255:red,0;green,255;blue,255}, dashed, from=4-2, to=3-3]
	\arrow[from=4-2, to=5-3]
	\arrow[from=4-4, to=3-5]
	\arrow[from=4-4, to=4-2]
	\arrow[color={rgb,255:red,255;green,0;blue,0}, from=4-4, to=5-5]
	\arrow[from=4-6, to=3-7]
	\arrow[color={rgb,255:red,255;green,0;blue,0}, from=4-6, to=4-4]
	\arrow[from=4-6, to=5-7]
	\arrow[from=4-8, to=4-6]
	\arrow[from=4-8, to=5-9]
	\arrow[dashed, from=5-1, to=4-2]
	\arrow[from=5-1, to=6-2]
	\arrow[color={rgb,255:red,255;green,0;blue,0}, from=5-3, to=4-4]
	\arrow[from=5-3, to=5-1]
	\arrow[from=5-3, to=6-4]
	\arrow[color={rgb,255:red,255;green,0;blue,0}, from=5-5, to=4-6]
	\arrow[color={rgb,255:red,255;green,0;blue,0}, from=5-5, to=5-3]
	\arrow[from=5-5, to=6-6]
	\arrow[from=5-7, to=4-8]
	\arrow[from=5-7, to=5-5]
	\arrow[color={rgb,255:red,0;green,255;blue,255}, from=5-7, to=6-8]
	\arrow[color={rgb,255:red,0;green,255;blue,255}, from=5-9, to=5-7]
	\arrow[from=5-9, to=6-10]
	\arrow[from=6-2, to=5-3]
	\arrow[from=6-2, to=7-3]
	\arrow[from=6-4, to=5-5]
	\arrow[from=6-4, to=6-2]
	\arrow[from=6-4, to=7-5]
	\arrow[color={rgb,255:red,0;green,255;blue,255}, from=6-6, to=5-7]
	\arrow[from=6-6, to=6-4]
	\arrow[from=6-6, to=7-7]
	\arrow[color={rgb,255:red,0;green,255;blue,255}, from=6-8, to=5-9]
	\arrow[color={rgb,255:red,0;green,255;blue,255}, from=6-8, to=6-6]
	\arrow[from=6-8, to=7-9]
	\arrow[from=6-10, to=6-8]
	\arrow[from=6-10, to=7-11]
	\arrow[from=7-3, to=6-4]
	\arrow[color={rgb,255:red,0;green,255;blue,0}, from=7-3, to=8-4]
	\arrow[from=7-5, to=6-6]
	\arrow[color={rgb,255:red,0;green,255;blue,0}, from=7-5, to=7-3]
	\arrow[from=7-5, to=8-6]
	\arrow[from=7-7, to=6-8]
	\arrow[from=7-7, to=7-5]
	\arrow[color={rgb,255:red,255;green,0;blue,0}, from=7-7, to=8-8]
	\arrow[from=7-9, to=6-10]
	\arrow[color={rgb,255:red,255;green,0;blue,0}, from=7-9, to=7-7]
	\arrow[from=7-9, to=8-10]
	\arrow[from=7-11, to=7-9]
	\arrow[color={rgb,255:red,0;green,255;blue,0}, from=8-4, to=7-5]
	\arrow[from=8-4, to=9-5]
	\arrow[color={rgb,255:red,255;green,0;blue,0}, from=8-6, to=7-7]
	\arrow[from=8-6, to=8-4]
	\arrow[from=8-6, to=9-7]
	\arrow[color={rgb,255:red,255;green,0;blue,0}, from=8-8, to=7-9]
	\arrow[color={rgb,255:red,255;green,0;blue,0}, from=8-8, to=8-6]
	\arrow[from=8-8, to=9-9]
	\arrow[dashed, from=8-10, to=7-11]
	\arrow[from=8-10, to=8-8]
	\arrow[from=9-5, to=8-6]
	\arrow[from=9-5, to=10-6]
	\arrow[from=9-7, to=8-8]
	\arrow[from=9-7, to=9-5]
	\arrow[color={rgb,255:red,0;green,255;blue,255}, from=9-7, to=10-8]
	\arrow[dashed, from=9-9, to=8-10]
	\arrow[color={rgb,255:red,0;green,255;blue,255}, from=9-9, to=9-7]
	\arrow[color={rgb,255:red,0;green,255;blue,255}, from=10-6, to=9-7]
	\arrow[from=10-6, to=11-7]
	\arrow[color={rgb,255:red,0;green,255;blue,255}, dashed, from=10-8, to=9-9]
	\arrow[color={rgb,255:red,0;green,255;blue,255}, from=10-8, to=10-6]
	\arrow[dashed, from=11-7, to=10-8]
\end{tikzcd}$$
The green ($\Box_{1,4}$), mint ($\Box_{2,1}$), and red ($\Box_{3,3}$) tiny polygons correspond to the columns $\mathbb{B}_{1,4}$, $\mathbb{B}_{2,1}$, and $\mathbb{B}_{3,3}$, respectively. The Figure above depicts the \textit{glued} $\mathcal A_n$-quiver, which is isomorphic to the standard $\mathcal A_n$-quiver up to arrow multiplicity. Since this quiver is obtained by gluing two $\mathcal A_n$-quivers, tiny polygons are represented as two separate pieces in the quiver.
\end{exmp}
\begin{prop} \label{prop4310}
   Let $\Diamond_{i,j}$ denote the plane figure generated by the cluster variables contained in the parallelogram $\blacklozenge_{i,j}$. Recall that $\blacklozenge_{i,j}$ is defined by the vertices $j$ and $i'$ in the dual quiver (see Figure \ref{Fig23}). Moreover, let $N^+$ (resp. $N^-$) be the number of variables shared by the plane figure $\Diamond_{i,j}$ and the set $V^+_{k,r}$ (resp. $V^-_{k,r}$). Then, the inner product $\mathbb{B}_{k,r}^T \cdot \operatorname{val}(\mathbb{A}_{i,j})$ is given by$$-\frac{1}{2} \left( N^+ - N^- \right).$$
\end{prop}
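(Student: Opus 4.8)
The plan is to treat the statement as a direct bilinear-pairing identity. Both $\operatorname{val}(\mathbb A_{i,j})$ and $\mathbb B_{k,r}$ are vectors indexed by the $\frac{n(n-1)}{2}$ cluster variables of the $\mathcal A_n$-quiver, and each is an indicator-type vector; hence their inner product is just a signed count of the common support, and the whole proof reduces to writing the two vectors down explicitly and multiplying componentwise.

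First I would record the two vectors. From (\ref{4.4}), the entry of $\operatorname{val}(\mathbb A_{i,j})$ indexed by a cluster variable $v$ is $-\tfrac12$ exactly when $v$ labels a vertex of the plane figure $\Diamond_{i,j}$ (equivalently, when $v$ lies in the parallelogram $\blacklozenge_{i,j}$), and $0$ otherwise. Since $\blacklozenge_{i,j}$ is drawn on the glued $\mathcal A_n$-quiver, where a single variable may label several vertices, I would fix once and for all the convention that this entry is $-\tfrac12$ times the number of vertices of $\Diamond_{i,j}$ carrying the label $v$, and that $N^+, N^-$ likewise count vertices (pieces) rather than distinct labels. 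On the other side, Definition \ref{defn436} together with the tiny-polygon description gives that $\mathbb B_{k,r}$ has entry $+1$ on the variables of $V^+_{k,r}$, entry $-1$ on those of $V^-_{k,r}$, and $0$ elsewhere; a short check of the index ranges $1\le k\le\lfloor n/2\rfloor$ (treating the even and odd cases of the last block separately) shows that $V^+_{k,r}$ and $V^-_{k,r}$ are free of repetitions and mutually disjoint, so $\mathbb B_{k,r}$ is a genuine $\pm1$-indicator vector.

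Then I would carry out the pairing. A summand of $\mathbb B_{k,r}^T\cdot\operatorname{val}(\mathbb A_{i,j})$ is nonzero only at a variable that lies both in $V^+_{k,r}\cup V^-_{k,r}$ and in $\Diamond_{i,j}$. Each vertex of $\Diamond_{i,j}$ whose label lies in $V^+_{k,r}$ contributes $(+1)\cdot(-\tfrac12)=-\tfrac12$, and there are $N^+$ of these; each vertex whose label lies in $V^-_{k,r}$ contributes $(-1)\cdot(-\tfrac12)=+\tfrac12$, and there are $N^-$ of these. Adding them gives $\mathbb B_{k,r}^T\cdot\operatorname{val}(\mathbb A_{i,j}) = -\tfrac12 N^+ + \tfrac12 N^- = -\tfrac12(N^+-N^-)$, which is the claimed value. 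Put geometrically, $-\tfrac12(N^+-N^-)$ is the signed count, with weight $-\tfrac12$ on $V^+$-vertices and $+\tfrac12$ on $V^-$-vertices, of the vertices of the tiny polygon $\Box_{k,r}$ that fall inside the parallelogram $\blacklozenge_{i,j}$ (cf. Figure \ref{Fig23}).

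I do not expect any analytic obstacle: the computation is entirely formal once the two vectors are written out. The one thing that genuinely needs care is bookkeeping — ensuring that the orientation convention of (\ref{4.4}) (every enclosed variable receives exponent $-\tfrac12$ in the minimal multidegree) and that of Definition \ref{defn436} (plus on $V^+$, minus on $V^-$) are applied consistently, and that the multiplicity convention for counting common vertices of $\Diamond_{i,j}$ and $V^{\pm}_{k,r}$ is the same throughout. I would also phrase the result so that it plugs directly into the next step, the evaluation of the full matrix product $\mathbb B_{\mathrm{deg}}^T\mathbb A_{\mathrm{deg}}$, which can then be read off block by block from the way each tiny polygon overlaps each parallelogram.
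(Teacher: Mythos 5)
Your proposal is correct and is essentially the argument the paper intends: the paper states Proposition \ref{prop4310} without a written proof, treating it as immediate from the definitions, with Figure \ref{Fig24} serving only as illustration, and your direct pairing of the $\pm 1$ indicator vector $\mathbb{B}_{k,r}$ against the $-\tfrac12$-valued vector $\operatorname{val}(\mathbb{A}_{i,j})$ from (\ref{4.4}) is exactly that computation. Your explicit flagging of the multiplicity convention for variables appearing at several vertices of the glued quiver is a worthwhile clarification that the paper leaves implicit but relies on in the subsequent proof that $\mathbb{B}_{\mathrm{deg}}^T$ inverts $\mathbb{A}_{\mathrm{deg}}$.
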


\begin{figure}[H]
    \centering
	\begin{tikzcd}[scale cd=0.6,column sep = tiny, row sep = tiny]
	&&&& {X_{1,3}} \\
	&&& {X_{2,4}} && \textcolor{rgb,255:red,0;green,255;blue,0}{{X_{1,4}}} \\
	&& \textcolor{rgb,255:red,0;green,255;blue,255}{{{{X_{3,2}}}}} && \textcolor{rgb,255:red,0;green,255;blue,0}{{X_{2,5}}} && {X_{1,5}} \\
	& \textcolor{rgb,255:red,0;green,255;blue,255}{{{{X_{2,2}}}}} && \textcolor{rgb,255:red,0;green,255;blue,0}{{X_{3,3}}} && {X_{2,6}} && {X_{1,6}} \\
	{X_{1,2}} && \textcolor{rgb,255:red,0;green,255;blue,0}{{X_{2,3}}} && {X_{3,1}} && \textcolor{rgb,255:red,0;green,255;blue,255}{{{{X_{2,1}}}}} && \textcolor{rgb,255:red,0;green,255;blue,255}{{{{X_{1,1}}}}} \\
	& \textcolor{rgb,255:red,0;green,255;blue,0}{{X_{1,3}}} && {X_{2,4}} && \textcolor{rgb,255:red,0;green,255;blue,255}{{{{X_{3,2}}}}} && \textcolor{rgb,255:red,128;green,0;blue,255}{{{{X_{2,2}}}}} && \textcolor{red}{{{{X_{1,2}}}}} \\
	&& {X_{1,4}} && {X_{2,5}} && \textcolor{red}{{{{X_{3,3}}}}} && \textcolor{red}{{{{X_{2,3}}}}} && {X_{1,3}} \\
	&&& {X_{1,5}} && \textcolor{red}{{{{X_{2,6}}}}} && \textcolor{red}{{{{X_{3,1}}}}} && {X_{2,4}} \\
	&&&& \textcolor{red}{{{{X_{1,6}}}}} && \textcolor{rgb,255:red,128;green,0;blue,255}{{{{X_{2,1}}}}} && \textcolor{rgb,255:red,0;green,255;blue,255}{{{{X_{3,2}}}}} \\
	&&&&& \textcolor{rgb,255:red,0;green,255;blue,255}{{{{X_{1,1}}}}} && \textcolor{rgb,255:red,0;green,255;blue,255}{{{{X_{2,2}}}}} \\
	&&&&&& {X_{1,2}}
	\arrow[from=1-5, to=2-6]
	\arrow[dashed, from=2-4, to=1-5]
	\arrow[from=2-4, to=3-5]
	\arrow[from=2-6, to=2-4]
	\arrow[from=2-6, to=3-7]
	\arrow[dashed, from=3-3, to=2-4]
	\arrow[from=3-3, to=4-4]
	\arrow[color={rgb,255:red,0;green,255;blue,0}, from=3-5, to=2-6]
	\arrow[from=3-5, to=3-3]
	\arrow[from=3-5, to=4-6]
	\arrow[from=3-7, to=3-5]
	\arrow[from=3-7, to=4-8]
	\arrow[color={rgb,255:red,0;green,255;blue,255}, dashed, from=4-2, to=3-3]
	\arrow[from=4-2, to=5-3]
	\arrow[color={rgb,255:red,0;green,255;blue,0}, from=4-4, to=3-5]
	\arrow[from=4-4, to=4-2]
	\arrow[from=4-4, to=5-5]
	\arrow[from=4-6, to=3-7]
	\arrow[from=4-6, to=4-4]
	\arrow[from=4-6, to=5-7]
	\arrow[from=4-8, to=4-6]
	\arrow[from=4-8, to=5-9]
	\arrow[dashed, from=5-1, to=4-2]
	\arrow[from=5-1, to=6-2]
	\arrow[color={rgb,255:red,0;green,255;blue,0}, from=5-3, to=4-4]
	\arrow[from=5-3, to=5-1]
	\arrow[from=5-3, to=6-4]
	\arrow[from=5-5, to=4-6]
	\arrow[from=5-5, to=5-3]
	\arrow[from=5-5, to=6-6]
	\arrow[from=5-7, to=4-8]
	\arrow[from=5-7, to=5-5]
	\arrow[color={rgb,255:red,0;green,255;blue,255}, from=5-7, to=6-8]
	\arrow[color={rgb,255:red,0;green,255;blue,255}, from=5-9, to=5-7]
	\arrow[from=5-9, to=6-10]
	\arrow[color={rgb,255:red,0;green,255;blue,0}, from=6-2, to=5-3]
	\arrow[from=6-2, to=7-3]
	\arrow[from=6-4, to=5-5]
	\arrow[from=6-4, to=6-2]
	\arrow[from=6-4, to=7-5]
	\arrow[color={rgb,255:red,0;green,255;blue,255}, from=6-6, to=5-7]
	\arrow[from=6-6, to=6-4]
	\arrow[from=6-6, to=7-7]
	\arrow[color={rgb,255:red,0;green,255;blue,255}, from=6-8, to=5-9]
	\arrow[color={rgb,255:red,0;green,255;blue,255}, from=6-8, to=6-6]
	\arrow[color=red, from=6-8, to=7-9]
	\arrow[color=red, from=6-10, to=6-8]
	\arrow[from=6-10, to=7-11]
	\arrow[from=7-3, to=6-4]
	\arrow[from=7-3, to=8-4]
	\arrow[from=7-5, to=6-6]
	\arrow[from=7-5, to=7-3]
	\arrow[from=7-5, to=8-6]
	\arrow[color=red, from=7-7, to=6-8]
	\arrow[from=7-7, to=7-5]
	\arrow[color=red, from=7-7, to=8-8]
	\arrow[color=red, from=7-9, to=6-10]
	\arrow[color=red, from=7-9, to=7-7]
	\arrow[from=7-9, to=8-10]
	\arrow[from=7-11, to=7-9]
	\arrow[from=8-4, to=7-5]
	\arrow[from=8-4, to=9-5]
	\arrow[color=red, from=8-6, to=7-7]
	\arrow[from=8-6, to=8-4]
	\arrow[color=red, from=8-6, to=9-7]
	\arrow[color=red, from=8-8, to=7-9]
	\arrow[color=red, from=8-8, to=8-6]
	\arrow[from=8-8, to=9-9]
	\arrow[dashed, from=8-10, to=7-11]
	\arrow[from=8-10, to=8-8]
	\arrow[color=red, from=9-5, to=8-6]
	\arrow[from=9-5, to=10-6]
	\arrow[color=red, from=9-7, to=8-8]
	\arrow[color=red, from=9-7, to=9-5]
	\arrow[color={rgb,255:red,0;green,255;blue,255}, from=9-7, to=10-8]
	\arrow[dashed, from=9-9, to=8-10]
	\arrow[color={rgb,255:red,0;green,255;blue,255}, from=9-9, to=9-7]
	\arrow[color={rgb,255:red,0;green,255;blue,255}, from=10-6, to=9-7]
	\arrow[from=10-6, to=11-7]
	\arrow[color={rgb,255:red,0;green,255;blue,255}, dashed, from=10-8, to=9-9]
	\arrow[color={rgb,255:red,0;green,255;blue,255}, from=10-8, to=10-6]
	\arrow[dashed, from=11-7, to=10-8]
\end{tikzcd}
    \caption{The red parallelogram ($\Diamond_{4,6}$) corresponds to $\mathbb{A}_{4,6}$, while the mint tiny polygon ($\Box_{2,1}$) denotes $\mathbb{B}_{2,1}$. The purple variables are shared variables of these shapes. Observe that the inner product of $\Diamond_{4,6}$ with $\Box_{2,1}$ is $-\frac{1}{2}(0-2) = 1$, whereas its inner product with every other tiny polygon is $0$. For instance, the inner product with $\Box_{2,5}$ (generated by $X_{2,5}, X_{2,6}, X_{3,3}, X_{1,5}$) is zero; specifically, in the column $\mathbb{B}_{2,5}$, the entry corresponding to $X_{3,3}$ is $1$ and the entry for $X_{2,6}$ is $-1$, resulting in $-\frac{1}{2}(1-1) = 0$. Likewise, the inner product with $\Box_{1,1}$ (generated by $X_{1,1}, X_{1,2}, X_{2,2}$) is zero. Analogously, consider the green line ($\Diamond_{1,2}$) representing $\mathbb{A}_{1,2}$. The unique tiny polygon having a nonzero inner product with $\Diamond_{1,2}$ is $\Box_{1,3}$ (generated by $X_{1,3}, X_{1,4}, X_{2,4}$).}
    \label{Fig24}
\end{figure}
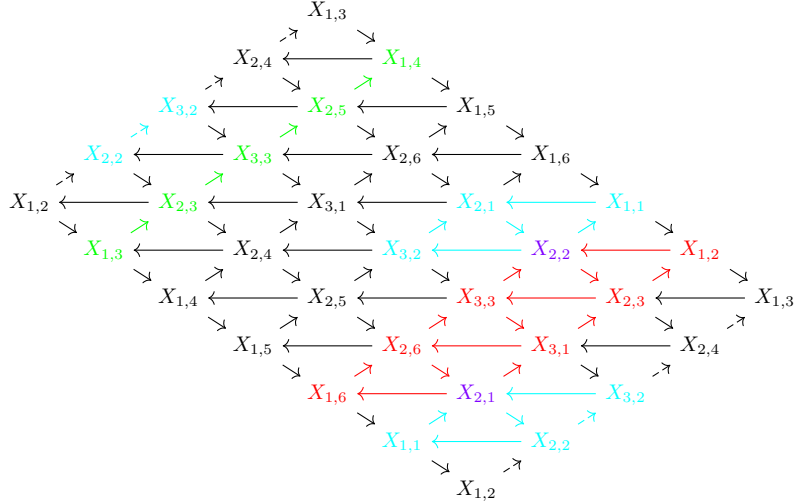

\begin{prop}
    $\mathbb{B}_{\text{deg}}^T$ is the inverse matrix of $\mathbb{A}_{\text{deg}}$.
\end{prop}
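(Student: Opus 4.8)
The plan is to evaluate the matrix product $\mathbb{B}_{\text{deg}}^T \mathbb{A}_{\text{deg}}$ entrywise and show it equals the identity matrix of size $\frac{n(n-1)}{2}$; since $\mathbb{A}_{\text{deg}}$ and $\mathbb{B}_{\text{deg}}$ are square matrices of this size, this simultaneously shows that $\mathbb{A}_{\text{deg}}$ is invertible with inverse $\mathbb{B}_{\text{deg}}^T$. By construction, the entry of $\mathbb{B}_{\text{deg}}^T \mathbb{A}_{\text{deg}}$ in the row labelled by a tiny polygon index $(k,r)$ and the column labelled by a parallelogram index $(i,j)$ is the inner product $\mathbb{B}_{k,r}^T\cdot\operatorname{val}(\mathbb{A}_{i,j})$, and Proposition \ref{prop4310} evaluates this as $-\frac{1}{2}(N^+ - N^-)$, where $N^{\pm}$ counts the vertices of $V^{\pm}_{k,r}$ lying inside the plane figure $\Diamond_{i,j}$. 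So the entire statement reduces to the combinatorial assertion that $N^+ - N^- = -2$ exactly when $(k,r)$ is the index matched to $(i,j)$ under the block correspondence between $\mathbb{A}_{\text{deg}}$ and $\mathbb{B}_{\text{deg}}$, and $N^+ - N^- = 0$ otherwise.

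First I would cut down the case analysis using the cyclic symmetry of Remark \ref{rem438}. The shift operator $T_1$ acts on $\mathbb{Z}^{n(n-1)/2}$ as a permutation of coordinates, hence preserves all inner products, and it cyclically permutes the columns within every block of both $\mathbb{A}_{\text{deg}}$ and $\mathbb{B}_{\text{deg}}$. Consequently $\mathbb{B}_{\text{deg}}^T \mathbb{A}_{\text{deg}}$ is block-circulant, and it suffices to compute $\mathbb{B}_{k,1}^T\cdot\operatorname{val}(\mathbb{A}_{i,j})$ for the single representative column $\mathbb{B}_{k,1}$ of each block $\mathbb{B}_k$ against all columns of $\mathbb{A}_{\text{deg}}$.

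The geometric heart of the argument is then a local analysis of how the tiny polygon $\Box_{k,r}$ meets the parallelogram $\Diamond_{i,j}$. The polygon $\Box_{k,r}$ is supported on the bounded vertex set $V^+_{k,r}\sqcup V^-_{k,r}$ lying in a fixed neighborhood of the $k$th main cycle, with $|V^-_{k,r}| = 2$ always and $|V^+_{k,r}| = 2$ for $k > 1$, whereas $\Diamond_{i,j}$ is a region whose boundary consists of two monotone lattice paths joining the vertices $j$ and $i'$ of the glued $\mathcal{A}_n$-quiver (Figure \ref{Fig23}). For a fixed tiny polygon only finitely many relative positions of this boundary can occur: $\Box_{k,r}$ lies wholly inside $\Diamond_{i,j}$, wholly outside it, or is straddled by one of the two boundary paths in one of a short list of local configurations. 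I would run through these configurations, using Definition \ref{defn436} and the tiny-polygon description following it to read off which vertices of $\Box_{k,r}$ carry sign $+$ and which carry sign $-$, and reading off from the quiver picture (as in Figure \ref{Fig24}) which of them land in $\Diamond_{i,j}$; the outcome to be verified is that $N^+ - N^- = 0$ in every case except the one where the southwest corner $i'$ of $\Diamond_{i,j}$ is pinned precisely at the distinguished location of $\Box_{k,r}$, in which case $\Diamond_{i,j}$ contains exactly $V^-_{k,r}$ and avoids $V^+_{k,r}$, giving $N^+ - N^- = -2$. Matching up, block by block, which $(i,j)$ this is with the column position $r$ of $\mathbb{B}_{k,1}$ reproduces exactly the identity matrix; the cases $n=4,5$ (Figures \ref{Fig23} and \ref{Fig24}) serve as a check.

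The step I expect to be the main obstacle is making this local analysis airtight: one must give a precise description of the two boundary paths of $\Diamond_{i,j}$ in the coordinates of the glued quiver, and the delicate cases are those in which a boundary path runs along an edge of a triangle, since there it wraps around via the amalgamation of Section \ref{Ch3.1} and the tiny polygon itself is drawn as two separate pieces. Packaging the content into a single invariant --- namely, the contribution of $\Box_{k,r}$ to $\Diamond_{i,j}$ is $1$ if the corner $i'$ of $\Diamond_{i,j}$ sits at the distinguished vertex of $\Box_{k,r}$ and $0$ otherwise --- and verifying this invariant by the cyclic-symmetry reduction together with the small base cases should keep the bookkeeping manageable.
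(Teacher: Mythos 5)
Your proposal follows essentially the same route as the paper's proof: reduce to one representative column per block via the cyclic symmetry of Remark \ref{rem438}, apply the counting formula $-\frac{1}{2}(N^+-N^-)$ of Proposition \ref{prop4310}, and verify by a local analysis that the matched pair gives $N^+=0$, $N^-=2$ (hence inner product $1$) while every other configuration has cancelling contributions (the paper organizes the zero cases by counting shared sides of $\Box_{k,r}$ with $\Diamond_{i,j}$ as $0$, $1$, or $4$, each side carrying one $V^+$ and one $V^-$ vertex, and sums over the two disjoint pieces of the tiny polygon in the glued quiver, exactly the subtlety you flag). The only cosmetic difference is that you fix a representative column of $\mathbb{B}_{\mathrm{deg}}$ and range over $\mathbb{A}_{\mathrm{deg}}$, whereas the paper does the reverse.
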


\begin{proof}
By the cyclic symmetry (Remark~\ref{rem438}), it suffices to show that the inner product $\mathbb{B}_{k,r}^T \cdot \operatorname{val}(\mathbb{A}_{n-d,n})$ is $1$ when $(k,r) = (d,1)$ and $0$ otherwise.

Let $(k,r) = (d,1)$. The geometric configuration of $\Box_{d,1}$ relative to $\Diamond_{n-d,n}$ is as follows ($m = \lfloor n/2 \rfloor)$:

$$\begin{tikzcd}[scale cd=0.8,column sep = tiny, row sep = tiny]
	&&& \textcolor{rgb,255:red,0;green,255;blue,255}{{X_{d,1}}} && \textcolor{rgb,255:red,0;green,255;blue,255}{\bullet} &&&&&&&&& \textcolor{rgb,255:red,0;green,255;blue,255}{{X_{1,2}}} \\
	&& \textcolor{rgb,255:red,0;green,255;blue,255}{\bullet} && \textcolor{rgb,255:red,128;green,0;blue,255}{{X_{d,2}}} && \textcolor{rgb,255:red,255;green,0;blue,0}{\cdots} && \textcolor{rgb,255:red,255;green,0;blue,0}{{X_{1,2}}} &&&&& \textcolor{rgb,255:red,0;green,255;blue,255}{\bullet} && \textcolor{rgb,255:red,128;green,0;blue,255}{{X_{1,2}}} \\
	&&& \textcolor{rgb,255:red,255;green,0;blue,0}{\cdots} &&&& \textcolor{rgb,255:red,255;green,0;blue,0}{\cdots} &&&&&&& \textcolor{rgb,255:red,255;green,0;blue,0}{\cdots} \\
	&& \textcolor{rgb,255:red,255;green,0;blue,0}{{X_{m,2-d}}} &&&& \textcolor{rgb,255:red,255;green,0;blue,0}{{X_{m,1}}} &&&&&&& \textcolor{rgb,255:red,255;green,0;blue,0}{{X_{m,1}}} \\
	& \textcolor{rgb,255:red,255;green,0;blue,0}{\cdots} &&&& \textcolor{rgb,255:red,255;green,0;blue,0}{\cdots} &&&&&&& \textcolor{rgb,255:red,255;green,0;blue,0}{\cdots} \\
	\textcolor{rgb,255:red,255;green,0;blue,0}{{X_{1,2-d}}} && \textcolor{rgb,255:red,255;green,0;blue,0}{\cdots} && \textcolor{rgb,255:red,128;green,0;blue,255}{{X_{d,1}}} && \textcolor{rgb,255:red,0;green,255;blue,255}{\bullet} &&&&& \textcolor{rgb,255:red,128;green,0;blue,255}{{X_{1,1}}} && \textcolor{rgb,255:red,0;green,255;blue,255}{\bullet} \\
	&&& \textcolor{rgb,255:red,0;green,255;blue,255}{\bullet} && \textcolor{rgb,255:red,0;green,255;blue,255}{{X_{d,2}}} &&&&&&& \textcolor{rgb,255:red,0;green,255;blue,255}{{X_{1,2}}}
	\arrow[color={rgb,255:red,0;green,255;blue,255}, from=1-4, to=2-5]
	\arrow[color={rgb,255:red,0;green,255;blue,255}, from=1-6, to=1-4]
	\arrow[color={rgb,255:red,0;green,255;blue,255}, from=1-15, to=2-16]
	\arrow[color={rgb,255:red,0;green,255;blue,255}, from=2-3, to=1-4]
	\arrow[color={rgb,255:red,0;green,255;blue,255}, from=2-5, to=1-6]
	\arrow[color={rgb,255:red,0;green,255;blue,255}, from=2-5, to=2-3]
	\arrow[color={rgb,255:red,255;green,0;blue,0}, from=2-7, to=2-5]
	\arrow[color={rgb,255:red,255;green,0;blue,0}, from=2-9, to=2-7]
	\arrow[color={rgb,255:red,0;green,255;blue,255}, from=2-14, to=1-15]
	\arrow[color={rgb,255:red,0;green,255;blue,255}, from=2-16, to=2-14]
	\arrow[color={rgb,255:red,255;green,0;blue,0}, from=3-4, to=2-5]
	\arrow[color={rgb,255:red,255;green,0;blue,0}, from=3-8, to=2-9]
	\arrow[color={rgb,255:red,255;green,0;blue,0}, from=3-15, to=2-16]
	\arrow[color={rgb,255:red,255;green,0;blue,0}, from=4-3, to=3-4]
	\arrow[color={rgb,255:red,255;green,0;blue,0}, from=4-7, to=3-8]
	\arrow[color={rgb,255:red,255;green,0;blue,0}, from=4-14, to=3-15]
	\arrow[color={rgb,255:red,255;green,0;blue,0}, from=5-2, to=4-3]
	\arrow[color={rgb,255:red,255;green,0;blue,0}, from=5-6, to=4-7]
	\arrow[color={rgb,255:red,255;green,0;blue,0}, from=5-13, to=4-14]
	\arrow[color={rgb,255:red,255;green,0;blue,0}, from=6-1, to=5-2]
	\arrow[color={rgb,255:red,255;green,0;blue,0}, from=6-3, to=6-1]
	\arrow[color={rgb,255:red,255;green,0;blue,0}, from=6-5, to=5-6]
	\arrow[color={rgb,255:red,255;green,0;blue,0}, from=6-5, to=6-3]
	\arrow[color={rgb,255:red,0;green,255;blue,255}, from=6-5, to=7-6]
	\arrow[color={rgb,255:red,0;green,255;blue,255}, from=6-7, to=6-5]
	\arrow[color={rgb,255:red,255;green,0;blue,0}, from=6-12, to=5-13]
	\arrow[color={rgb,255:red,0;green,255;blue,255}, from=6-12, to=7-13]
	\arrow[color={rgb,255:red,0;green,255;blue,255}, from=6-14, to=6-12]
	\arrow[color={rgb,255:red,0;green,255;blue,255}, from=7-4, to=6-5]
	\arrow[color={rgb,255:red,0;green,255;blue,255}, from=7-6, to=6-7]
	\arrow[color={rgb,255:red,0;green,255;blue,255}, from=7-6, to=7-4]
	\arrow[color={rgb,255:red,0;green,255;blue,255}, from=7-13, to=6-14]
\end{tikzcd}$$

The left quiver depicts the case where $d \ne 1$, while the right quiver shows the case $d=1$. In both quivers, the red plane figure represents $\Diamond_{n-d,n}$ and the mint polygon represents $\Box_{d,1}$. The variables highlighted in purple are shared by these two figures. In this case, we have $N^+ = 0$ and $N^- = 2$ (corresponding to the variables $X_{d,1}$ and $X_{d,2}$). Hence, by Proposition~\ref{prop4310}, the inner product is given by
$$
-\frac{1}{2}(0 - 2) = 1.
$$

Next, let $(k,r) \ne (d,1)$. In the glued $\mathcal{A}_n$-quiver, the polygon $\Box_{k,r}$ is represented as two disjoint pieces. For each piece, the number of shared sides with $\Diamond_{n-d,n}$ is either $0$, $1$, or $4$. In each case, the contributions from the common variables are zero. Note that each side of $\Box_{k,r}$ contains exactly one variable in $V^+_{k,r}$ and one variable in $V^-_{k,r}$.

\begin{itemize}
    \item If there are no shared sides, the piece and $\Diamond_{n-d,n}$ do not share any variables. As illustrated in the figure above, variable sharing without shared sides occurs only when $(k,r)=(d,1)$. In all other cases, sharing a variable requires sharing a side (See Figure~\ref{Fig24}). Consequently, since the variable sets are disjoint, the contribution is $0$.
    
    \item If there is \textbf{1 shared side}, a side contains exactly one variable in $V^+_{k,r}$ and one variable in $V^-_{k,r}$. By Proposition~\ref{prop4310}, the contribution is 
    $$-\frac{1}{2}(1 - 1) = 0.$$
    
    \item If there are \textbf{4 shared sides}, these sides contain two variables in $V^+_{k,r}$ and two variables in $V^-_{k,r}$. The contribution is 
    $$-\frac{1}{2}(2 - 2) = 0.$$
\end{itemize}

Since the total inner product is the sum of the contributions from these disjoint pieces (yielding $0 + 0 = 0$), we conclude that $\mathbb{B}_{k,r}^T \cdot \operatorname{val}(\mathbb{A}_{n-d,n}) = 0$ for all $(k,r) \ne (d,1)$. Consequently, $\mathbb{B}_{\text{deg}}^T$ is the inverse of $\mathbb{A}_{\text{deg}}$ (refer to Figure \ref{Fig24} for a concrete illustration).
\end{proof}

\begin{defn} \label{defn4312}
    Let $l \in \mathbb{Z}^{n(n-1)/2}$ be a column vector. We express the vector $\mathbb{B}^T_{\deg} l$ in block form as$$\mathbb{B}^T_{\deg} l = \begin{bmatrix} L_1(l) \\ L_2(l) \\ \vdots \\ L_{\lfloor n/2 \rfloor}(l) \end{bmatrix},$$where each block component $L_i(l)$ is an $N_i$-dimensional column vector for $i = 1, \dots, \lfloor n/2 \rfloor$. Here, $N_i$ denotes the length of the $i$th cycle in the $\mathcal{A}_n$-quiver. We denote the $r$th component of $L_k(l)$ by $[L_k(l)]_r$, so that the relation $\mathbb{B}_{k,r}^T \cdot l = [L_k(l)]_r$ holds.
\end{defn}

Thus, an equation (\ref{4.6}) can be rewritten as:
\begin{equation}\label{4.7}
    \mathbf{p} = \mathbb{B}_{\text{deg}}^T \left( l - \sum_{i=1}^{\lfloor n/2\rfloor} \operatorname{val}(\mathcal{K}_i)q_i \right).
\end{equation}

Let $m = \lfloor n/2 \rfloor$. Expanding \ref{4.7} explicitly, we obtain:
\begin{equation} \label{4.8}
    \begin{bmatrix}
        \mathbf{p}_1 \\
        \vdots \\
        \mathbf{p}_i \\
        \vdots \\
        \mathbf{p}_{m-1} \\
        \mathbf{p}_{m}
    \end{bmatrix} 
    = \mathbb{B}_{\text{deg}}^T 
    \begin{bmatrix}
        l_1 - q_1\mathbf{1}_1 \\
        \vdots \\
        l_i -\sum_{j=1}^i q_j\mathbf{1}_i \\
        \vdots \\
        l_{m-1} -\sum_{j=1}^{m-1} q_j\mathbf{1}_{m-1} \\
        l_{m} -\sum_{j=1}^{m} q_j\mathbf{1}_{m}
    \end{bmatrix} 
    = 
    \begin{bmatrix}
        L_1(l) + (q_1-q_2)\mathbf{1}_1 \\
        \vdots \\
        L_i(l) + (q_i-q_{i+1})\mathbf{1}_i \\
        \vdots \\
        L_{m-1}(l) + (q_{m-1}-q_{m})\mathbf{1}_{m-1} \\
        L_{m}(l) + (\text{par}(n)q_{m})\mathbf{1}_{m}
    \end{bmatrix}
\end{equation}
where $\mathbf{p}_i$, $l_i$, and $\mathbf 1_{i}$ are $N_i$ dimensional column vectors such that $\mathbf p = [\mathbf p^T_1, \cdots, \mathbf p^T_{\lfloor n/2 \rfloor}]^T$, $l = [l^T_1, \cdots, l^T_{\lfloor n/2 \rfloor}]^T$, and $\mathbf{1}_i = [1,1,\dots,1]^T$; see Example~\ref{exmp4314}.

The final equality in \eqref{4.8} follows from the block structure of $\mathbb{B}_{\mathrm{deg}}$ described in Definition~\ref{defn436}. For the calculation below, we further divide the rows of $\mathbb{B}_{\mathrm{deg}}^T$ into row blocks. If $n$ is even, these row blocks consist of $m-1$ blocks of size $n$ followed by a final block of size $n/2$. If $n$ is odd, they consist of $m$ blocks of size $n$. Since the rows of $\mathbb{B}_{\mathrm{deg}}^T$ are the columns of $\mathbb{B}_{\mathrm{deg}}$, we analyze the contribution of each row block in $\mathbb{B}_i^T$ as follows:

\begin{itemize}
    \item $\mathbb{B}_1^T$: Each row contains exactly two entries equal to $-1$ in the first row block and a single entry equal to $1$ in the second row block. This gives the term $(q_2 - q_1)\mathbf{1}_1$:
    $$
    -(-q_1) - (-q_1) + (-q_1 - q_2) = q_2 - q_1.
    $$

    \item $\mathbb{B}_i^T$ for $1 < i < m$: Each row has two entries equal to $-1$ in the $i$th row block, and single entries equal to $1$ in the $(i-1)$th and $(i+1)$th row blocks, respectively. This produces the term $(q_i - q_{i+1})\mathbf{1}_i$:
    \begin{align*}
        -\left(-\sum_{j=1}^i q_j\right) -\left(-\sum_{j=1}^i q_j\right) + \left(-\sum_{j=1}^{i-1} q_j\right) + \left(-\sum_{j=1}^{i+1} q_j\right) &= 2\sum_{j=1}^i q_j - \sum_{j=1}^{i-1} q_j - \sum_{j=1}^{i+1} q_j \\
        &= q_i - q_{i+1}.
    \end{align*}

    \item $\mathbb{B}_{m}^T$: We distinguish two cases based on the parity of $n$.
    \begin{itemize}
        \item \textit{If $n$ is even:} The rows contain two entries equal to $-1$ in the $m$th row block and two entries equal to $1$ in the $(m-1)$th row block. This results in $2q_{m}\mathbf{1}_{m}$:
        $$
        2\left(\sum_{j=1}^{m} q_j\right) - 2\left(\sum_{j=1}^{m-1} q_j\right) = 2q_{m}.
        $$
        
        \item \textit{If $n$ is odd:} The rows contain two entries equal to $-1$ and a single entry equal to $1$ in the $m$th row block, along with a single entry equal to $1$ in the $(m-1)$th row block. This gives $q_{m}\mathbf{1}_{m}$:
        $$
        2\left(\sum_{j=1}^{m} q_j\right) - \left(\sum_{j=1}^{m} q_j\right) - \left(\sum_{j=1}^{m-1} q_j\right) = q_{m}.
        $$
    \end{itemize}
\end{itemize}

Recall the $k$th component of $L_i(l)$ is denoted by $[L_i(l)]_k$. To ensure the non-negativity of $\mathbf{p}_i$, the following conditions must be satisfied:
\begin{equation} \label{4.9}
    \begin{cases}
        q_1 - q_2 \ge -\min_k [L_1(l)]_k, \\
        q_2 - q_3 \ge -\min_k [L_2(l)]_k, \\
        \qquad \vdots \\
        \text{par}(n) q_{m} \ge -\min_k [L_{m}(l)]_k,
    \end{cases}
\end{equation}
\begin{defn} \label{defn4313}
   We define the element $h_l = \prod_{i=1}^m \mathcal{K}_i^{q_i} \prod_{i,j} \mathbb{A}_{i,j}^{p_{i,j}}$, which possesses $l$ as its unique minimal multidegree. Here, the exponents $q_i$ are the minimal integers satisfying \eqref{4.9}. Specifically, for $i = 1, \dots, m-1$, they satisfy the relations $q_i - q_{i+1} = -\min_k [L_i(l)]_k$, and $q_m$ is given by
$$q_m = \left\lceil \frac{-\min_k [L_m(l)]_k}{\operatorname{par}(n)} \right\rceil.$$

Note that each vector $\mathbf{p}_i$ contains at least one zero entry for $1 \le i < m$ since $q_i - q_{i+1} = -\min_k [L_i(l)]_k$. Moreover, if $n$ is odd, $\operatorname{par}(n) = 1$. This shows that $q_m = -\min_k [L_m(l)]_k$. Hence, $\mathbf{p}_m$ contains at least one zero entry. Conversely, when $n$ is even, $\operatorname{par}(n) = 2$, so $q_m = \left\lceil \frac{-\min_k [L_m(l)]_k}{2} \right\rceil$. In this case, $\mathbf{p}_m$ contains at least one entry equal to either $0$ or $1$.
\end{defn}

\begin{exmp} \textit{($n=4$ example)} \label{exmp4314}

We aim to find $p_{i,j} \in \mathbb Z_{\ge 0}$ and $q_i \in \mathbb Z$ such that 
$$\mathcal (\mathcal K_1)^{q_1} \mathcal (\mathcal K_2)^{q_2}\cdot \prod_{i,j}(\mathbb A_{i,j})^{p_{i,j}}$$
has the unique minimal multidegree as $l = (l_{i,j})$. It is equivalent to solving the following equation:
\begin{equation}
\begin{bmatrix}
q_1 \\
q_1 \\
q_1 \\
q_1 \\
q_1 \\
q_1
\end{bmatrix} + \begin{bmatrix}
0 \\
0 \\
0 \\
0 \\
q_2 \\
q_2
\end{bmatrix} + \begin{bmatrix}
-1/2 &0  &0  &-1/2  &0  &-1/2  \\
-1/2 &-1/2  &0  &0  &-1/2  &0 \\
0 &-1/2  &-1/2  &0  &0  &-1/2  \\
0 &0  &-1/2  &-1/2  &-1/2  &0  \\
-1/2 &0  &-1/2  &0  &-1/2  &-1/2  \\
0 &-1/2  &0  &-1/2  &-1/2  &-1/2 
\end{bmatrix}\begin{bmatrix}
p_{3,4} \\
p_{1,4} \\
p_{1,2} \\
p_{2,3} \\
p_{2,4} \\
p_{1,3}
\end{bmatrix} = \begin{bmatrix}
l_{1,1} \\
l_{1,2} \\
l_{1,3} \\
l_{1,4} \\
l_{2,1} \\
l_{2,2}
\end{bmatrix}.
\end{equation}
The displayed $6 \times 6$ matrix is $\mathbb A_{\text{deg}}$. This is equivalent to 
\begin{equation}
\begin{bmatrix}
-1/2 &0  &0  &-1/2  &-1/2  &0  \\
-1/2 &-1/2  &0  &0  &0  &-1/2 \\
0 &-1/2  &-1/2  &0  &-1/2  &0  \\
0 &0  &-1/2  &-1/2  &0  &-1/2  \\
-1/2 &0  &-1/2  &0  &-1/2  &-1/2  \\
0 &-1/2  &0  &-1/2  &-1/2  &-1/2 
\end{bmatrix}\begin{bmatrix}
p_{3,4} \\
p_{1,4} \\
p_{1,2} \\
p_{2,3} \\
p_{2,4} \\
p_{1,3}
\end{bmatrix} = \begin{bmatrix}
l_{1,1} - q_1\\
l_{1,2} - q_1\\
l_{1,3} - q_1\\
l_{1,4} - q_1\\
l_{2,1} - q_1 - q_2\\
l_{2,2} - q_1 - q_2
\end{bmatrix}.
\end{equation}
Multiplying by the inverse matrix $\mathbb B_{\text{deg}}^T$, we have
\begin{equation}
\begin{bmatrix}
p_{3,4} \\
p_{1,4} \\
p_{1,2} \\
p_{2,3} \\
p_{2,4} \\
p_{1,3}
\end{bmatrix}  = \begin{bmatrix}
-1 &-1  &0  &0  &0  &1  \\
0 &-1  &-1  &0  &1  &0 \\
0 &0  &-1  &-1  &0  &1  \\
-1 &0  &0  &-1  &1  &0  \\
1 &0  &1  &0  &-1  &-1  \\
0 &1  &0  &1  &-1  &-1 
\end{bmatrix}\begin{bmatrix}
l_{1,1} - q_1\\
l_{1,2} - q_1\\
l_{1,3} - q_1\\
l_{1,4} - q_1\\
l_{2,1} - q_1 - q_2\\
l_{2,2} - q_1 - q_2
\end{bmatrix} = \begin{bmatrix}
-l_{1,1}-l_{1,2}+l_{2,2} + q_1 - q_2  \\
-l_{1,2}-l_{1,3}+l_{2,1} + q_1 - q_2 \\
-l_{1,3}-l_{1,4}+l_{2,2} + q_1 - q_2 \\
-l_{1,4}-l_{1,1}+l_{2,1} + q_1 - q_2 \\
-l_{2,1}-l_{2,2}+l_{1,1}+l_{1,3} + 2q_2\\
-l_{2,1}-l_{2,2}+l_{1,2}+l_{1,4} + 2q_2
\end{bmatrix}
\end{equation}

We denote our vectors as $\mathbf{p}_1 = [p_{3,4}, p_{1,4}, p_{1,2}, p_{2,3}]^T$ and $\mathbf{p}_2 = [p_{2,4}, p_{1,3}]^T$, with corresponding $l$-vectors given by $l_1 = [l_{1,1}, l_{1,2}, l_{1,3}, l_{1,4}]^T$ and $l_2 = [l_{2,1}, l_{2,2}]^T$. From Definition~\ref{defn4313}, we first determine $q_2$ as the minimal integer satisfying $2q_2 \ge -\min_k [L_2(l)]_k$. This implies
$$q_2 = \left\lceil \frac{-\min(-l_{2,1}-l_{2,2}+l_{1,1}+l_{1,3}, \ -l_{2,1}-l_{2,2}+l_{1,2}+l_{1,4})}{2} \right\rceil.$$
Subsequently, we determine $q_1$ by enforcing the equality condition $q_1 - q_2 = -\min_k [L_1(l)]_k$:
$$q_1 = q_2 - \min(-l_{1,1}-l_{1,2}+l_{2,2}, \ -l_{1,2}-l_{1,3}+l_{2,1}, \ -l_{1,3}-l_{1,4}+l_{2,2}, \ -l_{1,4}-l_{1,1}+l_{2,1}).$$
By choosing $q_1$ and $q_2$ in this manner, all entries $p_{i,j}$ are guaranteed to be nonnegative, yielding the element $h_l = \mathcal{K}_1^{q_1} \mathcal{K}_2^{q_2} \prod_{i,j} \mathbb{A}_{i,j}^{p_{i,j}}$ with the unique minimal multidegree $l$. 

For example, let $l = (1, 2, 1, 4, 2, 5)$. Evaluating the components for $L_2(l)$ induces $\min(-5, -1) = -5$, so $2q_2 \ge 5$, which gives $q_2 = 3$. Next, evaluating $L_1(l)$ yields $\min(2, -1, 0, -3) = -3$. Thus, we set $q_1 = 3 - (-3) = 6$. Calculating the corresponding $\mathbf{p}_i$ vectors, we obtain:$$h_{(1,2,1,4,2,5)} = \mathcal{K}_1^6 \mathcal{K}_2^3 \mathbb{A}_{3,4}^5 \mathbb{A}_{1,4}^2 \mathbb{A}_{1,2}^3 \mathbb{A}_{2,3}^0 \mathbb{A}_{2,4}^1 \mathbb{A}_{1,3}^5.$$

Next, let $l = (1, 2, 1, 4, -4, -1)$. Here, the components of $L_2(l)$ yield $\min(7, 11) = 7$, so $2q_2 \ge -7$, implying $q_2 = -3$. The components of $L_1(l)$ yield $\min(-4, -7, -6, -9) = -9$. Thus, we set $q_1 = -3 - (-9) = 6$. By computing the corresponding $\mathbf{p}_i$ vectors, we obtain:
$$h_{(1,2,1,4,-4,-1)} = \mathcal{K}_1^6 \mathcal{K}_2^{-3} \mathbb{A}_{3,4}^5 \mathbb{A}_{1,4}^2 \mathbb{A}_{1,2}^3 \mathbb{A}_{2,3}^0 \mathbb{A}_{2,4}^1 \mathbb{A}_{1,3}^5.$$

Notice that there is indeed a $0$ entry in $\mathbf{p}_1$ and a $1$ entry in $\mathbf{p}_2$. Moreover, we observe the identity $s_2^* \left( h_{(1,2,1,4,2,5)} \right) = h_{(1,2,1,4,-4,-1)}$.\end{exmp}

\begin{prop} \label{prop4315}
    The set $\{h_l \mid l \in \mathbb{Z}^{n(n-1)/2}\}$ is closed under the Weyl group action.
\end{prop}
\begin{proof}
Let $w \in W_n$ and suppose $h_l$ and its image $w(h_l)$ are given by (Theorem~\ref{thm422} and Proposition~\ref{prop353}):
$$h_l = \prod_{i=1}^{\lfloor n/2\rfloor} \mathcal{K}_i^{q_i} \prod_{i,j} \mathbb{A}_{i,j}^{p_{i,j}}, \quad w(h_l) = \prod_{i=1}^{\lfloor n/2\rfloor} \mathcal{K}_i^{q'_i} \prod_{i,j} \mathbb{A}_{i,j}^{p'_{i,j}}.$$

$w(h_l)$ possesses a unique minimal multidegree, which we denote by $l_w$. We observe that the corresponding exponents $q_i'$ of $w(h_l)$ satisfy conditions \eqref{4.8} and \eqref{4.9} when each $L_i(l)$ is replaced by $L_i(l_w)$.

Each vector $\mathbf{p}_i$ contains at least one zero entry whenever $i < \lfloor n/2 \rfloor$ (Definition~\ref{defn4313}). Since the formal geodesic functions are invariant under the action of the Weyl group, we have $\mathbf{p}'_i = \mathbf{p}_i$ (as $h_l$ involves no square roots, it does not depend on the choice of a square root). Thus, each vector $\mathbf{p}'_i$ also contains at least one zero entry for $i < \lfloor n/2 \rfloor$.

Consequently, $q_i' - q'_{i+1} = -\min_k [L_i(l_w)]_k$. Otherwise, the entries of $\mathbf{p}'_i$ would either be strictly positive (contradicting the existence of a zero entry) or negative (violating the non-negativity condition). Similarly, the vector $\mathbf{p}'_{\lfloor n/2 \rfloor}$ contains a zero entry if $n$ is odd, and an entry equal to $0$ or $1$ if $n$ is even. Therefore, by the same argument, the coefficient $q'_{\lfloor n/2 \rfloor} = \left\lceil \frac{-\min_k [L_m(l_w)]_k}{\operatorname{par}(n)} \right\rceil$.

Thus, we conclude that $w(h_l) = h_{l_w}$ (as illustrated in Example \ref{exmp4314} above, where $s_2^*(h_{(1,2,1,4,2,5)}) = h_{(1,2,1,4,-4,-1)}$).\end{proof}

Any element $f$ in $\mathcal{O}(\mathcal{X}_{|\mathcal{A}_n|})$ can be expressed as a formal series of the elements $h_l$. In other words,
\begin{equation} \label{4.13}
    f = \sum_{l} c_l h_l.
\end{equation}
The existence of this expansion is as follows: Consider the minimal multidegrees $l_1, l_2, \dots, l_j$ of $f$. Since the number of minimal multidegrees of $f$ is finite and each $h_l$ has the unique minimal multidegree, the difference $(f - \sum_j c_{l_j} h_{l_j})$ eliminates terms of the minimal multidegrees of $f$. The resulting expression also has a finite number of the minimal multidegrees, so repeating this process deduces the formal expansion in \ref{4.13}. Note that this series is, in general, infinite.

\begin{defn} \label{defn4316}
    We define $h^W_l$ as the sum over the Weyl group orbit of $h_l$:
    $$h^W_l := \sum_{h' \in W_n(h_l)} h'.$$
\end{defn}

\begin{lem}\label{lem4317}
    The element $h^W_l$ has a unique minimal multidegree, which consists entirely of nonpositive integers.
\end{lem}
\begin{proof}
    Write $h_l = F_l \cdot C_l$, where $F_l$ is the formal geodesic part and $C_l$ is the Casimir part of $h_l$. Since formal geodesic functions are invariant under the Weyl group action, we have 
    $$h^W_l = F_l\cdot\left(\sum_{C_i \in W_n(C_l)} C_i\right).$$
    Next, let us assume $C_l = \prod_i \mathcal K_i^{q_i}$. Then we can find a $w' \in W_n$ where $w'(C_l) = \prod_i \mathcal K_i^{-|q'_i|}$ such that $\{q'_i\}$ is a permutation of $\{q_i\}$ with $|q'_1| \ge |q'_2| \ge \cdots \ge |q'_{\lfloor n/2 \rfloor}|$ (The Weyl group acts on the set $\left\{\mathcal K_i\right\}$ via both inversions and permutations by Proposition~\ref{prop353}). 
    
    Furthermore, by the definition of $\mathcal{K}_i$, we have the sequence of inequalities $\operatorname{val}(\mathcal{K}_1) \ge \operatorname{val}(\mathcal{K}_2) \ge \dots \ge \operatorname{val}(\mathcal{K}_{\lfloor n/2 \rfloor})$. This implies that $\operatorname{val}(w'(C_l)) \le \operatorname{val}(w(C_l))$ for all $w \in W_n$. Consequently, the unique minimal multidegree of $w'(h_l)$ coincides with that of $h^W_l$, and it consists entirely of nonpositive integers.\end{proof}

\begin{prop} \label{prop4318}
    Any element in $\mathcal{O}(\mathcal{X}_{|\mathcal{A}_n|})^W$ can be expressed as a finite linear combination of the elements $h_l^W$. Note that $\mathcal{O}(\mathcal{X}_{|\mathcal{A}_n|})^W$ is the Poisson subalgebra of Weyl group invariants within $\mathcal{O}(\mathcal{X}_{|\mathcal{A}_n|})$.
\end{prop}
\begin{proof}
    Let $f \in \mathcal{O}(\mathcal{X}_{|\mathcal{A}_n|})^W$ and denote its minimal multidegrees by $l_1, l_2, \dots, l_j$. The elements $h_l$ are linearly independent, as their unique minimal multidegrees are distinct. Furthermore, the set $\{h_l \mid l \in \mathbb{Z}^{n(n-1)/2}\}$ is closed under the Weyl group action by Proposition~\ref{prop4315}. Consequently, \eqref{4.13} directly implies:
    $$
        f = \sum_lc_lh_l^W
    $$
   where the sum runs over the distinct Weyl group orbits and the series is graded by non-decreasing order $l$.

    In the series, each $h_l^W$ possesses a distinct unique minimal multidegree. Suppose that $h_l^W$ and $h_{l'}^W$ possess the same unique minimal multidegree, denoted by $m$. This means that the element $h_m$ is contained in the Weyl group orbit of both $h_l$ and $h_{l'}$: The set $\{h_l \mid l \in \mathbb{Z}^{n(n-1)/2}\}$ is closed under the Weyl group action (Proposition~\ref{prop4315}) and $h_m$ is the unique element within it having unique minimal multidegree $m$. Since Weyl group orbits are disjoint equivalence classes, sharing a common element $h_m$ implies that the orbits must be identical. Therefore, $h_l^W = h_{l'}^W$.
    
    By assumption, the minimal multidegrees of $f$ are $l_1, l_2, \dots, l_j$. Consequently, in the expansion (\ref{4.13}), each $h_l$ has a unique minimal multidegree bounded below by at least one of the $l_i$. It follows that the unique minimal multidegree of each $h_l^W$ must also be greater than or equal to some $l_i$. Because each $h_l^W$ has a strictly distinct unique minimal multidegree, the set $\{h_l^W\}$ is linearly independent. Furthermore, there are only finitely many multidegrees that are bounded below by the $l_i$ while having entirely nonpositive components. Therefore, the expansion $f = \sum_l c_l h_l^W$ is necessarily a finite sum, which concludes the proof.\end{proof}

Next, we show that the algebra of Weyl group orbit sums of the Casimirs is generated by formal geodesic functions. This implies each $h_l^W$ can be expressed in terms of formal geodesic functions. 

\begin{lem}\label{lem4319}
    Let $t = \lambda + \lambda^{-1}$. For any integer $i$ such that $0 \le i \le \lfloor n/2 \rfloor$, the following expressions are polynomials in $t$:
    For odd $n$:$$\frac{\lambda^{n-i}+\lambda^i}{(\lambda +1)\lambda^{\lfloor n/2 \rfloor}}$$For even $n$:$$\frac{\lambda^{n-i}+\lambda^i}{\lambda^{\lfloor n/2 \rfloor}}$$
\end{lem}

\begin{proof}
    We prove that $\lambda^k + \lambda^{-k}$ is a polynomial in $\lambda + \lambda^{-1}$ by induction. Utilizing the identity $(a^n + b^n) = (a+b)(a^{n-1} + \cdots + b^{n-1})$, one can straightforwardly verify the lemma by direct calculation.\end{proof}

\begin{prop}\label{prop4320}
    The algebra of Weyl group orbit sums of the Casimirs is generated by formal geodesic functions.
\end{prop}
\begin{proof}
    In\textcolor{black}{\cite[Theorem 4.1]{17}}, we have
\begin{equation} \label{4.14}
    \det({\mathbb A+\lambda \mathbb A^T})=\begin{cases}
(\lambda+1)\prod_{i=1}^{\lfloor n/2 \rfloor}(\lambda + \mathcal K_i)(\lambda + \mathcal K^{-1}_i) & \mbox{if }n \text{ is odd,}  \\
\prod_{i=1}^{\lfloor n/2 \rfloor}(\lambda - \mathcal K_i)(\lambda - \mathcal K^{-1}_i) & \mbox{if }n \text{ is even.}
\end{cases}
\end{equation}

Let us consider the case for odd $n$. By\textcolor{black}{\cite{4}}, the characteristic polynomial $ \det({\mathbb A+\lambda \mathbb A^T}) = \sum_{i=0}^ng_i\lambda^i$ is palindromic ($g_{n-i} = g_i$). Hence, for odd $n$, we have
\begin{equation}
    \det({\mathbb A+\lambda \mathbb A^T}) = \sum_{i=0}^{\lfloor n/2 \rfloor}g_i(\lambda^{n-i}+\lambda^i).
\end{equation}
Then, by \ref{4.14}, we have the following equation:
\begin{equation}
\sum_{i=0}^{\lfloor n/2 \rfloor}g_i(\lambda^{n-i}+\lambda^i) = (\lambda+1)\prod_{i=1}^{\lfloor n/2 \rfloor}(\lambda + \mathcal K_i)(\lambda + \mathcal K^{-1}_i) = (\lambda+1)\prod_{i=1}^{\lfloor n/2 \rfloor}(\lambda^2 + 1+(\mathcal K_i+\mathcal K_i^{-1})\lambda)
\end{equation}

We then divide the equation above by $(\lambda+1)\lambda^{\lfloor n/2\rfloor}$ and assume $t := \lambda + \lambda^{-1}$. By Lemma~\ref{lem4319}, we have
\begin{equation}
    \sum_{i=0}^{\lfloor n/2 \rfloor}\hat{g_i}t^i = \prod_{i=1}^{\lfloor n/2 \rfloor}(t+(\mathcal K_i+\mathcal K_i^{-1}))
\end{equation}
where $\hat g_i \in \mathcal{O}(\mathcal A_n)$ as each $g_i \in \mathcal{O}(\mathcal A_n)$. The equation above implies that the elementary symmetric functions in $(\mathcal K_i+\mathcal K_i^{-1})$ can be generated by formal geodesic functions. For even $n$, the same argument applies. 

According to Corollary~\ref{cor354}, the Weyl group orbit sums of Casimirs can be expressed in terms of formal geodesic functions. In particular, $h_l^W$ can be expressed in terms of formal geodesic functions. \end{proof}

Thus, we obtain the following theorem:
\begin{thm} \label{thm4321}
    $\mathcal{O}(\mathcal X_{|\mathcal A_n|})^W \subset \mathcal{O}(\mathcal A_n)$. In particular, this implies $\mathcal{O}(\mathcal X_{|\mathcal A_n|})^W = \mathcal{O}(\mathcal X_{|\mathcal A_n|}) \cap \mathcal{O}(\mathcal A_n)$. Furthermore, this is independent of the choice of square roots since each generator $h_l$ only possesses integral multidegrees.
\end{thm}

\begin{proof}
    Any element in $\mathcal{O}(\mathcal{X}_{|\mathcal{A}_n|})^W$ can be expressed as a finite linear combination of the elements $h_l^W$ (Proposition~\ref{prop4318}). 
    As established in Proposition~\ref{prop4320}, we have $h_l^W \in \mathcal{O}(\mathcal A_n)$. This completes the proof of the theorem.\end{proof}

In the following two remarks, we assume that \(\mathcal O(\mathcal X_{|\mathcal A_n|})\) is closed under the birational Weyl group action. For even \(n\), this is automatic, since each \(s_i^*\) is induced by a cluster transformation of the \(\mathcal A_n\)-quiver. For odd \(n\), the remaining issue is to show that the last reflection \(s_m^*\) maps \(\mathcal O(\mathcal X_{|\mathcal A_n|})\) to itself; we leave this problem for future work.

\begin{rem}\textit{(Description of $\mathcal O(\mathcal X_{|\mathcal A_n|})$ using Theorem~\ref{thm4321})} \label{rem4322}
        
Assume that \(\mathcal O(\mathcal X_{|\mathcal A_n|})\) is closed under the birational Weyl group action. This assumption is automatic for even \(n\). As an application of Theorem~\ref{thm4321}, we describe the regular function ring $\mathcal{O}(\mathcal{X}_{|\mathcal{A}_n|})$. Consider the subring $\mathcal{R} \subset \mathcal{O}(\mathcal{X}_{|\mathcal{A}_n|})$ defined by
$$\mathcal{R} := \mathcal{O}(\mathcal{X}_{|\mathcal{A}_n|})^W[\mathcal{K}_1^{\pm 1}, \dots, \mathcal{K}_m^{\pm 1}].$$
Because the Weyl group action is faithful (in particular, it acts faithfully on the set of Casimirs $\mathcal{K}_i$), we have
$$[\operatorname{Frac}(\mathcal{O}(\mathcal{X}_{|\mathcal{A}_n|})) : \operatorname{Frac}(\mathcal{O}(\mathcal{X}_{|\mathcal{A}_n|})^W)] = |W_n|.$$
By Proposition~\ref{prop353}, the field extension $\operatorname{Frac}(\mathcal{R})$ over $\operatorname{Frac}(\mathcal{O}(\mathcal{X}_{|\mathcal{A}_n|})^W)$ also has degree $|W_n|$. Given the natural inclusion $\operatorname{Frac}(\mathcal{R}) \subseteq \operatorname{Frac}(\mathcal{O}(\mathcal{X}_{|\mathcal{A}_n|}))$, it follows that
$$
\operatorname{Frac}(\mathcal{R}) = \operatorname{Frac}(\mathcal{O}(\mathcal{X}_{|\mathcal{A}_n|})).
$$
This equality implies that the ring $\mathcal{O}(\mathcal{X}_{|\mathcal{A}_n|})$ is \textbf{generically} generated by the formal geodesic functions and the Casimirs $\mathcal{K}_i$.
\end{rem}

\begin{rem} \label{rem4323}
\textit{(Remark~\ref{rem423} via Theorem~\ref{thm4321})} 

Assume that \(\mathcal O(\mathcal X_{|\mathcal A_n|})\) is closed under the
birational Weyl group action. This assumption is automatic for even \(n\). We apply Theorem~\ref{thm4321} to address the problem in Remark~\ref{rem423}: we seek to determine the number of generic solutions $(X_{i,j})$ satisfying $\mathbb{A}_{p,q}(X_{i,j}) = b_{p,q}$, where each $\mathbb{A}_{p,q}(X_{i,j})$ is a formal geodesic function and $(b_{p,q})$ is a fixed set of complex numbers.

As a first step, we investigate the number of solutions for $(\sqrt{X_{i,j}})$. We define $\mathbb{C}(\sqrt{\mathcal{X}})$ and $\mathbb{C}(\mathcal{X})$ as the fields of rational functions in $\sqrt{X_{i,j}}$ and $X_{i,j}$, respectively, and $\mathbb{C}(\mathcal{A}_n)$ as the field generated by the formal geodesic functions $\mathbb{A}_{i,j}$. 

Since any global regular function on the cluster variety $\mathcal{X}_{|\mathcal{A}_n|}$ can be expressed as a rational function in the initial cluster variables $(X_{i,j})$, we have $\operatorname{Frac}(\mathcal{O}(\mathcal{X}_{|\mathcal{A}_n|})) \subset \mathbb{C}(\mathcal{X})$. The inclusion $\mathbb{C}(\mathcal{X}) \subset \mathbb{C}(\sqrt{\mathcal{X}})$ holds trivially because $X_{i,j} = (\sqrt{X_{i,j}})^2$. 

Furthermore, since the formal geodesic functions $\mathbb{A}_{i,j}$ are expressed as rational functions in $\sqrt{X_{i,j}}$, the field they generate is naturally contained in $\mathbb{C}(\sqrt{\mathcal{X}})$, yielding $\mathbb{C}(\mathcal{A}_n) \subset \mathbb{C}(\sqrt{\mathcal{X}})$. We note that Theorem~\ref{thm4321} implies $\operatorname{Frac}(\mathcal{O}(\mathcal{X}_{|\mathcal{A}_n|}))^W \subset \mathbb{C}(\mathcal{A}_n)$.

Then, the number of solutions for $(\sqrt{X_{i,j}})$ is equal to the degree of the field extension
$$[\mathbb{C}(\sqrt{\mathcal{X}}) : \mathbb{C}(\mathcal{A}_n)].$$

To compute this degree, we consider two chains of field extensions starting from $\operatorname{Frac}(\mathcal{O}(\mathcal{X}_{|\mathcal{A}_n|})^W)$, the invariant field of fractions of the regular function ring $\mathcal{O}(\mathcal{X}_{|\mathcal{A}_n|})$ on the cluster Poisson variety under the Weyl group action.

Consider the following chain of extensions:
$$\operatorname{Frac}(\mathcal{O}(\mathcal{X}_{|\mathcal{A}_n|})^W) = \operatorname{Frac}(\mathcal{O}(\mathcal{X}_{|\mathcal{A}_n|}))^W \subset \operatorname{Frac}(\mathcal{O}(\mathcal{X}_{|\mathcal{A}_n|})) \subset \mathbb{C}(\mathcal{X}) \subset \mathbb{C}(\sqrt{\mathcal{X}}).$$
For this extension, the faithful action of the Weyl group $W_n$ yields a degree of $|W_n|$. Let $d = [\mathbb{C}(\mathcal{X}) : \operatorname{Frac}(\mathcal{O}(\mathcal{X}_{|\mathcal{A}_n|}))]$ denote the degree of the second extension. For the third extension, we define a group $G$ acting on $\mathbb{C}(\sqrt{\mathcal{X}})$, generated by the deck transformations $\sigma_{i,j}$ defined as
$$
\sigma_{i,j}(\sqrt{X_{k,l}}) = 
\begin{cases} 
    -\sqrt{X_{i,j}} & \text{if } (k,l) = (i,j), \\ 
    \sqrt{X_{k,l}} & \text{if } (k,l) \neq (i,j).
\end{cases}
$$
Since there are $n(n-1)/2$ vertices in the $\mathcal{A}_n$-quiver, the order of this group is $|G| = 2^{n(n-1)/2}$. Because the variables $\sqrt{X_{i,j}}$ are algebraically independent of each other, the degree of this extension is exactly $2^{n(n-1)/2} = |G|$. Therefore, the total degree of this chain is:
$$[\mathbb{C}(\sqrt{\mathcal{X}}) : \operatorname{Frac}(\mathcal{O}(\mathcal{X}_{|\mathcal{A}_n|})^W)] = |W_n| \cdot d \cdot |G|.$$

Next, consider the second chain of field extensions:
$$\operatorname{Frac}(\mathcal{O}(\mathcal{X}_{|\mathcal{A}_n|})^W) \subset \mathbb{C}(\mathcal{A}_n) \subset \mathbb{C}(\sqrt{\mathcal{X}}).$$
By Theorem~\ref{thm4321}, we deduce that $\mathbb{C}[\mathcal{A}_n]^G = \mathcal{O}(\mathcal{X}_{|\mathcal{A}_n|})^W$, where $\mathbb{C}[\mathcal{A}_n]$ is the ring generated by formal geodesic functions $\mathbb{A}_{i,j}$. This identity implies $\mathbb{C}(\mathcal{A}_n)^G=\operatorname{Frac}(\mathbb{C}[\mathcal{A}_n]^G) = \operatorname{Frac}(\mathcal{O}(\mathcal{X}_{|\mathcal{A}_n|})^W)$.

The action of $G$ might not be faithful on $\mathbb{C}(\mathcal{A}_n)$, so the degree of the first extension in this chain is $|G|/|\ker G|$ (where $\ker G$ denotes the kernel of the action on $\mathbb{C}(\mathcal{A}_n)$). Hence, the total degree of this second chain is:
$$[\mathbb{C}(\sqrt{\mathcal{X}}) : \operatorname{Frac}(\mathcal{O}(\mathcal{X}_{|\mathcal{A}_n|})^W)] = \frac{|G|}{|\ker G|} \cdot [\mathbb{C}(\sqrt{\mathcal{X}}) : \mathbb{C}(\mathcal{A}_n)].$$

Since the total degree of the field extension must be the same regardless of the choice of a chain, we can equate the two results. Consequently, we obtain:
$$[\mathbb{C}(\sqrt{\mathcal{X}}) : \mathbb{C}(\mathcal{A}_n)] = |W_n| \cdot d \cdot |\ker G|.$$

This degree corresponds to the number of solutions for $(\sqrt{X_{i,j}})$. Since there are exactly $|\ker G|$ distinct solutions for $(\sqrt{X_{i,j}})$ that correspond to the same $(X_{i,j})$, we conclude that, under the above assumption, there are exactly
$$\frac{|W_n| \cdot d \cdot |\ker G|}{|\ker G|} = |W_n| \cdot d$$
distinct solutions for $(X_{i,j})$ satisfying the given system of equations. \end{rem}

\begin{conj} \label{conj4324}
We conjecture that $d = [\mathbb{C}(\mathcal{X}) : \operatorname{Frac}(\mathcal{O}(\mathcal{X}_{|\mathcal{A}_n|}))] = 1$, which would imply that the system of equations in Remark~\ref{rem423} has exactly $|W_n|$ distinct solutions. For $n=3$, we have verified that $d=1$ via direct calculation.

It is worth noting that proving the finiteness of $d$ is highly non-trivial. Because cluster $\mathcal{X}$-variables do not generally possess the Laurent phenomenon, the regular function ring $\mathcal{O}(\mathcal{X}_{|\mathcal{A}_n|})$ might be small. However, in our case, the finiteness of $d$ is guaranteed. This is because there exists a family of elements $\{h_l \mid l \in \mathbb{Z}^{n(n-1)/2}\} \subset \mathcal{O}(\mathcal{X}_{|\mathcal{A}_n|})$ generating a subfield whose transcendence degree over $\mathbb{C}$ is exactly $n(n-1)/2$. Since this matches the transcendence degree of $\mathbb{C}(\mathcal{X})$ over $\mathbb{C}$, the field extension must be finite.\end{conj}

\subsection{Weyl Group Invariants in $\mathcal{O}(\sqrt{\mathcal X_{|\mathcal A_n|}})$}
As a natural generalization of Theorem~\ref{thm4321}, we introduce the ring of functions $\mathcal{O}(\sqrt{\mathcal{X}_{|\mathcal{A}_n|}})$, defined as the ring of universal Laurent polynomials in the square roots of the cluster variables. By Theorem~\ref{thm416}, we naturally obtain the inclusion $\mathcal{O}(\mathcal{A}_n) \subset \mathcal{O}(\sqrt{\mathcal{X}_{|\mathcal{A}_n|}})$. Consequently, the inclusion $\mathcal{O}(\mathcal{X}_{|\mathcal{A}_n|}) \subset \mathcal{O}(\sqrt{\mathcal{X}_{|\mathcal{A}_n|}})$ follows immediately. 

To investigate Weyl group invariants within $\mathcal{O}(\sqrt{\mathcal{X}_{|\mathcal{A}_n|}})$, we temporarily assume throughout this section that all variables take positive real values. This assumption is necessary because Theorem~\ref{thm422} depends on the choice of square roots; restricting to positive values eliminates potential complications arising from sign ambiguity.

\begin{lem} \label{prop:block_sum_evaluation}
    Let $m = \lfloor n/2 \rfloor$. We have
    \begin{equation} \label{4.18}
        \begin{cases}
            \sum_{k=1}^m \mathbb{B}_{k,1} + \sum_{k=1}^{m-1} \mathbb{B}_{k,k+1} = \operatorname{val}\left( (X_{1,2})^{-2} \right) & \text{if } n \text{ is even}, \\[5pt]
            \sum_{k=1}^m \mathbb{B}_{k,1} + \sum_{k=1}^m \mathbb{B}_{k,k+1} = \operatorname{val}\left( (X_{1,2})^{-2} \right) & \text{if } n \text{ is odd}.
        \end{cases}
    \end{equation}
\end{lem}

\begin{proof}
    By the property of $\operatorname{val}$, the sum of column vectors corresponds to the multidegree of the product of their respective monomials. Let $P_1$ and $P_2$ denote the products of the monomials corresponding to first and second sums in \ref{4.18} respectively. We compute $\operatorname{val}(P_1 \cdot P_2)$ for both cases.

    \vspace{2mm}
    \noindent \textbf{Case 1: $n$ is even.} We first compute $P_1$, which corresponds to $\sum_{k=1}^m \mathbb{B}_{k,1}$. Expanding the product induces:
    \begin{equation*}
        P_1 = \frac{X_{2,2}}{X_{1,1}X_{1,2}} \cdot \left( \prod_{k=2}^{m-1} \frac{X_{k+1,2} X_{k-1,1}}{X_{k,1} X_{k,2}} \right) \cdot \frac{X_{m-1, m+1} X_{m-1,1}}{X_{m,1} X_{m,2}}.
    \end{equation*}
    In the numerator, the terms $X_{2,2} \cdots X_{m,2}$ and $X_{1,1} \cdots X_{m-1,1}$ cancel with the corresponding terms in the denominator. This leaves only $X_{m-1,m+1}$ in the numerator and $X_{1,2}$ and $X_{m,1}$ in the denominator:
    \begin{equation*} 
        P_1 = \frac{X_{m-1, m+1}}{X_{1,2} X_{m,1}}.
    \end{equation*}
    Next, we compute $P_2$, corresponding to $\sum_{k=1}^{m-1} \mathbb{B}_{k,k+1}$:
    \begin{equation*}
        P_2 = \frac{X_{2,3}}{X_{1,2}X_{1,3}} \cdot \left( \prod_{k=2}^{m-1} \frac{X_{k+1,k+2} X_{k-1,k+1}}{X_{k,k+1} X_{k,k+2}} \right).
    \end{equation*}
    After a similar cancellation, we obtain:
    \begin{equation*} 
        P_2 = \frac{X_{m, m+1}}{X_{1,2} X_{m-1, m+1}}.
    \end{equation*}
    As $X_{m, m+1} = X_{m, 1}$, we have
    \begin{equation*}
        P_1 \cdot P_2 = \left( \frac{X_{m-1, m+1}}{X_{1,2} X_{m,1}} \right) \cdot \left( \frac{X_{m, m+1}}{X_{1,2} X_{m-1, m+1}} \right) = \frac{X_{m, m+1}}{X_{1,2}^2 X_{m,1}} = \frac{1}{X_{1,2}^2}.
    \end{equation*}
    
    \vspace{2mm}
    \noindent \textbf{Case 2: $n$ is odd.} We again compute $P_1$ for $\sum_{k=1}^m \mathbb{B}_{k,1}$:
    \begin{equation*}
        P_1 = \frac{X_{2,2}}{X_{1,1}X_{1,2}} \cdot \left( \prod_{k=2}^{m-1} \frac{X_{k+1,2} X_{k-1,1}}{X_{k,1} X_{k,2}} \right) \cdot \frac{X_{m, m+2} X_{m-1,1}}{X_{m,1} X_{m,2}}.
    \end{equation*}
    This simplifies to:
    \begin{equation*}
        P_1 = \frac{X_{m, m+2}}{X_{1,2} X_{m,1}}.
    \end{equation*}
    Next, we compute $P_2$ for $\sum_{k=1}^m \mathbb{B}_{k,k+1}$, which now includes the $k=m$ term:
    \begin{equation*}
        P_2 = \frac{X_{2,3}}{X_{1,2}X_{1,3}} \cdot \left( \prod_{k=2}^{m-1} \frac{X_{k+1,k+2} X_{k-1,k+1}}{X_{k,k+1} X_{k,k+2}} \right) \cdot \frac{X_{m, 2m+2} X_{m-1, m+1}}{X_{m, m+1} X_{m, m+2}}.
    \end{equation*}
    As $X_{m, 2m+2} = X_{m, 1}$, we have
    \begin{equation*}
        P_2 = \frac{X_{m, 1}}{X_{1,2} X_{m, m+2}}.
    \end{equation*}
    Multiplying the two products together, we get
    \begin{equation*}
        P_1 \cdot P_2 = \left( \frac{X_{m, m+2}}{X_{1,2} X_{m,1}} \right) \cdot \left( \frac{X_{m, 1}}{X_{1,2} X_{m, m+2}} \right) = \frac{1}{X_{1,2}^2} = X_{1,2}^{-2}.
    \end{equation*}
    This completes the proof. \end{proof}

\begin{defn}
    A half-integer lattice point $l \in (\frac{1}{2}\mathbb{Z})^{n(n-1)/2}$ is said to be \textit{admissible} if the components of $2L_i(l)$ (Definition~\ref{defn4312}) are congruent modulo $2$ for each $i$. Note that any integer lattice point $l \in \mathbb{Z}^{n(n-1)/2}$ is admissible.
\end{defn} 

Let $l$ be an admissible half-integer lattice point and assume $n$ is even. By applying an inner product on \ref{4.18} with $l$, we have ($\mathbb{B}_{k,r}^T \cdot l = [L_k(l)]_r$)
\begin{equation}
    \sum_{k=1}^m [L_k(l)]_1 + \sum_{k=1}^{m-1} [L_k(l)]_{k+1} = \operatorname{val}((X_{1,2})^{-2})\cdot l.
\end{equation}
By multiplying both sides by $2$ and using the condition of the admissible, we get $2[L_m(l)]_1 \equiv 0 \pmod 2$. Similarly, for odd $n$, we have $2[L_m(l)]_1 \equiv 2[L_m(l)]_{m+1} \pmod 2$. 

By cyclic symmetry and the fact that $n = 2m+1$ is relatively prime to $m$, the components of $2L_m(l)$ satisfy the following relations depending on the parity of $n$:
\begin{equation} \label{4.20}
    2 \big[ L_m(l) \big]_k \equiv 
    \begin{cases}
        0 \pmod 2 & \text{if } n \text{ is even}, \\[6pt]
        2 \big[ L_m(l) \big]_i \pmod 2 & \text{if } n \text{ is odd (for all } i).
    \end{cases}
\end{equation}

In other words, to determine whether $l$ is admissible, it suffices to check the components of $L_i(l)$ for $i \le m-1$. Next, assume that $l$ is admissible and consider the following equation:
\begin{equation} \label{4.21}
    \begin{bmatrix}
        \mathbf{p}_1 \\
        \vdots \\
        \mathbf{p}_i \\
        \vdots \\
        \mathbf{p}_{m-1} \\
        \mathbf{p}_{m}
    \end{bmatrix} 
    = \mathbb{B}_{\text{deg}}^T 
    \begin{bmatrix}
        l_1 - q_1\mathbf{1}_1 \\
        \vdots \\
        l_i -\sum_{j=1}^i q_j\mathbf{1}_i \\
        \vdots \\
        l_{m-1} -\sum_{j=1}^{m-1} q_j\mathbf{1}_{m-1} \\
        l_{m} -\sum_{j=1}^{m} q_j\mathbf{1}_{m}
    \end{bmatrix} 
    = 
    \begin{bmatrix}
        L_1(l) + (q_1-q_2)\mathbf{1}_1 \\
        \vdots \\
        L_i(l) + (q_i-q_{i+1})\mathbf{1}_i \\
        \vdots \\
        L_{m-1}(l) + (q_{m-1}-q_{m})\mathbf{1}_{m-1} \\
        L_{m}(l) + (\text{par}(n)q_{m})\mathbf{1}_{m}
    \end{bmatrix}
\end{equation}

As previously shown, $2L_m(l) \equiv 0 \pmod 2$ when $n$ is even, and $\text{par}(n) = 1$ when $n$ is odd. Consequently, there exist $\{q_i\}_{i=1}^m$ such that each $\mathbf{p}_i$ becomes a nonnegative integer vector. This allows us to extend the definition of $h_l$ (Definition~\ref{defn4312}) to any admissible $l$. Moreover, by choosing the minimal half-integers $\{q_i\}_{i=1}^m$ satisfying (\ref{4.9}), we can ensure that the set $\{h_l \mid l \text{ is admissible}\}$ remains closed under the Weyl group action.

\begin{exmp}
    \textit{(n=4 example)}

    Consider the case for $n=4$. Let us revisit the following linear system:
\begin{equation}
\begin{bmatrix}
p_{3,4} \\
p_{1,4} \\
p_{1,2} \\
p_{2,3} \\
p_{2,4} \\
p_{1,3}
\end{bmatrix}  = \begin{bmatrix}
-1 &-1  &0  &0  &0  &1  \\
0 &-1  &-1  &0  &1  &0 \\
0 &0  &-1  &-1  &0  &1  \\
-1 &0  &0  &-1  &1  &0  \\
1 &0  &1  &0  &-1  &-1  \\
0 &1  &0  &1  &-1  &-1 
\end{bmatrix}\begin{bmatrix}
l_{1,1} - q_1\\
l_{1,2} - q_1\\
l_{1,3} - q_1\\
l_{1,4} - q_1\\
l_{2,1} - q_1 - q_2\\
l_{2,2} - q_1 - q_2
\end{bmatrix} = \begin{bmatrix}
[L_1(l)]_1 + q_1 - q_2  \\
[L_1(l)]_2 + q_1 - q_2 \\
[L_1(l)]_3 + q_1 - q_2 \\
[L_1(l)]_4 + q_1 - q_2 \\
[L_2(l)]_1 + 2q_2\\
[L_2(l)]_2 + 2q_2
\end{bmatrix}
\end{equation}
    Let $l$ be an admissible half-integer lattice point. By definition, the components $2[L_1(l)]_i$ are all congruent modulo $2$. Consequently, there exists $q_1 - q_2 \in \frac{1}{2}\mathbb{Z}$ such that each $[L_1(l)]_i + q_1 - q_2$ is a nonnegative integer. Similarly, since the components $2[L_2(l)]_i$ are congruent to $0$ modulo $2$ (\ref{4.20}), there exists $q_2 \in \frac{1}{2}\mathbb{Z}$ such that each $[L_2(l)]_i + 2q_2$ is a nonnegative integer.
    
    For example, consider $l = (-\frac{1}{2}, -\frac{1}{2}, -\frac{1}{2}, -\frac{1}{2}, -\frac{3}{2}, -\frac{1}{2})$. By definition, we have $L_1(l) = [\frac{1}{2}, -\frac{1}{2}, \frac{1}{2}, -\frac{1}{2}]^T$ and $L_2(l) = [1, 1]^T$. Hence, we have $q_2 = -1/2$ and $q_1 - q_2 = 1/2$, which yields $q_1 = 0$ (we choose the minimal half-integers $q_i$ satisfying \ref{4.9}). Consequently, we obtain the exact expression:
$$h_l = \mathcal{K}_1^0 \mathcal{K}_2^{-1/2} \mathbb{A}_{3,4}^1 \mathbb{A}_{1,4}^0 \mathbb{A}_{1,2}^1 \mathbb{A}_{2,3}^0 \mathbb{A}_{2,4}^0 \mathbb{A}_{1,3}^0.$$
    \end{exmp}

\begin{prop} \label{prop:algebraic_mutation_exponent}
    Assume $n \ge 4$. Consider a monomial $M = \prod_{p,q} X_{p,q}^{x_{p,q}}$ in the cluster variables, and let $\mathbf{x} = (x_{p,q})$ be $\operatorname{val}(M)$. Suppose the birational Weyl group generator $s_i^* = \tau_i^*$ (for $i < \lfloor n/2 \rfloor$) acts on $M$ by:
    \begin{equation}
        \tau_i^* (M) = \left( \prod_{p,q} X_{p,q}^{x_{p,q}} \right) \left( \prod_{j=1}^{N_i} Y_{i,j}^{y_{i,j}} \right).
    \end{equation}
    Then we have:
    \begin{equation}
        \mathbf{y}_i = \big[ \mathbb{B}_{\deg}^T \mathbf{x} \big]_i
    \end{equation}
    where the right-hand side denotes the $i$th block component of $\mathbb{B}_{\deg}^T \mathbf{x}$, meaning $y_{i,j} = \big[ L_i(\mathbf{x}) \big]_j$.
\end{prop}

\begin{proof}
    Since the case $n=4$ can be verified by direct computation, we assume $n \ge 5$. 
    
    As $\tau_i^*$ is an algebra homomorphism, its action on the monomial $M$ distributes over the product: $\tau_i^*(M) = \prod_{p,q} \big( \tau_i^*(X_{p,q}) \big)^{x_{p,q}}$. To determine the exact exponent $y_{i,j}$ of a specific variable $Y_{i,j}$, we collect the contributions from the mutated variables $\tau_i^*(X_{p,q})$. 

    \vspace{2mm}
    \noindent \textbf{Case 1: $i = 1$.} \\
    According to the birational action of $\tau_1^*$, the mutations that produce a factor of $Y_{1,j}$ are:
    \begin{align*}
        \tau_1^*(X_{2, j+1}) &= X_{2, j+1} Y_{1,j}, \\
        \tau_1^*(X_{1, j}) &= X_{1, j} (Y_{1,j})^{-1} (Y_{1,j-1})^{-1}, \\
        \tau_1^*(X_{1, j+1}) &= X_{1, j+1} (Y_{1,j+1})^{-1} (Y_{1,j})^{-1}.
    \end{align*}
    Multiplying these contributions yields the total exponent:
    \begin{equation} \label{eq:y_1j}
        y_{1,j} = x_{2, j+1} - x_{1, j} - x_{1, j+1}.
    \end{equation}
    Recall from Definition~\ref{defn436} that the $j$th column of the first block of $\mathbb{B}_{\deg}$ is given by $\mathbb{B}_{1,j} = \operatorname{val}\left( \frac{X_{2, j+1}}{X_{1, j} X_{1, j+1}} \right)$. The inner product of this column vector with $\mathbf{x}$ exactly computes $x_{2, j+1} - x_{1, j} - x_{1, j+1}$, which agrees with equation \eqref{eq:y_1j}.

    \vspace{2mm}
    \noindent \textbf{Case 2: $1 < i < \lfloor n/2 \rfloor$.} \\
    The variables whose mutations contribute to the exponent of $Y_{i,j}$ are given by:
    \begin{align*}
        \tau_i^*(X_{i-1, j}) &= X_{i-1, j} Y_{i,j}, \\
        \tau_i^*(X_{i+1, j+1}) &= X_{i+1, j+1} Y_{i,j}, \\
        \tau_i^*(X_{i, j}) &= X_{i, j} (Y_{i,j})^{-1} (Y_{i,j-1})^{-1}, \\
        \tau_i^*(X_{i, j+1}) &= X_{i, j+1} (Y_{i,j+1})^{-1} (Y_{i,j})^{-1}.
    \end{align*}
    By combining these, the total exponent $y_{i,j}$ is:
    \begin{equation} \label{eq:y_ij}
        y_{i,j} = x_{i-1, j} + x_{i+1, j+1} - x_{i, j} - x_{i, j+1}.
    \end{equation}
    Again, Definition~\ref{defn436} tells $\mathbb{B}_{i,j} = \operatorname{val}\left( \frac{X_{i-1, j} X_{i+1, j+1}}{X_{i, j} X_{i, j+1}} \right)$ for $i > 1$. The inner product of $\mathbb{B}_{i,j}$ with $\mathbf{x}$ precisely yields the sum in equation \eqref{eq:y_ij}.

    \vspace{2mm}
    The $j$th entry of the $i$th block of the matrix-vector product $\mathbb{B}_{\deg}^T \mathbf{x}$ is exactly the corresponding inner product. We conclude that $y_{i,j} = \big[ L_i(\mathbf{x}) \big]_j$, which completes the proof.
\end{proof}

\begin{prop} \label{prop:admissible_multidegree}
    If $f \in \mathcal{O}(\sqrt{\mathcal{X}_{|\mathcal{A}_n|}})^W$, then its multidegree must be admissible. Note that $\mathcal{O}(\sqrt{\mathcal{X}_{|\mathcal{A}_n|}})^W$ is the Poisson subalgebra of Weyl group invariants within $\mathcal{O}(\sqrt{\mathcal{X}_{|\mathcal{A}_n|}})$.
\end{prop}

\begin{proof}
    Assume $n \ge 4$. We can express $f$ as a finite sum $f = \sum_i f_i \sqrt{M_i}$, where each $f_i$ is a Laurent polynomial and each $M_i$ is a monomial. We assume that this representation has a minimal number of terms. 
    
    Since the integer multidegrees of the Laurent polynomials $f_i$ are admissible and the sum of an admissible multidegree and an integer multidegree remains admissible, it suffices to examine the multidegrees of the square roots $\sqrt{M_i}$.

    Because $f$ is invariant under the Weyl group action, we have $\tau_k^*(f) = f$ for any generator $\tau_k^*$ (where $k < \lfloor n/2 \rfloor$). Applying $\tau_k^*$ to the monomial part yields:
    \begin{equation*}
        \tau_k^*(\sqrt{M_i}) = \sqrt{M_i' \prod_j Y_{k,j}^{y_{k,j}}},
    \end{equation*}
    where $M_i'$ is a monomial in the variables $X_{p,q}$, and $y_{k,j}$ are the new exponents. 
    
    Recall that $Y_{k,j} = X_{k,j} F_{k,j}^{-1} F_{k,j-1}$ (Theorem~\ref{thm333}). Suppose $y_{k,j} \not\equiv y_{k,l} \pmod 2$. Then the term $\tau_k^*(\sqrt{M_i})$ introduces the square root of the polynomial $F_{k,j}$. Since the $F_{k,j}$ are irreducible, terms involving these square roots cannot cancel unless their sum vanishes. By the involutive nature of $\tau_k^*$, this vanishing implies that a portion of the sum $\sum_i f_i \sqrt{M_i}$ is zero, which contradicts the assumed minimality of the expression. Hence, this imposes the parity condition:
    $$y_{k,j} \equiv y_{k,i} \pmod 2.$$
    By the previous proposition, the vector of these exponents is precisely given by $\mathbf{y}_k = \big[ \mathbb{B}_{\deg}^T \mathbf{x} \big]_k$. This implies that the multidegree of $\sqrt{M_i}$ must be admissible; as we have shown, it suffices to check $k \le m - 1$ (\ref{4.20}).
    
    For the case $n=3$, $f$ can be explicitly expressed as$$f = f_0 + f_1\sqrt{X_{1,1}} + f_2\sqrt{X_{1,2}} + f_3\sqrt{X_{1,3}} + f_4\sqrt{\frac{1}{X_{1,1}X_{1,2}}} + f_5\sqrt{\frac{1}{X_{1,2}X_{1,3}}} + f_6\sqrt{\frac{1}{X_{1,3}X_{1,1}}} + f_7\sqrt{X_{1,1}X_{1,2}X_{1,3}}$$where the $f_i$ are Laurent polynomials. A straightforward calculation verifies that the multidegree of each square root term is admissible.\end{proof}

\begin{thm} \label{thm446}
    Any $f \in \mathcal{O}(\sqrt{\mathcal{X}_{|\mathcal{A}_n|}})^W$ can be expressed as a finite linear combination of the $h_l^W$. This implies $\mathcal{O}(\sqrt{\mathcal{X}_{|\mathcal{A}_n|}})^W$ is generated by the formal geodesic functions $\mathbb{A}_{i,j}$ and elementary symmetric functions of $\{\sqrt{\mathcal{K}_i} + 1/(\sqrt{\mathcal{K}_i})\}_{i=1}^m$.
\end{thm}
\begin{proof}
    Since each multidegree of $f$ is admissible, we can apply the same logic as in the previous section to write $f = \sum_l c_l h_l$. Because $f$ is invariant under the action, this becomes $f = \sum_l c_l h_l^W$. Furthermore, each $h_l^W$ has a unique minimal multidegree consisting of nonpositive components (Similar to Lemma~\ref{lem4317}). This guarantees that the sum is finite, as there are only finitely many lattice points with nonpositive components that are greater than or equal to the minimal multidegrees of $f$. This reasoning is the same as in Proposition~\ref{prop4318}. Note that Weyl group orbit sums of square roots of Casimirs can be expressed in terms of elementary symmetric functions of $\{\sqrt{\mathcal{K}_i} + 1/(\sqrt{\mathcal{K}_i})\}_{i=1}^m$. \end{proof}

\begin{rem}\textit{(Weyl group invariants on the extended ring)}
    \begin{enumerate}
        \item Elementary symmetric functions of $\{\sqrt{\mathcal{K}_i} + 1/\sqrt{\mathcal{K}_i}\}_{i=1}^m$ are not generated by the formal geodesic functions in general. For instance, when $n=3$, the element $u = \sqrt{\mathcal{K}_1} + \frac{1}{\sqrt{\mathcal{K}_1}}$ cannot be generated by the geodesic functions. Indeed, its square satisfies the relation$$ \left(\sqrt{\mathcal{K}_1} + \frac{1}{\sqrt{\mathcal{K}_1}}\right)^2 = 4 - \mathbb{A}_{1,2}^2 - \mathbb{A}_{1,3}^2 - \mathbb{A}_{2,3}^2 + \mathbb{A}_{1,2}\mathbb{A}_{1,3}\mathbb{A}_{2,3}. $$Since the right-hand side is not a square in the ring of formal geodesic functions, $u$ itself cannot be expressed as a polynomial in $\mathbb{A}_{i,j}$.
        \item By Theorem~\ref{thm446}, we have the inclusion$$\mathcal{O}(\mathcal{A}_n) \subset \mathcal O\left(\sqrt{\mathcal{X}_{|\mathcal{A}_n|}}\right)^W,$$realizing $\mathcal O\left(\sqrt{\mathcal{X}_{|\mathcal{A}_n|}}\right)^W$ as a central extension of $\mathcal{O}(\mathcal{A}_n)$. In particular, these algebras are isomorphic at the level of leaves.
    \end{enumerate}
\end{rem}

Finally, we investigate the relationship between $\mathcal{O}(\mathcal{A}_n)$ and the Weyl group invariant subalgebra $\mathcal{O}\left(\sqrt{\mathcal{X}_{|\mathcal{A}_n|}}\right)^W$.

\begin{prop}
    Let $\mathcal{Z}(\mathcal{A}_n)$ denote the Poisson center of $\mathcal{O}(\mathcal{A}_n)$. When $n$ is odd, $\mathcal{Z}(\mathcal{A}_n)$ is generated by the elementary symmetric functions in the variables $\{\mathcal{K}_i + 1/(\mathcal{K}_i)\}_{i=1}^m$. When $n$ is even, the center is generated by these symmetric functions together with an additional Pfaffian generator, which is given by the product $\prod_{i=1}^m (\sqrt{\mathcal{K}_i} + 1/(\sqrt{\mathcal{K}_i}))$.
\end{prop}

\begin{proof}
    For odd $n$, the Poisson center $\mathcal{Z}(\mathcal{A}_n)$ is generated by the coefficients of the characteristic polynomial $\det(\mathbb{A} + \lambda \mathbb{A}^T)$ \cite{4}. By Proposition~\ref{prop4320}, the transformed coefficients $\hat{g}_i$ are generated by the elementary symmetric functions in $\{\mathcal{K}_i + 1/(\mathcal{K}_i)\}_{i=1}^m$. 
    
    Since the coefficients $g_i$ and $\hat{g}_i$ are related by an invertible linear transformation (specifically, taking the form of a lower triangular matrix whose diagonal entries are $1$), the $g_i$ are also generated by elementary symmetric functions in $\{\mathcal{K}_i + 1/(\mathcal{K}_i)\}_{i=1}^m$. This establishes the result for odd $n$.
    
    For even $n$, the center requires an additional Pfaffian generator associated with $\det(\mathbb{A} + \mathbb{A}^T)$\cite{4}. This element is precisely equal to the product $\prod_{i=1}^m (\sqrt{\mathcal{K}_i} + 1/(\sqrt{\mathcal{K}_i}))$, which completes the proof.
\end{proof}

\begin{thm} \label{thm449}
    Let $\Lambda_n(\mathcal{K})$ be the ring generated by the elementary symmetric functions of $\{\sqrt{\mathcal{K}_i} + 1/(\sqrt{\mathcal{K}_i})\}_{i=1}^m$. Then,
    $$\mathcal O(\sqrt{\mathcal X_{|\mathcal A_n|}})^W \simeq \mathcal O(\mathcal A_n) \otimes_{\mathcal Z(\mathcal A_n)} \Lambda_n(\mathcal{K}).$$ Furthermore, $\mathcal O(\sqrt{\mathcal X_{|\mathcal A_n|}})^W$ is a finite central extension of $\mathcal O(\mathcal A_n)$.
\end{thm}
\begin{proof}
    By Theorem~\ref{thm446}, we have the isomorphism:
    \begin{equation}
        \mathcal{O}(\sqrt{\mathcal{X}_{|\mathcal{A}_n|}})^W \simeq \mathcal{O}(\mathcal{A}_n) \otimes_{\mathcal{O}(\mathcal{A}_n) \cap \Lambda_n(\mathcal{K})} \Lambda_n(\mathcal{K}).
    \end{equation}
    Since $\mathcal{O}(\mathcal{A}_n) \cap \Lambda_n(\mathcal{K}) = \mathcal{Z}(\mathcal{A}_n)$ by the previous proposition (noting that both the elementary symmetric functions and the Pfaffian are contained in $\Lambda_n(\mathcal{K})$), we conclude:
    \begin{equation}
        \mathcal{O}(\sqrt{\mathcal{X}_{|\mathcal{A}_n|}})^W \simeq \mathcal{O}(\mathcal{A}_n) \otimes_{\mathcal{Z}(\mathcal{A}_n)} \Lambda_n(\mathcal{K}).
    \end{equation}
    Furthermore, the degree of $\mathcal{O}(\sqrt{\mathcal{X}_{|\mathcal{A}_n|}})^W$ over $\mathcal{O}(\mathcal{A}_n)$ is determined by the extension degree of $\Lambda_n(\mathcal{K})$ over $\mathcal{Z}(\mathcal{A}_n)$. This corresponds to the degree between the elementary symmetric functions of $\{\sqrt{\mathcal{K}_i} + 1/(\sqrt{\mathcal{K}_i})\}_{i=1}^m$ and those of $\{\mathcal{K}_i + 1/(\mathcal{K}_i)\}_{i=1}^m$. 

    This directly implies that the degree is $2^m$ for odd $n$ and $2^{m-1}$ for even $n$. In the even case, the degree is reduced because the Pfaffian element, which coincides with the product $\prod_{i=1}^m (\sqrt{\mathcal{K}_i} + 1/(\sqrt{\mathcal{K}_i}))$ is already in $\mathcal O(\mathcal A_n)$.
\end{proof}

\section{The Classical Image of the Embedding for $\imath$Quantum Groups} \label{Ch5}

In this section, we describe the image of the embedding from the $\imath$quantum group of type $\mathrm{AI}_n$ into the quantum cluster algebra associated with the $\Sigma_n$-quiver in the classical case. Because the $\Sigma_n$-quiver is a frozen extension of the $\mathcal{A}_{n+1}$-quiver, our framework can be applied to determine this image, resolving the classical case of the open problem posed by J. Song \cite{42}.

\subsection{$\imath$Quantum Group of Type $\mathrm{AI}_n$}
We introduce the $\imath$quantum group of type $\mathrm{AI}_n$. Our explanation is based on\cite{42}. Consider the Lie algebra $\mathfrak{g} = \mathfrak{sl}_{n+1}(\mathbb{C})$ with an associated Cartan matrix $(a_{ij})$. Here, $a_{ij}$ equals $2$ if $i = j$, equals $-1$ if $|i - j| = 1$, and equals $0$ if $|i - j| > 1$.

\begin{defn}
The algebra $\widetilde{U}_n$ is the unital $\mathbb Q(q^{1/2})$-algebra generated by $E_i, F_i, K_i^{\pm 1}, {K'_i}^{\pm 1}$ for $1 \leq i \leq n$ with the following relations for $1 \leq i, j \leq n$:
\begin{align*}
    & K_i K_i^{-1} = K_i^{-1} K_i = K'_i {K'_i}^{-1} = {K'_i}^{-1} K'_i = 1, \\
    & [K_i, K_j] = [K'_i, K'_j] = 0, \\
    & K_i E_j = q^{a_{ij}} E_j K_i, \quad K'_i E_j = q^{-a_{ij}} E_j K'_i, \\
    & K_i F_j = q^{-a_{ij}} F_j K_i, \quad K'_i F_j = q^{a_{ij}} F_j K'_i, \\
    & [E_i, F_j] = \delta_{ij}(q - q^{-1})(K'_i - K_i), \\
    & E_i^2 E_j - (q + q^{-1})E_i E_j E_i + E_j E_i^2 = 0 \quad (\text{if } |i - j| = 1), \\
    & F_i^2 F_j - (q + q^{-1})F_i F_j F_i + F_j F_i^2 = 0 \quad (\text{if } |i - j| = 1), \\
    & [E_i, E_j] = [F_i, F_j] = 0 \quad (\text{if } |i - j| > 1).
\end{align*}
\end{defn}

$\widetilde{U}_n$ is called the Drinfeld double quantum group associated with the Lie algebra $\mathfrak{g}$. It is a Hopf algebra with the coproduct given by:
\begin{align*}
    & \Delta(E_i) = E_i \otimes 1 + K_i \otimes E_i, \quad \Delta(K_i) = K_i \otimes K_i, \\
    & \Delta(F_i) = F_i \otimes K'_i + 1 \otimes F_i, \quad \Delta(K'_i) = K'_i \otimes K'_i,
\end{align*}
for $1 \leq i \leq n$.

The quantum group $U_n$ is the unital $\mathbb Q(q^{1/2})$-algebra with generators $E_i, F_i, K_i^{\pm 1}$ for $1 \leq i \leq n$ and with relations $K_iK_i' = 1$. There is a surjective algebra homomorphism, called the central reduction map of $\widetilde{U}_n$:
\[
    \pi: \widetilde{U}_n \to U_n,
\]
given by $E_i \mapsto E_i$, $F_i \mapsto F_i$, $K_i \mapsto K_i$, and $K'_i \mapsto K_i^{-1}$ for $1 \leq i \leq n$.

We introduce the $\imath$quantum group of type AI. It is a certain coideal subalgebra of the quantum group.

\begin{defn}
The algebra $\widetilde{U}^{\imath}_n$ is the unital $\mathbb Q(q^{1/2})$-subalgebra of $\widetilde{U}_n$, generated by elements $B_i, k_i^{\pm 1}$ which are given by
\[
    B_i = F_i - q^{-1}E_i K'_i, \quad k_i = K_i K'_i
\]
for $1 \leq i \leq n$.
\end{defn}

The algebra $\widetilde{U}^{\imath}_n$ is called the universal $\imath$quantum group of type $AI_n$. Let $\widetilde{U}^{\imath 0}_n$ be the unital $\mathbb Q(q^{1/2})$-subalgebra of $\widetilde{U}^{\imath}_n$ generated by $k_i^{\pm 1}$ for $1 \leq i \leq n$. Then $\widetilde{U}^{\imath 0}_n$ is a central subalgebra of $\widetilde{U}^{\imath}_n$. 

The algebra $\widetilde{U}^{\imath}_n$ is a right coideal subalgebra of $\widetilde{U}_n$; the coproduct of $\widetilde{U}_n$ restricts to:
\[
    \Delta: \widetilde{U}^{\imath}_n \to \widetilde{U}^{\imath}_n \otimes \widetilde{U}_n.
\]
\begin{prop} \label{prop513}
The algebra $\widetilde{U}^{\imath}_n$ is the unital $\mathbb Q(q^{1/2})$-algebra generated by $B_i, k_i^{\pm 1}$ with relations:
\begin{align*}
    & k_i k_i^{-1} = k_i^{-1} k_i = 1, \quad [k_i, k_j] = [k_i, B_j] = 0, \\
    & [B_i, B_j] = 0 \quad (\text{if } |i - j| > 1), \\
    & B_j B_i^2 - (q + q^{-1})B_i B_j B_i + B_i^2 B_j = (q - q^{-1})^2 B_j k_i \quad (\text{if } |i - j| = 1)
\end{align*}
for $1 \leq i \leq n$.
\end{prop}

The $\imath$quantum group $U^{\imath}_n$ is defined to be the $\mathbb Q(q^{1/2})$-subalgebra of $U_n$, generated by elements:
\[
    B_i = F_i + q^{-1}E_i K_i^{-1} \quad (\text{for } 1 \leq i \leq n).
\]
Naturally, there is a surjective algebra homomorphism:
\[
    \pi^{\imath}: \widetilde{U}^{\imath}_n \to U^{\imath}_n,
\]
which is called the central reduction of $\widetilde{U}^{\imath}_n$, given by:
\[
    B_i \mapsto B_i, \quad k_i \mapsto -1 \quad (\text{for } 1 \leq i \leq n).
\]
The kernel of $\pi^{\imath}$ is the two-sided ideal generated by $k_i + 1$ for $1 \leq i \leq n$. Note that $\pi^{\imath}$ is not the restriction of $\pi$. They only coincide after a twist of the generators.

\subsection{Cluster Realization of $\imath$Quantum Groups of Type $\mathrm{AI}_n$} \label{Ch5.2}

For the remainder of this section, we restrict our focus to the classical case, specifically viewing $\widetilde{U}^\imath_n$ and $U^\imath_n$ as Poisson algebras.

The cluster realization of the $\imath$quantum group of type $\mathrm{AI}_n$ is constructed using the $\Sigma_n$-quiver. This quiver is obtained either by appending frozen vertices to the $\mathcal{A}_{n+1}$-quiver, or by retaining the frozen vertices from the $SL_{n+1}$-quiver (Definition~\ref{defn316}). To begin this construction, we consider the $\mathcal{A}_{n+1}$-quiver:

\begin{figure}[H]
    \centering
    \begin{tikzcd}[scale cd=0.9,column sep = tiny, row sep = small]
	&&& {X_{1,3}} &&&&&&&&&&&& \\
	&& \textcolor{red}{{{X_{2,4}}}} && {X_{1,4}} &&&&&&&& {X_{1,3}} \\
	& \textcolor{red}{{{X_{2,2}}}} && \textcolor{red}{{{{{X_{2,5}}}}}} && {X_{1,5}} &&&&&& \textcolor{red}{{{X_{2,2}}}} && {X_{1,4}} \\
	{X_{1,2}} && \textcolor{red}{{{X_{2,3}}}} && \textcolor{red}{{{X_{2,1}}}} && {X_{1,1}} &&&& {X_{1,2}} && \textcolor{red}{{{X_{2,1}}}} && {X_{1,1}} \\
	& {X_{1,3}} && \textcolor{red}{{{X_{2,4}}}} && \textcolor{red}{{{X_{2,2}}}} && {X_{1,2}} &&&& {X_{1,3}} && \textcolor{red}{{{X_{2,2}}}} && {X_{1,2}}
	\arrow[from=1-4, to=2-5]
	\arrow[dashed, from=2-3, to=1-4]
	\arrow[from=2-3, to=3-4]
	\arrow[from=2-5, to=2-3]
	\arrow[from=2-5, to=3-6]
	\arrow[from=2-13, to=3-14]
	\arrow[dashed, from=3-2, to=2-3]
	\arrow[from=3-2, to=4-3]
	\arrow[from=3-4, to=2-5]
	\arrow[from=3-4, to=3-2]
	\arrow[from=3-4, to=4-5]
	\arrow[from=3-6, to=3-4]
	\arrow[from=3-6, to=4-7]
	\arrow[dashed, from=3-12, to=2-13]
	\arrow[from=3-12, to=4-13]
	\arrow[from=3-14, to=3-12]
	\arrow[from=3-14, to=4-15]
	\arrow[dashed, from=4-1, to=3-2]
	\arrow[from=4-1, to=5-2]
	\arrow[from=4-3, to=3-4]
	\arrow[from=4-3, to=4-1]
	\arrow[from=4-3, to=5-4]
	\arrow[from=4-5, to=3-6]
	\arrow[from=4-5, to=4-3]
	\arrow[from=4-5, to=5-6]
	\arrow[from=4-7, to=4-5]
	\arrow[from=4-7, to=5-8]
	\arrow[dashed, from=4-11, to=3-12]
	\arrow[from=4-11, to=5-12]
	\arrow[from=4-13, to=3-14]
	\arrow[from=4-13, to=4-11]
	\arrow[from=4-13, to=5-14]
	\arrow[from=4-15, to=4-13]
	\arrow[from=4-15, to=5-16]
	\arrow[from=5-2, to=4-3]
	\arrow[from=5-4, to=4-5]
	\arrow[dashed, from=5-4, to=5-2]
	\arrow[from=5-6, to=4-7]
	\arrow[dashed, from=5-6, to=5-4]
	\arrow[dashed, from=5-8, to=5-6]
	\arrow[from=5-12, to=4-13]
	\arrow[from=5-14, to=4-15]
	\arrow[dashed, from=5-14, to=5-12]
	\arrow[dashed, from=5-16, to=5-14]
\end{tikzcd}
    \caption{$\mathcal A_5$-quiver and $\mathcal A_4$-quiver.}
    \label{Fig25}
\end{figure}

To this $\mathcal{A}_{n+1}$-quiver, we add new frozen vertices $Z_i$, along with arrows from $Z_i$ to $X_{1,i}$ and from $X_{1,i+1}$ to $Z_i$ for $i \in \{1\} \cup \{3, \dots, n+1\}$. Furthermore, we introduce dashed arrows from $Z_i$ to $Z_{i+1}$, where we identify $Z_{n+2}$ with $Z_1$. This construction induces the following quiver:

\begin{figure}[H]
    \centering
    \begin{tikzcd}[scale cd=0.9,column sep = tiny, row sep = small]
	&&& {X_{1,3}} && \textcolor{blue}{{{{Z_3}}}} &&&&&&&&&&& \\
	&& \textcolor{red}{{{X_{2,4}}}} && {X_{1,4}} && \textcolor{blue}{{{{Z_4}}}} &&&&&& {X_{1,3}} && \textcolor{blue}{{{{Z_3}}}} \\
	& \textcolor{red}{{{X_{2,2}}}} && \textcolor{red}{{{{{X_{2,5}}}}}} && {X_{1,5}} && \textcolor{blue}{{{{Z_5}}}} &&&& \textcolor{red}{{{X_{1,2}}}} && {X_{1,4}} && \textcolor{blue}{{{{Z_4}}}} \\
	{X_{1,2}} && \textcolor{red}{{{X_{2,3}}}} && \textcolor{red}{{{X_{2,1}}}} && {X_{1,1}} && \textcolor{blue}{{{{Z_1}}}} && {X_{1,2}} && \textcolor{red}{{{X_{1,1}}}} && {X_{1,1}} && \textcolor{blue}{{{{Z_1}}}} \\
	& {X_{1,3}} && \textcolor{red}{{{X_{2,4}}}} && \textcolor{red}{{{X_{2,2}}}} && {X_{1,2}} &&&& {X_{1,3}} && \textcolor{red}{{{X_{1,2}}}} && {X_{1,2}}
	\arrow[from=1-4, to=2-5]
	\arrow[from=1-6, to=1-4]
	\arrow[dashed, from=1-6, to=2-7]
	\arrow[dashed, from=2-3, to=1-4]
	\arrow[from=2-3, to=3-4]
	\arrow[from=2-5, to=1-6]
	\arrow[from=2-5, to=2-3]
	\arrow[from=2-5, to=3-6]
	\arrow[from=2-7, to=2-5]
	\arrow[dashed, from=2-7, to=3-8]
	\arrow[from=2-13, to=3-14]
	\arrow[from=2-15, to=2-13]
	\arrow[dashed, from=2-15, to=3-16]
	\arrow[dashed, from=3-2, to=2-3]
	\arrow[from=3-2, to=4-3]
	\arrow[from=3-4, to=2-5]
	\arrow[from=3-4, to=3-2]
	\arrow[from=3-4, to=4-5]
	\arrow[from=3-6, to=2-7]
	\arrow[from=3-6, to=3-4]
	\arrow[from=3-6, to=4-7]
	\arrow[from=3-8, to=3-6]
	\arrow[dashed, from=3-8, to=4-9]
	\arrow[dashed, from=3-12, to=2-13]
	\arrow[from=3-12, to=4-13]
	\arrow[from=3-14, to=2-15]
	\arrow[from=3-14, to=3-12]
	\arrow[from=3-14, to=4-15]
	\arrow[from=3-16, to=3-14]
	\arrow[dashed, from=3-16, to=4-17]
	\arrow[dashed, from=4-1, to=3-2]
	\arrow[from=4-1, to=5-2]
	\arrow[from=4-3, to=3-4]
	\arrow[from=4-3, to=4-1]
	\arrow[from=4-3, to=5-4]
	\arrow[from=4-5, to=3-6]
	\arrow[from=4-5, to=4-3]
	\arrow[from=4-5, to=5-6]
	\arrow[from=4-7, to=3-8]
	\arrow[from=4-7, to=4-5]
	\arrow[from=4-7, to=5-8]
	\arrow[from=4-9, to=4-7]
	\arrow[dashed, from=4-11, to=3-12]
	\arrow[from=4-11, to=5-12]
	\arrow[from=4-13, to=3-14]
	\arrow[from=4-13, to=4-11]
	\arrow[from=4-13, to=5-14]
	\arrow[from=4-15, to=3-16]
	\arrow[from=4-15, to=4-13]
	\arrow[from=4-15, to=5-16]
	\arrow[from=4-17, to=4-15]
	\arrow[from=5-2, to=4-3]
	\arrow[from=5-4, to=4-5]
	\arrow[dashed, from=5-4, to=5-2]
	\arrow[from=5-6, to=4-7]
	\arrow[dashed, from=5-6, to=5-4]
	\arrow[from=5-8, to=4-9]
	\arrow[dashed, from=5-8, to=5-6]
	\arrow[from=5-12, to=4-13]
	\arrow[from=5-14, to=4-15]
	\arrow[dashed, from=5-14, to=5-12]
	\arrow[from=5-16, to=4-17]
	\arrow[dashed, from=5-16, to=5-14]
\end{tikzcd}
    \caption{$\Sigma_4$-quiver and $\Sigma_3$-quiver.}
    \label{Fig26}
\end{figure}

The resulting quiver is called the $\Sigma_n$-quiver. Because it naturally contains the $\mathcal{A}_{n+1}$-quiver as a subquiver, it induces a natural inclusion of their function rings: $\mathcal{O}(\mathcal{X}_{|\mathcal{A}_{n+1}|}) \subset \mathcal{O}(\mathcal{X}_{|\Sigma_n|})$. Indeed, the $\Sigma_n$-quiver differs from the $\mathcal{A}_{n+1}$-quiver solely by the addition of the frozen vertices $Z_i$.

\begin{thm}[\cite{42}] 
    There is a Poisson embedding 
    $$\iota: \widetilde{U}^{\imath}_n \to \mathcal O(\mathcal X_{|\Sigma_n|})$$
    such that
    $$B_i \mapsto (Z_{i+2})^{-1}\left(1+\sum_{j=1}^m \prod_{k=1}^j (X_{k,i+2})^{-1} + \left(\prod_{k=1}^m (X_{k,i+2})^{-1}\right)\left(\sum_{j=e}^{m-1} \prod_{k=e}^j (X_{m-k,m-k+i+2})^{-1}\right)\right)$$
    and
    $$k_i \mapsto -(Z_{i+2})^{-2}\left(\prod_{k=1}^m (X_{k,i+2})^{-1}\right)\left(\prod_{k=e}^{m-1} (X_{m-k,m-k+i+2})^{-1}\right)$$
    where $1 \le i \le n$, $e = \operatorname{par}(n+1)-1$, and $m = \lfloor (n+1) /2\rfloor$. By abuse of notation, we identify the generators $B_i$ and $k_i$ with their images under $\iota$ throughout the remainder of the paper.
    
    Note that, compared with the formulas in \cite{42}, we use the inverse variables $(X_{i,j})^{-1}$ and $(Z_k)^{-1}$ in place of $X_{i,j}$ and $Z_k$. This natural modification ensures that the embedding is universally Laurent while preserving the underlying quantum algebra relations in Proposition~\ref{prop513}.
\end{thm}

\begin{cor}\label{cor522}
The map $\iota$ induces a well-defined embedding $\iota: U^{\imath}_n \to \mathcal{O}(\mathcal{X}_{|\Sigma_n|}) / \mathcal{I}$, where $\mathcal{I}$ is the ideal generated by $(k_i)^{-1} + 1$ for $i = 1, \dots, n$.
\end{cor}

\begin{exmp} \label{exmp523}
    \textit{($n=3$ and $n=4$ examples)}
    
    For $n=3$, we have $n+1=4$ (even), so $\operatorname{par}(4)=2$, which gives $e=1$. Since $m = \lfloor 4/2 \rfloor = 2$, the Poisson embedding is given by (Note that the second component of $X_{i,j}$ is taken modulo $N_i$, where $N_i$ is the length of the $i$th cycle)
    \begin{align*}
        B_i &\mapsto (Z_{i+2})^{-1}\left(1 + (X_{1,i+2})^{-1} + (X_{1,i+2})^{-1}(X_{2,i+2})^{-1} + (X_{1,i+2})^{-1}(X_{2,i+2})^{-1}(X_{1,i+3})^{-1}\right), \\
        k_i &\mapsto -(Z_{i+2})^{-2} (X_{1,i+2})^{-1}(X_{2,i+2})^{-1}(X_{1,i+3})^{-1}.
    \end{align*}
    Thus, for $1 \le i \le 3$, the embedding is given by 
    \begin{align*}
        B_1 &\mapsto (Z_{3})^{-1}\left(1 + (X_{1,3})^{-1} + (X_{1,3})^{-1}(X_{2,1})^{-1} + (X_{1,3})^{-1}(X_{2,1})^{-1}(X_{1,4})^{-1}\right), \\
        k_1 &\mapsto -(Z_{3})^{-2} (X_{1,3})^{-1}(X_{2,1})^{-1}(X_{1,4})^{-1}, \\
        B_2 &\mapsto (Z_{4})^{-1}\left(1 + (X_{1,4})^{-1} + (X_{1,4})^{-1}(X_{2,2})^{-1} + (X_{1,4})^{-1}(X_{2,2})^{-1}(X_{1,1})^{-1}\right), \\
        k_2 &\mapsto -(Z_{4})^{-2} (X_{1,4})^{-1}(X_{2,2})^{-1}(X_{1,1})^{-1}, \\
        B_3 &\mapsto (Z_{1})^{-1}\left(1 + (X_{1,1})^{-1} + (X_{1,1})^{-1}(X_{2,1})^{-1} + (X_{1,1})^{-1}(X_{2,1})^{-1}(X_{1,2})^{-1}\right), \\
        k_3 &\mapsto -(Z_{1})^{-2} (X_{1,1})^{-1}(X_{2,1})^{-1}(X_{1,2})^{-1}.
    \end{align*}

    For $n=4$, we have $n+1=5$ (odd), so $\operatorname{par}(5)=1$, which gives $e=0$. Since $m = \lfloor 5/2 \rfloor = 2$, the Poisson embedding is given by
    \begin{align*}
B_i &\mapsto (Z_{i+2})^{-1}\left(1+(X_{1,i+2})^{-1}+(X_{1,i+2})^{-1}(X_{2,i+2})^{-1}+(X_{1,i+2})^{-1}(X_{2,i+2})^{-1}(X_{2,i+4})^{-1}\right.\\
&\hspace{3.2cm}\left.
+(X_{1,i+2})^{-1}(X_{2,i+2})^{-1}(X_{2,i+4})^{-1}(X_{1,i+3})^{-1}\right), \\
k_i &\mapsto -(Z_{i+2})^{-2}(X_{1,i+2})^{-1}(X_{2,i+2})^{-1}(X_{2,i+4})^{-1}(X_{1,i+3})^{-1}.
\end{align*}
    Thus, for $1 \le i \le 4$, the embedding is given by
    \begin{align*}
        B_1 &\mapsto (Z_{3})^{-1}\left(1 + (X_{1,3})^{-1} + (X_{1,3})^{-1}(X_{2,3})^{-1} + (X_{1,3})^{-1}(X_{2,3})^{-1}(X_{2,5})^{-1}+(X_{1,3})^{-1}(X_{2,3})^{-1}(X_{2,5})^{-1}(X_{1,4})^{-1}\right), \\
        k_1 &\mapsto -(Z_{3})^{-2} (X_{1,3})^{-1}(X_{2,3})^{-1}(X_{2,5})^{-1}(X_{1,4})^{-1}, \\
        B_2 &\mapsto (Z_{4})^{-1}\left(1 + (X_{1,4})^{-1} + (X_{1,4})^{-1}(X_{2,4})^{-1} + (X_{1,4})^{-1}(X_{2,4})^{-1}(X_{2,1})^{-1}+(X_{1,4})^{-1}(X_{2,4})^{-1}(X_{2,1})^{-1}(X_{1,5})^{-1}\right), \\
        k_2 &\mapsto -(Z_{4})^{-2} (X_{1,4})^{-1}(X_{2,4})^{-1}(X_{2,1})^{-1}(X_{1,5})^{-1}, \\
        B_3 &\mapsto (Z_{5})^{-1}\left(1 + (X_{1,5})^{-1} + (X_{1,5})^{-1}(X_{2,5})^{-1} + (X_{1,5})^{-1}(X_{2,5})^{-1}(X_{2,2})^{-1}+ (X_{1,5})^{-1}(X_{2,5})^{-1}(X_{2,2})^{-1}(X_{1,1})^{-1}\right), \\
        k_3 &\mapsto -(Z_{5})^{-2} (X_{1,5})^{-1}(X_{2,5})^{-1}(X_{2,2})^{-1}(X_{1,1})^{-1}, \\
        B_4 &\mapsto (Z_{1})^{-1}\left(1 + (X_{1,1})^{-1} + (X_{1,1})^{-1}(X_{2,1})^{-1} + (X_{1,1})^{-1}(X_{2,1})^{-1}(X_{2,3})^{-1}+(X_{1,1})^{-1}(X_{2,1})^{-1}(X_{2,3})^{-1}(X_{1,2})^{-1}\right), \\
        k_4 &\mapsto -(Z_{1})^{-2} (X_{1,1})^{-1}(X_{2,1})^{-1}(X_{2,3})^{-1}(X_{1,2})^{-1}.
    \end{align*}
\end{exmp}

\begin{rem} \textit{($B_i$ and $k_i$)} \label{rem524}

    \begin{enumerate}
        \item The element $k_i$ is a Casimir element for the $\Sigma_n$-quiver.
        \item The compatibility process to obtain the $\mathcal{A}_{n+1}$-quiver (Section \ref{Ch3.1}) is equivalent to imposing the condition $k_i = -1$. Under this condition, $B_i = \mathbb A_{i,i+1}$.
    \end{enumerate}
\end{rem}

We introduce the birational Weyl group action on the $\Sigma_n$-quiver. It is obtained by adding frozen vertices to the $\mathcal{A}_{n+1}$ quiver. Since the main cycles still satisfy \eqref{3.3}, we naturally obtain the following propositions:

\begin{prop}
    The $s_i^*$ naturally act on $\mathcal{K}(\mathcal{X}_{|\Sigma_n|})$. Thus, we define $W_n^\imath$ as a group generated by $\{s_i^*\}_{i=1}^m$. It is a Weyl group of type $B_m$, where $m = \lfloor \frac{n+1}{2} \rfloor$.
\end{prop}

\begin{prop} 
    The $s_i^*$ act on the variables $X_{i,j}$ in the same way as in the $\mathcal{A}_{n+1}$-quiver. For the frozen variables $Z_i$, we have:
    \[
        s_l^*(Z_i) = 
        \begin{cases}
            Z_i Y_{1,i} & \text{if } l = 1, \\
            Z_i & \text{otherwise}.
        \end{cases}
    \]
\end{prop}

\begin{proof}
    It is clear that the $s_i^*$ act on the variables $X_{i,j}$ exactly as they do in the $\mathcal{A}_{n+1}$-quiver. For the frozen variables, the proof uses the same technique as in Proposition~\ref{prop334} and is therefore omitted.
\end{proof}

\begin{prop} \label{prop527}
    The elements $B_i$ and $k_i$ are invariant under the action of the Weyl group.
\end{prop}

\begin{proof}
    The proof uses the same technique as in Theorem~\ref{thm422} and is therefore omitted. 
\end{proof}

\subsection{Classical Image of the Embedding for $\imath$Quantum Group of Type $\mathrm{AI}_n$}

\begin{defn}
    We define the \emph{frozen grading} on $\mathcal{K}(\mathcal{X}_{|\Sigma_n|})$ to be the $\mathbb{Z}^n$-grading given by $\deg_f(X_{i,j}) = 0$ for all mutable variables, and $\deg_f(Z_k) = e_{k-2}$ for the frozen variables. Here, $e_k$ denotes the $k$th standard basis vector of $\mathbb{Z}^n$, with the convention that $e_{-1} = e_n$.
\end{defn}

Let $\mathcal{O}(\mathcal{X}_{|\Sigma_n|})^W$ be the Poisson subalgebra of Weyl group invariants in $\mathcal{O}(\mathcal{X}_{|\Sigma_n|})$. For any function $f \in \mathcal{O}(\mathcal{X}_{|\Sigma_n|})^W$, we can decompose $f$ into distinct homogeneous components with respect to the frozen grading:
$$f = \sum_{u \in I_0(f)} H_u,$$
where $I_0(f)$ is the set of frozen multidegrees appearing in $f$. Specifically, each component factors as $H_u = Z^u L_u$, where $Z^u$ is a monomial in frozen variables $Z_i$ such that $\deg_f(Z^u) = u$, and $L_u$ is a Laurent polynomial involving only the mutable variables $X_{i,j}$.

\begin{prop}
    Each graded component $H_u$ of $f \in \mathcal{O}(\mathcal{X}_{|\Sigma_n|})^W$ belongs to $\mathcal{O}(\mathcal{X}_{|\Sigma_n|})^W$.
\end{prop}

\begin{proof}
    By the definition of the frozen grading, the frozen multidegree of any monomial is invariant under the Weyl group action. This implies that the algebra $\mathcal{O}(\mathcal{X}_{|\Sigma_n|})^W$ naturally splits into a direct sum of its graded components. 

    Since the Weyl group action preserves the frozen grading, applying any element $w \in W_n^{\imath}$ to the function $f$ induces $f = w(f) = \sum_{u} w(H_u)$. Because the components $H_u$ have distinct frozen multidegrees, they are linearly independent. Thus, we have $w(H_u) = H_u$ for each $u$. Therefore, each $H_u$ independently belongs to $\mathcal{O}(\mathcal{X}_{|\Sigma_n|})^W$.
\end{proof}

Consequently, we restrict our attention to a single graded component of $f$. Without loss of generality, we assume that $f$ is homogeneous of frozen multidegree $u$. 

\begin{prop} \label{prop533}
    There exists an element $P_u$ invariant under the Weyl group action such that every monomial in $P_u$ has frozen multidegree $u$.
\end{prop}

\begin{proof}
    The frozen multidegrees of monomials in $B_i$, $k_i$ and $(k_i)^{-1}$ are $-e_i$, $-2e_i$, and $2e_i$ respectively. Since $B_i$ and $k_i$ are invariant under the Weyl group action (Proposition~\ref{prop527}), the existence of such an element $P_u$ follows straightforwardly.
\end{proof}

We distinguish between the frozen multidegree and the  multidegree: the frozen multidegree lies in $\mathbb{Z}^n$, whereas the multidegree is in $\mathbb{Z}^{n(n+1)/2+n}$. Specifically, the last $n$ components of the multidegree correspond to the frozen multidegree.

\begin{exmp}
    \textit{(Examples of Proposition~\ref{prop533} for $n=3$)}
    
    Consider the $\Sigma_3$-quiver in Figure~\ref{Fig26} and the elements $B_i$ and $k_i$ from Example~\ref{exmp523}. All monomials in $B_1$ have the frozen multidegree $[-1,0,0]$, while those in $B_2$ and $B_3$ have frozen multidegrees $[0,-1,0]$ and $[0,0,-1]$, respectively. With respect to the multidegree, the valuation (see Definition~\ref{defn432}) is $\operatorname{val}(B_1) = [-1,-1,0,0,-1,0,-1,0,0]$.
    
    Similarly, the elements $k_1$, $k_2$, and $k_3$ have frozen multidegrees $[-2,0,0]$, $[0,-2,0]$, and $[0,0,-2]$, respectively. With respect to the multidegree, their valuations are $\operatorname{val}(k_1) = [0,0,-1,-1,-1,0,-2,0,0]$, $\operatorname{val}(k_2) = [-1,0,0,-1,0,-1,0,-2,0]$, and $\operatorname{val}(k_3) = [-1,-1,0,0,-1,0,0,0,-2]$.
\end{exmp}
Suppose $f$ has minimal multidegrees $l_1, \dots, l_r$. Because $f$ is homogeneous with frozen multidegree $u$, it admits an expansion of the form
$$f = \sum_l c_l P_u h_l,$$
where the sum is ordered by increasing multidegree $l$. Here, the elements $h_l \in \mathcal{O}(\mathcal{X}_{|\mathcal{A}_{n+1}|})$ are as given in Definition~\ref{defn4313}, noting that $\mathcal{O}(\mathcal{X}_{|\mathcal{A}_{n+1}|})$ is naturally viewed as a subalgebra of $\mathcal{O}(\mathcal{X}_{|\Sigma_{n}|})$.

The existence of this expansion can be seen as follows: Since $f$ has a finite number of minimal multidegrees and each $h_{l_i}$ possesses a unique minimal multidegree, subtracting the finite sum $\sum_{i=1}^r c_{l_i} P_u h_{l_i}$ from $f$ eliminates the terms of minimal multidegree. The resulting difference again has a finite number of minimal multidegrees, all of which are strictly greater than the initial $l_i$. Iterating this procedure yields the formal expansion above, which is generally an infinite series. Note that any multidegree $l$ appearing in the sum has a frozen multidegree of zero and satisfies $l \ge l_i - \operatorname{val}(P_u)$ for some index $i \in \{1, \dots, r\}$.

Since $f$ and $P_u$ are invariant under the Weyl group action and the elements $h_l$ are linearly independent, we have 
$$f = P_u \left(\sum_l c_l h_l^W\right).$$

By Lemma~\ref{lem4317}, each $h_l^W$ has a unique minimal multidegree with nonpositive components. Furthermore, this multidegree must be bounded below by $l_i - \operatorname{val}(P_u)$ for some $i$. Because there are only finitely many nonpositive multidegrees bounded below by a fixed value, the sum must be finite. Hence, we have
$$f = P_u \left(\sum_{l \in \Lambda} c_l h_l^W\right),$$
where $\Lambda$ is a finite index set; the argument is the same as Proposition~\ref{prop4318}.

As discussed in Proposition~\ref{prop4320}, the elements $h_l^W$ belong to the algebra of formal geodesic functions. Note that $P_u$ lies in the algebra generated by $B_i$ and $k_i$. Thus, we conclude that $\mathcal{O}(\mathcal{X}_{|\Sigma_n|})^W$ is generated by $B_i, k_i$, and the formal geodesic functions. 

\begin{thm}\label{thm535}
Let $\iota: U^{\imath}_n \to \mathcal{O}(\mathcal{X}_{|\Sigma_n|}) / \mathcal{I}$ be the embedding of the $\imath$quantum group $U^{\imath}_n$, where $\mathcal{I}$ is the ideal generated by $(k_i)^{-1} + 1$ for $i = 1, \dots, n$. Then, there is a Poisson isomorphism:
$$\iota(U^{\imath}_n) \simeq \mathcal{O}(\mathcal{X}_{|\Sigma_n|})^W / \mathcal{I}.$$
\end{thm}

\begin{proof}
Algebraically, we can express $\mathbb{A}_{i,i+1}$ as $B_{i} + Q_i((k_i)^{-1}+1)$. Explicitly, we have
$$\mathbb{A}_{i,i+1} = B_i - \frac{B_i}{\sqrt{-(k_i)^{-1}} + 1}((k_i)^{-1}+1).$$
Since $h_l^W$ belongs to the Poisson algebra generated by $\mathbb{A}_{i,i+1}$ and $k_i$ is a Casimir element on the quiver, $h_l^W$ admits a decomposition $h_l^W = h^l + k^l$, where $h^l$ is in the Poisson algebra generated by $B_i$, and $k^l \in \mathcal{I}$. Furthermore, because both $h_l^W$ and $h^l$ are elements of $\mathcal{O}(\mathcal{X}_{|\Sigma_n|})$, it follows that $k^l$ is also in $\mathcal{O}(\mathcal{X}_{|\Sigma_n|})$.

In the quotient algebra $\mathcal{O}(\mathcal{X}_{|\Sigma_n|})^W / \mathcal{I}$, the ideal elements vanish. Hence, we have $h_l^W = h^l$ in $\mathcal{O}(\mathcal{X}_{|\Sigma_n|})^W / \mathcal{I}$. Since $h^l$ belongs to the Poisson algebra generated by $B_i$ and $\mathcal{O}(\mathcal{X}_{|\Sigma_n|})^W$ is generated by $B_i$, $k_i$, and $h_l^W$, the quotient algebra is generated entirely by the elements $B_i$. This completes the proof.\end{proof}

We propose the following conjecture as the quantum analogue of the preceding theorem.

\begin{conj} \label{conj536}
Let $U^{\imath}_{n}$ be the $\imath$quantum group of type $\mathrm{AI}_n$, and let $\mathcal{O}_q(\mathcal{X}_{|\Sigma_n|})$ be the corresponding quantum cluster algebra. Suppose $F = \mathbb{Q}(q^{1/2})$ and $A = \mathbb{Z}[q^{1/2}, q^{-1/2}]$, and let ${}_F\mathcal{O}_q(\mathcal{X}_{|\Sigma_n|}) = F \otimes_A \mathcal{O}_q(\mathcal{X}_{|\Sigma_n|})$.

Recall that there exists a quantum algebra embedding $\iota_q: \widetilde{U}^{\imath}_{n} \to {}_F\mathcal{O}_q(\mathcal{X}_{|\Sigma_n|})$. We conjecture that the image of the centrally reduced algebra $U^{\imath}_{n}$ under the induced embedding satisfies
$$\iota_q(U^{\imath}_{n}) \simeq {}_F\mathcal{O}_q(\mathcal{X}_{|\Sigma_n|})^W / \mathcal{I}_q,$$
where ${}_F\mathcal{O}_q(\mathcal{X}_{|\Sigma_n|})^W$ is the quantum subalgebra of Weyl group invariants, and $\mathcal{I}_q$ is the two-sided ideal generated by the elements $(k_i)^{-1} + 1$ for all $1 \le i \le n$.\end{conj}

\section{Transitivity of Coisotropic Reductions under the Weyl Group Action}
In this section, we investigate the conjugacy of coisotropic reductions on the symplectic groupoid. This reduction procedure aims to recover the cluster structure of certain Teichmüller spaces.
\subsection{Geometric Leaf of the Symplectic Groupoid and Rank Condition}

In\textcolor{black}{\cite{12}} and\textcolor{black}{\cite{13}}, the map $\phi_n: \mathcal T_{\left\lfloor {n-1 \over 2} \right\rfloor,par(n)} \to \mathcal A_n$ is constructed as follows: For an element $\left\{z_\alpha\right\} \in \mathcal T_{\left\lfloor {n-1 \over 2} \right\rfloor,par(n)}$,

\begin{equation}
    \phi(\left\{z_\alpha\right\}) = \begin{pmatrix}
1 &  & G_{ij}(z_\alpha) \\
 & \cdots &  \\
0 &  & 1
\end{pmatrix}
\end{equation}
where $G_{ij}(z_\alpha)$ is the value of the geodesic function $G_{ij}$ at $\left\{z_\alpha\right\} \in \mathcal T_{\left\lfloor {n-1 \over 2} \right\rfloor,par(n)}$. 

\begin{defn}
    $\text{Im}(\phi_n)$ is called a \textit{geometric leaf} of the symplectic groupoid.
\end{defn}

\begin{thm}\textcolor{black}{\cite{28}} Every $n \times n$ matrix $A \in \text{Im}(\phi_n)$ satisfies the rank condition $\text{rank}(A+A^T) \le 4$.
\end{thm}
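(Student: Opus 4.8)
The cleanest route is to construct, for each point $\{z_\alpha\}$ of the Teichmüller space $\mathcal T_{\lfloor (n-1)/2\rfloor,\mathrm{par}(n)}$, an explicit factorization of $\phi_n(\{z_\alpha\})$ that exhibits $A+A^T$ as a product of matrices of rank at most $4$. Recall from Section \ref{Ch3.1} that $\mathbb A = M_1^T M_2$ with $M_i = S T_{(i)} D_i^{-1}$, where the $T_{(i)}$ are transport matrices of a flat $SL_2$-connection; more relevantly, on $\mathrm{Im}(\phi_n)$ the entries $\mathbb A_{ij}$ are the geodesic functions $G_{ij}(\{z_\alpha\})$, which by Proposition \ref{prop234} are traces $\mathrm{Tr}(\gamma_{ij})$ of products of the $2\times 2$ matrices $X_{z_\alpha}, L, R \in PSL(2,\mathbb R)$. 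The plan is to write $A + A^T$ in the form $P^T Q + Q^T P$ (or a symmetrized $2\times n$ block product), where $P, Q$ are $2\times n$ matrices built from the $SL_2$-monodromy data; any such expression has rank $\le 4$ since it factors through $\mathbb R^2\oplus\mathbb R^2$.

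First I would recall the precise description of the geodesic functions $G_{ij}$ attached to the surface $\Sigma_{\lfloor(n-1)/2\rfloor,\mathrm{par}(n)}$ used in \cite{13}: the relevant geodesics $\gamma_{ij}$ all share a common structure — they are ``nested'' arcs around a fixed puncture — so that $\gamma_{ij} = \mathrm{(word)}_i \cdot \mathrm{(core)} \cdot \mathrm{(word)}_j^{-1}$ up to conjugation, with the $\mathrm{(word)}_k \in SL_2$ depending only on the index $k$. Then $G_{ij} = \mathrm{Tr}(\rho_i \,\Xi\, \rho_j^{-1})$ for fixed $2\times2$ matrices $\rho_k$ and a fixed $\Xi \in SL_2$. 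Writing $\rho_i$ in terms of its two columns (or rows), the matrix $(G_{ij})_{i,j}$ becomes a sum of a bounded number (at most $4$, coming from the $2\times 2 = 4$ coordinates of the bilinear form $(\rho_i,\rho_j)\mapsto \mathrm{Tr}(\rho_i\Xi\rho_j^{-1})$) of rank-one outer products $u^{(a)}(v^{(a)})^T$. Since $A = (G_{ij})$ is already triangular and $\mathbb A_{ij}$ for $i<j$ agrees with $G_{ij}$, one gets $A + A^T - (\text{diagonal part})$ expressed as $\sum_{a=1}^{4} (\text{rank one})$, hence $\mathrm{rank}(A+A^T) \le 4$ after controlling the diagonal contribution. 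I would carry out the bookkeeping: (i) fix the word decomposition $\gamma_{ij}=\rho_i\Xi\rho_j^{-1}$ from the fat-graph path description, (ii) expand the trace as a sum of four bilinear terms in the entries of $\rho_i$ and $\rho_j$, (iii) assemble these into $A+A^T = \sum_{a=1}^4 p_a q_a^T$ and read off the rank bound.

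The main obstacle is step (i): making the ``common core'' decomposition $\gamma_{ij} = \rho_i \Xi \rho_j^{-1}$ rigorous and uniform in $i,j$, including the dependence on the parity $\mathrm{par}(n) \in \{1,2\}$ and on whether the path wraps around one or two holes. The fat-graph paths $P_{\gamma_{ij}}$ in \cite{13} are not literally of this product form; one must verify that the ``tails'' $\rho_i$ really depend only on $i$ (not on $j$) after a suitable conjugation, and handle carefully the boundary cases ($i$ or $j$ extremal, or $i=j$, which feeds the diagonal). An alternative — possibly cleaner — is to invoke the matrix identity behind \cite[Theorem 4.1]{17} together with the Cayley–Hamilton / skein relation \eqref{2.11}: since $A$ comes from a $2$-dimensional representation, every $3\times 3$ minor of the symmetrized ``geodesic matrix'' built from traces must vanish by a Plücker-type trace identity for $SL_2$, and an extension of this to $4\times 4$ (using that two $SL_2$-traces and their product span the relations) forces $\mathrm{rank}(A+A^T)\le 4$. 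I would present the factorization argument as the primary proof and remark on the trace-identity viewpoint as a cross-check, deferring to \cite{28} for the detailed combinatorics of the fat-graph paths if the bookkeeping becomes long.
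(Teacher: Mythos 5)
The paper does not prove this statement: it is imported verbatim from \cite{28}, so there is no internal proof to compare your attempt against. Judged on its own, your primary argument is the standard (and correct) mechanism, and it is essentially the one underlying \cite{28}. The substantive input you need from \cite{13} is that the off-diagonal entries of $\phi_n(\{z_\alpha\})$ are $G_{ij}=\mathrm{Tr}(\gamma_i\gamma_j^{-1})$ for a fixed family $\gamma_1,\dots,\gamma_n$ in the Fuchsian group, with each $\gamma_i$ depending only on its own index. Granting this, the bilinear form $(X,Y)\mapsto\mathrm{Tr}(X\,\mathrm{adj}(Y))$ on the four-dimensional space of $2\times2$ matrices gives $A+A^T=V^TBV$ with $V$ of size $4\times n$ and $B$ a fixed $4\times4$ matrix, hence $\mathrm{rank}(A+A^T)\le 4$. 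Note that with this normalization the diagonal needs no separate ``control'': $\mathrm{Tr}(\gamma_i\gamma_i^{-1})=2=(A+A^T)_{ii}$, and this is exactly why the decomposition must take the form $\gamma_i\gamma_j^{-1}$ rather than, say, $\mathrm{Tr}(\gamma_i\gamma_j)$ --- if the tails cannot be arranged so that the diagonal comes out to $2$ automatically, the bound degrades by the rank of a diagonal correction and the argument collapses. So the ``main obstacle'' you flag (uniformity of the tail decomposition across all $i,j$, including the parity cases) is not bookkeeping; it is the entire content of the theorem, and deferring it to \cite{28} makes the proposal circular rather than a proof.

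Two smaller corrections. Your proposed cross-check is wrong as stated: vanishing of all $3\times3$ minors would force $\mathrm{rank}\le 2$, which is false here; the correct minor-theoretic reformulation is that all $5\times5$ minors of $\bigl(\mathrm{Tr}(\gamma_i\gamma_j^{-1})\bigr)_{i,j}$ vanish, and that is an immediate consequence of the factorization you already have, so the ``alternative viewpoint'' adds nothing and should be dropped. Also, the transport matrices $T_{(i)}$ of Section 3.1 belong to a flat $SL_n$-connection, not $SL_2$; the $2\times2$ data $X_{z_\alpha},L,R$ live in the Teichm\"uller picture of Section 2.3, which is where your factorization must be carried out.
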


This theorem implies the natural criterion for our coisotropic reduction is the rank condition $\text{rank}(\mathbb A + \mathbb A^T) \le 4$. The reduction based on this rank condition is studied in\textcolor{black}{\cite{16}} and\textcolor{black}{\cite{18}}.

To describe the solution set of the rank condition, we consider a $SL_n$-quiver and the graph $P_n$ (Section \ref{Ch3.1}). Then reverse the direction of horizontal arrows of $P_n$ to get another graph $\hat P_n$ (Figure \ref{Fig27}).

\begin{defn}
We define the $n \times n$ transport matrix $T_{(3)}$ by

$$
(T_{(3)})_{ji} = \sum_{\text{oriented paths } p: i' \to j''} w(p)
$$
where $w(p) = \prod_{v}Z_v$ such that the product is taken over all vertices $v$ in the $SL_n$-quiver that is to the right of the path $p$.

\end{defn}

\begin{figure}[H]
    \centering
	\begin{tikzcd}[scale cd=0.8,sep = tiny]
	&&&& \textcolor{rgb,255:red,255;green,51;blue,51}{{{1'}}} && \textcolor{rgb,255:red,255;green,0;blue,0}{\bullet} && \textcolor{rgb,255:red,255;green,51;blue,51}{1} \\
	&&&& \textcolor{rgb,255:red,51;green,136;blue,255}{{Z_{1,0,2}}} &&&& \textcolor{rgb,255:red,51;green,136;blue,255}{{Z_{0,1,2}}} \\
	&& \textcolor{rgb,255:red,255;green,51;blue,51}{{{2'}}} && \textcolor{rgb,255:red,255;green,0;blue,0}{\bullet} && \textcolor{rgb,255:red,255;green,0;blue,0}{\bullet} && \textcolor{rgb,255:red,255;green,0;blue,0}{\bullet} && \textcolor{rgb,255:red,255;green,51;blue,51}{2} \\
	&& \textcolor{rgb,255:red,51;green,136;blue,255}{{Z_{2,0,1}}} &&&& {Z_{1,1,1}} &&&& \textcolor{rgb,255:red,51;green,136;blue,255}{{Z_{0,2,1}}} \\
	\textcolor{rgb,255:red,255;green,51;blue,51}{{{3'}}} && \textcolor{rgb,255:red,255;green,0;blue,0}{\bullet} && \textcolor{rgb,255:red,255;green,0;blue,0}{\bullet} && \textcolor{rgb,255:red,255;green,0;blue,0}{\bullet} && \textcolor{rgb,255:red,255;green,0;blue,0}{\bullet} && \textcolor{rgb,255:red,255;green,0;blue,0}{\bullet} && \textcolor{rgb,255:red,255;green,51;blue,51}{3} \\
	&&&& \textcolor{rgb,255:red,51;green,136;blue,255}{{Z_{2,1,0}}} &&&& \textcolor{rgb,255:red,51;green,136;blue,255}{{Z_{1,2,0}}} \\
	&& \textcolor{rgb,255:red,255;green,51;blue,51}{{{1''}}} &&&& \textcolor{rgb,255:red,255;green,51;blue,51}{{{2''}}} &&&& \textcolor{rgb,255:red,255;green,51;blue,51}{{{3''}}}
	\arrow[color={rgb,255:red,255;green,0;blue,0}, from=1-5, to=1-7]
	\arrow[color={rgb,255:red,255;green,0;blue,0}, from=1-7, to=1-9]
	\arrow[color={rgb,255:red,255;green,0;blue,0}, from=1-7, to=3-7]
	\arrow[from=2-5, to=4-7]
	\arrow[from=2-9, to=2-5]
	\arrow[dashed, from=2-9, to=4-11]
	\arrow[color={rgb,255:red,255;green,0;blue,0}, from=3-3, to=3-5]
	\arrow[color={rgb,255:red,255;green,0;blue,0}, from=3-5, to=3-7]
	\arrow[color={rgb,255:red,255;green,0;blue,0}, from=3-5, to=5-5]
	\arrow[color={rgb,255:red,255;green,0;blue,0}, from=3-7, to=3-9]
	\arrow[color={rgb,255:red,255;green,0;blue,0}, from=3-9, to=3-11]
	\arrow[color={rgb,255:red,255;green,0;blue,0}, from=3-9, to=5-9]
	\arrow[dashed, from=4-3, to=2-5]
	\arrow[from=4-3, to=6-5]
	\arrow[from=4-7, to=2-9]
	\arrow[from=4-7, to=4-3]
	\arrow[from=4-7, to=6-9]
	\arrow[from=4-11, to=4-7]
	\arrow[color={rgb,255:red,255;green,0;blue,0}, from=5-1, to=5-3]
	\arrow[color={rgb,255:red,255;green,0;blue,0}, from=5-3, to=5-5]
	\arrow[color={rgb,255:red,255;green,0;blue,0}, from=5-3, to=7-3]
	\arrow[color={rgb,255:red,255;green,0;blue,0}, from=5-5, to=5-7]
	\arrow[color={rgb,255:red,255;green,0;blue,0}, from=5-7, to=5-9]
	\arrow[color={rgb,255:red,255;green,0;blue,0}, from=5-7, to=7-7]
	\arrow[color={rgb,255:red,255;green,0;blue,0}, from=5-9, to=5-11]
	\arrow[color={rgb,255:red,255;green,0;blue,0}, from=5-11, to=5-13]
	\arrow[color={rgb,255:red,255;green,0;blue,0}, from=5-11, to=7-11]
	\arrow[from=6-5, to=4-7]
	\arrow[from=6-9, to=4-11]
	\arrow[dashed, from=6-9, to=6-5]
\end{tikzcd}
    \caption{$SL_3$-quiver and $\hat P_3$.}
    \label{Fig27}
\end{figure}
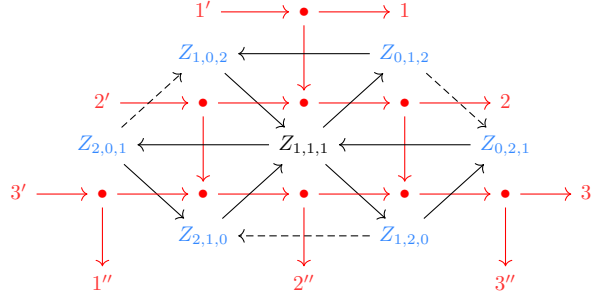

\begin{thm}\textcolor{black}{\cite{17}}
Consider an $n \times n$ matrix with entries $S_{ij} = (-1)^{n-i}\delta_{i,n+1-j}$ and let $M_1 = ST_{(1)}D_1^{-1}$, $M_2 = ST_{(2)}D_2^{-1}$, and $M_3 = ST_{(3)}D_3^{-1}$, where
$$
D_1 := \prod_{k=1}^n \prod_{i+j = n-k}Z_{i,j,k}^{\frac{k}{n}}, \quad
D_2 := \prod_{k=1}^n \prod_{i+j = n-k}Z_{i,k,j}^{\frac{k}{n}}, \quad \text{and} \quad
D_3 := \prod_{k=1}^n \prod_{i+j = n-k}Z_{k,i,j}^{\frac{k}{n}}.
$$
Then the following \textit{groupoid condition} holds:
$$
M_2 = M_3 M_1.
$$
\end{thm}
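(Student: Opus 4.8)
The plan is to verify the identity $M_2 = M_3 M_1$ directly on the level of transport matrices, reducing it to the path-counting identity
\[
(T_{(2)})_{kj} = \sum_{i} (T_{(3)})_{ki}\,(S^{-1}T_{(1)})_{ij}
\]
up to the scalar normalizations $D_1, D_2, D_3$. First I would record how $S$ interacts with the transport matrices: since $S_{ij} = (-1)^{n-i}\delta_{i,n+1-j}$ is (up to signs) the antidiagonal permutation, multiplication by $S$ simply relabels the ``primed'' boundary vertices $1',\dots,n'$ of the graph $P_n$ (resp.\ $\hat P_n$) in reverse order, with a sign bookkeeping. Thus $M_1 = ST_{(1)}D_1^{-1}$ is, entrywise, a signed sum of weights of paths from a right-side vertex $i$ to a left-side vertex, and similarly for $M_3$ with paths from a left-side vertex to a bottom vertex. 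The composition $M_3 M_1$ then sums weights of concatenated paths $i \to i' \to j''$ that pass through the left side of the graph, and the claim is that this equals the sum over all paths $i \to j''$ directly (which is $M_2$ up to scalars).

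The heart of the matter is therefore a \emph{path factorization lemma}: every oriented path in the graph $\hat P_n$ (equivalently $P_n$ with horizontal arrows reversed) from a northeast vertex $i$ to a bottom vertex $j''$ decomposes uniquely as a path $i \to i'$ into the west side followed by a path $i' \to j''$, and the weight multiplies as $w(p) = w(p_1)w(p_2)$ because the sets of vertices lying to the right of the two sub-paths partition the set of vertices to the right of $p$. I would prove this by induction on the triangular structure of the $SL_n$-quiver: the crucial geometric fact is that reversing the horizontal arrows in $P_n$ makes the west boundary a ``cut'' that every such path must cross exactly once, so the decomposition exists and is unique. The weight multiplicativity then follows because ``right of $p$'' is the disjoint union of ``right of $p_1$'' and ``right of $p_2$'' — this is the same kind of region-counting used implicitly in Theorem \ref{thm314} and in the definition of the transport matrices.

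Next I would check that the scalar prefactors are consistent, i.e.\ $D_2^{-1} = D_3^{-1} D_1^{-1}$ after accounting for the $S$-factors, or more precisely that $D_2 = D_3 D_1$ as monomials in the $Z_{i,j,k}$. This is a direct computation: $D_1 = \prod_k \prod_{i+j=n-k} Z_{i,j,k}^{k/n}$, $D_2$ is obtained by the cyclic relabeling $Z_{i,j,k}\mapsto Z_{i,k,j}$ of the exponents, and $D_3$ by $Z_{i,j,k}\mapsto Z_{k,i,j}$; summing the exponents and using that each $Z_{i,j,k}$ appears in all three products with the appropriate index in the ``$k$-slot'' should give the identity, possibly after using the $SL_n$ (determinant $=1$) normalization. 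I expect this part to be bookkeeping, though one must be careful about the corner and boundary vertices that were forgotten or amalgamated.

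The main obstacle will be the sign tracking coming from $S$: both $M_1$ and $M_3$ carry a factor of $S$, so $M_3 M_1$ carries $S^2$, whereas $M_2$ carries only one $S$; one needs $S \cdot (\text{signed path sum for } T_{(3)}) \cdot S \cdot (\text{path sum for } T_{(1)})$ to collapse correctly to $S \cdot (\text{path sum for } T_{(2)})$. Concretely, $S^2$ is $(-1)^{n-1}$ times the identity (or a signed identity), and this overall sign must cancel against signs hidden in the path-weight conventions for $\hat P_n$ versus $P_n$. I would isolate this by first proving the identity at the level of \emph{unsigned} weighted path sums (where it is clean combinatorics) and then reintroducing the signs of $S$ at the very end, checking that the parity works out for all $n$. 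Once the path factorization lemma and the scalar identity $D_2 = D_3 D_1$ are in hand, the theorem follows by assembling $M_3 M_1 = S T_{(3)} D_3^{-1} S T_{(1)} D_1^{-1}$, moving the diagonal scalars past the (anti)permutation $S$, applying the factorization, and recognizing the result as $S T_{(2)} D_2^{-1} = M_2$.
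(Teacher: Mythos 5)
The paper offers no proof of this statement to compare against: it is quoted directly from \cite{17} and used as a black box. Your proposal therefore has to stand on its own, and as written it contains two concrete gaps.

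The central one is the ``path factorization lemma.'' The matrices $T_{(1)}$ and $T_{(2)}$ count paths in $P_n$ (horizontal arrows oriented toward the west boundary), while $T_{(3)}$ counts paths in $\hat P_n$ (horizontal arrows reversed). A concatenation of a $T_{(1)}$-path $i\to \ell'$ with a $T_{(3)}$-path $\ell'\to j''$ is therefore a walk in neither graph, and a direct $T_{(2)}$-path $i\to j''$ need not touch the west boundary at all (already in $P_3$ the path $3\to\bullet\to 3''$ visits none of $1',2',3'$), so the west side is not a ``cut'' and there is no weight-preserving bijection between direct and concatenated paths. The entrywise content of $M_2=M_3M_1$ is the alternating sum $\sum_{\ell}(-1)^{n-\ell}(T_{(3)})_{k\ell}(T_{(1)})_{n+1-\ell,\,j}$, which equals a multiple of $(T_{(2)})_{kj}$ only because of cancellations forced by the signs in $S$ (a Lindstr\"om--Gessel--Viennot-type mechanism, or equivalently an identity of elementary Jacobi factors in the Gauss decomposition of the transports). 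Your plan to ``first prove the identity at the level of unsigned weighted path sums'' and reinstate the signs at the end cannot work: the unsigned identity is false, and the signs are what make the statement true rather than a bookkeeping layer.

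The second gap is the scalar claim $D_2=D_3D_1$. For a vertex $Z_{a,b,c}$ with $a+b+c=n$, the exponent contributed to $D_1$ is $c/n$, to $D_2$ is $b/n$, and to $D_3$ is $a/n$, so $D_1D_3/D_2=\prod Z_{a,b,c}^{(a-b+c)/n}\neq 1$ in general. The normalizations are chosen so that each $M_i$ is unimodular, and the nontrivial monomial $D_1D_3/D_2$ must be shown to coincide exactly with the scalar by which $T_{(3)}ST_{(1)}$ differs from $T_{(2)}$; this is part of the content of the theorem, not a routine bookkeeping check. The overall skeleton of your argument (peel off the leading $S$, reduce to an identity of transport matrices, control the scalars) is the right shape, but both of its load-bearing lemmas fail as stated.
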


From the groupoid condition, we have
\begin{equation}
\begin{split}
    \mathbb{A} &= M_1^T M_2 = T_{(1)}^T S^T D_1^{-1} M_3 M_1 = T_{(1)}^T S^T S T_{(3)} S T_{(1)} (D_1^2 D_3)^{-1} \\
    &= (-1)^{n-1} T_{(1)}^T T_{(3)} S T_{(1)} (D_1^2 D_3)^{-1}.
\end{split}
\end{equation}
This implies,
\begin{equation}\label{6.3}
    \text{rank}(\mathbb A + \mathbb A^T) = \text{rank}((T_{(1)})^T(T_{(3)}S + (T_{(3)}S)^T)T_{(1)}) = \text{rank}(T_{(3)}S + (T_{(3)}S)^T)
\end{equation}
where the last equality follows because $T_{(1)}$ is nondegenerate and nonzero scalars can be disregarded when considering the rank of the matrix.

\begin{prop} \label{prop615}
\textcolor{black}{\cite{17}} The matrix $\mathcal M := T_{(3)}S + (T_{(3)}S)^T$ is an $n \times n$ lower anti-diagonal matrix. Furthermore, it is a symmetric matrix and $\mathcal M_{i,n-i+1} = \mathcal M_{n-i+1,i} = 0$ if and only if $\mathcal K_i = 1$ where $\mathcal C_i = \prod_{j=1}^{N_i}X_{i,j}$ and $\mathcal K_i = \prod_{j=i}^{\left\lfloor {n \over 2} \right\rfloor} \mathcal C_{j}$ such that $N_i$ is the length of the $i$th cycle.
\end{prop}
    
\begin{prop}\label{prop616}
    The rank of $\mathcal{M}$ is invariant under the Weyl group action.
\end{prop}
\begin{proof}
By Remark~\ref{rem424}, $s(\mathbb{A}) = E \mathbb{A} E^T$ for any $s \in W_n$. Hence, the rank of $\mathbb A+\mathbb A^T$ is invariant under the action $s$. This implies the rank of $\mathcal{M}$ is invariant under the Weyl group action by \eqref{6.3}. \end{proof}

\begin{defn}
Let $\mathcal{D}^n \subset \mathcal{X}_{\mathcal{A}_n}$ be the solution set of the rank condition. For an index set $I = \{i_1, \dots, i_{e(n)}\} \subset \{1, \dots, \lfloor n/2 \rfloor\}$ of size $e(n) = \lfloor n/2 \rfloor - \operatorname{par}(n)$, the subvariety $\mathcal{D}^n_I$ is the irreducible component of $\mathcal{D}^n$ satisfying the Casimir conditions $\mathcal{K}_i = 1$ for all $i \in I$. Here, the parity function is defined by
\begin{equation*}
\operatorname{par}(n) = 
\begin{cases} 
1 & \text{if } n \text{ is odd,} \\ 
2 & \text{if } n \text{ is even.} 
\end{cases}
\end{equation*}
\end{defn}

Proposition~\ref{prop615} implies that $\mathcal{D}^n$ decomposes into the components $\mathcal{D}^n_I$ where $I$ ranges over all subsets of $\{1, \dots, \lfloor n/2 \rfloor\}$ of size $e(n)$.

\begin{exmp}
(\textit{Rank condition for $n = 6$})

$$
\mathcal{M} = \begin{pmatrix}
0 & 0 & 0 & 0 & 0 & m_{16} \\
0 & 0 & 0 & 0 & m_{25} & m_{26} \\
0 & 0 & 0 & m_{34} & m_{35} & m_{36} \\
0 & 0 & m_{34} & m_{44} & m_{45} & m_{46} \\
0 & m_{25} & m_{35} & m_{45} & m_{55} & m_{56} \\
m_{16} & m_{26} & m_{36} & m_{46} & m_{56} & m_{66}
\end{pmatrix}.
$$
The solution set $\mathcal D^6$ of the rank condition splits into the following three irreducible components $\mathcal{D}^6_{\{1\}}$, $\mathcal{D}^6_{\{2\}}$, and $\mathcal{D}^6_{\{3\}}$:

\begin{enumerate}
    \item $\mathcal{D}^6_{\{3\}}$ satisfies the Casimir condition $\mathcal{K}_3 = 1 \longleftrightarrow m_{34} = 0$. Thus, the component is defined by 
    $$m_{34} = m_{44} = 0.$$
    
    \item $\mathcal{D}^6_{\{2\}}$ satisfies the Casimir condition $\mathcal{K}_2 = 1 \longleftrightarrow m_{25} = 0$. Thus, the component is defined by 
    $$
    m_{25} = \det \begin{pmatrix}
    0 & m_{34} & m_{35}\\
    m_{34} & m_{44} & m_{45}\\
    m_{35} & m_{45} & m_{55}\\
    \end{pmatrix} = 0.
    $$

    \item $\mathcal{D}^6_{\{1\}}$ satisfies the Casimir condition $\mathcal{K}_1 = 1 \longleftrightarrow m_{16} = 0$. Thus, the component is defined by 
    $$
    m_{16} = \det \begin{pmatrix}
    0 & 0 & 0 & m_{25} & m_{26}\\
    0 & 0 & m_{34} & m_{35} & m_{36}\\
    0 & m_{34} & m_{44} & m_{45} & m_{46}\\
    m_{25} & m_{35} & m_{45} & m_{55} & m_{56}\\
    m_{26} & m_{36} & m_{46} & m_{56} & m_{66}\\
    \end{pmatrix} = 0.
    $$
\end{enumerate}
\end{exmp}

\begin{exmp}(\textit{Rank condition for $n = 7$})

$$\mathcal M = \begin{pmatrix}
0 & 0 & 0 & 0 & 0 & 0 & m_{17}\\
0 & 0 & 0 & 0 & 0 &m_{26} & m_{27}\\
0 & 0 & 0 & 0& m_{35} & m_{36} & m_{37}\\
0 & 0 & 0& m_{44} & m_{45} & m_{46} & m_{47}\\
0 & 0 & m_{35} & m_{45} & m_{55} & m_{56} & m_{57}\\
0 & m_{26} & m_{36} & m_{46} & m_{56} & m_{66} & m_{67}\\
m_{17} & m_{27} & m_{37} & m_{47} & m_{57} & m_{67} & m_{77} \\
\end{pmatrix}$$

The solution set $\mathcal D^7$ of the rank condition splits into the following three irreducible components $\mathcal{D}^7_{\{1,2\}}$, $\mathcal{D}^7_{\{1,3\}}$, and $\mathcal{D}^7_{\{2,3\}}$:
\begin{enumerate}
    \item $\mathcal{D}^7_{\{2,3\}}$ satisfies the Casimir condition $\mathcal K_2 = \mathcal K_3 = 1 \longleftrightarrow m_{26} = m_{35} = 0$. Thus, the component is defined by
    $$m_{26} = m_{35} = m_{36} = \det \begin{pmatrix}
m_{44} & m_{45} & m_{46} \\
m_{45} & m_{55} & m_{56} \\
m_{46} & m_{56} & m_{66} \\
\end{pmatrix} = 0$$.

    \item $\mathcal{D}^7_{\{1,3\}}$ satisfies the Casimir condition $\mathcal K_1 = \mathcal K_3 = 1 \longleftrightarrow m_{17} = m_{35} = 0$. Thus, the component is defined by
    
    $$m_{17} = m_{35} = \det \begin{pmatrix}
m_{26} & m_{36} \\
m_{27} & m_{37}
\end{pmatrix} = \det \begin{pmatrix}
0 & 0 & 0 & m_{36} &m_{37}\\
0 & m_{44} & m_{45} & m_{46} & m_{47}\\
0 & m_{45} & m_{55} & m_{56} & m_{57}\\
m_{36} & m_{46} & m_{56} & m_{66} & m_{67}\\
m_{37} & m_{47} & m_{57} & m_{67} & m_{77}\\
\end{pmatrix} = 0$$.

\item $\mathcal{D}^7_{\{1,2\}}$ satisfies the Casimir condition $\mathcal K_1 = \mathcal K_2 = 1 \longleftrightarrow m_{17} = m_{26} = 0$. Thus, the component is defined by
$$m_{17} = m_{26} = m_{27} = \det \begin{pmatrix}
0 & 0 & m_{35} & m_{36} &m_{37}\\
0 & m_{44} & m_{45} & m_{46} & m_{47}\\
m_{35} & m_{45} & m_{55} & m_{56} & m_{57}\\
m_{36} & m_{46} & m_{56} & m_{66} & m_{67}\\
m_{37} & m_{47} & m_{57} & m_{67} & m_{77}\\
\end{pmatrix} = 0$$.
\end{enumerate}\end{exmp}

\subsection{The Birational Weyl Group Action Conjugates the Coisotropic Reductions}

We show that the coisotropic reductions are conjugate under the birational Weyl group action. We first recall the following definition (Definition~\ref{defn218}): The cluster modular group $\Gamma_{|\Sigma|}$ is the group generated by cluster transformations on a seed $\Sigma$ that preserve the initial quiver. Elements of this group are referred to as cluster modular transformations.

Let $n \ge 5$ and $m = \lfloor n/2 \rfloor$. The group generated by the actions $\{s_i^*\}_{i=1}^{m-1}$ acts on the set of Casimirs $\{\mathcal{K}_j\}_{j=1}^m$ as the symmetric group $S_m$. Specifically, each generator $s_i^*$ acts as the simple transposition $\mathcal{K}_i \longleftrightarrow \mathcal{K}_{i+1}$ on the set of Casimirs (Proposition~\ref{prop353}). This implies that the Weyl group action on $\{\mathcal{K}_j\}_{j=1}^m$ is transitive, leading to the following result:

\begin{thm}
For $n \ge 5$, the irreducible components $\mathcal{D}^n_I$ of $\mathcal{D}^n$ are birationally isomorphic under the Weyl group action. Specifically, for any index set $I = \{i_1, \dots, i_{e(n)}\}$ satisfying $1 \le i_1 < \dots < i_{e(n)} \le \lfloor n/2 \rfloor$ with $e(n) = \lfloor n/2 \rfloor - \operatorname{par}(n)$, the components $\mathcal{D}^n_I$ are permuted transitively by the Weyl group action.
\end{thm}

\begin{proof}First, consider the even case $n = 2m$. Since the symmetric group $S_m$ acts transitively on the collection of subsets of $\{1, \dots, m\}$ of a fixed size, there exists a permutation $\sigma \in S_m$ such that $\sigma(\{i_1, \dots, i_{m-2}\}) = \{3, \dots, m\}$. By Proposition~\ref{prop353}, this implies the existence of an element $\eta^* \in W_n$ that maps the set of Casimirs $\{\mathcal{K}_{i_1}, \dots, \mathcal{K}_{i_{m-2}}\}$ to $\{\mathcal{K}_3, \dots, \mathcal{K}_m\}$. Furthermore, the Weyl group action preserves the rank of $\mathcal M$ by Proposition~\ref{prop616}.

It follows that $\eta^* \in W_n$ maps the set of Casimirs $\{\mathcal{K}_{i_1}, \dots, \mathcal{K}_{i_{m-2}}\}$ to $\{\mathcal{K}_3, \dots, \mathcal{K}_m\}$ while preserving the condition $\text{rank}(\mathcal M)$. This implies that the cluster transformation $\eta$ induces a birational map from $\mathcal{D}^n_{\{3, \dots, m\}}$ to $\mathcal{D}^n_{\{i_1, \dots, i_{m-2}\}}$. In particular, $\eta(\mathcal{D}^n_{\{3, \dots, m\}})$ is an irreducible dense subset of $\mathcal{D}^n_{\{i_1, \dots, i_{m-2}\}}$, establishing a birational isomorphism between them.

For the odd case $n = 2m+1$, a similar argument demonstrates that $\mathcal{D}^n_{\{i_1, \dots, i_{m-1}\}}$ is birationally isomorphic to $\mathcal{D}^n_{\{2, \dots, m\}}$ via the Weyl group action.

We conclude that all irreducible components of the form $\mathcal{D}^n_{\{i_1, \dots, i_{e(n)}\}}$ are birationally isomorphic to the distinguished component $\mathcal{D}^n_{\{1+\operatorname{par}(n), \dots, \lfloor n/2 \rfloor\}}$. By transitivity of the Weyl group action, all such irreducible components are birationally isomorphic to one another.\end{proof}

\begin{exmp}
(\textit{Transitivity of irreducible components under Weyl group actions for $n=6,7$})

Applying the Weyl group action to the defining equations of $\mathcal D^n$, we can directly verify the following results using computer-assisted calculations:
\begin{enumerate}
    \item $s_2$ maps $\mathcal D^6_{\left\{2\right\}}$ to $\mathcal D^6_{\left\{3\right\}}$ and $s_2\circ s_1$ maps $\mathcal D^6_{\left\{1\right\}}$ to $\mathcal D^6_{\left\{3\right\}}$.
    \item $s_1$ maps $\mathcal D^7_{\left\{1,3\right\}}$ to $\mathcal D^7_{\left\{2,3\right\}}$ and $s_2 \circ s_1$ maps $\mathcal D^7_{\left\{1,2\right\}}$ to $\mathcal D^7_{\left\{2,3\right\}}$.
\end{enumerate}
\end{exmp}

\begin{cor} \label{cor623}
    The coisotropic reductions are conjugate to each other under the birational Weyl group action. Consequently, it suffices to consider a single irreducible component of $\mathcal D^n$ for the coisotropic reduction.
\end{cor}

\begin{proof}
    For $i < \lfloor \frac{n}{2} \rfloor$, the elements $s_i = \tau_i$ are cluster modular transformations; hence, they preserve the $\mathcal A_n$-quiver. Furthermore, the irreducible components are Poisson birationally isomorphic to each other under these transformations. Consequently, the coisotropic reductions are conjugate under the Weyl group action.\end{proof}
    
For $n \le 6$, the existence of the coisotropic reduction on a single component was shown in\textcolor{black}{\cite{16}}. This induces the existence of a coisotropic reduction for all components when $n \le 6$ by Corollary~\ref{cor623}. Consequently, the cluster structure of $\mathcal T_{2,2}$ is recovered through the coisotropic reduction.

For $n = 8$ and $10$, we also found the coisotropic reduction on a single component. Consequently, cluster structures of $\mathcal T_{3,2}$ and $\mathcal T_{4,2}$ are recovered through the coisotropic reductions; these results will be presented in a future paper. 

Hence, for higher $n$, we expect a similar result:

\begin{conj} \label{conj624}
There exists the coisotropic reduction on a single irreducible component of the rank condition solution. This would imply the existence of a coisotropic reduction on the entire solution set of the rank condition by \textit{Corollary~\ref{cor623}}. Furthermore, this coisotropic reduction recovers the cluster structure of $\mathcal T_{\left\lfloor {n-1 \over 2} \right\rfloor,par(n)}$. \end{conj}

\section{Cluster DT-transformation on $\mathcal A_n$-quiver} 
In this section, we show that the longest element of our Weyl group corresponds to a cluster DT-transformation for even $n$. In contrast, there is no reddening sequence when $n$ is odd.

\subsection{Tropicalization of Cluster Algebras}

\begin{defn} \label{defn711}
    A set $\mathbb P$ is a semifield if it is a multiplicative abelian group with the addition $\oplus$, which is commutative, associative, and distributive. In particular, the following two semifields will be important:
    \begin{enumerate}
        \item On the algebraic torus $\mathcal X_{\Sigma}$, the universal semifield $\mathbb P_{\text{univ}}(\mathcal X_{\Sigma})$ is the semifield consisting of all the rational functions of variables $X_i$ over $\mathbb Q$ with subtraction-free expressions.
        \item The tropical semifield $\mathbb P_{\text{trop}}(\mathcal X_{\Sigma})$ is a multiplicative free abelian group with the following special addition $\oplus:$
        \begin{equation}\prod_{i\in I}X_i^{a_i} \oplus \prod_{i\in I}X_i^{b_i} := \prod_{i\in I}X_i^{\min(a_i,b_i)}.\end{equation}
    \end{enumerate}

    Define a natural homomorphism of semifields
    \begin{equation}\pi_{\text{trop}}: \mathbb P_{\text{univ}}(\mathcal X_{\Sigma}) \to \mathbb P_{\text{trop}}(\mathcal X_{\Sigma})\end{equation}
    where $\pi_{\text{trop}}(X_i) = X_i$ and $\pi_{\text{trop}}(c) = c$ for $c \in \mathbb Q_{>0}.$ We can also denote a homomorphism $\pi_{\text{trop}}(\cdot)$ by $[\cdot]$. Specifically, $[x] = \pi_{\text{trop}}(x)$ where $x \in \mathbb P_{\text{univ}}(\mathcal X_{\Sigma})$.
\end{defn}

\begin{thm}[{\textcolor{black}{\cite[Proposition 3.13, Corollary 6.3]{7}}}]
Let $\Sigma = (I, F, \epsilon)$ be the initial seed. For every cluster transformation $\tau : \Sigma \to \Sigma'$, there exists an $|I| \times |I|$ integer matrix $C^{\tau} = (c^{\tau}_{ij})$ such that
\[
    [\tau^*(X_i')] = \prod_{j \in I} X_j^{c^{\tau}_{ji}}.
\]
The matrix $C^{\tau}$ is called the \textit{$C$-matrix} of the transformation $\tau$, and its columns are referred to as the \textit{$c$-vectors} of $\tau$.
\end{thm}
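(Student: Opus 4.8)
The plan is to reduce the statement to two elementary facts: that $\tau^*(X_i')$ is always a subtraction-free rational expression in the initial variables $X_j$, and that the tropical semifield $\mathbb P_{\text{trop}}(\mathcal X_\Sigma)$ consists precisely of Laurent monomials in the $X_j$. Granting these, $[\tau^*(X_i')] = \pi_{\text{trop}}(\tau^*(X_i'))$ is a well-defined element of $\mathbb P_{\text{trop}}(\mathcal X_\Sigma)$, hence of the form $\prod_{j \in I} X_j^{c^\tau_{ji}}$ for unique integers $c^\tau_{ji}$; collecting these exponents into a matrix yields $C^\tau$, and integrality is automatic from the definition of $\mathbb P_{\text{trop}}$.

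For the first fact I would argue by induction on the number of mutations in a decomposition $\tau = \mu_{k_r} \circ \cdots \circ \mu_{k_1}$. The base case is the identity. For the inductive step, the $\mathcal X$-mutation formula recalled in the excerpt,
$$(\mu_k^{\mathcal X})^*(X_i') = X_k^{-1} \text{ if } i = k, \qquad (\mu_k^{\mathcal X})^*(X_i') = X_i\bigl(1 + X_k^{-\text{sgn}(\epsilon_{ki})}\bigr)^{-\epsilon_{ki}} \text{ if } i \ne k,$$
expresses each new variable as a ratio of subtraction-free polynomials in the old ones. Since subtraction-free expressions are closed under products, integer powers, and the operation $\oplus$, and in particular under substitution, the composite $\tau^*(X_i')$ stays in $\mathbb P_{\text{univ}}(\mathcal X_\Sigma)$. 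This is exactly the content needed to make the homomorphism $\pi_{\text{trop}}$ applicable.

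Carrying the same induction through $\pi_{\text{trop}}$ also produces the recursion governing the $c$-vectors: because $\pi_{\text{trop}}$ is a semifield homomorphism with $\pi_{\text{trop}}(1) = 1$, one has $[1 \oplus X_k^{\pm 1}] = X_k^{\min(0, \pm 1)}$, so applying $(\mu_k^{\mathcal X})^*$ to a monomial $\prod_j X_j^{a_j}$ yields again a monomial whose exponent vector is obtained from $(a_j)$ by the standard tropical $\mathcal X$-mutation rule. Iterating this over a decomposition of $\tau$ computes $C^\tau$ explicitly, and shows that $C^\tau$ depends only on the ring map $\tau^*$ and not on the chosen decomposition — the latter being automatic, since $\tau^*$ and $\pi_{\text{trop}}$ are themselves decomposition-independent.

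The main obstacle — and the one point I would not treat as routine — is the well-definedness of $\pi_{\text{trop}}$ on the relevant rational functions: a priori $\tau^*(X_i')$ is an element of the function field $\mathcal K(\mathcal X_\Sigma)$, and one must know that any two subtraction-free representatives of the same element have the same tropicalization, so that $[\,\cdot\,]$ descends. This is precisely where one uses that $\pi_{\text{trop}}$ factors through $\mathbb P_{\text{univ}}(\mathcal X_\Sigma)$ rather than through the whole field, and it is a standard property of these semifields. Since the statement as quoted is taken from \cite{7}, an alternative is simply to invoke \cite[Proposition 3.13, Corollary 6.3]{7}; but the inductive argument above reconstructs both the existence of $C^\tau$ and its mutation recursion from first principles.
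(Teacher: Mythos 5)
The paper gives no proof of this statement at all --- it is quoted verbatim as a cited result from \cite{7} --- so there is no in-paper argument to compare against. Your reconstruction is correct and is the standard one: each one-step $\mathcal X$-mutation pullback is manifestly subtraction-free, subtraction-free expressions are closed under composition, so $\tau^*(X_i')$ lies in $\mathbb P_{\mathrm{univ}}(\mathcal X_\Sigma)$, and the semifield homomorphism $\pi_{\mathrm{trop}}$ (well-defined by the universal property of the universal semifield, which is exactly the point you correctly isolate as non-routine) sends it to a Laurent monomial $\prod_{j\in I} X_j^{c^{\tau}_{ji}}$ with integer exponents; note in passing that the base in the displayed formula of the statement should indeed be $X_j$, not $X_i$, as your proof implicitly assumes. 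The one step you treat a little too glibly is the claim that ``carrying the induction through $\pi_{\mathrm{trop}}$'' yields the exponent recursion: tropicalization commuting with composition of positive maps is itself a lemma (functoriality of the tropicalization of positive rational maps, due to Fock--Goncharov), not an immediate consequence of $\pi_{\mathrm{trop}}$ being a semifield homomorphism, since composition involves substitution rather than the semifield operations alone. That caveat affects only your derivation of the mutation rule for $c$-vectors, which is extra content; the bare existence and integrality of $C^{\tau}$, which is all the statement asserts, follows already from your first two facts.
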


For a given $C$-matrix $C^\tau$ with a cluster transformation $\tau : \Sigma \to \Sigma'$, the mutation in the direction $k$ changes the matrix $C'$ to the matrix
\begin{equation}c^{\mu_k\circ\tau}_{ij} =
\begin{cases}
-c^\tau_{ij}, & \mbox{if }j = k  \\
c^\tau_{ij} + c^\tau_{ik}\max(0,\epsilon'_{kj}) + \max(0,-c^\tau_{ik})\epsilon'_{kj} & \mbox{otherwise.}
\end{cases}
\end{equation} 

We now introduce a reddening sequence.
\begin{defn}\label{defn713}
\cite{36} A sequence of cluster mutations is called a \textit{reddening sequence} if all entries of the associated $C$-matrix are nonpositive. Furthermore, if the $C$-matrix is exactly $-I$ and the underlying quiver is preserved, the sequence is called a \textit{cluster Donaldson--Thomas (DT) transformation}.
\end{defn}

\begin{prop} \label{prop714}
    If Q admits a reddening sequence, any quiver $Q'$ mutation equivalent to $Q$ also admits a reddening sequence, following from the definition of the $C$-matrix. 
\end{prop}

\begin{thm} \label{thm715}
    \textcolor{black}{\cite{9}} Suppose that a quiver $Q$ admits a reddening sequence. Then, every full subquiver $Q'$ of $Q$ also admits a reddening sequence. Hence, $Q$ does not admit a reddening sequence if it contains a full subquiver that does not admit a reddening sequence.
\end{thm}

The sign-coherence of the c-vectors, which plays a crucial role in proving the existence of a reddening sequence for the $\mathcal A_n$-quiver, is as follows:

\begin{thm} \label{thm716}
    \textcolor{black}{\cite[Theorem 1.7]{8}} Every column of a $C$-matrix is a nonzero vector and its nonzero entries are either all positive or all negative.
\end{thm}

\begin{exmp} \label{exmp717}
    \textit{(Reddening sequences on $\mathcal A_3$-quiver and $\mathcal A_4$-quiver)}

See the following $\mathcal A_4$-quiver:
$$
\begin{tikzcd}
	{\text{1}} &&&& {\text{2}} \\
	\\
	& {\text{6}} && {\text{5}} \\
	\\
	{\text{4}} &&&& {\text{3}}
	\arrow[from=1-1, to=1-5]
	\arrow[from=1-1, to=3-4]
	\arrow[from=1-5, to=3-2]
	\arrow[from=1-5, to=5-5]
	\arrow[from=3-2, to=1-1]
	\arrow[from=3-2, to=5-5]
	\arrow[from=3-4, to=1-5]
	\arrow[from=3-4, to=5-1]
	\arrow[from=5-1, to=1-1]
	\arrow[from=5-1, to=3-2]
	\arrow[from=5-5, to=3-4]
	\arrow[from=5-5, to=5-1]
\end{tikzcd}
$$

It admits a reddening sequence: Let $\tau_1 : = \mu_1\mu_2\mu_4\mu_3\mu_2\mu_1$ and $\tau_2 : = \mu_6\mu_5.$ Then $\tau_2\tau_1\tau_2\tau_1$ is a reddening sequence; see Theorem~\ref{thm731}.

On the other hand, the following $\mathcal A_3$-quiver, which is also called the Markov quiver, does not admit a reddening sequence: see\textcolor{black}{\cite{10}}.

$$
\begin{tikzcd}
	& {\text{1}} \\
	{\text{2}} && {\text{3}}
	\arrow[Rightarrow, from=1-2, to=2-3]
	\arrow[Rightarrow, from=2-1, to=1-2]
	\arrow[Rightarrow, from=2-3, to=2-1]
\end{tikzcd}
$$
\end{exmp}

\subsection{Cluster Duality and Canonical Basis} \label{Ch7.2}

In this section, we introduce the \textit{canonical basis (theta basis)} on $\mathcal{O}(\mathcal X_{|\Sigma|})$. Our definitions are based on\textcolor{black}{\cite{3}}.

Let $\mathbb{Q}^+(\mathcal{A}_{|\Sigma|})$ be the set of all positive functions on a cluster $\mathcal A$-variety $\mathcal{A}_{|\Sigma|}$. Note that a nonzero rational function $F$ on $\mathcal{A}_{|\Sigma|}$ is called a positive function if it has only nonnegative integral coefficients in every cluster torus (due to the positivity, it is enough that the function has only nonnegative integral coefficients in a single torus). The set $\mathbb{Q}^+(\mathcal{A}_{|\Sigma|})$ forms a semifield, which is closed under addition, multiplication, and division.

Similar to Definition~\ref{defn711}, we define the tropical integers $\mathbb{Z}^t$ to be the set of integers with tropical multiplication $\cdot$ and tropical addition $+$ defined by:
\[
a \cdot b = a + b, \quad a + b = \min\{a, b\}.
\]

\begin{defn}
The tropicalization of $\mathcal{A}_{|\Sigma|}$ is the set of semifield homomorphisms from $\mathbb{Q}^+(\mathcal{A}_{|\Sigma|})$ to the tropical integers $\mathbb{Z}^t$:
\[
\mathcal{A}_{|\Sigma|}(\mathbb{Z}^t) := \mathrm{Hom}_{\mathrm{semifield}}(\mathbb{Q}^+(\mathcal{A}_{|\Sigma|}), \mathbb{Z}^t).
\]
Furthermore, for any positive function $F \in \mathbb{Q}^+(\mathcal{A}_{|\Sigma|})$, its tropicalization $F^t$ is a $\mathbb{Z}$-valued function on $\mathcal{A}_{|\Sigma|}(\mathbb{Z}^t)$ defined by
\[
F^t(l) := l(F), \quad \text{for all } l \in \mathcal{A}_{|\Sigma|}(\mathbb{Z}^t).
\]
\end{defn}

The \textit{full Fock--Goncharov duality conjecture}, suggested in\textcolor{black}{\cite[4.3]{11}}, claims that the algebra $\mathcal{O}(\mathcal{X}_{|\Sigma|})$ admits a linear basis over $\mathbb C$, which can be canonically parametrized by the tropical points of the tropicalization of the cluster $\mathcal A$-variety $\mathcal{A}^\vee$. The conjecture predicts the existence of a canonical basis on $\mathcal{O}(\mathcal{X}_{|\Sigma|})$.

When $\mathcal{X}_{|\Sigma|}$ admits a reddening sequence, the conjecture holds on $\mathcal{X}_{|\Sigma|}$. This has been proven in\textcolor{black}{\cite{21}} (it does not hold in general; see\cite{35}). For the reader's convenience, we state the result here:

Every tropical point $l \in \mathcal{A}_{|\Sigma|}(\mathbb{Z}^t)$ corresponds to a monomial $X_{\mathbf{a}(l)}$ with multidegree $\mathbf{a}(l) := \sum_{i \in I} l(A_i)e_i$. Let $\mathbf{b} = \sum_{i \in I} b_i e_i$. We define a partial order $\mathbf{b} \geq \mathbf{a}(l)$ by the condition that $b_i \geq l(A_i)$ for every mutable index $i \in I \setminus F$, and $b_i = l(A_i)$ for every frozen index $i \in F$. We then have the following theorem:
\begin{thm}
Assume that the unfrozen part of the quiver $Q$ for the seed $\Sigma$ admits a reddening sequence. The algebra $\mathcal{O} (\mathcal{X}_{|\Sigma|})$ admits a linear basis $\Theta (\mathcal{O}(\mathcal{X}_{|\Sigma|}))$, called the \textbf{theta basis}. The basis $\Theta (\mathcal{O}(\mathcal{X}_{|\Sigma|}))$ satisfies the following properties:
\begin{enumerate}
    \item The basis $\Theta (\mathcal{O}(\mathcal{X}_{|\Sigma|}))$ is preserved by the action of $\Gamma_{|\Sigma|}$. Note that $\Gamma_{|\Sigma|}$ is a cluster modular group.
    \item All global monomials are contained in $\Theta (\mathcal{O}(\mathcal{X}_{|\Sigma|}))$.
    \item For any $l_1, l_2 \in \mathcal A_{|\Sigma|}(\mathbb Z^t)$, we have$$\theta_{l_1} \cdot \theta_{l_2} = \sum_l c_l\theta_l,$$where $c_l \in \mathbb{N}$ and $c_l = 0$ for all but finitely many $l$.
    \item There is a natural $\Gamma_{|\Sigma|}$-equivariant bijection
    $$
        \mathcal{A}_{|\Sigma|}(\mathbb{Z}^t) \xrightarrow{\sim} \Theta (\mathcal{O}(\mathcal{X}_{|\Sigma|})), \quad l \mapsto \theta_l.
    $$
    \item Let $\{A_i\}$ be an arbitrary cluster torus of $\mathcal{A}_{|\Sigma|}$. For every $l \in \mathcal{A}_{|\Sigma|}(\mathbb{Z}^t)$, we have
    $$
        \theta_l = X_{\mathbf{a}(l)} + \sum_{\mathbf{v} > \mathbf{a}(l)} c_{l,\mathbf{v}}X_{\mathbf{v}},
    $$
    where $c_{l,\mathbf{v}} \in \mathbb{N}$.
\end{enumerate}
\end{thm}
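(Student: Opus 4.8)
The plan is to derive this statement from the theory of theta functions on cluster varieties built from scattering diagrams and broken lines, as carried out in the cited work \cite{21}. First I would recall the construction: attached to the seed $\Sigma$ there is a consistent scattering diagram living in the tropical space $\mathcal{A}_{|\Sigma|}(\mathbb{R}^t)$, and for each integral point $l \in \mathcal{A}_{|\Sigma|}(\mathbb{Z}^t)$ one defines a theta function $\theta_l$ as a sum over broken lines with asymptotic direction $\mathbf{a}(l)$ and a fixed generic endpoint in the initial chamber. A priori each $\theta_l$ is only a formal Laurent series; the role of the reddening-sequence hypothesis is precisely to guarantee that every $\theta_l$ is in fact a universal Laurent polynomial, i.e. a genuine element of $\mathcal{O}(\mathcal{X}_{|\Sigma|})$, and that the collection $\{\theta_l\}$ is a $\mathbb{C}$-linear basis. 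Concretely, a reddening sequence produces a chamber of the scattering diagram opposite to the initial one, forcing the ``full Fock--Goncharov'' property and the coincidence of the middle cluster algebra with $\mathcal{O}(\mathcal{X}_{|\Sigma|})$. This is exactly the content of \cite{21}, so I would invoke it to obtain the basis $\Theta(\mathcal{O}(\mathcal{X}_{|\Sigma|}))$ together with the parametrization in property (4).

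Next I would verify the five listed properties, each of which is essentially built into the broken-line description. Property (5), the expansion $\theta_l = X_{\mathbf{a}(l)} + \sum_{\mathbf{v} > \mathbf{a}(l)} c_{l,\mathbf{v}} X_{\mathbf{v}}$ with $c_{l,\mathbf{v}} \in \mathbb{N}$ in every cluster, is the \emph{pointedness} of theta functions: the unique broken line contributing the lowest-degree monomial is the straight one, with coefficient $1$, and all other broken-line corrections are positive and of strictly higher multidegree in the mutable directions, equal on the frozen ones. Property (3) is the broken-line multiplication formula, where $c_l$ counts pairs of broken lines with the prescribed asymptotics meeting at a common generic point; this count is manifestly a non-negative integer, and finiteness of the support follows because $\theta_{l_1}\theta_{l_2}$ is itself a universal Laurent polynomial, hence has finitely many minimal multidegrees, which one peels off inductively using (5), exactly as in the expansion around \eqref{4.13}. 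Property (2) follows because each Casimir $\mathcal{K}_i$ (Proposition \ref{prop352}) is a monomial in every cluster, so it equals its own leading term; by (5) the theta function with that leading exponent has no corrections, hence coincides with $\mathcal{K}_i$.

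For properties (1) and (4) I would use the intrinsic nature of the scattering diagram. A cluster modular transformation $\gamma \in \Gamma_{|\Sigma|}$ preserves the quiver, so it induces a piecewise-linear automorphism of $\mathcal{A}_{|\Sigma|}(\mathbb{Z}^t)$ that is compatible with the walls and wall-functions of the scattering diagram, and therefore permutes broken lines; since $\theta_l$ is defined purely from this data, $\gamma$ permutes the theta basis, giving (1). The bijection $l \mapsto \theta_l$ in (4) is then a formal consequence of (5): distinct leading exponents give linear independence, and peeling off leading terms of an arbitrary element of $\mathcal{O}(\mathcal{X}_{|\Sigma|})$ shows spanning; its $\Gamma_{|\Sigma|}$-equivariance is immediate from (1) together with the fact that $\gamma$ acts on leading exponents through the induced map on tropical points.

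I expect the genuine obstacle to be the very first step: establishing that the $\theta_l$ are honest regular functions forming a basis, rather than formal objects. This is where the reddening hypothesis is indispensable --- without it the full Fock--Goncharov statement can fail, as the Markov / $\mathcal{A}_3$ example (\textit{Example \ref{exmp617}}) and \cite{35} illustrate --- and it relies on the deep consistency and positivity theorems for scattering diagrams. I would treat this as a black-box citation to \cite{21}. Everything afterward, namely verifying the five bookkeeping properties, is routine given the broken-line formalism.
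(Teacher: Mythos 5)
The paper gives no proof of this theorem at all: it is stated verbatim as a result imported from \cite{21}, prefaced only by the remark that the reddening hypothesis is the condition under which the full Fock--Goncharov duality holds (with \cite{35} cited for failure in general). Your proposal correctly identifies \cite{21} as the source, treats the existence of the theta basis as the black box it must be, and your sketch of the broken-line verification of the five properties is a faithful outline of the cited argument, so it is consistent with (and more detailed than) the paper's treatment.
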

\subsection{Cluster DT-transformation on $\mathcal A_n$-quiver}

We show that there exists a cluster DT-transformation on the $\mathcal A_n$-quiver for even $n$.

\begin{thm} \label{thm731}
    Assume $n = 2m$. Then the cluster transformation $(\tau_1\tau_2\cdots\tau_{m-1}\tau_m)^m$ is a cluster DT-transformation on the $\mathcal A_n$-quiver. Here, the longest element of $W_n$ corresponds to the cluster transformation since $W_n$ is a $B_m$-type Weyl group.
\end{thm}

\begin{lem} \label{lem732}
Denote $w := \tau_1\tau_2\cdots\tau_{m-1}\tau_m$ and $w_0 := w^m.$ Then, 
$$w_0^*(\mathcal C_i) = \mathcal C_i^{-1}$$
with $i \in \left\{1,\cdots,m\right\}.$
\end{lem}

\begin{proof}

The identity $\prod_{j=1}^{N_i} Y_{i,j} = \prod_{j=1}^{N_i} X_{i,j}$ where $N_i$ is the length of the $i$th cycle follows immediately from the definition $$Y_{i,j} = X_{i,j}{F_{i,j-1} \over F_{i,j}}$$
where $F_{i,j} := 1 + X_{i,j} + X_{i,j}X_{i,j-1} + \dots + X_{i,j}X_{i,j-1}\dots X_{i,j-N_i+2}$. Consider $\mathcal C_i = \prod_{j=1}^{2m}X_{i,j}$ where $i \in \left\{2,\cdots,m-1\right\}.$ We have 
$$\tau_{i-1}^*\cdots\tau_2^*\tau_1^*\mathcal C_i = \prod_{j=1}^{2m}X_{i,j}Y_{i-1,j-1} =  \prod_{j=1}^{2m}X_{i,j}\prod_{j=1}^{2m}Y_{i-1,j} = \prod_{j=1}^{2m}X_{i,j}\prod_{j=1}^{2m}X_{i-1,j}.$$

This implies

$$\tau_{i}^*\tau_{i-1}^*\cdots\tau_2^*\tau_1^*\mathcal C_i = \prod_{j=1}^{2m}{X_{i,j} \over Y_{i,j}Y_{i,j-1}}\prod_{j=1}^{2m}X_{i-1,j}Y_{i,j} = \prod_{j=1}^{2m}X_{i-1,j} \to w^*\mathcal C_i = \prod_{j=1}^{2m}X_{i-1,j} = \mathcal C_{i-1}.$$

On the other hand, consider $\mathcal C_1 = \prod_{j=1}^{2m}X_{1,j}.$ We get, $$\tau_{1}^*\mathcal C_1 = \prod_{j=1}^{2m}{X_{1,j}\over Y_{1,j}Y_{1,j-1}} = {1 \over \prod_{j=1}^{2m}{X_{1,j}}} \to \tau_2^*\tau_1^*\mathcal C_1 = {1 \over \prod_{j=1}^{2m}{X_{1,j}Y_{2,j}}} = {1 \over \prod_{j=1}^{2m}{X_{1,j}X_{2,j}}} \to \cdots $$
$$\to w^*\mathcal C_1 = {1 \over \prod_{j=1}^{2m}{X_{1,j}X_{2,j}\cdots X_{m,j}}} = {1 \over \mathcal C_m\prod_{j=1}^m\mathcal C_j}.$$

Next, consider $\mathcal C_m = \prod_{j=1}^mX_{m,j}.$
We have $$\tau_{m-1}^*\cdots\tau_2^*\tau_1^*\mathcal C_m = \prod_{j=1}^mX_{m,j}Y_{m-1,j-1}Y_{m-1,j-1+m} = \prod_{j=1}^mX_{m,j}\prod_{j=1}^{2m}Y_{m-1,j} = \prod_{j=1}^mX_{m,j}\prod_{j=1}^{2m}X_{m-1,j} $$

$$\to w^*\mathcal C_m = \prod_{j=1}^m{X_{m,j} \over Y_{m,j}Y_{m,j-1}}\prod_{j=1}^{2m}X_{m-1,j}Y_{m,j} = \prod_{j=1}^mX_{m,j}\prod_{j=1}^{2m}X_{m-1,j} = \mathcal C_m\mathcal C_{m-1}.$$

In summary, $w^*\mathcal C_1 = {1 \over \mathcal C_m\prod_{j=1}^m\mathcal C_j}, w^*\mathcal C_i =\mathcal C_{i-1},$ and $w^*\mathcal C_m = \mathcal C_m\mathcal C_{m-1}.$

By repeatedly applying the equations above, we obtain the following expression for $(w_0)^*(\mathcal C_m) =(w^*)^m(\mathcal C_m)$:

$$\mathcal C_m \xrightarrow{w^*}\mathcal C_{m}\mathcal C_{m-1}\xrightarrow{w^*}\mathcal C_{m}\mathcal C_{m-1}\mathcal C_{m-2} \xrightarrow{w^*} \cdots \xrightarrow{w^*}\mathcal C_{m}\mathcal C_{m-1}\cdots \mathcal C_2\mathcal C_1 \xrightarrow{w^*} {\prod_{j=1}^m\mathcal C_j \over \mathcal C_m\prod_{j=1}^mC_j} = {1 \over \mathcal C_m}.$$

Similarly, we have
$$\mathcal C_i \xrightarrow{w^*} \mathcal C_{i-1} \xrightarrow{w^*} \cdots \xrightarrow{w^*}\mathcal C_1 \xrightarrow{w^*} {1 \over \mathcal C_k\prod_{j=1}^m\mathcal C_j} \xrightarrow{w^*}{\mathcal C_m\prod_{j=1}^m\mathcal C_j \over \mathcal C_m\mathcal C_{m-1}\prod_{j=1}^m\mathcal C_j} = {1 \over \mathcal C_{m-1}} \xrightarrow{w^*} {1 \over \mathcal C_{m-2}} \xrightarrow{w^*} \cdots \xrightarrow{w^*} {1 \over \mathcal C_i},$$
and 
$$\mathcal C_1 \xrightarrow{w^*} {1 \over \mathcal C_m\prod_{j=1}^m\mathcal C_j} \xrightarrow{w^*}{\mathcal C_m\prod_{j=1}^mC_j \over \mathcal C_m\mathcal C_{m-1}\prod_{j=1}^m\mathcal C_j} = {1 \over \mathcal C_{m-1}} \xrightarrow{w^*} {1 \over \mathcal C_{m-2}} \xrightarrow{w^*} \cdots \xrightarrow{w^*} {1 \over \mathcal C_1}.$$

Thus, we finally get 
$$(w_0)^*\mathcal C_i = {1 \over \mathcal C_i}$$ 
for any $i \in \left\{1,\cdots,m\right\}.$\end{proof}

\begin{proof}
    \textit{(Proof of Theorem~\ref{thm731})}

Let $(X_{i,j})$ be an initial $\mathcal X$-torus and $(X_{i,j}')$ be an $\mathcal X$-torus after applying a cluster transformation $w_0.$ Let $\mathcal C_i' = \prod_{j=1}^{N_i} X_{i,j}'.$ Then, $w_0^*(\mathcal C_i') = {1 \over \mathcal C_i}$ because an image of $C_i'$ under pullback of the $w_0$ is a ${1 \over \mathcal C_i}$ from the lemma above.

Consider the $C$-matrix of $w_0$ and assume $j \in \left\{1,\cdots,n\right\}.$ Each $j$th column corresponds to the tropicalization of $w_0^*(X_{1,j}')$ and has a sign-coherence property \textit{(Theorem~\ref{thm716})}. Moreover, these columns are identical up to permutations: It is a consequence of both the cyclic symmetry of the $\mathcal A_n$-quiver (Proposition~\ref{prop321}) and the fact that each $\tau_k$ is independent of the order of mutations \textit{(Theorem~\ref{thm331})}.

Since $w_0^*(\mathcal C_1') = {1 \over \mathcal C_1}$, 
the sum of all entries in the first $n$ columns of the $C$-matrix of $w_0$ is $-n$. This implies entries of a column should be $0$ except a single entry of $-1$ due to the sign-coherence of the $C$-matrix. Moreover, this non-zero entry must be located within the first $n$ rows due to $w_0^*(\mathcal C_1') = {1 \over \mathcal C_1}$. By applying this argument on every cycle, we conclude that the $C$-matrix of $w_0$ has the following block-diagonal form:

$$\begin{pmatrix}
    -P_{\sigma_1} & 0 & \dots & 0 \\
    0 & -P_{\sigma_2} & \dots & 0 \\
    \vdots & \vdots & \ddots & \vdots \\
    0 & 0 & \dots & -P_{\sigma_m}
\end{pmatrix}$$

where $P_{\sigma_i}$ is a $N_i \times N_i$ permutation matrix with respect to a permutation $\sigma_i$ on $S_{N_i}$. Recall that $N_i$ is the length of the $i$th cycle. This implies $[w_0^*(X'_{i,j})] = (X_{i,\sigma_i(j)})^{-1}$.

Let us denote the following squares of elementary geodesic functions as:
$$\beta_{n} :=\left(\left\langle X_{j,2m}\Big|_{j=1}^{m}, X_{m-j,m-j+2m}\Big|_{j=1}^{m-1} \right\rangle\right)^2,\text{  }\text{  }\beta_{1} :=\left(\left\langle X_{j,1}\Big|_{j=1}^{m}, X_{m-j,m-j+1}\Big|_{j=1}^{m-1} \right\rangle\right)^2$$
and
$$\beta'_{n} :=\left(\left\langle X'_{j,2m}\Big|_{j=1}^{m}, X'_{m-j,m-j+2m}\Big|_{j=1}^{m-1} \right\rangle\right)^2,\text{  }\text{  }\beta'_{1} :=\left(\left\langle X'_{j,1}\Big|_{j=1}^{m}, X'_{m-j,m-j+1}\Big|_{j=1}^{m-1} \right\rangle\right)^2$$

From Theorem~\ref{thm422}, we know $w_0^*(\beta'_1) = \beta_1$ and $w_0^*(\beta'_{n}) = \beta_{n}$ since every elementary geodesic function is invariant under any $\tau_i^*$. By applying the tropicalization on the equations $\beta_1 = w_0^*(\beta'_1)$ and $\beta_{n} = w_0^*(\beta'_{n})$, we have
$$\prod_{j=1}^{m}(X_{j,1})^{-1}\prod_{j=1}^{m-1}(X_{m-j,m-j+1})^{-1} = \prod_{j=1}^{m}[w_0^*(X'_{j,1})]\prod_{j=1}^{m-1}[w_0^*(X'_{m-j,m-j+1})]$$
and
$$\prod_{j=1}^{m}(X_{j,2m})^{-1}\prod_{j=1}^{m-1}(X_{m-j,m-j+2m})^{-1} = \prod_{j=1}^{m}[w_0^*(X'_{j,2m})]\prod_{j=1}^{m-1}[w_0^*(X'_{m-j,m-j+2m})]$$

since $w_0^*$ is a homomorphism and $[w_0^*(X'_{i,r})] = (X_{i,\sigma_i(r)})^{-1}$ from the block-diagonal form above.

$(X_{1,1})^{-1}$ is the only variable that appears in both $[\beta_1]$ and $[\beta_{n}]$ among variables $\left\{X_{1,j}\right\}_{j=1}^n$ (see the left sides of equations above). Similarly, $[w_0^*(X_{1,1}')]$ is the only variable that appears in both $[w_0^*(\beta_1')]$ and $[w_0^*(\beta_{n}')]$ among variables $\left\{[w_0^*(X_{1,j}')]\right\}_{j=1}^n$ (see the right sides of equations above).

Hence, we conclude that 
$$[w_0^*(X_{1,1}')] = (X_{1,1})^{-1}.$$ 
Applying the same argument to all variables in first $m-1$ cycles, we find that $P_{\sigma_i} = I$ for all $i \in \left\{1,\cdots,m-1\right\}$: It is possible because of the cyclic symmetry of the $\mathcal A_n$-quiver.

For the $m$th cycle, see the following equations derived from the equation $\beta_1 = w_0^*(\beta'_1)$:
$$\prod_{j=1}^{m}(X_{j,1})^{-1}\prod_{j=1}^{m-1}(X_{m-j,m-j+1})^{-1} = \prod_{j=1}^{m}[w_0^*(X'_{j,1})]\prod_{j=1}^{m-1}[w_0^*(X'_{m-j,m-j+1})].$$

We have $[w_0^*(X_{i,j})] = (X_{i,j})^{-1}$ for $i \le m-1$, so we have 
$$(X_{m,1})^{-1} = [w_0^*(X'_{m,1})].$$
Applying the same argument to the other variables in the $m$th cycle, we conclude that $P_{\sigma_m} = I.$ This shows that the $C$-matrix of $w_0$ is $-I$, so it is the cluster DT-transformation. \end{proof}

\begin{exmp}\textit{(Example: $n=4$)}

To provide a concrete illustration of Theorem~\ref{thm731}, we present the proof for the case $n=4$.

Consider $w_0 = \tau_1 \tau_2 \tau_1 \tau_2$ and the Casimir elements $\mathcal C_1 = X_{1,1}X_{1,2}X_{1,3}X_{1,4}$ and $\mathcal C_2 = X_{2,1}X_{2,2}$. Let $(X_{i,j})$ denote the initial $\mathcal X$-torus and $(X'_{i,j})$ denote the $\mathcal X$-torus after applying the cluster transformation $w_0$.

Consider the $C$-matrix of $w_0$ and assume $j \in \left\{1,2,3,4\right\}.$ Each $j$th column corresponds to the tropicalization of $w_0^*(X_{1,j}')$ and has a sign-coherence property\textit{(Theorem~\ref{thm716})}. Moreover, these columns are identical up to permutations: It is a consequence of both the cyclic symmetry of the $\mathcal A_4$-quiver (Proposition~\ref{prop321}) and the fact that $\tau_1$ and $\tau_2$ are independent of the order of mutations\textit{(Theorem~\ref{thm331})}. 

Similarly, for $j \in \{5,6\}$, the same properties hold for $w_0^*(X'_{2,1})$ and $w_0^*(X'_{2,2})$. 

Consequently, we can write $[w_0^*(X'_{i,j})] = (X_{i,\sigma_i(j)})^{-1}$, where $\sigma_i \in S_{N_i}$ are permutations. This relationship arises from the fact that the action of $w_0$ satisfies the following identity for each $i \in \{1, 2\}$:$$\prod_{j = 1}^{N_i} w_0^*(X'_{i,j}) = w_0^* \left( \prod_{j = 1}^{N_i} X'_{i,j} \right) = \prod_{j=1}^{N_i} (X_{i,j})^{-1}.$$
Next, analogous to the general proof, we consider the square of the elementary formal geodesic functions:
\begin{align*}
    \beta_1 &= \langle X_{1,1},X_{2,1},X_{1,2}\rangle^2 \\
    &= (X_{1,1}X_{2,1}X_{1,2})\left(1+\frac{1}{X_{1,1}} + \frac{1}{X_{1,1}X_{2,1}} + \frac{1}{X_{1,1}X_{2,1}X_{1,2}}\right)^2, \\
    \beta_4 &= \langle X_{1,4},X_{2,2},X_{1,1}\rangle^2 \\
    &= (X_{1,4}X_{2,2}X_{1,1})\left(1+\frac{1}{X_{1,4}} + \frac{1}{X_{1,4}X_{2,2}} + \frac{1}{X_{1,4}X_{2,2}X_{1,1}}\right)^2.
\end{align*}
Note that $X_{2,4} \equiv X_{2,2}$. Similarly, we define $\beta'_1$ and $\beta'_4$ using the coordinates $(X'_{i,j})$. We have that $w_0^*(\beta_1') = \beta_1$ and $w_0^*(\beta_4') = \beta_4$ by Theorem~\ref{thm422}.

Applying the tropicalization $[\cdot]$, the condition $[w_0^*(\beta_1')] = [\beta_1]$ yields the relation between the monomials:
\[
    [w_0^*(X'_{1,1})][w_0^*(X'_{2,1})][w_0^*(X'_{1,2})] = (X_{1,1})^{-1}(X_{2,1})^{-1}(X_{1,2})^{-1}.
\]
Substituting $[w_0^*(X'_{1,j})] = (X_{1,\sigma_1(j)})^{-1}$ and $[w_0^*(X'_{2,j})] = (X_{2,\sigma_2(j)})^{-1}$, this becomes:
\[
    (X_{1,\sigma_1(1)})^{-1}(X_{2,\sigma_2(1)})^{-1}(X_{1,\sigma_1(2)})^{-1} = (X_{1,1})^{-1}(X_{2,1})^{-1}(X_{1,2})^{-1}.
\]
Similarly, the condition $[w_0^*(\beta_4')] = [\beta_4]$ implies:
\[
    (X_{1,\sigma_1(4)})^{-1}(X_{2,\sigma_2(2)})^{-1}(X_{1,\sigma_1(1)})^{-1} = (X_{1,4})^{-1}(X_{2,2})^{-1}(X_{1,1})^{-1}.
\]
Comparing the variables in these two equations necessitates $\sigma_1(1) = 1$. Consequently, we obtain $[w_0^*(X'_{1,1})] = X_{1,1}^{-1}$. Generalizing this comparison to other elementary geodesic functions, it follows that $[w_0^*(X'_{1,j})] = X_{1,j}^{-1}$ for all $j$. Finally, given that $w_0^*(X'_{1,j}) = X_{1,j}^{-1}$ for all $j$, the equations above immediately imply $[w_0^*(X'_{2,j})] = X_{2,j}^{-1}$ for all $j$. This logic demonstrates that the $C$-matrix of $w_0$ is $-I$.
\end{exmp}

\begin{cor} \label{cor734}
    $\mathcal{O}(\mathcal X_{|\mathcal A_n|})$ admits the canonical basis (theta basis) for even $n$.
\end{cor}

\begin{rem}\textit{(Cluster DT-transformation on the doubled $\mathcal A_n$-quiver)}

For arbitrary $n$, let $m = \lfloor n/2 \rfloor$. The cluster transformation $s_0 := (s_1\cdots s_{m})^{m}$ acts as the cluster DT-transformation on the doubled $\mathcal{A}_n$-quiver.

This result follows from reasoning analogous to Theorem~\ref{thm731}. Specifically, we observe:
\[
    s_0^*(\mathcal{C}_i) = \mathcal{C}_i^{-1} \quad \text{and} \quad s_0^*(\widetilde{\mathcal{C}_i}) = \left(\widetilde{\mathcal{C}_i}\right)^{-1}.
\]
Here, $\mathcal{C}_i = \prod_j U_{i,j}$ and $\widetilde{\mathcal{C}}_i = \prod_j \widetilde{U_{i,j}}$. Then, we use Theorem~\ref{thm716} and the cyclic symmetry of the doubled quiver. Furthermore, we consider the elementary geodesic functions depending on the parity of $n$.

If $n$ is even, we consider:
\[
    \left\langle \widetilde{U_{1,i}}, \dots, \widetilde{U_{m,i}}, U_{m-1,m-1+i}, \dots, U_{1,1+i} \right\rangle.
\]

If $n$ is odd, we consider:
\[
    \left\langle \widetilde{U_{1,i}}, \dots, \widetilde{U_{m,i}}, U_{m,m+i}, \dots,U_{1,1+i} \right\rangle.
\]
Applying tropicalization to these functions implies that $s_0$ is indeed the DT-transformation. Consequently, the doubled $\mathcal{A}_n$-quiver admits the theta basis. We omit the detailed verification as the proof is very analogous to Theorem~\ref{thm731}.
\end{rem}

However, the proof of Theorem~\ref{thm731} fails for standard $\mathcal A_n$-quiver with odd $n$ because the innermost cycle of the quiver is not chordless: In fact, there is no reddening sequence when $n$ is odd.

\begin{exmp}
    (\textit{Nonexistence of the reddening sequence for $n = 5$})
    
    In the $\mathcal A_5$-quiver, the innermost cycle is the left-hand side quiver in the following figure:
    \begin{figure}[H]
        \centering
        \begin{tikzcd}[scale cd=1]
	& 1 && 2 &&&&& 1 && 2 \\
	\\
	5 &&&& 3 &&& 5 &&&& 3 \\
	\\
	&& 4 &&&&&&& 4
	\arrow[from=1-2, to=1-4]
	\arrow[from=1-2, to=3-5]
	\arrow[from=1-4, to=3-5]
	\arrow[from=1-4, to=5-3]
	\arrow[from=1-9, to=3-8]
	\arrow[from=1-11, to=5-10]
	\arrow[from=3-1, to=1-2]
	\arrow[from=3-1, to=1-4]
	\arrow[from=3-5, to=3-1]
	\arrow[from=3-5, to=5-3]
	\arrow[Rightarrow, from=3-8, to=5-10]
	\arrow[from=3-12, to=1-11]
	\arrow[Rightarrow, from=3-12, to=3-8]
	\arrow[from=5-3, to=1-2]
	\arrow[from=5-3, to=3-1]
	\arrow[from=5-10, to=1-9]
	\arrow[Rightarrow, from=5-10, to=3-12]
\end{tikzcd}
\caption{An innermost cycle of the $\mathcal A_5$-quiver.}
    \label{Fig28}
    \end{figure}
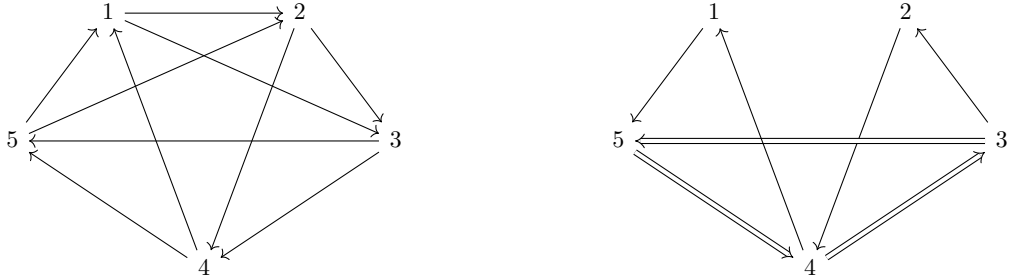
    After applying mutations of $4,1, 2$, the quiver is transformed into the right-hand side quiver. This new quiver includes the Markov quiver as its full subquiver.

    Since the Markov quiver does not admit a reddening sequence; see \textit{Example \ref{exmp717}}, the right-hand side quiver does not admit a reddening sequence by \textit{Theorem~\ref{thm715}}. Hence, the left-hand side quiver also does not admit a reddening sequence by \textit{Proposition~\ref{prop714}}. As a result, $\mathcal A_5$-quiver does not admit a reddening sequence by \textit{Theorem~\ref{thm715}}, since the left-hand side quiver in \textbf{Figure \ref{Fig28}} is its full subquiver.
\end{exmp}

\begin{thm} \label{thm737}
    There is no reddening sequence for the $\mathcal A_n$-quiver for any odd $n = 2m + 1$.
\end{thm}
\begin{proof}
    We can express the innermost cycle as follows:

    \begin{figure}[H]
        \centering
	\begin{tikzcd}[scale cd=1,sep = tiny]
	& 1 &&&&&& 1 \\
	2 && 3 &&&& 2 && 3 \\
	& 4 && 5 &&&& 4 && 5 \\
	&& 6 && 7 &&&& 6 && 7 \\
	&&& \cdots && \cdots && {\longeq} && \cdots && \cdots \\
	&&&& {\text{\small{2m-2}}} && {\text{\small{2m-1}}} &&&& {\text{\small{2m-2}}} && {\text{\small{2m-1}}} \\
	&&&&& {\text{\small{2m}}} && {\text{\small{2m+1}}} &&&& {\text{\small{2m}}} && {\text{\small{2m+1}}} \\
	&&&&&& 1 && 2
	\arrow[from=1-2, to=2-3]
	\arrow[from=1-8, to=2-9]
	\arrow[curve={height=-24pt}, from=1-8, to=7-14]
	\arrow[dashed, from=2-1, to=1-2]
	\arrow[from=2-1, to=3-2]
	\arrow[from=2-3, to=2-1]
	\arrow[from=2-3, to=3-4]
	\arrow[from=2-7, to=1-8]
	\arrow[from=2-7, to=3-8]
	\arrow[from=2-9, to=2-7]
	\arrow[from=2-9, to=3-10]
	\arrow[from=3-2, to=2-3]
	\arrow[from=3-2, to=4-3]
	\arrow[from=3-4, to=3-2]
	\arrow[from=3-4, to=4-5]
	\arrow[from=3-8, to=2-9]
	\arrow[from=3-8, to=4-9]
	\arrow[from=3-10, to=3-8]
	\arrow[from=3-10, to=4-11]
	\arrow[from=4-3, to=3-4]
	\arrow[from=4-3, to=5-4]
	\arrow[from=4-5, to=4-3]
	\arrow[from=4-5, to=5-6]
	\arrow[from=4-9, to=3-10]
	\arrow[from=4-9, to=5-10]
	\arrow[from=4-11, to=4-9]
	\arrow[from=4-11, to=5-12]
	\arrow[from=5-4, to=4-5]
	\arrow[from=5-4, to=6-5]
	\arrow[from=5-6, to=6-7]
	\arrow[from=5-10, to=4-11]
	\arrow[from=5-10, to=6-11]
	\arrow[from=5-12, to=6-13]
	\arrow[from=6-5, to=5-6]
	\arrow[from=6-5, to=7-6]
	\arrow[from=6-7, to=6-5]
	\arrow[from=6-7, to=7-8]
	\arrow[from=6-11, to=5-12]
	\arrow[from=6-11, to=7-12]
	\arrow[from=6-13, to=6-11]
	\arrow[from=6-13, to=7-14]
	\arrow[from=7-6, to=6-7]
	\arrow[from=7-6, to=8-7]
	\arrow[from=7-8, to=7-6]
	\arrow[from=7-8, to=8-9]
	\arrow[curve={height=-70pt}, from=7-12, to=1-8]
	\arrow[from=7-12, to=6-13]
	\arrow[curve={height=-50pt}, from=7-14, to=2-7]
	\arrow[from=7-14, to=7-12]
	\arrow[from=8-7, to=7-8]
	\arrow[dashed, from=8-9, to=8-7]
\end{tikzcd}
\caption{An innermost cycle of the $\mathcal A_n$-quiver.}
    \label{Fig29}
    \end{figure}
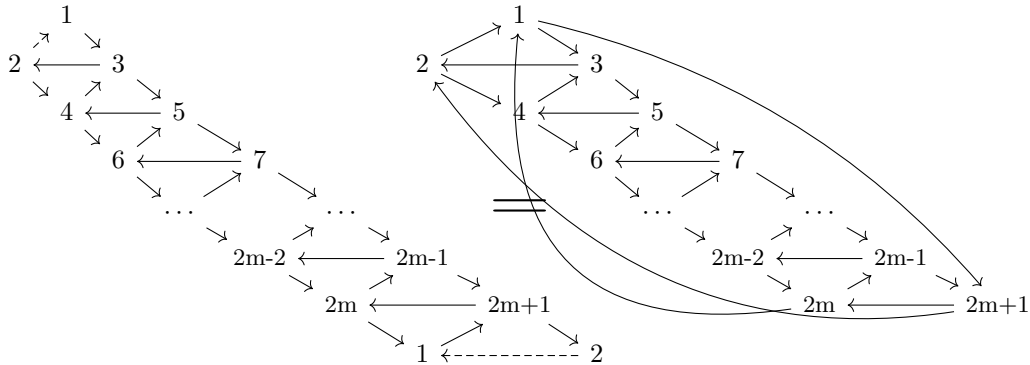

    Consider a sequence of mutations $\mu_{2m}\mu_{2m-1}\mu_{2m-2}\cdots\mu_5\mu_4\mu_3$ on the right-hand side quiver above. This transforms the quiver to the following quiver:

    \begin{figure}[H]
        \centering
	\begin{tikzcd}[scale cd=0.95,sep = tiny]
	& 1 \\
	2 && 3 \\
	& 4 && 5 \\
	&& 6 && 7 \\
	&&& \cdots && \cdots \\
	&&&& {\text{\small{2m-2}}} && {\text{\small{2m-1}}} \\
	&&&&& {\text{\small{2m}}} && {\text{\small{2m+1}}}
	\arrow[Rightarrow, from=1-2, to=3-2]
	\arrow[Rightarrow, from=2-1, to=1-2]
	\arrow[from=2-3, to=1-2]
	\arrow[from=2-3, to=3-4]
	\arrow[from=3-2, to=2-1]
	\arrow[from=3-2, to=2-3]
	\arrow[from=3-2, to=4-3]
	\arrow[from=3-4, to=3-2]
	\arrow[from=3-4, to=4-5]
	\arrow[from=4-3, to=3-4]
	\arrow[from=4-3, to=5-4]
	\arrow[from=4-5, to=4-3]
	\arrow[from=4-5, to=5-6]
	\arrow[from=5-4, to=4-5]
	\arrow[from=5-4, to=6-5]
	\arrow[from=5-6, to=6-7]
	\arrow[from=6-5, to=5-6]
	\arrow[from=6-5, to=7-6]
	\arrow[from=6-7, to=6-5]
	\arrow[from=7-6, to=6-7]
	\arrow[from=7-6, to=7-8]
	\arrow[curve={height=-60pt}, from=7-8, to=2-1]
\end{tikzcd}
\caption{A quiver after applying mutations $\mu_{2m}\mu_{2m-1}\mu_{2m-2}\cdots\mu_5\mu_4\mu_3$ on the quiver of Figure~\ref{Fig29}.}
    \label{Fig30}
    \end{figure}
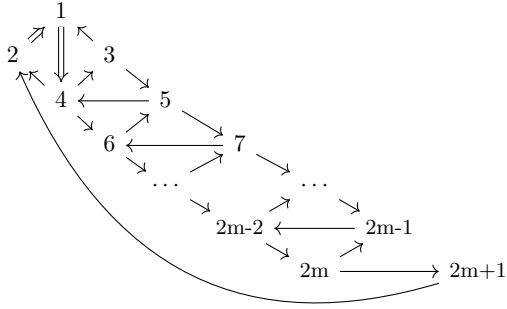
    Next, a sequence of mutations $\mu_{2m+1}\mu_{2m}\mu_{2m-2}\cdots\mu_{10}\mu_8\mu_6$ transforms a quiver in Figure~\ref{Fig30} into a quiver in Figure~\ref{Fig31}. 
    
\begin{figure}[H]
        \centering
	\begin{tikzcd}[scale cd=0.95,sep = tiny]
	& 1 \\
	2 && 3 \\
	& 4 && 5 \\
	&& 6 && 7 \\
	&&& \cdots && \cdots \\
	&&&& {\text{\small{2m-4}}} && {\text{\small{2m-3}}} \\
	&&&&& {\text{\small{2m-2}}} && {\text{\small{2m-1}}} \\
	&&&&&& {\text{\small{2m}}} && {\text{\small{2m+1}}}
	\arrow[Rightarrow, from=1-2, to=3-2]
	\arrow[Rightarrow, from=2-1, to=1-2]
	\arrow[curve={height=30pt}, from=2-1, to=4-3]
	\arrow[from=2-3, to=1-2]
	\arrow[from=2-3, to=3-4]
	\arrow[Rightarrow, from=3-2, to=2-1]
	\arrow[from=3-2, to=2-3]
	\arrow[from=3-4, to=4-3]
	\arrow[from=3-4, to=4-5]
	\arrow[from=4-3, to=3-2]
	\arrow[from=4-3, to=5-4]
	\arrow[from=4-5, to=5-4]
	\arrow[from=4-5, to=5-6]
	\arrow[from=5-4, to=3-4]
	\arrow[from=5-4, to=6-5]
	\arrow[from=5-6, to=6-5]
	\arrow[from=5-6, to=6-7]
	\arrow[from=6-5, to=7-6]
	\arrow[from=6-7, to=7-6]
	\arrow[from=6-7, to=7-8]
	\arrow[from=7-6, to=5-6]
	\arrow[from=7-6, to=8-7]
	\arrow[from=7-8, to=8-7]
	\arrow[from=8-7, to=6-7]
	\arrow[from=8-7, to=8-9]
	\arrow[from=8-9, to=7-8]
\end{tikzcd}
\caption{A quiver after applying mutations $\mu_{2m+1}\mu_{2m}\mu_{2m-2}\cdots\mu_{10}\mu_8\mu_6$ on the quiver of Figure~\ref{Fig30}}
    \label{Fig31}
    \end{figure}
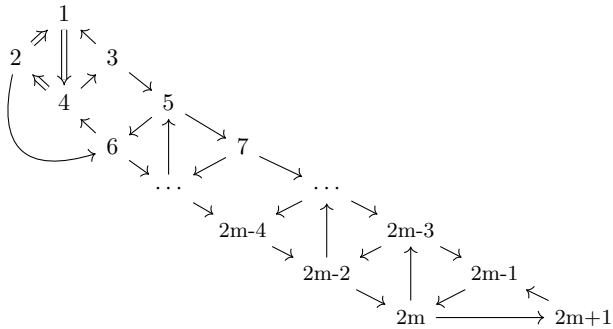

    The Markov quiver does not admit a reddening sequence; therefore, by \textit{Theorem~\ref{thm715}}, the quiver in \textbf{Figure \ref{Fig31}} also fails to admit a reddening sequence.
    
    Since the quivers in \textbf{Figure \ref{Fig29}} and \textbf{Figure \ref{Fig31}} are mutation equivalent, \textit{Proposition~\ref{prop714}} implies that the quiver in \textbf{Figure \ref{Fig29}} also does not admit a reddening sequence. Hence, we conclude that the $\mathcal{A}_n$-quiver does not possess a reddening sequence, as it contains the quiver in \textbf{Figure \ref{Fig29}} as a full subquiver.\end{proof}

\subsection{Canonical Basis for $\mathcal{O}(\mathcal X_{|\mathcal A_4|})$}
As shown in Corollary~\ref{cor734}, the $\mathcal{A}_n$-quiver admits a canonical 
basis for even $n$.

The $\mathcal{A}_4$-quiver arises from a triangulation of a twice-punctured torus. We describe $\mathcal{O}(\mathcal X_{|\mathcal A_4|})$ via canonical functions on the cluster Poisson variety associated with the surface. The canonical basis consists of the canonical functions. We begin by reviewing the core concepts of canonical functions on cluster Poisson varieties from surfaces. Our explanations are based on\textcolor{black}{\cite{31}},\cite{22}, and\textcolor{black}{\cite{32}}.

\begin{defn}
A decorated surface $\mathcal S$ is a compact oriented surface equipped with a finite collection of marked points on the boundary. 
\end{defn}

We shrink boundary components without marked points to get a surface $\mathcal S'$ with punctures and boundary so that every boundary component contains at least one marked point.

\begin{defn}
An ideal triangulation of $\mathcal S$ is a triangulation of $\mathcal S'$ whose vertices are marked points and punctures.
\end{defn}

\begin{defn}
    A rational bounded lamination on the surface $\mathcal S$ is the homotopy class of a set of finitely many nonintersecting noncontractible simple curves on $\mathcal S$ with rational weights. Each curve in the lamination is either closed or ending on the boundary except marked points. The lamination is subject to the following conditions and equivalence relations:
    \begin{enumerate}
        \item The weight of a curve is nonnegative unless the curve is retractable to a puncture or an interval on the boundary containing exactly one marked point. 
        \item A lamination including a curve of weight zero is equivalent to the lamination with this curve removed.
        \item A lamination including homotopic curves of weights $a$ and $b$ is equivalent to the lamination with one curve removed and the weight $a+b$ on the other.
    \end{enumerate}

The set of rational bounded laminations on $\mathcal S$ is denoted $\mathcal A_L(\mathcal S, \mathbb Q)$. We also write $\mathcal A_L(\mathcal S, \mathbb Z)$ for the set of bounded laminations on $\mathcal S$ that can be represented by a collection of curves with integral weights.
\end{defn}

Let us fix an ideal triangulation $\Delta$ and a lamination $l$. We deform curves of $l$ so that each curve intersects every edge of $\Delta$ in the minimal number of points. Then for each edge $i$, we define its associated coordinate $a_i$ to be half of the total weight of the curves that intersect the edge.
\begin{prop} \label{prop644}
    \textcolor{black}{\cite[Proposition 5.1.3]{31}} The coordinates $(a_i)_{i \in I}$, where $I$ is a collection of edges, provide a bijection
    $$\mathcal A_L(\mathcal S,\mathbb Q) \longrightarrow \mathbb Q^{|I|}.$$
\end{prop}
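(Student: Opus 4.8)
\textbf{Proof plan for Proposition \ref{prop644}.}

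The plan is to follow the standard shear/shape coordinate argument for laminations on surfaces, adapted to the bounded (rather than closed) case. First I would fix the ideal triangulation $\Delta$ with edge set $I$ and, given a rational bounded lamination $l$, put each curve of $l$ in \emph{minimal position} with respect to $\Delta$; this is possible and unique up to isotopy because the curves are simple, pairwise disjoint, and non-contractible, and one can appeal to the usual bigon criterion. The coordinate $a_i$ is then half the total weight of strands crossing edge $i$. The first task is to check this is well-defined: independence of the chosen minimal-position representative follows since any two minimal-position representatives differ by an isotopy that does not change intersection numbers with the edges, and independence of the weighting choices follows from equivalence relations (2) and (3) in the definition of $\mathcal A_L(\mathcal S,\mathbb Q)$ (zero-weight curves contribute nothing; homotopic curves add their weights).

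Next I would establish injectivity. The key point is that within each ideal triangle, the intersection pattern of the lamination with the three sides is determined by three non-negative half-integer ``corner'' parameters (the number of strands cutting off each corner), and these corner parameters are recovered from the three edge-coordinates by the triangle inequalities: for a triangle with sides $i,j,k$, the corner parameter at the vertex opposite side $i$ equals $\tfrac12(a_j+a_k-a_i)$ when this is non-negative. Since the lamination is reconstructed triangle-by-triangle from its corner data and the gluing across edges is forced by matching strand endpoints, the full coordinate vector $(a_i)$ determines $l$ up to equivalence. The bounded case requires one extra bookkeeping step: near a boundary arc with a single marked point, or near a puncture, strands may ``spiral'' or carry negative weight, so I would track the spiralling direction/sign as part of the combinatorial data and verify it too is recoverable from the $a_i$ together with the sign conventions in condition (1).

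For surjectivity, given an arbitrary vector $(a_i)\in\mathbb Q^{|I|}$, I would construct a lamination realizing it: in each triangle place the strands dictated by the corner parameters $\tfrac12(a_j+a_k-a_i)$ when they are non-negative, and when some combination is negative, use the permitted negative-weight curves around the relevant marked point or puncture to absorb the deficit; then glue consistently across every edge. One must check the glued-up collection of arcs is again a disjoint union of simple non-contractible curves and that its edge-coordinates are exactly $(a_i)$ — this is a local check in each triangle and at each gluing. I expect the main obstacle to be precisely the boundary/puncture bookkeeping: making the sign conventions in condition (1) interact correctly with the triangle-inequality reconstruction, so that the map is a clean bijection rather than merely a bijection onto a chamber. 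This is exactly the content of \cite[Proposition 5.1.3]{31}, so I would cite that for the delicate cases and present the triangle-by-triangle reconstruction as the conceptual heart of the argument.
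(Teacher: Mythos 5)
The paper gives no proof of this proposition: it is imported verbatim from \cite[Proposition 5.1.3]{31} and used as a black box, so there is no internal argument to compare yours against. Your outline is the standard one (minimal position and the bigon criterion for well-definedness, triangle-by-triangle reconstruction from the corner parameters $\tfrac12(a_j+a_k-a_i)$ for injectivity, and explicit realization with negative-weight peripheral curves absorbing any deficit for surjectivity), and it is sound as a plan; since you ultimately defer to the same citation for the delicate boundary and puncture bookkeeping, you end up in the same place as the paper. One small correction: spiralling into punctures is a feature of unbounded ($\mathcal X$-type) laminations, whereas the bounded laminations defined here consist of curves that are either closed or end on the boundary away from marked points, so the only extra data you need to track near punctures is the sign of the weight of peripheral curves permitted by condition (1), not a spiralling direction.
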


Next, consider a curve $\overline l$ of the lamination $l$. Suppose that $i_1, i_2,\cdots, i_s$ is a sequence of edges of $\Delta$ which $\overline l$ intersects. After crossing an edge $i_k$, the curve enters a triangle and then turns either left or right before exiting the triangle through the next edge $i_{k+1}$.

If this turn is left, we set 
\begin{equation} \label{7.4}
M_k := \begin{pmatrix}
X_{i_k}^{1/2} & X_{i_k}^{1/2} \\
0 & X_{i_k}^{-1/2}
\end{pmatrix}\end{equation}

and if the turn is right, we set
\begin{equation} \label{7.5}
M_k := \begin{pmatrix}
X_{i_k}^{1/2} & 0 \\
X_{i_k}^{-1/2} & X_{i_k}^{-1/2}
\end{pmatrix}\end{equation}

Then, the monodromy associated with the curve $\overline{l}$ is$$\rho(\overline{l}) := M_1 M_2 \cdots M_s.$$ Note that this monodromy is equal to the element in the Fuchsian subgroup defined in Section \ref{Ch2.3}, assuming $X_\alpha = e^{z_\alpha/2}$ for each edge $\alpha$; see \ref{2.8} and \ref{2.9}.

We now define a function $\mathbb I_\mathcal A(l)$ for $l \in \mathcal A_L(\mathcal S, \mathbb Z)$. From now on, we assume that the surface $\mathcal S$ has no marked points, which allows us to apply this construction to the $\mathcal A_4$-quiver. 

\begin{defn}
Let $\mathcal S$ be a decorated surface with no marked points.
    \begin{enumerate}
        \item Let $l \in \mathcal A_L(S,\mathbb Z)$ be a lamination consisting of a single curve $\overline l$ with positive weight $k$ that is not retractable to a puncture. Then $\mathbb I_{\mathcal A}(l)$ is a trace of the $k$th power of the monodromy $\rho(\overline l).$
        \item Let $l \in \mathcal A_L(S,\mathbb Z)$ be a lamination consisting of a single curve $\overline l$ with weight $k$ that is retractable to a puncture. Then $\mathbb I_{\mathcal A}(l)$ is the $k$th power of $(X_{i_1}\cdots X_{i_s})^{1/2}$ that is
        an eigenvalue of the monodromy $\rho(\overline l).$ Note that the other eigenvalue is $(X_{i_1}\cdots X_{i_s})^{-1/2}.$
        \item Let $l \in \mathcal A_L(S,\mathbb Z)$ be written as a sum $\sum_i k_il_i$ where each component $l_i$ is a lamination consisting of a single curve $\overline{l_i}$ such that curves $\overline{l_i}$ all belong to distinct homotopy classes. Then,
 $$\mathbb I_{\mathcal A}(l) = \prod_i \mathbb I_{\mathcal A}(k_il_i).$$
    \end{enumerate}
\end{defn}

To express the value of $\mathbb I_{\mathcal{A}}(kl)$ in terms of $\mathbb I_{\mathcal{A}}(l)$, we need to define the following polynomials.

\begin{defn}
    Chebyshev polynomials $F_k(t) \in \mathbb{Z}[t]$ are defined by $F_0(t) = 2$, $F_1(t) = t$, and the recursive formula
    $$
F_{k+1}(t) = F_k(t) \cdot t - F_{k-1}(t)
$$
for $k \ge 1$.
\end{defn}

\begin{prop}
    For any $M \in PSL(2,\mathbb{R})$ and nonnegative integer $k$, we have
$$
\text{Tr}(M^k) = F_k(\text{Tr}(M)).
$$
\end{prop}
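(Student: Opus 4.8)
The plan is to prove the trace identity $\text{Tr}(M^k) = F_k(\text{Tr}(M))$ by induction on $k$, using the Cayley--Hamilton theorem for $2\times 2$ matrices. The base cases $k=0$ and $k=1$ are immediate: $\text{Tr}(M^0) = \text{Tr}(I) = 2 = F_0(\text{Tr}(M))$ and $\text{Tr}(M^1) = \text{Tr}(M) = F_1(\text{Tr}(M))$, matching the initial conditions in the definition of the Chebyshev polynomials. For the inductive step, I would assume the identity holds for all indices up to $k$ and establish it for $k+1$.

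The key algebraic input is that any $M \in PSL(2,\mathbb{R})$ (lifted to $SL(2,\mathbb{R})$, so $\det M = 1$) satisfies its characteristic equation $M^2 - \text{Tr}(M)\,M + I = 0$. Multiplying this relation by $M^{k-1}$ gives $M^{k+1} = \text{Tr}(M)\,M^k - M^{k-1}$. Taking traces and using linearity of the trace yields
\begin{equation}
\text{Tr}(M^{k+1}) = \text{Tr}(M)\cdot\text{Tr}(M^k) - \text{Tr}(M^{k-1}).
\end{equation}
Applying the inductive hypothesis to the two terms on the right, this equals $\text{Tr}(M)\cdot F_k(\text{Tr}(M)) - F_{k-1}(\text{Tr}(M))$, which is precisely $F_{k+1}(\text{Tr}(M))$ by the recursion defining $F_{k+1}$. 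This closes the induction.

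One minor point to address is the passage from $PSL(2,\mathbb{R})$ to $SL(2,\mathbb{R})$: the trace of an element of $PSL(2,\mathbb{R})$ is only well-defined up to sign, but since $F_k$ has the same parity as $k$ (so $F_k(-t) = (-1)^k F_k(t)$) and likewise $\text{Tr}(M^k)$ changes by $(-1)^k$ under $M \mapsto -M$, the identity is consistent with the projective ambiguity. In the context of the paper, the relevant traces are taken with a fixed lift (as in the geodesic functions of Section \ref{Ch2.3}), so this causes no difficulty. I do not anticipate any serious obstacle here — the statement is a classical fact, and the only thing to be careful about is stating the inductive hypothesis for two consecutive indices (both $k$ and $k-1$) so that the three-term recursion can be applied cleanly.
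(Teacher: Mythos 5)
Your proof is correct and follows essentially the same route as the paper: both arguments reduce to the three-term recursion $\text{Tr}(M^{k+1}) = \text{Tr}(M)\,\text{Tr}(M^k) - \text{Tr}(M^{k-1})$ and then induct, the only cosmetic difference being that the paper obtains the recursion from the trace identity $\text{Tr}(A)\text{Tr}(B) = \text{Tr}(AB) + \text{Tr}(AB^{-1})$ while you obtain it directly from Cayley--Hamilton (these are equivalent, since that identity is just the trace of the Cayley--Hamilton relation). Your additional remark on the $PSL$ versus $SL$ sign ambiguity is a point the paper leaves implicit, and your handling of it is sound.
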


\begin{proof}
This is clear for $k = 0, 1$.
Let us assume $k \ge 2$. For $A, B \in PSL(2,\mathbb{R})$, we have
$$
\text{Tr}(A)\text{Tr}(B) = \text{Tr}(AB) + \text{Tr}(AB^{-1})
$$
by direct calculation. This implies
$$
\text{Tr}(M^k)\text{Tr}(M) = \text{Tr}(M^{k+1}) + \text{Tr}(M^{k-1}) \implies \text{Tr}(M^{k+1}) = \text{Tr}(M^k)\text{Tr}(M) - \text{Tr}(M^{k-1}).
$$
Thus, the proposition follows by induction.
\end{proof}

\begin{cor} \label{cor748}
    Let $l \in \mathcal{A}_L(S, \mathbb{Z})$ be a lamination consisting of a single curve $l$ with weight $1$ that is not retractable to a puncture, and let $k$ be a nonnegative integer. Then,
$$
\mathbb I_{\mathcal{A}}(k l) = F_k(\mathbb I_{\mathcal{A}}(l)).
$$
\end{cor}

Next, we define the canonical function.
\begin{defn}
    Let $\mathcal A(\mathbb Z^t)$ denote the set of all bounded laminations in $\mathcal A_L(\mathcal S, \mathbb Z)$ such that for each edge $i$, the total weight of curves intersecting the edge is even. The function $\mathbb I_{\mathcal A}(l)$ is called a \textit{canonical function} when $l \in \mathcal A(\mathbb Z^t)$. In particular, for any $l \in \mathcal A(\mathbb Z^t)$, the coordinates $(a_i)_{i \in I}$ of $l$ consist of integer components, so $\mathbb I_{\mathcal A}(l)$ is a Laurent polynomial in the variables $X_i$. 
\end{defn}
We now handle the $\mathcal A_4$-quiver, which is induced from the triangulation of a twice-punctured torus $\Sigma_{1,2}$ (Section \ref{Ch2.3}). In this case, the canonical functions $\mathbb I_{\mathcal A}(l)$ generate $\mathcal{O}(\mathcal X_{|\mathcal A_4|})$. In particular, for any regular function $f \in \mathcal{O}(\mathcal X_{|\mathcal A_4|})$, we can write:
\begin{equation} \label{6.6}
f = \sum_{l \in \mathcal A(\mathbb Z^t)} c_l \mathbb I_{\mathcal A}(l)
\end{equation}
where the coefficients $c_l$ are zero for all but finitely many $l$. This basis is also the theta basis; see\cite{22}. 
\begin{figure}[H]
    \centering
    \begin{tikzcd}
	\textcolor{rgb,255:red,255;green,0;blue,0}{\bullet} &&&& \textcolor{rgb,255:red,255;green,0;blue,0}{\bullet} \\
	\\
	&& \textcolor{rgb,255:red,0;green,0;blue,255}{\blacksquare} \\
	\\
	\textcolor{rgb,255:red,255;green,0;blue,0}{\bullet} &&&& \textcolor{rgb,255:red,255;green,0;blue,0}{\bullet}
	\arrow["{X_{2,2}}"{description}, no head, from=1-1, to=5-1]
	\arrow["{X_{2,1}}"{description}, no head, from=1-5, to=1-1]
	\arrow["{X_{1,2}}"{description}, no head, from=3-3, to=1-1]
	\arrow["{X_{1,3}}"{description}, no head, from=3-3, to=1-5]
	\arrow["{X_{1,1}}"{description}, no head, from=3-3, to=5-1]
	\arrow["{X_{2,1}}"{description}, no head, from=5-1, to=5-5]
	\arrow["{X_{2,2}}"{description}, no head, from=5-5, to=1-5]
	\arrow["{X_{1,4}}"{description}, no head, from=5-5, to=3-3]
\end{tikzcd}
    \caption{Triangulation of a torus with two punctures which are marked by a disk and a square respectively.}
    \label{Fig32}
\end{figure}
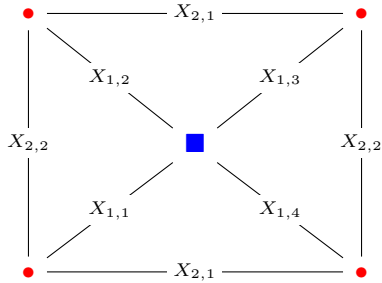

Consider the following laminations with weight $1$ (see Figure \ref{Fig33} below)
\begin{enumerate}
    \item $g_i$ is a lamination consisting of a single curve $\overline{g_i}$ that crosses edges $X_{1,i}, X_{1,i+1},$ and $X_{2,i}$ exactly once for $i \in \left\{1,2,3,4\right\}$.
    \item $g_5$ is a lamination consisting of a single curve $\overline{g_5}$ that crosses edges $X_{1,2},X_{2,1},X_{1,4},$ and $X_{2,2} $ exactly once.
    \item $g_6$ is a lamination consisting of a single curve $\overline{g_6}$ that crosses edges $X_{1,1},X_{2,2},X_{1,3},$ and $X_{2,1}$ exactly once.
\end{enumerate}

It was shown in \cite[Section 5.3]{37} that every trace of the monodromy along any closed curve which is not retractable to a puncture is generated by geodesic length functions. Here, we give explicit calculations in our setting.

It is well known that $\pi_1(\Sigma_{1,2})$ is isomorphic to the free group generated by $3$ elements. In particular, we can choose representatives of the free homotopy classes of $g_3$, $g_4$, and $g_5$ as its free generators. Hence, by Proposition~\ref{prop234}, for any lamination $l \in \mathcal{A}_L(\Sigma_{1,2}, \mathbb{Z})$ consisting of a single closed curve $\overline l$ which is not retractable to a puncture, we have
\begin{equation} \label{7.7}
\text{Tr}(\rho(\overline l)) = \text{Tr}\left(\prod_{i \in I} \left(\rho(\overline{g_i})\right)^{d_i} \right)\end{equation}
where $I$ is a multiset of indices drawn from $\{3,4,5\}$ and $d_i$ is a nonzero integer.

\begin{figure}[H]
\centering
    \includegraphics[width=0.35\linewidth]{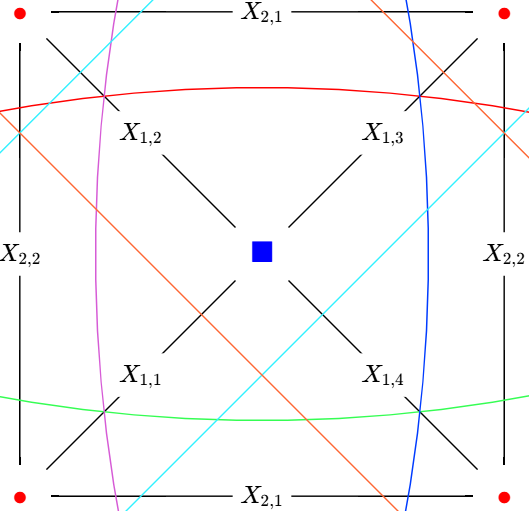}
    \caption{Laminations on the twice-punctured torus. The pink, red, blue, green, mint, and orange lines correspond to laminations $g_1$, $g_2$, $g_3$, $g_4$, $g_5$, and $g_6$, respectively.}
    \label{Fig33}
\end{figure}

Let us assume:
\begin{align*}
\rho(\overline{g_3}) &= \begin{pmatrix}
X_{1,4}^{1/2} & X_{1,4}^{1/2} \\
0 & X_{1,4}^{-1/2}
\end{pmatrix}
\begin{pmatrix}
X_{2,1}^{1/2} & X_{2,1}^{1/2} \\
0 & X_{2,1}^{-1/2}
\end{pmatrix}
\begin{pmatrix}
X_{1,3}^{1/2} & 0 \\
X_{1,3}^{-1/2} & X_{1,3}^{-1/2}
\end{pmatrix}, \\[1em]
\rho(\overline{g_4}) &= \begin{pmatrix}
X_{1,4}^{1/2} & 0 \\
X_{1,4}^{-1/2} & X_{1,4}^{-1/2}
\end{pmatrix}
\begin{pmatrix}
X_{1,1}^{1/2} & X_{1,1}^{1/2} \\
0 & X_{1,1}^{-1/2}
\end{pmatrix}
\begin{pmatrix}
X_{2,2}^{1/2} & X_{2,2}^{1/2} \\
0 & X_{2,2}^{-1/2}
\end{pmatrix}, \\[1em]
\rho(\overline{g_5}) &= \begin{pmatrix}
X_{1,4}^{1/2} & X_{1,4}^{1/2} \\
0 & X_{1,4}^{-1/2}
\end{pmatrix}
\begin{pmatrix}
X_{2,1}^{1/2} & 0 \\
X_{2,1}^{-1/2} & X_{2,1}^{-1/2}
\end{pmatrix}
\begin{pmatrix}
X_{1,2}^{1/2} & 0 \\
X_{1,2}^{-1/2} & X_{1,2}^{-1/2}
\end{pmatrix}
\begin{pmatrix}
X_{2,2}^{1/2} & X_{2,2}^{1/2} \\
0 & X_{2,2}^{-1/2}
\end{pmatrix}
\end{align*}
(see \ref{7.4} and \ref{7.5}). Then, by direct calculations, we have
\begin{equation} \label{7.8}
\begin{aligned}
\text{Tr}(\rho(\overline{g_3})) &= \mathbb{A}_{1,2}, &
\text{Tr}(\rho(\overline{g_4})) &= \mathbb{A}_{2,3}, & \text{Tr}(\rho(\overline{g_5})) &= \mathbb{A}_{2,4}, \\ 
\text{Tr}(\rho(\overline{g_3})(\rho(\overline{g_4}))^{-1}) &= \mathbb{A}_{1,3}, & \text{Tr}(\rho(\overline{g_4})(\rho(\overline{g_5}))^{-1}) &= \mathbb{A}_{3,4}, &
\text{Tr}(\rho(\overline{g_5})(\rho(\overline{g_3}))^{-1}) &= \mathbb{A}_{1,4},
\end{aligned}
\end{equation}
and
\begin{equation} \label{7.9}
\text{Tr}(\rho(\overline{g_3})\rho(\overline{g_4})(\rho(\overline{g_5}))^{-1}) = -\sqrt{\mathcal K_1/\mathcal K_2}\left(1+\mathcal K_2/\mathcal K_1\right), \quad \text{Tr}((\rho(\overline{g_5}))^{-1}\rho(\overline{g_4})\rho(\overline{g_3})) = -\sqrt{\mathcal K_1}\left(1+1/\mathcal K_1\right)
\end{equation}
where $\mathcal K_1 = X_{1,1}X_{1,2}X_{1,3}X_{1,4}X_{2,1}X_{2,2}$ and $\mathcal K_2 = X_{2,1}X_{2,2}$. Note that $\mathbb{A}_{1,3} = \text{Tr}(\rho(\overline{g_6}))$, $\mathbb{A}_{3,4} = \text{Tr}(\rho(\overline{g_1}))$, and $\mathbb{A}_{1,4} = \text{Tr}(\rho(\overline{g_2}))$. By\cite[Section 5.1]{37}, the expression $\ref{7.7}$ can be generated by the following eight elements:
\begin{align*}
& \text{Tr}(\rho(\overline{g_3})), \quad \text{Tr}(\rho(\overline{g_4})), \quad \text{Tr}(\rho(\overline{g_5})), \\
& \text{Tr}(\rho(\overline{g_3})\rho(\overline{g_4})), \quad \text{Tr}(\rho(\overline{g_4})\rho(\overline{g_5})), \quad \text{Tr}(\rho(\overline{g_5})\rho(\overline{g_3})), \\
& \text{Tr}(\rho(\overline{g_3})\rho(\overline{g_4})\rho(\overline{g_5})), \quad \text{Tr}(\rho(\overline{g_5})\rho(\overline{g_4})\rho(\overline{g_3})).
\end{align*}

By (\ref{7.8}), (\ref{7.9}), and the Cayley-Hamilton theorem ($A^{-1} = \text{Tr}(A)I - A$ for $A \in PSL(2,\mathbb{R})$), we conclude that the expression \ref{7.7} is generated by the formal geodesic functions $\mathbb{A}_{i,j}$ for $i < j$ and the Casimirs $\mathcal{K}_1$ and $\mathcal{K}_2$.

\begin{thm} \label{thm7410}
    $\mathbb I_{\mathcal A}(l)$ is generated by formal geodesic functions and the Casimirs on the $\mathcal A_4$-quiver for $l \in \mathcal A(\mathbb Z^t)$. Thus, $\mathcal{O}(\mathcal X_{|\mathcal A_4|})$ is generated by formal geodesic functions and the Casimirs.
\end{thm}
\begin{proof}
Let $\mathbb I_{\mathcal A}(l) = \prod_i \mathbb I_{\mathcal A}(k_il_i)$ where each $l_i \in \mathcal{A}_L(\Sigma_{1,2}, \mathbb{Z})$ is a lamination consisting of a single closed curve $\overline {l_i}$ with weight $1$.

If $\overline l_i$ is retractable to a puncture, $\mathbb I_\mathcal A(k_il_i)$ is generated by $\mathcal K_1$ and $\mathcal K_2$ by definition. If $\overline {l_i}$ is not retractable to a puncture, we have
$$\mathbb I_\mathcal A(l_i) = \text{Tr}(\rho(\overline {l_i})) = \text{Tr}\left(\prod_{i \in I} \left(\rho(\overline{g_i})\right)^{d_i}\right).$$
As shown above, this expression is generated by formal geodesic functions $\mathbb{A}_{i,j}$ for $i < j$ and the Casimirs $\mathcal{K}_1$ and $\mathcal{K}_2$. Hence, $\mathbb I_{\mathcal A}(k_il_i)=F_{k_i}(\mathbb I(l_i))$ (Corollary~\ref{cor748}) is also generated by them. 

Thus, the canonical function $\mathbb I_{\mathcal A}(l)$ is generated by formal geodesic functions and the Casimirs.\end{proof}

For general even $n$, the existence of a canonical basis is shown (Corollary~\ref{cor734}) although an exact formula remains unknown. Analogous to the $n=4$ case, we expect that the canonical basis is generated by formal geodesic functions and Casimirs. As noted in Remark~\ref{rem4322}, these elements do generate the basis generically for even $n$. However, whether they generate the canonical basis as a ring remains an open question.

For odd $n$, specifically $n=3$, it was shown in\textcolor{black}{\cite{24}} that the quiver admits a canonical basis. Note that the full Fock--Goncharov duality conjecture is equivalent to the statement in Section \ref{Ch7.2}. Consequently, we anticipate that the canonical basis exists for odd $n$ and is generated by formal geodesic functions and Casimirs. In summary, we suggest the following conjecture:

\begin{conj} \label{conj7411}
    For any $n$, $\mathcal{O}(\mathcal X_{|\mathcal A_n|})$ admits a canonical basis. Furthermore, the basis is generated by formal geodesic functions and Casimirs $\mathcal K_i$.
\end{conj}

\section{Conclusion}
In this paper, we introduce a birational Weyl group action on the $\mathcal{A}_n$-quiver. We show that this action preserves the formal geodesic functions. We further prove that $\mathcal{O}(\sqrt{\mathcal{X}_{|\mathcal{A}_n|}})^W$ is generated by the formal geodesic functions together with the elementary symmetric functions of $\{\sqrt{\mathcal{K}_i}+1/\sqrt{\mathcal{K}_i}\}_{i=1}^m$.

We apply this framework to the $\imath$quantum group of type $\mathrm{AI}_n$ and determine the exact image of its cluster realization in the classical limit. More precisely, this image is Poisson isomorphic to a quotient algebra of Weyl group invariants.

Using the Weyl group action, we prove that it suffices to consider a single irreducible component of the rank condition for the coisotropic reduction. This significantly reduces the computational complexity of the reduction problem.

Furthermore, we show that, for even $n$, the longest element of the Weyl group corresponds to a cluster DT-transformation, which provides a theta basis for the associated cluster algebra. In contrast, for odd $n$, we prove that no reddening sequence exists. We also explicitly describe $\mathcal{O}(\mathcal{X}_{|\mathcal{A}_4|})$ in terms of canonical functions on a twice-punctured torus. This allows us to show that $\mathcal{O}(\mathcal{X}_{|\mathcal{A}_4|})$ is generated by formal geodesic functions and the Casimirs.

Future directions for this research include the following:
\begin{enumerate}
\item \textit{Maximal green sequences}: The cluster DT-transformation $\omega_0$ is a maximal green sequence for $n=6,8,10$. Based on this evidence, we conjecture that $\omega_0$ is a maximal green sequence for all even $n$.

\item \textit{The $\mathcal{A}=\mathcal{U}$ problem for the $\mathcal{A}_n$-quiver}: It remains an open question whether the cluster algebra and the upper cluster algebra of the $\mathcal{A}_n$-quiver coincide.

\item \textit{Generic solutions and the field extension degree}: We aim to determine the exact number of generic solutions to the system of equations discussed in Remark~\ref{rem423}. This is equivalent to proving that the degree of the field extension 
\[
d=[\mathbb{C}(\mathcal{X}):\operatorname{Frac}(\mathcal{O}(\mathcal{X}_{|\mathcal{A}_n|}))]
\]
is exactly $1$ (Conjecture~\ref{conj4324}). Another related problem is to prove that the last reflection \(s_m^*\) preserves \(\mathcal O(\mathcal X_{|\mathcal A_n|})\) when \(n\) is odd.

\item \textit{Quantization}: Since this paper addresses the classical case, the quantization of the birational Weyl group action is a natural next step. We expect that this quantization will resolve the open problem of determining the exact image of the cluster embedding for $\imath$quantum groups of type $\mathrm{AI}_n$ (Conjecture~\ref{conj536}).

\item \textit{Coisotropic reduction}: We aim to complete the coisotropic reduction for general $n$ (Conjecture~\ref{conj624}). We know that a coisotropic reduction exists for $n=5,6,8,10$.

\item \textit{Full Fock--Goncharov duality conjecture and canonical basis}: The conjecture holds for the doubled $\mathcal{A}_n$-quiver and for the standard $\mathcal{A}_n$-quiver when $n$ is even, due to the existence of a reddening sequence. For odd $n$, the conjecture remains generally open. However, for $n=3$, it holds by \cite{24}, since the $\mathcal{A}_3$-quiver is the Markov quiver. We also aim to prove that the canonical basis is generated by formal geodesic functions and Casimirs. In other words, we seek to prove 
Conjecture~\ref{conj7411}.

\item \textit{$\mathfrak{sp}(2n)$ algebra}: This paper focuses on the classical reflection equation of type $\mathfrak{so}(n)$. Since the $\mathfrak{sp}(2n)$ algebra has also been studied in the framework of the symplectic groupoid in \cite{17}, a future goal is to describe the cluster realization of the reflection equation of type $\mathfrak{sp}(2n)$.
\end{enumerate}

\section{Acknowledgments}
The author is supported by NSF grant DMS-2100791. I am deeply grateful to my advisor, Michael Shapiro, for his constant guidance and many fruitful discussions throughout this research. I also thank Leonid Chekhov for insightful conversations that greatly enriched my understanding of symplectic groupoids and cluster algebras. I am grateful to John Machacek and Eric Bucher for their assistance in identifying the reddening sequence for even $n$. I would also like to thank Linhui Shen for pointing out that the projection from the doubled quiver does not preserve the Poisson bracket, an assumption made in an earlier arXiv version of this work. Finally, I thank Alexander Shapiro, David Whiting, Gus Schrader, and Scott Neville for invaluable conversations that contributed to the development of this work.

\nocite{*}
\footnotesize{\bibliographystyle{alpha}\bibliography{ref}}
\end{document}